\author{Simeon Ball, Michel Lavrauw}
\thanks{The first author acknowledges the support of the project MTM2017-82166-P of the Spanish {\em Ministerio de Econom\'ia y Competitividad}. The second author acknowledges the support of {\em The Scientific and Technological Research Council of Turkey}, T\"UB\.{I}TAK (project no. 118F159)}
\title{Arcs in finite projective spaces}
\newtheorem{theorem}{Theorem}
\newtheorem{lemma}[theorem]{Lemma}
\newtheorem{remark}[theorem]{Remark}
 \newtheorem{corollary}[theorem]{Corollary}
 \newtheorem{conjecture}[theorem]{Conjecture}
\newtheorem{example}[theorem]{Example}
\def\bF{{\mathbb{F}}}
\def\PG{{\mathrm{PG}}}
\def\PGF{{\mathrm{PG}}(k-1,\bF)}
\def\PGL{{\mathrm{PGL}}}
\def\det{{\mathrm{det}}}
\def\cC{{\mathcal{C}}}
\def\cZ{{\mathcal{Z}}}
\def\cN{{\mathcal{N}}}
\def\cA{{\mathcal{A}}}
\def\cG{{\mathcal{G}}}
\begin{document}


\date{23 November 2019}

\maketitle


\begin{abstract}
This is an expository article detailing results concerning large arcs in finite projective spaces, which attempts to cover the most relevant results on arcs, simplifying and unifying proofs of known old and more recent theorems. The article is mostly self-contained and includes a proof of the most general form of Segre's lemma of tangents and a short proof of the MDS conjecture over prime fields based on this lemma.
\end{abstract}

\setcounter{tocdepth}{1}
\tableofcontents


\section{Introduction}

An {\em arc} in a $(k-1)$-dimensional projective space is a set of points with the property that any $k$ of them span the whole space. An arc in a projective plane is called a {\em planar arc}. 
Arcs in finite projective spaces have a long history dating back at least as far as the 1950's and 1960's when Beniamino Segre published his fundamental work on arcs, including the articles \cite{Segre1955a}, \cite{Segre1955b}, \cite{Segre1957}, \cite{Segre1962} and \cite{Segre1967}. Segre's work included a proof of a conjecture of J\"arnefelt and Kustaanheimo \cite{JK1952} that a planar arc of size $q+1$, $q$ odd, is a conic.  See \cite[footnote (5) p. 359]{Segre1955b} for an interesting historical account of this result. 

Segre highlighted three problems concerning arcs. One of these problems ({\em Problema $II_{r,q}$} in \cite{Segre1955b}) is whether there exist parameters $(k,q)$ for which every arc of size $q+1$ is a normal rational curve. This was answered in the affirmative by Segre's theorem for $k=3$ and $q$ odd, but a full classification of such parameters has not been achieved yet. A second problem  ({\em Problema $III_{r,q}$} in \cite{Segre1955b}) concerned the extendibility of arcs in projective spaces with such parameters, and asks for the possible size of an arc which is not extendable to a normal rational curve. Both of these problems have been the subject of much research in the last sixty years, and some of the main results are collected in this article. However, the most fundamental problem  ({\em Problema $I_{r,q}$} in \cite{Segre1955b}) is that of determining the size and the structure of the largest arcs in a given projective space over a finite field. We still do not have a complete answer to this question, although progress is continuing to be made, and this article aims to provide unified proofs of the main results (including Segre's theorem) and the mathematics which was developed in the process. A solution to this problem would settle the famous {\em MDS conjecture} for maximum distance separable codes, see Section \ref{sec:MDS_conjecture}.

A simple combinatorial approach to these problems related to arcs reveals little. It is possible to improve on the trivial upper bound of $q+k-1$ for the size slightly but any reasonable results on arcs  in $k$-dimensional projective spaces over a finite field with $q$ elements, require an algebraic and geometric approach. Indeed, all the strongest results on arcs are based on Segre's initial ideas to find connections between arcs and algebraic curves/varieties.

Segre's approach relies on his lemma of tangents which will form the basis of this exposition. Note that in this context a {\em tangent} to a planar arc $\cA$ is a combinatorial tangent, i.e. a line meeting the arc $\cA$ in exactly one point, and this should not be confused with the notion of a tangent line of an algebraic curve.  For a planar arc $\cA$ of size $q+1$, $q$ odd, the lemma of tangents says that the triangle, formed by any three distinct points of $\cA$, and the triangle, formed by the tangent lines at these points, are in perspective (\cite[Theorem II]{Segre1955a}).

The most significant methodological contribution in Segre's work was to associate an algebraic curve to a planar arc. In particular, he proved that the set of tangents to a planar arc belong to an algebraic envelope of class $t$ when $q$ is even and of class $2t$ when $q$ is odd (\cite[Theorem I, II]{Segre1959}), where the constant $t$ is the number of tangents through a point of the arc. The crucial part in the proof of the existence of this envelope is \cite[Theorem I]{Segre1959}, which can be seen as a generalisation of the theorem of Menelaos (which gives a simple condition for collinearity of three points on the sides of a triangle). Segre's envelope associated to a planar arc allowed the use of existing bounds on the number of points on algebraic curves over finite fields to obtain bounds for the size of planar arcs by careful analysis of its irreducible components, see for example \cite{Voloch1991}, \cite{HiKo1996}, \cite{HiKo1998} and \cite{GiPaToUg2002}.

More recently, elaborating on Segre's original ideas, reformulations and generalisations of his lemma of tangents have simplified the lemma which has extended its applicability. We refer to Section \ref{sec:lemma_of_tangents} for the {\em Coordinate-free lemma of tangents} from \cite{Ball2012} and the most recent {\em Scaled coordinate-free lemma of tangents} from \cite{BL2019}. These lemmas have led to significant advancement in our knowledge of large arcs, not least that we now know how large an arc can be when $q$ is prime \cite{Ball2012}. A short proof of this result can be found in Section \ref{sec:MDS_p}. The main motivation of the article was to set down results that can be derived using the afore-mentioned versions of Segre's lemma of tangents, which allow us to simplify proofs of many recent and old results.

Arcs appear in many branches of mathematics. In problems of finite geometry itself they are prevalent. To represent a matroid of rank $r$ over a finite field one must map subsets of the matroid whose $r$-subsets are independent sets onto an arc, so the relevance of arcs to the representablility of matroids is evident, see \cite{GRZ2018}. Non-existence results on arcs translate to non-representability results for matroids. 
An application of arcs to the theory error correcting codes, is given by the well-established fact that a $k$-dimensional linear maximum distance separable code is entirely equivalent to an arc in $\mathrm{PG}(k-1,q)$. This is detailed in Section \ref{sec:MDS_conjecture}. The famous MDS conjecture, which is a focus of this article, comes from this view of arcs. MDS codes are the codes of choice for codes over large alphabets and there has been a recent resurgence of interest in MDS codes and locally recoverable codes (see \cite{BT2014} and \cite{GXY2019}) due to their applications to distributed storage systems.
There are further applications of arcs to quantum physics, see \cite{Bernal2017} and \cite{GR2015}, to cryptography \cite {FPXY2014}, to group theory \cite{CL2012} and to other combinatorial objects \cite{Potapov2018} and \cite{WLL2018}. 

The articles we have referenced in these paragraphs are by no means exhaustive and are just given as an example of the far reaching relevance of arcs of finite projective spaces. 

\section{Basic objects and definitions}

A point of $\mathrm{PG}(k-1,{\mathbb F})$ is a one-dimensional subspace of $\bF^k$, and its {\em coordinate vector} with respect to a basis, is determined up to a nonzero scalar and
is denoted by 
$$(x_1,\ldots,x_k).$$

The {\em weight} of a vector (or a point) with respect to some basis is the number of non-zero coordinates it has with respect to that basis. 

An arc of $\PGF$ of size $k+1$ is called a {\em frame} of $\PGF$. An arc is called {\em complete} if it is not contained in a larger arc. An arc of size $\leq k$ in $\PGF$ is always incomplete. Often we will implicitly assume that an arc has size at least $k+1$.

Let ${\mathrm{PGL}}(k,\bF)$ denote the projective linear group. Two sets of points $A$ and $B$ in $\PGF$ are called {\em projectively equivalent} if there exists a {\em projectivity} $\varphi \in {\mathrm{PGL}}(k,\bF)$ such that $B=A^\varphi$.

A symmetric bilinear form $b(X,Y)$ on $\bF^k$ (where $X$ and $Y$, stand for $X_1,\ldots,X_k$ and $Y_1,\ldots,Y_k$) is {\em degenerate} if $b(X,y)=0$ for some point $y$. A quadratic form $f(X)$ is {\em degenerate} if $f(y)=0$ and $b(X,y)=0$ for some point $y$, where 
$$b(X,Y)=f(X+Y)-f(X)-f(Y)$$ 
is the bilinear form associated to $f(X)$. 
The zero-set in $\PG({2},\bF)$ of a (non-degenerate) quadratic form on $\bF^3$ is called a {\em (non-degenerate) conic}. We note that there is a unique conic through an arc of size 5 in $\mathrm{PG}({2},\bF)$.


\section{Normal rational curves}

A {\em normal rational curve} is a set of points in $\PG(k-1,\bF)$, 
projectively equivalent to
\begin{eqnarray}\label{eqn:normal rational curve}
\cN_{k-1}=\{ (1,t,\ldots,t^{k-1}) \ | \ t \in {\mathbb \bF}\} \cup \{ (0,\ldots,0,1) \}.
\end{eqnarray}

\begin{lemma}
A normal rational curve in $\PG(k-1,\bF)$, with $|\bF|\geq k-1$, is an arc of $\PG(k-1,\bF)$.
\end{lemma}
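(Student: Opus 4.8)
The plan is to show that any $k$ points of the normal rational curve $\cN_{k-1}$ span $\PG(k-1,\bF)$, which by projective equivalence suffices. Equivalently, I would show that the coordinate vectors of any $k$ chosen points are linearly independent in $\bF^k$, i.e. that the $k \times k$ matrix formed by these vectors is nonsingular. There are two cases according to whether the special point $(0,\ldots,0,1)$ is among the $k$ chosen points or not.

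First I would handle the generic case, where the $k$ chosen points are $(1,t_i,t_i^2,\ldots,t_i^{k-1})$ for $k$ distinct scalars $t_1,\ldots,t_k \in \bF$. The matrix whose rows are these vectors is a Vandermonde matrix, whose determinant is $\prod_{i<j}(t_j-t_i)$. Since the $t_i$ are pairwise distinct and $\bF$ is a field, this product is nonzero, so the matrix is nonsingular and the $k$ points span the space. (This is where the hypothesis $|\bF| \geq k-1$ enters: we need at least $k-1$ distinct field elements available on the affine part, together with the point at infinity, to even have an arc of size $\geq k+1$; for the independence argument itself one just needs the chosen $t_i$ to be distinct, which is automatic.)

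Next I would treat the case where $(0,\ldots,0,1)$ is one of the $k$ points, so the other $k-1$ points are $(1,t_i,\ldots,t_i^{k-1})$ for distinct $t_1,\ldots,t_{k-1}$. Expanding the determinant of the resulting matrix along the row $(0,\ldots,0,1)$ reduces it to the determinant of the $(k-1)\times(k-1)$ Vandermonde matrix in $t_1,\ldots,t_{k-1}$, which is $\prod_{i<j}(t_j - t_i) \neq 0$. Hence these $k$ points are again independent and span $\PG(k-1,\bF)$.

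There is no real obstacle here; the only thing to be careful about is the bookkeeping of the two cases and noting that a projectivity preserves the property of being an arc (so it is enough to verify the statement for the specific curve $\cN_{k-1}$ rather than an arbitrary projectively equivalent set). If one prefers a unified treatment, one can observe that the point $(0,\ldots,0,1)$ is the limit of $(1,t,\ldots,t^{k-1})$ rescaled as $t \to \infty$ — concretely, replacing a parameter value $t_k$ by the vector $(0,\ldots,0,1)$ corresponds to the coefficient of the highest power in a suitable limiting/homogenized Vandermonde determinant — so both cases are instances of the same nonvanishing Vandermonde-type determinant; but splitting into the two cases above is the cleanest route.
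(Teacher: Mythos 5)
Your proposal is correct and follows essentially the same route as the paper, which simply observes that linear independence of $k$ points of $\cN_{k-1}$ amounts to the nonvanishing of a Vandermonde determinant; your two-case split (according to whether $(0,\ldots,0,1)$ is among the chosen points) just makes explicit the bookkeeping the paper leaves implicit. Your parenthetical about where $|\bF|\geq k-1$ enters is also consistent with the paper's subsequent remark that for smaller fields the curve has fewer than $k$ points and spans only a proper subspace.
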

\begin{proof}
The condition that $k$ points of the normal rational curve as defined in (\ref{eqn:normal rational curve}) are linearly independent is equivalent to a Vandermonde determinant being different from zero.
\end{proof}

\begin{remark}
If the condition $|\bF|\geq  k-1$ is not satisfied, say $\bF=\bF_q$ with $q< k-1$, then the normal rational curve $\cN_{k-1}$ has $q+1<k$ points. Therefore $\cN_{k-1}$ spans at most a hyperplane of $\PG(k-1,\bF)$. In this case $t^{k-2}=t^i$ for some $1\leq i<k-2$ for all $t\in \bF$ and $\cN_{k-1}$ defines a normal rational curve in the subspace spanned by $\cN_{k-1}$.
\end{remark}

For $k=3$, a normal rational curve is the zero-set of a quadratic form, i.e. it is a conic. In the example (\ref{eqn:normal rational curve}) above, the quadratic form is $X_1X_3-X_2^2$.

Since an arc of size $k+1$ is a {\em frame} of $\PGF$ and $\PGL(k,\bF)$ acts regularly on the set of frames of $\PGF$, any two arcs of size $k+1$ are projectively equivalent.

The normal rational curve as defined in (\ref{eqn:normal rational curve}) is the image in $\PG(k-1,\bF)$ of the set of points of $\PG(1,\bF)$ under the polynomial map 
\begin{eqnarray}\label{eqn:map1}
\nu_{k-1}~:~(x_1,x_2)\mapsto (x_1^{k-1},x_1^{k-2}x_2,\ldots,x_2^{k-1}).
\end{eqnarray}
The map $\nu_{k-1}$ can be used to define an action of the group $\PGL(2,\bF)$ on $\PGF$ as follows.

\begin{lemma}\label{lem:tilde}
Given $\alpha\in \PGL(2,\bF)$, there is a projectivity $\tilde \alpha \in \PGL(k,\bF)$ satisfying
$\nu_{k-1}(x)^{\tilde{\alpha}}=\nu_{k-1}(x^\alpha)$ for all points $x$ of $\PG(1,\bF)$.
Moreover, if $|\bF|\geq k$, then $\tilde \alpha$ is unique.
\end{lemma}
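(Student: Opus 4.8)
The plan is to build $\tilde\alpha$ from the natural action of $\GL(2,\bF)$ on the homogeneous polynomials of degree $k-1$ in two variables. Identify $\bF^k$ with the space $V$ of such polynomials in $x_1,x_2$, using the basis of monomials $x_1^{k-1},x_1^{k-2}x_2,\ldots,x_2^{k-1}$, so that the coordinate vector of $\nu_{k-1}(x)$ in (\ref{eqn:map1}) is exactly the list of values of these monomials at $x$. Choose a matrix $A\in\GL(2,\bF)$ representing $\alpha$, say with $x^{\alpha}$ represented by $(ax_1+bx_2,\ cx_1+dx_2)$. Then each coordinate of $\nu_{k-1}(x^{\alpha})$, namely $(ax_1+bx_2)^{k-1-i}(cx_1+dx_2)^{i}$ for $i=0,\ldots,k-1$, is a homogeneous polynomial of degree $k-1$ in $x_1,x_2$, hence a fixed $\bF$-linear combination of the monomials above. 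Recording these combinations as the rows of a $k\times k$ matrix $\tilde A$ yields the identity $\nu_{k-1}(x^{\alpha})=\nu_{k-1}(x)^{\tilde A}$ for every point $x$ of $\PG(1,\bF)$, which is the required intertwining relation. Conceptually, $\tilde A$ is the image of $A$ under the $(k-1)$-st symmetric power homomorphism $\GL(2,\bF)\to\GL(k,\bF)$.

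Next I would verify that $\tilde A$ is invertible and that its class $\tilde\alpha$ in $\PGL(k,\bF)$ depends only on $\alpha$. Invertibility follows from functoriality: the assignment $A\mapsto\tilde A$ is multiplicative and sends the identity to the identity, so $\widetilde{A^{-1}}$ is a two-sided inverse of $\tilde A$; alternatively $\det\tilde A$ is, up to a nonzero constant, a power of $\det A$. Replacing $A$ by $\lambda A$ with $\lambda\in\bF$ nonzero multiplies $\tilde A$ by $\lambda^{k-1}$, so the class of $\tilde A$ in $\PGL(k,\bF)$ is unchanged; this makes $\tilde\alpha$ well defined and establishes existence.

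For uniqueness under the hypothesis $|\bF|\geq k$, suppose $\beta_1,\beta_2\in\PGL(k,\bF)$ both satisfy $\nu_{k-1}(x)^{\beta_j}=\nu_{k-1}(x^{\alpha})$ for all $x$. Then $\beta_1\beta_2^{-1}$ fixes every point of the normal rational curve $\cN_{k-1}=\nu_{k-1}(\PG(1,\bF))$. When $q=|\bF|\geq k$ this curve has $q+1\geq k+1$ points, and since $q\geq k-1$ it is an arc of $\PG(k-1,\bF)$; hence any $k+1$ of its points form a frame. Because $\PGL(k,\bF)$ acts regularly on the set of frames, the only projectivity fixing a frame pointwise is the identity, so $\beta_1\beta_2^{-1}$ is trivial and $\beta_1=\beta_2$.

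The computation itself is routine; the two places needing a little care are fixing conventions so that the intertwining identity $\nu_{k-1}(x^{\alpha})=\nu_{k-1}(x)^{\tilde A}$ holds exactly (at the projective level), and, for the uniqueness part, observing that $|\bF|\geq k$ is precisely what guarantees that $\cN_{k-1}$ contains a frame, so that the regular action of $\PGL(k,\bF)$ on frames can be invoked.
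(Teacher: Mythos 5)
Your proof is correct and follows essentially the same route as the paper: the paper's proof simply asserts that the map $\nu_{k-1}(x)\mapsto\nu_{k-1}(x^\alpha)$ is "straightforward to verify" to be a projectivity (which is exactly the symmetric-power computation you carry out), and its uniqueness argument is precisely yours — for $|\bF|\geq k$ the image of $\nu_{k-1}$ contains a frame, and a projectivity is determined by its action on a frame. Your write-up just makes the existence step explicit.
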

\begin{proof}
It is straightforward to verify that for each $\alpha\in\PGL_2(\bF)$, the map
$\nu_{k-1}(x)\mapsto \nu_{k-1}(x^\alpha)$ defines a projectivity $\tilde \alpha$ of $\PG(k-1,\bF)$. The uniqueness follows from the fact that if $|\bF|\geq k$ then the image of $\nu_{k-1}$ contains a frame and a projectivity is uniquely determined by its action on a frame.
\end{proof}
The action of the group $\PGL(2,\bF)$ on $\PGF$ is then defined by lifting the action on $\PG(1,\bF)$:
\begin{eqnarray}\label{eqn:lift}
{\tilde{\varphi}}:\PGL(2,\bF)\rightarrow  \PGL(k,\bF): \alpha\mapsto {\tilde{\alpha}},
\end{eqnarray}
where $\tilde \alpha$ is the projectivity induced by $\alpha$ as in the proof of Lemma \ref{lem:tilde}.
\begin{lemma}\label{lem:normal rational curve_transitivity}
The image of $\tilde \varphi$ is a subgroup $H\cong \PGL(2,\bF)$ of $\PGL({k-1},\bF)$ and the action defined by the restriction of $H$ to the points of the normal rational curve $\cN_{k-1}$ from (\ref{eqn:normal rational curve}) is faithful. In particular, $H$ acts regularly on the set of triples of distinct points of $\cN_{k-1}$.
\end{lemma}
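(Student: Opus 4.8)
The plan is to establish, in order, that $\tilde\varphi$ is a group homomorphism (so that its image $H$ really is a subgroup of $\PGL(k,\bF)$), that $\tilde\varphi$ is injective (so that $H\cong\PGL(2,\bF)$), and that $H$ acts faithfully on $\cN_{k-1}$. The assertion about triples will then follow by transporting the classical regular action of $\PGL(2,\bF)$ on ordered triples of points of $\PG(1,\bF)$ through the parametrisation $\nu_{k-1}$.

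\emph{Homomorphism.} For $\alpha,\beta\in\PGL(2,\bF)$ I would check that both $\widetilde{\alpha\beta}$ and $\tilde\alpha\tilde\beta$ satisfy the identity of Lemma~\ref{lem:tilde} associated to $\alpha\beta$: indeed, using the defining property of $\tilde\alpha$ and $\tilde\beta$,
$$\nu_{k-1}(x)^{\tilde\alpha\tilde\beta}=\bigl(\nu_{k-1}(x^\alpha)\bigr)^{\tilde\beta}=\nu_{k-1}\bigl((x^\alpha)^\beta\bigr)=\nu_{k-1}(x^{\alpha\beta})$$
for every point $x$ of $\PG(1,\bF)$. Since $|\bF|\geq k$, the uniqueness clause of Lemma~\ref{lem:tilde} forces $\widetilde{\alpha\beta}=\tilde\alpha\tilde\beta$, so $\tilde\varphi$ is a homomorphism and $H:=\tilde\varphi(\PGL(2,\bF))$ is a subgroup of $\PGL(k,\bF)$.

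\emph{Injectivity and faithfulness.} The elementary input here is that $\nu_{k-1}$ is injective on the points of $\PG(1,\bF)$, with image $\cN_{k-1}$: normalising the first coordinate to $1$ recovers $t=x_2$ from $(1,t,\dots,t^{k-1})$, while the remaining point $(0,1)$ maps to $(0,\dots,0,1)$, which is none of these. Hence if $\tilde\alpha\in H$ fixes every point of $\cN_{k-1}$, then $\nu_{k-1}(x^\alpha)=\nu_{k-1}(x)$ for all $x$, so $x^\alpha=x$ for all $x\in\PG(1,\bF)$; since $\PGL(2,\bF)$ acts faithfully on $\PG(1,\bF)$ this gives $\alpha=\mathrm{id}$ and $\tilde\alpha=\mathrm{id}$. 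Taking $\tilde\alpha$ to be the identity projectivity shows $\tilde\varphi$ is injective, whence $H\cong\PGL(2,\bF)$; applying the same argument to an arbitrary element of $H$ shows the restriction of $H$ to the points of $\cN_{k-1}$ is faithful.

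\emph{Regularity on triples.} By the previous paragraph $\nu_{k-1}$ is a bijection from the point set of $\PG(1,\bF)$ onto $\cN_{k-1}$ which intertwines the natural action of $\PGL(2,\bF)$ with the action of $H$ through $\tilde\varphi$, so it carries ordered triples of pairwise distinct points to ordered triples of pairwise distinct points equivariantly. By the fundamental theorem of projective geometry for the line, $\PGL(2,\bF)$ acts sharply transitively, i.e.\ regularly, on such triples (the orders agree, $|\PGL(2,\bF)|=(|\bF|+1)|\bF|(|\bF|-1)$), and transporting this along $\nu_{k-1}$ yields a regular action of $H$ on ordered triples of distinct points of $\cN_{k-1}$. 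I do not expect a genuine obstacle: the only places needing a little care are composing the projectivities in the correct order in the homomorphism step and the injectivity of $\nu_{k-1}$, while the regularity statement is just the classical fact about $\PG(1,\bF)$ pushed forward through the parametrisation.
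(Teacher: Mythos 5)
Your proposal is correct and follows essentially the same route as the paper, whose entire proof is the one-line observation that $\tilde\varphi$ is a group homomorphism with trivial kernel; you have simply written out the details (homomorphism via the uniqueness clause of Lemma~\ref{lem:tilde}, injectivity of $\nu_{k-1}$, and transport of the sharp $3$-transitivity of $\PGL(2,\bF)$ on $\PG(1,\bF)$). No issues.
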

\begin{proof}
The map $\tilde \varphi$ defined in (\ref{eqn:lift}) is a group homomorphism with trivial kernel.
\end{proof}
\begin{lemma}\label{lem:unique_normal rational curve}
If $|\bF|\geq k+1$ then there is a unique normal rational curve through an arc in $\PG(k-1,\bF)$ of size $k+2$.
\end{lemma}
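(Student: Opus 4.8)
The plan is to prove existence and uniqueness together, by normalising the arc and reducing everything to the solvability of an explicit system of rational equations over $\bF$. Since $\PGL(k,\bF)$ acts transitively on frames, I would first move $\cA=\{P_1,\dots,P_{k+2}\}$ so that $P_i=e_i$ for $1\le i\le k$, $P_{k+1}=(1,\dots,1)$, and hence $P_{k+2}=(c_1,\dots,c_k)$ for some $c_i$; expanding the $k\times k$ determinants that express the arc condition then shows that $\cA$ is an arc precisely when the $c_i$ are all nonzero and pairwise distinct, and this is the only way the combinatorial hypothesis on $\cA$ gets used.

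Next I would pin down the shape of a normal rational curve $\cC$ passing through $\cA$. Every normal rational curve is the image of $\nu_{k-1}$ followed by a change of coordinates, so it is of the form $\{(g_1(x_1,x_2),\dots,g_k(x_1,x_2))\mid(x_1,x_2)\in\PG(1,\bF)\}$ for some basis $g_1,\dots,g_k$ of the space of binary forms of degree $k-1$. Since $e_j\in\cA\subseteq\cC$, the $k-1$ forms $g_i$ with $i\ne j$ vanish at a common point $s_j\in\PG(1,\bF)$; as $g_j$ then has the $k-1$ distinct roots $s_i$ ($i\ne j$) and has degree $k-1$, it equals $\lambda_j\prod_{i\ne j}\ell_{s_i}$ for some $\lambda_j\ne0$, where $\ell_{s_i}$ denotes a linear form vanishing at $s_i$. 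Hence (after dividing the coordinates through by $\prod_i\ell_{s_i}$) $\cC$ is the image of $(x_1,x_2)\mapsto(\lambda_1/\ell_{s_1},\dots,\lambda_k/\ell_{s_k})$, and is determined by the $k$ distinct ``poles'' $s_1,\dots,s_k\in\PG(1,\bF)$ and the scalars $\lambda_i$. Using Lemma~\ref{lem:normal rational curve_transitivity} (equivalently, the sharp $3$-transitivity of $\PGL(2,\bF)$ on $\PG(1,\bF)$) I would reparametrise so that $s_1,s_2,s_3$ are three prescribed points; then requiring $P_{k+1}=(1,\dots,1)$ to lie on $\cC$, at some parameter $q$, forces $\lambda_i=\ell_{s_i}(q)$ up to a common scalar.

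The last step is to write out the condition $P_{k+2}\in\cC$, say at a parameter $q'$. Putting $d_i=c_i/c_1$, this is a system of $k-1$ equations, one for each $i=2,\dots,k$, in the $k-1$ unknowns $q,q',s_4,\dots,s_k$: the equations for $i=2,3$ solve rationally for $q'$ and then $q$, and the equation for each $i\ge4$ solves rationally for the corresponding $s_i$, so the solution is unique. What then remains is to verify that this forced solution is \emph{non-degenerate}: the parameters $q,q',s_4,\dots,s_k$ must be pairwise distinct and distinct from the points chosen for $s_2,s_3$, so that $s_1,\dots,s_k$ really are $k$ distinct poles and $q,q'$ are genuine distinct parameters. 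Each of these non-vanishing checks reduces, after a short manipulation, to one of the inequalities $c_i\ne0$ or $c_i\ne c_j$ from the first step, and this is exactly the point at which $|\bF|\ge k+1$ is needed, since $k-1$ distinct parameters avoiding two further elements of $\bF$ are being produced. Existence of a solution then yields a normal rational curve through $\cA$ (one checks that the coordinate forms $\lambda_j\prod_{i\ne j}\ell_{s_i}$ are linearly independent and have no common zero, so the map really is $\nu_{k-1}$ up to a change of basis), while uniqueness of the solution gives the uniqueness asserted.

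I expect the bookkeeping of the non-degeneracy conditions to be the main obstacle, even though it is entirely routine; the algebraic structure is forced at every other stage. A less computational alternative would be an induction on $k$ starting from the uniqueness of the conic through an arc of size $5$: projecting $\cA$ from one of its points gives an arc of size $k+1$ in $\PG(k-2,\bF)$, which by induction lies on a unique normal rational curve, so any $\cC$ through $\cA$ lies in the cone over that curve, and intersecting the cones obtained from two or three points of $\cA$ confines $\cC$ enough to force it --- with making ``confines enough'' precise, by a degree argument, being the main difficulty in that route.
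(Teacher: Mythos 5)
Your argument is correct in outline and rests on the same structural fact as the paper's proof: a normal rational curve through the coordinate simplex $e_1,\dots,e_k$ must have coordinate forms $g_j=\lambda_j\prod_{i\ne j}\ell_{s_i}$ for $k$ distinct poles $s_1,\dots,s_k\in\PG(1,\bF)$, so the whole problem reduces to pinning down the poles and the parameters of the two remaining points. Where you diverge is in the choice of normalisation, and the paper's choice is worth noting because it removes almost all of your bookkeeping: instead of sending $P_{k+1}$ to $(1,\dots,1)$ and fixing three poles by $\PGL(2,\bF)$, the paper keeps $p_0=(u_1,\dots,u_k)$ and $p_{k+1}=(v_1,\dots,v_k)$ in the coordinates of the basis $p_1,\dots,p_k$ and instead normalises the \emph{source}, sending $(1,0)\mapsto p_0$ and $(0,1)\mapsto p_{k+1}$; the poles are then forced to be $s_i=(u_i:v_i)$ and the curve is written down explicitly as the image of $f_i=\prod_{j\ne i}(u_j^{-1}X_1-v_j^{-1}X_2)$, with all non-degeneracy (the $u_i,v_i$ nonzero, the $(u_i,v_i)$ pairwise distinct in $\PG(1,\bF)$) read off directly from the arc condition and no system of equations to solve. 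In your route the one step that deserves more care than you give it is the claim that the $i=2,3$ equations ``solve rationally for $q'$ and then $q$'': two cross-ratio conditions on $(q,q')$ are a priori a pair of $(1,1)$-curves on $\PG(1,\bF)\times\PG(1,\bF)$ meeting in two points, so uniqueness is not automatic; it does hold here because one intersection point is the degenerate $q=q'=s_1$ (concretely, placing $s_1$ at infinity makes both equations affine-linear in $(q,q')$, with unique solution precisely because $c_2\ne c_3$), but this should be said. Both proofs use $|\bF|\ge k+1$ identically, namely to fit $k+2$ distinct parameter values into $\PG(1,\bF)$. Your proposed inductive alternative via projection is not what the paper does; the paper derives the projection statements (Lemmas \ref{lem:projection_normal rational curve} and \ref{lem:extra_tangents_proj_normal rational curve} and Theorem \ref{thm:main_projection}) \emph{from} this lemma, so basing the proof on projection would risk circularity in the paper's logical order.
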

\begin{proof}
Consider an arc in $\PG(k-1,\bF)$ consisting of the $k+2$ points $p_0,\ldots,p_{k+1}$. Suppose $p_0=(u_1,\ldots,u_k)$ and $p_{k+1}=(v_1,\ldots,v_k)$ with respect to the basis $p_1,\ldots,p_{k}$. Observe that this implies that the vectors $(u_i,v_i)$, $i=1,\ldots, k$, define $k$ distinct points in $\PG(1,\bF)$.
Now consider the polynomial map 
$$\varphi_f:(x_1,x_2)\mapsto (f_1(x_1,x_2),\ldots, f_k(x_1,x_2))$$ 
from $\PG(1,\bF)$ to $\PG(k-1,\bF)$, where 
$$
f_i(X_1,X_2)= \prod_{j=1,j\neq i}^k (u_j^{-1}X_1-v_j^{-1}X_2),~~~i=1,\ldots,k
$$
are forms of degree $k-1$. Since $\sum a_if_i=0$ implies $a_i=0$ after substituting $(u_i,v_i)$ for $(X_1,X_2)$, the forms $f_1,\ldots,f_k$ form a basis for the space of forms of degree $k-1$ in $\bF[X_1, X_2]$. This implies that up to a change of basis the image of $\varphi_f$ coincides with that of $\nu_{k-1}$ as in (\ref{eqn:map1}), and is therefore a normal rational curve  $\cN_f$ in $\PG(k-1,\bF)$. The image under $\varphi_f$ of the point $(u_l,v_l)$, for $l\in\{1,\ldots,k\}$, is the point $p_l$, since $f_i(u_l,v_l)=0$ for $i\neq l$ and $f_i(u_i,v_i)=\prod_{j\neq i} (u_j^{-1}u_i-v_j^{-1}v_i)$, which is nonzero.
The image under $\varphi_f$ of the points $(1,0)$ and $(0,1)$ are the points $p_0$ and $p_{k+1}$. This shows that the points $p_0,\ldots,p_{k+1}$ are contained in a normal rational curve in $\PG(k-1,\bF)$.

The uniqueness follows by considering any normal rational curve $\cN_g$ given by the image of 
$$\varphi_g:(x_1,x_2)\mapsto (g_1(x_1,x_2),\ldots, g_k(x_1,x_2))$$ 
where the $g_i$'s form a basis for the space of forms of degree $k-1$ in $\bF[X_1,X_2]$, and assuming that $\cN_g$ contains the $k+2$ points $p_0,\ldots,p_{k+1}$. W.l.o.g. we may assume that the points $(1,0)$ and $(0,1)$ are the pre-images of the points $p_0$ and $p_{k+1}$ (if necessary, apply a coordinate transformation to $\PG(1,\bF)$ and relabel the $g_i$'s).
For each $i\in \{1,\ldots,k\}$, the assumption $p_i\in \cN_g$ implies the existence of a
point $(a_i,b_i)$ in $\PG(1,\bF)$ and a factor $a_i^{-1}X_1-b_i^{-1}X_2$ of each $g_j$, $j\neq i$. This determines each of the $g_i$'s up to a nonzero scalar factor. The condition that $\varphi_g(1,0)=p_0$ and $\varphi_g(0,1)=p_{k+1}$ implies that 
$(u_1,\ldots,u_k)=\lambda(a_1,\ldots,a_k)$ and $(v_1,\ldots,v_k)=\mu(b_1,\ldots,b_k)$ and therefore
$$
g_i(X_1,X_2)=\prod_{j=1,j\neq i}^k (u_j^{-1}\lambda X_1-v_j^{-1}\mu X_2),~~~i=1,\ldots,k.
$$
If follows that $\cN_f=\cN_g$, since $\cN_g=\cN_g^{\tilde\alpha}$ where $\alpha$ is the projectivity of $\PG(1,\bF)$ mapping $(x_1,x_2)$ to $(\lambda x_1,\mu x_2)$ (see Lemma \ref{lem:tilde} and Lemma \ref{lem:normal rational curve_transitivity}).
\end{proof}

\begin{lemma}\label{lem:projection_normal rational curve} The projection $x(\cN)$ of a normal rational curve $\cN$ in $\PG(k-1,\bF)$ from one of its points $x$ onto a hyperplane $\Pi$, $x\notin \Pi$, is contained in a normal rational curve of $\Pi$, which is uniquely determined by $x(\cN)$ if $|\bF|\geq k+1$.
\end{lemma}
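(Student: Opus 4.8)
The plan is to put everything into a normal form using projective equivalence, read the projection off the standard parametrisation of $\cN_{k-1}$, and then deduce uniqueness from Lemma \ref{lem:unique_normal rational curve} applied one dimension down.

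First I would reduce to the case $\cN=\cN_{k-1}$ and $x=(0,\ldots,0,1)$. Every normal rational curve is by definition projectively equivalent to $\cN_{k-1}$, and by Lemma \ref{lem:normal rational curve_transitivity} the group $H\cong\PGL(2,\bF)$ acts transitively on the points of $\cN_{k-1}$, so a single projectivity sends $\cN$ to $\cN_{k-1}$ and $x$ to $\nu_{k-1}(0,1)=(0,\ldots,0,1)$. I would also record that projection is compatible with projectivities — a projectivity $\varphi$ carries the projection of $\cN$ from $x$ onto $\Pi$ to the projection of $\cN^\varphi$ from $x^\varphi$ onto $\Pi^\varphi$ — and that for two hyperplanes $\Pi_1,\Pi_2$ avoiding $x$ the perspectivity with centre $x$ is a projectivity $\Pi_1\to\Pi_2$ taking one projected set to the other. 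Since ``contained in a normal rational curve'' is invariant under projective equivalence, it then suffices to treat the single hyperplane $\Pi:X_k=0$, which does not contain $x$.

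Next I would compute the projection: for $t\in\bF$ the line joining $x=(0,\ldots,0,1)$ to the point $(1,t,\ldots,t^{k-1})$ of $\cN_{k-1}$ meets $\Pi$ in the point obtained by deleting the last coordinate, so, identifying $\Pi$ with $\PG(k-2,\bF)$ in this way,
\[
x(\cN_{k-1})=\{(1,t,\ldots,t^{k-2})\mid t\in\bF\}=\cN_{k-2}\setminus\{(0,\ldots,0,1)\},
\]
which lies in the normal rational curve $\cN_{k-2}$ of $\Pi$. This settles the first assertion.

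For uniqueness, assume $|\bF|\geq k+1$. I would first check that $x(\cN)$ is an arc of $\Pi$: if $k-1$ of its points lay in a subspace of dimension $k-3$, then adjoining $x$ would put $k$ points of $\cN$ in a hyperplane of $\PG(k-1,\bF)$, contradicting that $\cN$ is an arc. Since $\cN$ has $|\bF|\geq k+1$ points other than $x$, the arc $x(\cN)$ has size at least $k+1=(k-1)+2$, so Lemma \ref{lem:unique_normal rational curve} applied in $\PG(k-2,\bF)$ (whose hypothesis $|\bF|\geq k$ holds) shows that any $k+1$ of its points determine a unique normal rational curve of $\Pi$; hence every normal rational curve through $x(\cN)$ coincides with that one. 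I expect the only slightly delicate points to be the bookkeeping of the reduction — that projection intertwines with projectivities and is independent of the chosen hyperplane up to projective equivalence — and the verification that the projected set is again an arc so that Lemma \ref{lem:unique_normal rational curve} is available; the curve computation itself is routine.
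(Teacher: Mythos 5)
Your proposal is correct and follows essentially the same route as the paper: reduce to the standard parametrisation via the transitivity of Lemma \ref{lem:normal rational curve_transitivity}, compute the projection explicitly (you project from $(0,\ldots,0,1)$ where the paper uses $(1,0,\ldots,0)$, an immaterial choice), and deduce uniqueness from Lemma \ref{lem:unique_normal rational curve} applied in $\Pi$. Your additional checks — that projection intertwines with projectivities and that $x(\cN)$ is an arc of size at least $k+1$ — are exactly the details the paper leaves implicit.
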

\begin{proof}
Let $\cA$ be the set of points on the normal rational curve as defined in (\ref{eqn:normal rational curve}). The projection from the point $(1,0,\ldots,0)$ onto the hyperplane $\Pi$ with equation $X_1=0$ is the set of points $\{ (0,1,t,\ldots,t^{k-2}) \ | \ t \in \bF\setminus\{0\}\} \cup \{ (0,\ldots,0,1) \}$, to which we can add the point $(0,1,0,\ldots,0)$ to obtain a normal rational curve in $\Pi$.
Applying Lemma \ref{lem:unique_normal rational curve} and the transitivity properties   from Lemma \ref{lem:normal rational curve_transitivity} concludes the proof.
\end{proof}

A normal rational curve is a nonsingular algebraic curve in $\PG(k-1,\bF)$ and the {\it tangent line} at the point $p=\nu_{k-1}(x_0,x_1)$ of $\cN_{k-1}$ is the line $\ell_p=\langle p, p_0,p_1\rangle$ where 
$p_i$ is the point whose coordinates are the evaluations at $(x_0,x_1)$ of the partial derivatives of the coordinates of $\nu_{k-1}(X_0,X_1)$ with respect to $X_i$ ($i=0,1$).
\begin{lemma}\label{lem:tangents_normal rational curve} 
The tangents to a normal rational curve $\cN_k$, with $k\geq 4$, are pairwise disjoint and only meet the secants of $\cN_k$ which pass through the points of tangency. 
\end{lemma}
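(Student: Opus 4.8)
The plan is to dualise. Identify a point $q=(q_0,\dots,q_k)$ of $\PG(k,\bF)$ with the linear form $c\mapsto \sum_iq_ic_i$ on the $(k+1)$-dimensional space $\mathcal P$ of binary forms $c=\sum_ic_iX_0^{k-i}X_1^i$ of degree $k$, so that $q$ lies on the hyperplane $\sum c_iX_i=0$ exactly when this form vanishes on $q$. Then a subspace $\Lambda\subseteq\PG(k,\bF)$ of projective dimension $d$ is the common zero set of its annihilator $\Lambda^{\perp}\subseteq\mathcal P$, of vector-space dimension $k-d$, and two such subspaces are disjoint if and only if their annihilators sum to $\mathcal P$. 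Write $\nu_k\colon(X_0,X_1)\mapsto(X_0^k,X_0^{k-1}X_1,\dots,X_1^k)$ for the parametrisation with image $\cN_k$, write $\ell_t$ for the tangent line of $\cN_k$ at $\nu_k(t)$, $t\in\PG(1,\bF)$, let $L_t$ be a linear form on $\bF^2$ vanishing at $t$, and put $\mathcal P_j$ for the binary forms of degree $j$ (with $\mathcal P_j=0$ when $j<0$). For distinct $s,t$ one clearly has $\langle\nu_k(s),\nu_k(t)\rangle^{\perp}=L_sL_t\,\mathcal P_{k-2}$ (the degree-$k$ forms vanishing at $s$ and at $t$), of dimension $k-1$, and the key fact is
\[
\ell_t^{\perp}=L_t^{2}\,\mathcal P_{k-2},
\]
also of dimension $k-1$. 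Indeed $(L_t^2\mathcal P_{k-2})^{\perp}$ is a $2$-dimensional subspace of $\bF^{k+1}$, and from the excerpt's definition $\ell_t=\langle\nu_k(t),p_0,p_1\rangle$ (with $p_i=\partial_{X_i}\nu_k$ evaluated at $t$) one checks that the form $c$ applied to the coordinate vector $\nu_k(t)$ equals $c$ evaluated at $t$, and applied to $p_i$ equals $(\partial_{X_i}c)$ evaluated at $t$; both vanish whenever $L_t^{2}\mid c$, since then $L_t\mid c$ and $L_t\mid\partial_{X_i}c$. Hence $\ell_t\subseteq(L_t^2\mathcal P_{k-2})^{\perp}$, and equality holds because $\nu_k(t)$ together with one of $p_0,p_1$ is already linearly independent (inspect two coordinates), in every characteristic.

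Granting this, the lemma is a dimension count. If $s\ne t$ then $L_s^{2}$ and $L_t^{2}$ are coprime, so $\ell_s^{\perp}\cap\ell_t^{\perp}=L_s^{2}L_t^{2}\,\mathcal P_{k-4}$ has dimension $\max\{k-3,0\}$, and therefore
\[
\dim(\ell_s^{\perp}+\ell_t^{\perp})=(k-1)+(k-1)-\max\{k-3,0\}=k+1,
\]
i.e. $\ell_s^{\perp}+\ell_t^{\perp}=\mathcal P$, so the tangents $\ell_s$ and $\ell_t$ are disjoint. Likewise, if $u\notin\{s,t\}$ then $L_u^{2}$ and $L_sL_t$ are coprime and the identical computation gives $\ell_u^{\perp}+\langle\nu_k(s),\nu_k(t)\rangle^{\perp}=\mathcal P$, so the tangent at $\nu_k(u)$ misses the secant $\langle\nu_k(s),\nu_k(t)\rangle$; while if $u\in\{s,t\}$ the tangent $\ell_u$ and that secant both contain $\nu_k(u)$ and so meet. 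This proves both assertions. (The displayed count uses only $k\ge3$; for $k=2$ the statement genuinely fails, two tangents to a conic meeting at the pole of the corresponding chord.)

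The one step that requires care is the identification $\ell_t^{\perp}=L_t^{2}\mathcal P_{k-2}$, that is, reconciling the excerpt's analytic description of the tangent (partial derivatives of $\nu_k$) with the algebraic "double-contact" description. Here one should note that $p_0$ or $p_1$ may vanish in small characteristic — e.g. $p_1=(0,\dots,0,k)$ at $t=[0:1]$, the parameter of $(0,\dots,0,1)\in\cN_k$, vanishes when $\mathrm{char}(\bF)\mid k$ — but this is harmless, since $\ell_t$ is the span of the three vectors $\nu_k(t),p_0,p_1$ and at least two of them are independent whatever the characteristic. Everything else is linear algebra inside $\mathcal P$, and carrying the argument out with the homogeneous parameter $t\in\PG(1,\bF)$ disposes of the point $(0,\dots,0,1)$ of $\cN_k$ without a separate case.
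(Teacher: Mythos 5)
Your proof is correct, but it takes a genuinely different route from the paper. The paper's proof is group-theoretic and computational: it invokes the regular action of the lifted $\PGL(2,\bF)$ on triples of points of the curve (its Lemma on transitivity) to reduce the two claims to a single concrete configuration --- the tangent at $\nu(1,0)$ versus the tangent at $\nu(0,1)$, and the tangent at $\nu(1,0)$ versus the secant through $\nu(0,1)$ and $\nu(1,1)$ --- and then checks that an explicit $4\times k$ matrix has rank $4$. Your argument instead dualises to the space of binary forms of degree $k$, identifies the annihilator of the tangent at $t$ as the forms divisible by $L_t^2$ and that of the secant through $s,t$ as the forms divisible by $L_sL_t$, and finishes with a uniform dimension count via unique factorisation of binary forms. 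What the paper's approach buys is brevity, given that the transitivity lemma is already in place and is reused elsewhere; what yours buys is uniformity (no reduction to a normal form, no separate treatment of the point $(0,\dots,0,1)$), an argument valid verbatim in every characteristic, and a framework that generalises immediately to osculating subspaces and higher secant spaces. You correctly isolate and settle the one delicate point, namely that the analytically defined tangent $\langle p,p_0,p_1\rangle$ coincides with the ``double-contact'' line $(L_t^2\,\mathcal{P}_{k-2})^{\perp}$ even when one of $p_0,p_1$ degenerates in small characteristic, and your closing remark correctly locates where the count fails for the conic.
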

\begin{proof}
Observing that the tangent lines at the points $\nu_{k-1}(1,0)$ and $\nu_{k-1}(0,1)$ are disjoint, the first part of the lemma follows from the transitivity properties from Lemma \ref{lem:normal rational curve_transitivity}. For the second part, consider the tangent line $\ell_p$ at the point $\nu_{k-1}(1,0)$ and the secant line $\ell$ through the points $\nu_{k-1}(0,1)$ and $\nu_{k-1}(1,1)$. Observing that the matrix
$$
\begin{bmatrix}
1&0&0&\ldots&0&0\\
0&1&0&\ldots&0&0\\
0&0&0&\ldots&0&1\\
1&1&1&\ldots&1&1\\
\end{bmatrix}
$$
has rank 4, implies that the two lines $\ell_p$ and $\ell$ are disjoint.
\end{proof}

\begin{lemma}\label{lem:extra_tangents_proj_normal rational curve}
If $x$ is a point of a normal rational curve $\cN=\cN_{k-1}$ in $\PG(k-1,\bF)$ with $|\bF|\geq k+1$, $\Pi$ a hyperplane not passing through $x$, and $\cN_x$ is the unique normal rational curve passing through the projection $x(\cN)$ of $\cN$ from $x$ onto $\Pi$, then the tangent line of $\cN$ at $x$ is the line through $x$ and the unique point of $\cN_x\setminus x(\cN)$.
\end{lemma}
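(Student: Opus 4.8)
The plan is to reduce, via the transitivity built into Lemma \ref{lem:normal rational curve_transitivity}, to a single coordinatised instance, and then to exhibit both the tangent line and the line $\langle x,\,\cN_x\setminus x(\cN)\rangle$ as the same coordinate line. First I would apply a projectivity so that $\cN=\cN_{k-1}$ is the standard curve of (\ref{eqn:normal rational curve}); since the group $H\cong\PGL(2,\bF)$ of Lemma \ref{lem:normal rational curve_transitivity} stabilises $\cN_{k-1}$ and is transitive on its points, I may then assume $x=\nu_{k-1}(1,0)=(1,0,\ldots,0)$. I would also note that the line $\langle x,\,\cN_x\setminus x(\cN)\rangle$ is independent of the hyperplane $\Pi$: a perspectivity with centre $x$ carrying $\Pi$ to another admissible hyperplane $\Pi'$ maps the projection and, by the uniqueness in Lemma \ref{lem:projection_normal rational curve}, the curve $\cN_x$ to their $\Pi'$-analogues, hence the extra point to the extra point, while fixing every line through $x$. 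So I may take $\Pi$ to be the hyperplane $X_1=0$.

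With these normalisations, the computation in the proof of Lemma \ref{lem:projection_normal rational curve} already shows that $x(\cN)=\{(0,1,t,\ldots,t^{k-2})\mid t\in\bF\setminus\{0\}\}\cup\{(0,\ldots,0,1)\}$ and that the unique normal rational curve of $\Pi$ through it is $\cN_x=x(\cN)\cup\{(0,1,0,\ldots,0)\}$. Thus the point of $\cN_x\setminus x(\cN)$ is $(0,1,0,\ldots,0)$, and the line in question is the coordinate line spanned by $(1,0,\ldots,0)$ and $(0,1,0,\ldots,0)$.

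It then remains to check that the tangent line of $\cN$ at $x$ is this same line. The coordinates of $\nu_{k-1}(X_0,X_1)$ are the monomials $X_0^{k-1-j}X_1^{\,j}$, $j=0,\ldots,k-1$; evaluating at $(1,0)$ the partial derivative with respect to $X_0$ gives the vector $(k-1,0,\ldots,0)$ and the partial derivative with respect to $X_1$ gives $(0,1,0,\ldots,0)$. By the definition of the tangent line, $\ell_x=\langle(1,0,\ldots,0),\,(k-1,0,\ldots,0),\,(0,1,0,\ldots,0)\rangle$, and the middle generator is absorbed whether or not $k-1$ vanishes in $\bF$, so $\ell_x=\langle(1,0,\ldots,0),(0,1,0,\ldots,0)\rangle$, which is exactly the line found above.

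The argument is short once the reduction of the first paragraph is set up, so the main thing to be careful about is that reduction itself: checking that the statement is genuinely independent of the choice of $\Pi$ (handled by the perspectivity remark) and that the degenerate case $\mathrm{char}(\bF)\mid k-1$, in which the $X_0$-derivative vector vanishes, does not disturb the spans involved. These are the only subtleties; the rest is bookkeeping with coordinates.
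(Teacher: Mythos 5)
Your proof is correct and follows essentially the same route as the paper: reduce by the transitivity of Lemma \ref{lem:normal rational curve_transitivity} to $x=\nu_{k-1}(1,0)$ and $\Pi:X_1=0$, read off the extra point $(0,1,0,\ldots,0)$ from the computation in Lemma \ref{lem:projection_normal rational curve}, and identify the tangent line as $\langle(1,0,\ldots,0),(0,1,0,\ldots,0)\rangle$ as in Lemma \ref{lem:tangents_normal rational curve}. You merely make explicit two points the paper leaves implicit (independence of the choice of $\Pi$ via a perspectivity, and the harmless vanishing of the $X_0$-derivative when $\mathrm{char}(\bF)\mid k-1$), which is fine.
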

\begin{proof}
By the transitivity properties from Lemma  \ref{lem:normal rational curve_transitivity} we may assume $x=\nu_{k-1}(1,0)$. The result then immediately follows from the details given in the proofs of Lemma \ref{lem:projection_normal rational curve} and Lemma \ref{lem:tangents_normal rational curve}.
\end{proof}

\section{Examples of large arcs in $\PG(k-1,q)$}

From now on we consider the case $\bF=\bF_q$ for some prime power $q$. 
As we have seen in the previous section, when $q\geq k-1$, a normal rational curve is an example of an arc of size $q+1$ in $\PG(k-1,q)$.
If $k=3$ then a normal rational curve is a non-degenerate conic $\cC$, and if $q$ is even, then the tangents to $\cC$ are concurrent. The point $N$ common to all the tangents is called the {\em nucleus} of $\cC$. If follows that $\cC\cup \{N\}$ is also an arc. This gives an example of what is called a {\em hyperoval}: an arc of size $q+2$ in $\PG(2,q)$. So a hyperoval can be obtained from a non-degenerate conic $\cC$ in $\PG(2,q)$, $q$ even, by adding the nucleus of $\cC$. Another example of a hyperoval is the following.

\begin{example}[Segre \cite{Segre1957}] \label{translation}
\rm{Let $\sigma$ be the automorphism of ${\mathbb F}_q$, $q=2^h$, which takes $x$ to $x^{2^e}$. The set
$$
\cA=\{ (1,t,t^{\sigma}) \ | \ t \in {\mathbb F}_q \} \cup \{ (0,0,1) , (0,1,0) \}
$$}
is an arc of $q+2$ points in $\PG(2,q)$, whenever $(e,h)=1$. It is an example of a {\em translation hyperoval}.
\end{example}

\begin{proof}
We prove that $\cA$ defined in Example~\ref{translation} is an arc. Clearly, the line through $(0,0,1)$ and $(0,1,0)$ ($X_1=0$), as well as the lines through one of 
$\{ (0,0,1) , (0,1,0) \}$ and a point $(1,t,t^{\sigma})$ ($tX_1=X_2$ and $t^\sigma X_1=X_3$) contains no other points of $\cA$.
If three distinct points (parameterised by $t_1,t_2,t_3$) of $\cA$ were collinear then, by calculating the corresponding determinant, one obtains 
$$\left ( \frac{t_3-t_1}{t_2-t_1}\right )^{\sigma-1}=1.$$ 
Since $(e,h)=1$ this implies $t_2=t_3$, a contradiction.
\end{proof}

We refer to \cite{Pinneri1996}, \cite{PP1999} and the more recent \cite[Section 1]{Vandendriessche2019} and to Section~\ref{hypsection} in this article, for more on hyperovals. 

Next we give two further examples of large arcs (of size $q+1$) which are not normal rational curves. The first example is an example in $\PG(3,q)$, $q$ even, the second in $\PG(4,q)$, $q=9$. As we will see, such examples are extremely rare.
\begin{example}[Segre \cite{Segre1967}]\label{ex:3space}
\rm{Let $\sigma$ be the automorphism of ${\mathbb F}_q$, $q=2^h$, which takes $x$ to $x^{2^e}$. The set
$$
\cA=\{ (1,t,t^{\sigma},t^{\sigma+1}) \ | \ t \in {\mathbb F}_q \} \cup \{ (0,0,0,1)\}
$$
is an arc of $q+1$ points in $\mathrm{PG}(3,q)$, whenever $(e,h)=1$. Note that $\cA$ is a normal rational curve when $e=1$.}
\end{example}
\begin{proof}
Consider the  map
\begin{eqnarray}\label{eqn:map1}
\varphi_1~:~(x_1,x_2)\mapsto (x_1^{\sigma+1},x_1^{\sigma}x_2,x_1x_2^\sigma,x_2^{\sigma+1})
\end{eqnarray}
from $\PG(1,q)$ to $\PG({3},q)$. Then $\cA$ is the image of $\varphi_1$, and lifting the action of 
$\PGL(2,q)$ on $\PG(1,q)$ to $\PG(k-1,q)$, (in the same way as in (\ref{eqn:lift}), but now with $\varphi_1$ instead of $\varphi$) 
we obtain the action of a subgroup $H\leq \PGL_4(\bF_q)$, $H\cong \PGL_2(q)$, on the set of points of $\cA$, which is regular on the set of triples of distinct points of $\cA$. 
Now consider any four distinct points of $\cA$. Using the action of $H$, we may assume that the first three points are the images under $\varphi_1$ of the points $(1,0)$, $(0,1)$ and $(1,1)$. Let $(1,t,t^{\sigma},t^{\sigma+1})$, $t\in \bF_q\setminus\{0,1\}$ be the fourth point. Then these four points are collinear if and only if $t^\sigma=t$, a contradiction since $t\notin \{0,1\}$ and $t^\sigma=t^{2^e}$ with $(e,h)=1$.
\end{proof}


\begin{example} [Glynn \cite{Glynn1986}] \label{glynn}
\rm{Let $\eta$ be an element of ${\mathbb F}_9$, $\eta^4=-1$. The set
$$
\cA=\{ (1,t,t^{2}+\eta t^6,t^{3},t^4) \ | \ t \in {\mathbb F}_9\} \cup \{ (0,0,0,0,1)\}.
$$
is an arc of size $q+1$ in $\mathrm{PG}(4,9)$.}
\end{example}
\begin{proof}
Consider five distinct points of $\cA$ (for the moment excluding the point $(0,0,0,0,1)$) parameterised by $t_1,\ldots,t_5$. Let $A$ denote the $5\times 5$-matrix, with the corresponding vectors as rows.
The determinant of  $A$ equals $|A_1|+\eta |A_2|$, where $A_1$ and $A_2$ are equal to $A$, except in their third column, where $A_1$ has entries $t_i^2$, and $A_2$ has $\eta t_i^6$, instead of the entries $t_i^2+\eta t_i^6$ in $A$. If $|A|=0$ then $|A_1|=-\eta |A_2|$. Applying the automorphism $\sigma~:~a\mapsto a^3$ to the entries of $A_1$ gives the matrix obtained from $A_2$ after applying the permutation $(24)$ to its columns. This implies that $| A_1 |^3=-| A_2|$ and so $| A_1|^4 =| A_2|^4$, since $| A_1 |^{12}=| A_1 |^4$. It follows from
$|A_1|=-\eta | A_2|$ that $|A_1|^4=\eta^4| A_1|^4$. Since $A_1$ is a Vandermonde matrix, $|A_1|\neq 0$, which contradicts $\eta^4=-1$.\\
If one of the five points is the point $(0,0,0,0,1)$, then the same arguments work, starting from the $4\times 4$ matrix $A$ whose rows correspond to the vectors of $\cA$ parametrised by 4 distinct values of $t$, but with the last coordinate removed.
\end{proof}

\section{The trivial upper bound and the MDS conjecture}\label{sec:MDS_conjecture}

In this section we give the trivial upper bounds on the size of an arc, explain the links between arcs and MDS codes and conclude with the statement of the main conjecture for MDS codes.

\begin{lemma} \label{triv}
If $S$ is a $(k-2)$-subset of an arc $\cA$ in $\PG(k-1,q)$ then there are exactly $t=q+k-|\cA|-1$ 
hyperplanes which meet $\cA$ in precisely $S$. 
\end{lemma}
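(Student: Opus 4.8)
The plan is to count in two ways the flags $(S, H)$ where $H$ is a hyperplane of $\PG(k-1,q)$ containing the fixed $(k-2)$-set $S$, refining by whether $H$ meets $\cA$ only in $S$ or in $S$ together with exactly one further point. Since $S$ is a $(k-2)$-subset of an arc, the $k-2$ points of $S$ are linearly independent and span a $(k-3)$-dimensional subspace; the hyperplanes through this subspace form a pencil, i.e. there are exactly $q+1$ of them (the number of points of $\PG(1,q)$, obtained by counting hyperplanes through a codimension-$2$ subspace). This gives the total count on one side.

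Next I would partition these $q+1$ hyperplanes according to their intersection with $\cA$. Every point of $\cA \setminus S$ together with $S$ spans a hyperplane (arc property: any $k-1$ points of $\cA$ are independent, hence $S$ plus one more point spans a unique hyperplane), and that hyperplane contains no further point of $\cA$, because $k$ points of $\cA$ would then lie in a hyperplane, contradicting that any $k$ points of $\cA$ span the whole space. So each of the $|\cA| - (k-2)$ points of $\cA \setminus S$ determines a distinct hyperplane in the pencil meeting $\cA$ in exactly $S \cup \{\text{that point}\}$; distinctness holds because such a hyperplane recovers the extra point as its unique point of $\cA$ outside $S$. The remaining hyperplanes in the pencil meet $\cA$ in precisely $S$, and there are no other possibilities since a hyperplane through $S$ cannot contain two further points of $\cA$ (that would again put $k$ points of $\cA$ in a hyperplane).

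Subtracting, the number of hyperplanes meeting $\cA$ in precisely $S$ equals
\[
(q+1) - \bigl(|\cA| - (k-2)\bigr) = q + k - |\cA| - 1,
\]
which is the claimed value $t$. I do not anticipate a serious obstacle here: the only points requiring care are the two uses of the arc axiom (no $k$ points of $\cA$ in a hyperplane) and the standard fact that the hyperplanes through a fixed $(k-3)$-subspace of $\PG(k-1,q)$ are in bijection with the points of $\PG(1,q)$, hence number $q+1$. Everything else is bookkeeping.
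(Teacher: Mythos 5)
Your proposal is correct and follows exactly the paper's argument: count the $q+1$ hyperplanes through the $(k-3)$-space spanned by $S$, note each contains at most one point of $\cA\setminus S$ and each such point determines a distinct one, and subtract. You simply spell out the bookkeeping that the paper leaves implicit.
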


\begin{center}
\begin{tikzpicture}[scale=0.4]  
    \def\rx{1}
    \def\ry{2}
    \draw [name path=A] [line width=1pt] (0,0) ellipse ({\rx} and {\ry});
    \node (A) at (0,0) {$S$};
    \path [name path=vertical] (0,-\ry) -- (0,\ry); 
    \path [name intersections = {of = A and vertical}];
    \draw [name path=hyp1] [rounded corners=10mm] (0,\ry) -- (8.5,\ry+1.5) -- (0.2,-\ry+0.005);
    \path [color=red,name path=testhyp2] (0,\ry) -- (9,\ry-1) -- (0,-\ry);
    \path [name intersections = {of = testhyp2 and hyp1}];
    \draw [name path=hyp2,rounded corners=10mm] (0.1,-\ry) -- (9,\ry-0.5) -- (intersection-2);
    \path [color=red,name path=testhyp3] (0,\ry) -- (9,-\ry-1) -- (0,-\ry);
    \path [name intersections = {of = testhyp3 and hyp2}];
    \draw [dotted,name path=hyp3,line width=1pt,rounded corners=10mm] (0,-\ry) -- (9,-\ry-1) -- (intersection-1);
    \path [color=red,name path=testhyp4] (0,\ry) -- (9,-\ry-3) -- (0,-\ry);
    \path [name intersections = {of = testhyp4 and hyp3}];
    \draw [dotted,name path=hyp4,line width=1pt,rounded corners=10mm] (0,-\ry) -- (9,-\ry-3) -- (intersection-1);
    \node (t) at (12,-3.2) {$t=q+1-|\cA\setminus S|$};
    \draw [name path=rest_of_S] [line width=1pt] [rotate around={20:(7,\ry)}] (7,\ry) ellipse ({\rx+1.4} and {\ry+0.2});
    \node (rest_of_S) at (8,\ry) {$\cA\setminus S$};
    \draw (6.5,\ry+0.7) node [draw,circle,fill=black,minimum size=3pt,inner sep=0pt]{};
    \draw (6.7,\ry-0.9) node [draw,circle,fill=black,minimum size=3pt,inner sep=0pt]{};
\end{tikzpicture}
\end{center}

\begin{proof}
There are $q+1$ hyperplanes containing the $(k-3)$-dimensional subspace spanned by $S$, each containing at most one point of $\cA \setminus S$. Therefore there are $q+1-|\cA\setminus S|$ hyperplanes which meet $\cA$ in precisely the points of $S$.
\end{proof}

\begin{theorem}\label{cor:trivial_upper_bound}
An arc of $\PG(k-1,q)$ has at most $q+k-1$ points.
\end{theorem}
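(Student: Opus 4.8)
The plan is to derive the bound directly from Lemma~\ref{triv} by a non-negativity argument on the parameter $t$. Recall that Lemma~\ref{triv} asserts that for any $(k-2)$-subset $S$ of an arc $\cA$ in $\PG(k-1,q)$, the number of hyperplanes meeting $\cA$ in precisely $S$ equals $t=q+k-|\cA|-1$. Since this quantity counts a set of hyperplanes, it must be a non-negative integer, hence $q+k-|\cA|-1\geq 0$, which rearranges to $|\cA|\leq q+k-1$.

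First I would note that the argument requires $\cA$ to contain at least one $(k-2)$-subset $S$, i.e.\ $|\cA|\geq k-2$; but an arc of size at most $k-2$ trivially satisfies $|\cA|\leq k-2<q+k-1$ (as $q\geq 1$), so that case is immediate and we may assume $|\cA|\geq k-2$. Then picking any such $S$ and applying Lemma~\ref{triv} gives the value $t=q+k-|\cA|-1$ as a count of hyperplanes, and the conclusion follows at once.

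I do not expect any real obstacle here: the content has already been done in Lemma~\ref{triv}, and the remaining step is simply observing that a cardinality cannot be negative. The only mild subtlety worth mentioning is the edge case on the size of $\cA$ flagged above, and the implicit convention (noted in Section~2 of the excerpt) that one typically assumes $|\cA|\geq k+1$ anyway, in which case $S$ of size $k-2$ certainly exists.

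\begin{proof}
If $|\cA|\leq k-2$ the bound holds trivially since $q\geq 1$. Otherwise $\cA$ contains a $(k-2)$-subset $S$, and by Lemma~\ref{triv} there are exactly $t=q+k-|\cA|-1$ hyperplanes meeting $\cA$ in precisely $S$. As this number counts a (possibly empty) set of hyperplanes, $t\geq 0$, whence $|\cA|\leq q+k-1$.
\end{proof}
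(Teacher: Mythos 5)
Your proof is correct and is essentially identical to the paper's, which likewise deduces the bound from Lemma~\ref{triv} by observing that $t\geq 0$. Your extra remark about the edge case $|\cA|\leq k-2$ is a harmless (and slightly more careful) addition that the paper leaves implicit.
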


\begin{proof}
This follows from Lemma~\ref{triv}, since $t \geq 0$.
\end{proof}

\begin{theorem}
Let $\cA$ be an arc of $\PG(k-1,q)$. If $k \geq q$ then $|\cA| \leq k+1$.
\end{theorem}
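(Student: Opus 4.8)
The plan is to reduce the statement to the trivial upper bound of Theorem~\ref{cor:trivial_upper_bound} applied not to the arc itself but to a suitable projection, exploiting the fact that when $k \geq q$ the ambient space is so large relative to $q$ that an arc cannot even fill up a frame plus a couple of extra points. Concretely, suppose for contradiction that $\cA$ is an arc in $\PG(k-1,q)$ with $|\cA| \geq k+2$. First I would fix $k+1$ points of $\cA$ forming a frame and one further point $p \in \cA$, so that we have a $(k+2)$-arc; if $|\bF_q| \geq k+1$ this already contradicts nothing directly, but the point is that $q < k+1$ here, so we are in the regime where Lemma~\ref{lem:unique_normal rational curve} and the normal rational curve machinery are unavailable, and instead the combinatorial counting must be pushed.

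The cleaner route is via iterated projection. Given a $(k+2)$-arc $\cA$ in $\PG(k-1,q)$, projecting from one of its points yields a $(k+1)$-arc in $\PG(k-2,q)$ (projection of an arc from one of its points is an arc in the hyperplane, since any $k-1$ of the projected points, together with the centre, span the original space). Iterating, after $j$ projections we obtain a $(k+2-j)$-arc in $\PG(k-1-j,q)$. I would iterate until the dimension drops to something where the trivial bound bites: in $\PG(m-1,q)$ an arc has at most $q+m-1$ points by Theorem~\ref{cor:trivial_upper_bound}, so after projecting down to $\PG(q-1,q)$ (i.e. taking $j = k-q$ projections, legitimate since $k \geq q$ guarantees $j \geq 0$), we would have a $(q+2)$-arc in $\PG(q-1,q)$, whereas the trivial bound allows at most $q + q - 1 = 2q - 1$ points — which, annoyingly, does not immediately contradict $q+2$ when $q \geq 3$. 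So the naive projection-plus-trivial-bound is not quite enough, and the real argument must go one dimension lower: project all the way down to $\PG(k-1-(k-1),q) = \PG(0,q)$, a single point. Tracking sizes, a $(k+2)$-arc projects after $k-1$ steps to a $(k+2-(k-1)) = 3$-arc in $\PG(0,q)$. But $\PG(0,q)$ consists of a single point, and a $3$-arc there would require $3$ points spanning a $0$-dimensional space with every $1$ of them spanning it — there is only one point, so at most a $1$-arc exists. This is the contradiction.

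The main obstacle I anticipate is making the projection step fully rigorous in the degenerate low-dimensional regime: one must check that projecting a $(k+2)$-arc from one of its points genuinely lands in a hyperplane as a $(k+1)$-arc, i.e. that no two image points coincide and that every $k-1$ image points are independent in the hyperplane. The coincidence of two image points $\bar{x}, \bar{y}$ would mean $x, y$ and the centre $p$ are collinear, contradicting that $\{p,x,y\}$ lies in an arc with $k \geq 3$; independence of $k-1$ image points follows because their preimages together with $p$ are $k$ points of $\cA$, hence independent, and projection of $k$ independent points through one of them gives $k-1$ independent points. The only subtlety is the very last steps of the iteration where $k - 1 - j$ becomes small; but the size bookkeeping $|\cA| - j$ versus dimension $k-1-j$ is linear and the arc property is preserved at every stage, so reaching a $3$-arc in a $0$-dimensional space is the clean contradiction, and one could equivalently stop at a $4$-arc in $\PG(1,q)$ (four points on a projective line over $\bF_q$, all distinct — impossible when $q \leq 3$, and $k \geq q$ forces $q \leq k$, but we need $q \leq 3$... hence stopping at $\PG(0,q)$ is the robust choice). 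I would present the iterated-projection version, invoking only that the projection of an arc from one of its points is an arc, which is elementary and fits the level of the surrounding exposition.
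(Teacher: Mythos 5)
There is a genuine gap, and it sits exactly where you flagged your own doubt: the final projection step. The lemma ``the projection of an arc from one of its points is an arc of the same size minus one'' relies on no three points of the arc being collinear, which, as you yourself note, needs the ambient rank to be at least $3$. When you project from a point of a $4$-arc in $\PG(1,q)$ onto $\PG(0,q)$, every other point of the line maps to the single point of the hyperplane, so the image is not a $3$-arc but a single point; no contradiction is produced. The iteration therefore legitimately stops at a $5$-arc in $\PG(2,q)$ or a $4$-arc in $\PG(1,q)$, and both of these exist for all $q\geq 4$ (five points of a conic; four distinct points on a line). More fundamentally, the hypothesis $k\geq q$ is never actually used anywhere in your argument in a way that bites: projection discards the relationship between the dimension and the field size, and the statement is false without that hypothesis (normal rational curves give $(k+2)$-arcs, indeed $(q+1)$-arcs, whenever $q\geq k+1$), so any proof that does not exploit $k\geq q$ at a specific point cannot succeed.

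The paper's proof is a direct coordinate argument that uses $k\geq q$ via the pigeonhole principle. After a change of basis one may assume $\cA$ contains the frame $\{e_1,\ldots,e_k,e_1+\cdots+e_k\}$. A further point $u=(u_1,\ldots,u_k)$ of $\cA$ either has a zero coordinate $u_i=0$, in which case the hyperplane $X_i=0$ contains the $k-1$ points $e_j$ ($j\neq i$) together with $u$, i.e.\ $k$ points of $\cA$; or all $u_i$ are nonzero, and since there are $k\geq q$ coordinates but only $q-1$ nonzero field elements, $u_i=u_j$ for some $i\neq j$, so the hyperplane $X_i=X_j$ contains the $k-2$ points $e_l$ ($l\neq i,j$), the point $e_1+\cdots+e_k$, and $u$ --- again $k$ points of $\cA$ in a hyperplane. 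Either case contradicts the arc property. If you want to salvage a projection-flavoured argument, you would still need to inject the condition $k\geq q$ somewhere; the coordinate pigeonhole is the natural place it enters.
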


\begin{proof}
After choosing a suitable basis we may assume that $\cA$ contains the standard frame
$$
\{ e_1,\ldots,e_k,e_1+\cdots+e_k \},
$$
where $e_i$ is the $i$-th vector in the standard basis of $\bF_q^k$.
Suppose $u=(u_1,\ldots,u_k) \in \cA \setminus  \{ e_1,\ldots,e_k,e_1+\cdots+e_k \}$. 
If $u_i=0$ for some $i$ then the hyperplane with equation $X_i=0$ contains $k$ points of $\cA$, contradicting the arc property.
If $u_i \neq 0$ for all $i$ then, since $k\geq q$, by the pigeon-hole principle there exists and $i$ and $j$ such that $u_i=u_j$. But then the hyperplane with equation $X_i=X_j$, contains $k$ points of $\cA$, contradicting the arc property.
\end{proof}

Let $G_\cA$ denote the $k \times |\cA|$ matrix whose columns are the coordinate vectors of the points belonging to a set $\cA$ in $\PG(k-1,q)$.

\begin{lemma} \label{kminus1}
The set $\cA$ is an arc in  $\PG(k-1,q)$ if and only if 
for all non-zero $u \in {\mathbb F}_q^k$ the vector $uG_\cA$ has at most $k-1$ zeros.
\end{lemma}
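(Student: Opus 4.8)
The plan is to translate the arc condition into a statement about the matrix $G_\cA$ directly, using the definition of an arc together with the observation that hyperplanes of $\PG(k-1,q)$ correspond to non-zero vectors $u \in \bF_q^k$ via the equation $u X^{\mathrm t} = 0$. First I would recall that a set $\cA$ of points is an arc precisely when no $k$ of its points lie in a common hyperplane: indeed, $k$ points span the whole space if and only if they are not all contained in some hyperplane, since a proper subspace containing them would lie in a hyperplane. So $\cA$ fails to be an arc exactly when there is a hyperplane meeting $\cA$ in at least $k$ points.

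Next I would make the correspondence explicit. A hyperplane is the zero-set of some non-zero linear form, i.e. $\{ x : u_1 x_1 + \cdots + u_k x_k = 0 \}$ for some non-zero $u = (u_1,\ldots,u_k) \in \bF_q^k$. A point of $\cA$ with coordinate vector $v$ (a column of $G_\cA$) lies on this hyperplane if and only if $u v = 0$, that is, if and only if the corresponding entry of the row vector $u G_\cA$ is zero. Hence the number of points of $\cA$ lying on the hyperplane defined by $u$ equals the number of zero entries of $u G_\cA$. Combining this with the previous paragraph: $\cA$ is an arc if and only if for every non-zero $u$, the hyperplane defined by $u$ contains at most $k-1$ points of $\cA$, which happens if and only if $u G_\cA$ has at most $k-1$ zero entries.

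The two directions then follow immediately: if some non-zero $u$ gives $u G_\cA$ with at least $k$ zeros, those $k$ (or more) columns lie on a common hyperplane, so they span at most a hyperplane and hence fail to span $\PG(k-1,q)$, contradicting the arc property; conversely, if $\cA$ is not an arc there are $k$ points spanning a proper subspace, which lies in a hyperplane given by some non-zero $u$, forcing $u G_\cA$ to have at least $k$ zeros. I would also note, for cleanliness, that the statement is vacuous or trivial when $|\cA| < k$, and that one should implicitly assume the points of $\cA$ are distinct (so that distinct columns correspond to distinct points); this is consistent with the convention adopted earlier in the paper.

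I do not expect any serious obstacle here — the lemma is essentially a reformulation, and the only point requiring a moment's care is the equivalence "$k$ points span the space $\iff$ they are not contained in a hyperplane", which is just the rank–nullity observation that a set of $k$ vectors in $\bF_q^k$ is linearly independent if and only if it is not annihilated by any non-zero linear functional. Everything else is bookkeeping about which entries of $u G_\cA$ vanish.
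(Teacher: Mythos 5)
Your proposal is correct and takes exactly the same route as the paper: the paper's one-line proof is precisely the observation that the $i$-th coordinate of $uG_\cA$ vanishes if and only if the corresponding point of $\cA$ lies on the hyperplane defined by $u$, and you have simply spelled out the surrounding bookkeeping (hyperplanes $\leftrightarrow$ nonzero $u$, and ``arc $\iff$ no hyperplane contains $k$ points'') in more detail. No issues.
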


\begin{proof}
The $i$-th coordinate of $uG_\cA$ is zero if and only if the point of $\cA$ corresponding to the $i$-th column of $G_\cA$ lies in the hyperplane defined by $u$. 
\end{proof}

\begin{lemma}
A non-zero vector in $\{ uG_\cA \ | \ u \in  {\mathbb F}_q^k\}$ has weight at least $|\cA|-k+1$.
\end{lemma}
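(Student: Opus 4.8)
The plan is to read this off directly from Lemma~\ref{kminus1}. Let $v=uG_\cA$ be an element of $\{uG_\cA \mid u\in\bF_q^k\}$ which is not the zero vector; then necessarily $u\neq 0$, since $0\cdot G_\cA=0$. The vector $v$ has exactly $|\cA|$ coordinates, indexed by the points of $\cA$. Because $\cA$ is an arc, Lemma~\ref{kminus1} guarantees that $v$ has at most $k-1$ zero coordinates. Hence the number of non-zero coordinates of $v$, that is, its weight, is at least $|\cA|-(k-1)=|\cA|-k+1$, which is the claim.

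There is no genuine obstacle here: the whole content is the elementary counting step above, and the geometric input — the bound of $k-1$ on the number of zeros of $uG_\cA$ — has already been isolated in Lemma~\ref{kminus1}, whose own proof reduces it to the defining hyperplane-intersection property of an arc. The only point worth emphasising is the interpretation: this statement says precisely that the linear code with generator matrix $G_\cA$ (of length $|\cA|$ and dimension $k$) has minimum distance at least $|\cA|-k+1$, so by the Singleton bound it has minimum distance exactly $|\cA|-k+1$ and is therefore MDS. Conversely, Lemma~\ref{kminus1} shows that a code meeting the Singleton bound forces the arc condition on the columns of its generator matrix, so this lemma is exactly the bridge establishing the equivalence between arcs in $\PG(k-1,q)$ and linear MDS codes that is taken up in the rest of the section.
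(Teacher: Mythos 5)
Your proof is correct and follows exactly the paper's route: the paper also derives this lemma immediately from Lemma~\ref{kminus1} by the same counting of zero coordinates. The additional remarks on the Singleton bound and the MDS equivalence are accurate context but not needed for the proof itself.
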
 

\begin{proof}
This follows immediately from Lemma~\ref{kminus1}.
\end{proof}

A $k$-dimensional {\em linear code} of length $n$ and minimum distance $d$ is a $k$-dimensional subspace of ${\mathbb F}_q^n$ in which every non-zero vector has weight at least $d$. Such a code is called an {\em $\bF_q$-linear $[n,k,d]$-code}, where $n$, $k$, and $d$ are ofter referred to as the {\em parameters} of the code.

Let $C$ be a linear code  of length $n$ and minimum distance $d$. If $\bar u$ denotes the vector obtained from the codeword $u\in C$ by deleting the first $d-1$ coordinates from $u$, then clearly for any two codewords $u\neq v$, we must have $\bar u\neq \bar v$.
This implies that the dimension $k$ of $C$ can be at most $n-(d-1)$. Equivalently $d\leq n-k+1$. This bound is called the {\em Singleton bound}.
A $k$-dimensional {\em linear maximum distance separable} (MDS) code $C$ of length $n$ is a $k$-dimensional subspace of ${\mathbb F}_q^{n}$ in which every non-zero vector has weight at least $n-k+1$. We have already established the following theorem.

\begin{theorem} \label{arcisMDS}
The linear code $C$ generated by the matrix $G$, whose columns are the coordinate vectors of the points of an arc is a linear MDS code, and vice versa, the set of columns of a generator matrix of an MDS code considered as a set of points of the projective space, is an arc.
\end{theorem}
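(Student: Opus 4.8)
This statement has, in effect, already been proved; the plan is simply to read it off from Lemma~\ref{kminus1} and the weight bound that precedes it.

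For the first implication, let $\cA$ be an arc of $\PG(k-1,q)$ with $n=|\cA|\geq k+1$, and put $G=G_\cA$. Since any $k$ columns of $G$ are linearly independent and $n\geq k$, the matrix $G$ has rank $k$, so the map $u\mapsto uG$ from $\bF_q^k$ to $\bF_q^n$ is injective and $C=\{uG\ :\ u\in\bF_q^k\}$ is a $k$-dimensional linear code of length $n$. By the lemma asserting that every non-zero vector of this form has weight at least $|\cA|-k+1$, every non-zero codeword of $C$ has weight at least $n-k+1$, which is precisely the defining property of a linear MDS code.

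For the converse, let $C$ be an MDS code, that is, a $k$-dimensional subspace of $\bF_q^n$ every non-zero vector of which has weight at least $n-k+1$, and let $G$ be a generator matrix of $C$, i.e.\ a $k\times n$ matrix of rank $k$ with $C=\{uG\ :\ u\in\bF_q^k\}$. Let $\cA$ be the set of points of $\PG(k-1,q)$ whose coordinate vectors are the columns of $G$. For every non-zero $u\in\bF_q^k$ the vector $uG$ is a non-zero codeword of $C$ (non-zero because $G$ has full row rank), hence has at most $k-1$ zero coordinates; passing from $G$ to $G_\cA$ by deleting and rescaling duplicated columns only removes zeros, so $uG_\cA$ likewise has at most $k-1$ zero coordinates, and Lemma~\ref{kminus1} then gives that $\cA$ is an arc.

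Because the mathematical content is already contained in the earlier lemmas, there is no genuine obstacle here, only a minor technical point in the converse: a priori the columns of a generator matrix need not be pairwise non-proportional, so $\cA$ could have fewer than $n$ points. This is dealt with by the ``deleting coordinates never creates new zeros'' remark above; in fact one can check that for $k\geq 2$ this degeneracy does not occur, since if two columns coincided as points of $\PG(k-1,q)$, or some column were zero, one could place $k$ of the columns in a single hyperplane and thereby produce a codeword of weight at most $n-k$, contradicting the minimum distance of $C$.
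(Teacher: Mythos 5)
Your proposal is correct and follows exactly the route the paper intends: the paper gives no separate proof of this theorem but introduces it with ``We have already established the following theorem,'' meaning it is read off from Lemma~\ref{kminus1} and the weight lemma, which is precisely what you do in both directions. Your extra remark about possibly proportional or zero columns of a generator matrix is a reasonable tidying-up of a point the paper leaves implicit, and your resolution of it is sound.
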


The {\em dual} of a linear code $C$ of length $n$ over $\bF_q$ is,
$$
C^{\perp}=\{ v \in {\mathbb F}_q^n \ | \ u \cdot v=0 \ \mathrm{for\ all} \ u \in C \},
$$
where $u \cdot v$ denotes the standard inner product $u \cdot v=u_1v_1+\cdots+u_nv_n$. If $C$ has dimension $k$, then $C^\perp$ has dimension $n-k$.

\begin{lemma} \label{dualMDS}
The linear code $C$ is MDS if and only if $C^{\perp}$ is MDS.
\end{lemma}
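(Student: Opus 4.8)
The plan is to prove the equivalence directly from the definition of $C^{\perp}$, the definition of an MDS code, and the Singleton bound, without passing through the arc dictionary. Since the statement is symmetric and $(C^{\perp})^{\perp}=C$ — this follows by applying the dimension formula $\dim C^{\perp}=n-\dim C$ twice, together with the obvious inclusion $C\subseteq(C^{\perp})^{\perp}$ — it suffices to prove one implication: if $C$ is MDS then $C^{\perp}$ is MDS. Write $k=\dim C$, so $\dim C^{\perp}=n-k$, and assume $0<k<n$. The Singleton bound applied to $C^{\perp}$ gives $d(C^{\perp})\leq n-(n-k)+1=k+1$, so it is enough to show that \emph{every} nonzero codeword of $C^{\perp}$ has weight at least $k+1$.

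First I would suppose, for a contradiction, that some nonzero $v\in C^{\perp}$ has weight $w\leq k$, and let $S$ be its support. Since $w\leq k\leq n$, enlarge $S$ to a set $S'$ with $|S'|=k$. The point is that the coordinate projection $\pi_{S'}:C\to\bF_q^{S'}$, $c\mapsto(c_i)_{i\in S'}$, is not surjective: for every $c\in C$ we have $\sum_{i\in S'}v_ic_i=\sum_{i\in S}v_ic_i=v\cdot c=0$, using that $v_i=0$ for $i\notin S$ and that $v\in C^{\perp}$. As $(v_i)_{i\in S'}$ is a nonzero vector of $\bF_q^{S'}$, this says that $\pi_{S'}(C)$ lies in a hyperplane of $\bF_q^{S'}$, so $\dim\pi_{S'}(C)\leq k-1<k=\dim C$. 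Hence $\pi_{S'}$ has a nontrivial kernel, i.e. there is a nonzero $c\in C$ with $c_i=0$ for all $i\in S'$; this $c$ has at least $|S'|=k$ zero coordinates, so its weight is at most $n-k$, contradicting the MDS property of $C$. Therefore no such $v$ exists, $d(C^{\perp})=k+1$, and $C^{\perp}$ is MDS. Applying this with $C^{\perp}$ in place of $C$ and using $(C^{\perp})^{\perp}=C$ yields the converse.

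There is no serious obstacle here; the one idea that has to be right is to enlarge the support of the hypothetical low-weight dual codeword to a set of size \emph{exactly} $k$ before projecting — projecting onto $S$ itself only produces a codeword of weight at most $n-w$, which is too weak when $w<k$. For completeness I note two equivalent alternatives. One can instead combine Lemma~\ref{kminus1} and Theorem~\ref{arcisMDS} with the standard fact that, for generator matrices $G$ of $C$ and $H$ of $C^{\perp}$ (so that $GH^{\top}=0$), a set of $k$ columns of $G$ is linearly independent if and only if the complementary set of $n-k$ columns of $H$ is linearly independent; this identifies ``the columns of $G$ form an arc'' with ``the columns of $H$ form an arc'', hence ``$C$ is MDS'' with ``$C^{\perp}$ is MDS''. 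Alternatively, one can put $G$ in the form $[I_k\,|\,A]$, observe that $[-A^{\top}\,|\,I_{n-k}]$ generates $C^{\perp}$, and use that $C$ is MDS precisely when every square submatrix of $A$ is nonsingular, a property visibly unchanged by transposition.
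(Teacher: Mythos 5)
Your proof is correct. It takes a somewhat different route from the paper: the paper's argument is a one-liner through the arc dictionary --- a nonzero $v\in C^{\perp}$ of weight at most $k$ satisfies $Gv=0$ and hence exhibits a nontrivial dependence among at most $k$ columns of the generator matrix $G$, directly contradicting the fact (Theorem~\ref{arcisMDS}, Lemma~\ref{kminus1}) that those columns form an arc. You instead stay entirely inside coding theory: you enlarge the support of the hypothetical low-weight dual word to a set $S'$ of size exactly $k$, observe that the coordinate projection $\pi_{S'}$ of $C$ lands in a hyperplane and so has nontrivial kernel, and thereby produce a nonzero codeword of $C$ with at least $k$ zeros, contradicting $d(C)=n-k+1$. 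The two contradictions are equivalent facts (Lemma~\ref{kminus1} is precisely the translation between ``$k$ columns of $G$ dependent'' and ``nonzero codeword with $k$ zeros''), so the mathematical content is the same; what your version buys is independence from the arc formalism, at the cost of the extra support-enlargement and rank--nullity step that the paper's column-dependence argument avoids. Your observation that one must enlarge $S$ to size exactly $k$ before projecting is the right technical point for your route, and your first ``alternative'' at the end is essentially the paper's own proof.
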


\begin{proof}
Suppose $C$ is a $k$-dimensional MDS code of length $n$ and that $C^{\perp}$ is not MDS. Then $C^{\perp}$ contains a non-zero vector $v$ of weight less than $n-(n-k)+1=k+1$. Let $G$ be the generator matrix of $C$. Since $v\in C^\perp$ if and only if $G v=0$, the (at most $k$) non-zero coordinates of $v$ give a non-trivial linear dependence relation between at most $k$ columns of $G_\cA$. This contradicts the fact that the columns of $G$ form an arc in $\PG(k-1,q)$.
\end{proof}

\begin{corollary} \label{dualarc}
There is an arc of size $n$ in $\PG(k-1,q)$ if and only if there is an arc of size $n$ in $\PG(n-k-1,q)$.
\end{corollary}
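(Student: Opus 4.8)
The plan is to derive Corollary~\ref{dualarc} directly by chaining the correspondence between arcs and MDS codes (Theorem~\ref{arcisMDS}) with the duality statement for MDS codes (Lemma~\ref{dualMDS}), and keeping careful track of the parameters. First I would observe that an arc $\cA$ of size $n$ in $\PG(k-1,q)$ exists if and only if there is an $\bF_q$-linear $[n,k,n-k+1]$-MDS code: the generator matrix $G_\cA$ is $k\times n$ and Theorem~\ref{arcisMDS} gives both directions of this equivalence (the columns of any generator matrix of an MDS code form an arc, and conversely). So the existence of the arc is equivalent to the existence of such an MDS code.

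Next I would apply Lemma~\ref{dualMDS}: the $[n,k]$-code $C$ is MDS if and only if its dual $C^\perp$ is MDS. Here $C^\perp$ has length $n$ and dimension $n-k$, hence, being MDS, it is an $[n,\,n-k,\,n-(n-k)+1]=[n,\,n-k,\,k+1]$-code. Applying Theorem~\ref{arcisMDS} once more in the reverse direction, the columns of a generator matrix of $C^\perp$ — a $(n-k)\times n$ matrix — form an arc of size $n$ in $\PG(n-k-1,q)$. Conversely, starting from an arc of size $n$ in $\PG(n-k-1,q)$ one obtains an $[n,n-k]$-MDS code, whose dual is an $[n,k]$-MDS code, whose generator matrix columns give back an arc of size $n$ in $\PG(k-1,q)$. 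This closes the loop and yields the claimed equivalence.

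The only point that needs a word of care is checking that the dimension of $C^\perp$ is indeed $n-k$ and that one does not accidentally get an empty statement when $k=0$ or $k=n$; but these degenerate cases are consistent (an arc of size $n$ in a projective space of dimension $n-1$, i.e. a frame, corresponds under duality to the trivial situation in $\PG(-1,q)$, which we may simply exclude or interpret conventionally). I do not expect any real obstacle here: everything has already been assembled in the preceding lemmas, and the proof is essentially a two-line composition of Theorem~\ref{arcisMDS} and Lemma~\ref{dualMDS}. The one thing I would be slightly vigilant about is the indexing convention $\PG(k-1,q)$ versus the code dimension $k$, so that "dimension $n-k$" translates to the projective space $\PG(n-k-1,q)$ and not something off by one.
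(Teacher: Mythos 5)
Your proof is correct and follows exactly the paper's route: the paper's own proof is the one-line observation that the corollary follows from Theorem~\ref{arcisMDS} and Lemma~\ref{dualMDS}, which is precisely the composition you carry out, with the parameter bookkeeping made explicit.
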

\begin{proof}
This follows from Theorem~\ref{arcisMDS} and Lemma~\ref{dualMDS}.
\end{proof}

We have seen examples (normal rational curves and some others) of arcs of size $q+1$ in $\PG(k-1,q)$ (for $q\geq k-1$) and of size $q+2$ in $\PG(2,q)$.

If $\cA$ is the set of points on the normal rational curve as defined in (\ref{eqn:normal rational curve}), and $G_\cA$ is the generator matrix of the associated MDS code, as described above, then for each $u=(u_0,\ldots,u_{k-1})\in \bF_q^k$, we obtain a codeword $c=uG_\cA$, whose last coordinate is $u_{k-1}$ and whose other coordinates are of the form
$$u_0+u_1t+u_2t^2+\ldots+u_{k-1}t^{k-1},~ t\in \bF_q.
$$ 
Therefore, the code $C=\{ uG_\cA \ | \ u \in  {\mathbb F}_q^k\}$ can be rewritten as
\begin{eqnarray}\label{eqn:RS_code}
C=\{ (f(t_1),f(t_2),\ldots, f(t_q),f_{k-1}) ~ |~ f (X) =\sum_{i=0}^{k-1}f_iX^i \in  {\mathbb F}_q[X]\}
\end{eqnarray} 
where we have labeled the $q$ elements of $\bF_q$ as $\{t_1,t_2,\ldots,t_q\}$.
This MDS code obtained from a normal rational curve is called the {\em Reed-Solomon} code (or {\em extended} Reed-Solomon code) and has parameters $[q+1,k,q-k+2]$.

\begin{theorem} \label{thm:dual_of_RS}
The dual of the Reed-Solomon code is a Reed-Solomon code.
\end{theorem}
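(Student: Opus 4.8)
The plan is to compute the dual code $C^\perp$ explicitly and to recognise it as the Reed-Solomon code of dimension $q+1-k$ on the same evaluation points. Starting from the description in~(\ref{eqn:RS_code}), a vector $v=(v_1,\ldots,v_q,v_{q+1})\in\bF_q^{q+1}$ lies in $C^\perp$ if and only if $\sum_{i=1}^q v_if(t_i)+v_{q+1}f_{k-1}=0$ for every polynomial $f=\sum_{i=0}^{k-1}f_iX^i$ of degree at most $k-1$. Testing this against the basis $f=X^j$, $0\le j\le k-1$, it is equivalent to the $k-1$ conditions $\sum_{i=1}^q v_it_i^{\,j}=0$ for $0\le j\le k-2$, together with the single equation $v_{q+1}=-\sum_{i=1}^q v_it_i^{\,k-1}$, which merely prescribes the last coordinate in terms of the first $q$. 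In particular $\dim C^\perp=q-(k-1)=q+1-k$, consistent with $\dim C^\perp=n-\dim C$.

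The one nontrivial ingredient is the elementary power-sum identity over $\bF_q$: for an integer $s$ with $0\le s\le q-1$ one has $\sum_{t\in\bF_q}t^{\,s}=0$ when $0\le s\le q-2$, while $\sum_{t\in\bF_q}t^{\,q-1}=-1$. (When $s=0$ the sum is $q\cdot 1=0$ in $\bF_q$; when $1\le s\le q-2$ one uses that $\bF_q^\ast$ is cyclic of order $q-1$ and that $q-1\nmid s$; when $s=q-1$ each of the $q-1$ nonzero field elements contributes $1$.)

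It then remains to check that the Reed-Solomon code $C'$ of dimension $k'=q+1-k$ on the points $t_1,\ldots,t_q$, that is $C'=\{(h(t_1),\ldots,h(t_q),h_{k'-1})\mid h\in\bF_q[X],\ \deg h\le k'-1\}$, is contained in $C^\perp$; since it suffices to verify this for a basis of the space of polynomials $h$, I would take $h=X^m$ with $0\le m\le q-k$. For such $m$ and for $0\le j\le k-2$ one has $0\le m+j\le q-2$, so $\sum_{i=1}^q t_i^{\,m}t_i^{\,j}=\sum_{t\in\bF_q}t^{\,m+j}=0$; and since $k-1\le m+k-1\le q-1$, the prescribed last coordinate $-\sum_{i=1}^q t_i^{\,m}t_i^{\,k-1}=-\sum_{t\in\bF_q}t^{\,m+k-1}$ equals $0$ when $m<q-k$ and equals $1$ when $m=q-k=k'-1$. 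Thus the vector associated with $h=X^m$ is exactly the row of the generator matrix of $C'$ indexed by $m$, and it lies in $C^\perp$. Hence $C'\subseteq C^\perp$, and as both spaces have dimension $q+1-k$ (for $C'$ because the $(q+1-k)\times q$ matrix $(t_i^{\,m})$ of its first coordinates has full row rank, being of Vandermonde type with distinct $t_i$), we conclude $C^\perp=C'$, which is a Reed-Solomon code.

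I do not anticipate any serious difficulty; the only point requiring care is the bookkeeping around the adjoined coordinate. One must confirm that the term $-\sum_{i=1}^q t_i^{\,m}t_i^{\,k-1}$ reproduces precisely the leading-coefficient coordinate $h_{k'-1}$ expected for $C'$ — this is the sole place where the exponent $q-1$, and hence the ``$-1$'' case of the power-sum identity, intervenes — and that the two standard conventions for the extra coordinate of an extended Reed-Solomon code are compatible. One could alternatively invoke Lemma~\ref{dualMDS} to know a priori that $C^\perp$ is MDS, but the direct computation above already delivers the stronger conclusion that it is a Reed-Solomon code.
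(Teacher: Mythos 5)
Your proposal is correct and rests on the same key ingredient as the paper's proof, namely the power-sum identity $\sum_{t\in\bF_q}t^{\,n}=0$ unless $(q-1)\mid n$ with $n>0$, in which case it equals $-1$; the paper simply computes the inner product of general codewords $c_f\in C_k$ and $c_g\in C_{q+1-k}$ as $\sum_{t}f(t)g(t)+f_{k-1}g_{q-k}=0$ in one line, whereas you organise the same computation by testing against monomial bases and then conclude by the (implicit in the paper) dimension count. This is essentially the same argument, just written out in more detail.
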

\begin{proof}
Let $C_k$ denote the Reed-Solomon code obtained from the normal rational curve in $\PG(k-1,q)$ as in (\ref{eqn:RS_code}).
Let $c_f\in C_k$ and $c_g\in C_{q+1-k}$ with $f,g \in \bF_q[X]$. The inner product of $c_f$ and $c_g$ is
$$
\sum_{t\in \bF_q}f(t)g(t)+f_{k-1}g_{q-k}.
$$
Since the sum of all $n$-th powers $\{t^n~:~t\in \bF_q\}$ is zero unless $n$ is a multiple of $q-1$ in which case it is $-1$, the above sum is zero.
\end{proof}

By Corollary \ref{dualarc} the normal rational curves also give arcs of size $q+1$ in $\PG(q-k,q)$ and the hyperovals give arcs of size $q+2$ in $\PG(q-3,q)$. If we exclude the trivial parameters, so assume $3\leq k\leq q-2$, and we also exclude the case of hyperovals (including the arcs obtained from the dual codes of a hyperoval code), we obtain the interval $4\leq k\leq q-3$ for the dimension $k$ of an $\bF_q$-linear MDS code. For these dimensions, no $\bF_q$-linear MDS codes are know which have length larger than $q+1$. Equivalently, if $4\leq k\leq q-3$, then no arcs in $\PG(k-1,q)$ are known of size larger than $q+1$. This led to the well-known {\em MDS-conjecture}, see for example \cite[Chapter 11]{MS1977}. 

The MDS conjecture can be extended to non-linear codes but that was not done until recently, see \cite{Huntemann2012}. For the purposes of this article, the MDS conjecture will refer to the MDS conjecture for linear codes.

\begin{conjecture}\label{conject:MDS} (The MDS conjecture) If $4 \leq k \leq q-3$ then an arc in $\PG(k-1,q)$ has size at most $q+1$.
\end{conjecture}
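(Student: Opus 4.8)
\emph{This is the MDS conjecture, an open problem; what follows is not a proof but a sketch of the line of attack that has succeeded in the cases currently known, in particular for $q$ prime (Section~\ref{sec:MDS_p}, following \cite{Ball2012}).}

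The plan is to argue by contradiction. Suppose $\cA\subseteq\PG(k-1,q)$ is an arc with $|\cA|=q+2$ and $4\le k\le q-3$; by Corollary~\ref{dualarc} we may pass to the dual and so assume $2k\le q+2$, which keeps $t:=q+k-|\cA|-1=k-3$ small. The engine is Segre's lemma of tangents in its scaled coordinate-free form from \cite{BL2019} (see Section~\ref{sec:lemma_of_tangents}). Fixing a $(k-2)$-subset $S\subseteq\cA$, Lemma~\ref{triv} gives exactly $t=k-3$ hyperplanes meeting $\cA$ precisely in $S$; the lemma of tangents records, for each ordered triple of points of $\cA$ in suitably general position with respect to flags through $S$, a multiplicative relation among the linear forms defining these $t$ tangent hyperplanes, evaluated along $\cA$. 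First I would package these relations into a single nonzero homogeneous polynomial $F$ (in a few variables, with coefficients in $\bF_q$ built from the tangent forms) whose vanishing behaviour and whose linear factorisation are both prescribed: it must acquire one linear factor for each point of $\cA\setminus S$, forcing it to have many roots, while the geometric construction constrains its degree. In the small-$k$ range this tension is not yet fatal by itself, and the decisive extra step — as in Section~\ref{sec:MDS_p} — is to expand $F$ and compare the coefficient of a top monomial computed in two ways: the lemma of tangents expresses it as an explicit sum of products of arc-coordinates, which collapses, via a Lucas-type binomial-coefficient identity valid over $\bF_p$, to a quantity that is manifestly nonzero modulo $p$, a contradiction.

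For composite $q$ the prime-field endgame is unavailable, and one would instead try to descend on $k$ by projection: an arc of size $q+2$ in $\PG(k-1,q)$ projected from one of its points onto a hyperplane is an arc of size $q+1$ in $\PG(k-2,q)$ (the same argument as in Lemma~\ref{lem:projection_normal rational curve}, now applied to an abstract arc), and one would iterate until reaching a dimension where the residual arc is pinned down — e.g.\ forced onto a normal rational curve via Lemma~\ref{lem:unique_normal rational curve}, which has only $q+1$ points, contradicting the extra point.

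The hard part — and the reason the conjecture is still open — is exactly the composite case $q=p^h$ with $h>1$. There the field carries nontrivial automorphisms and proper subfields, which is precisely what makes Examples~\ref{translation}, \ref{ex:3space} and \ref{glynn} exist; the binomial identity that closes the prime case degenerates, and the polynomial $F$ can genuinely factor through a subfield, so the coefficient no longer refuses to vanish. Overcoming this seems to require either a much finer analysis of the algebraic envelope attached to $\cA$, controlling its irreducible components and their fields of definition in the spirit of the curve-theoretic bounds of \cite{HiKo1998} and \cite{GiPaToUg2002}, or a genuinely new invariant that detects the absence of the forbidden subfield configurations. Until such an idea appears, the statement remains a conjecture, established for $q$ prime, for $k$ or its dual $q+1-k$ bounded by a small constant, and in a handful of further special ranges.
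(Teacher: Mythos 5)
This statement is a conjecture, not a theorem: the paper states it as Conjecture~\ref{conject:MDS} and offers no proof, so there is nothing to compare your attempt against line by line. You correctly recognised this and declined to manufacture a proof, and your survey of the partial results is broadly faithful to the paper's actual treatment (duality via Corollary~\ref{dualarc} to reduce to $2k\le q+2$, the scaled lemma of tangents feeding the system of equations of Section~\ref{systemsection}, and the prime-field endgame of Section~\ref{sec:MDS_p}).

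Two small corrections to your sketch so that it matches what the paper actually does. First, the "Lucas-type binomial identity" is a loose description: the mechanism in Lemma~\ref{eqnsksmall} is that a carefully weighted sum of the equations $\mathrm{eqn}(S)$ telescopes so that a single surviving term carries the coefficient $(-1)^{k-2}(k-1)!$, and the hypothesis $k\le p$ is used only to guarantee that this factorial is nonzero modulo $p$; no binomial congruence is invoked. Second, your proposed projection descent cannot be closed by Lemma~\ref{lem:unique_normal rational curve}, which applies only to arcs of size $k+2$; the paper's actual descent (Theorem~\ref{thm:main_projection} and Corollaries~\ref{cor:main_projection}, \ref{cor:kaneta_maruta}) requires knowing that every arc of size $q+1$ in the lower-dimensional space is a normal rational curve, which is itself the open Conjecture~\ref{conject:normal rational curve} except for the parameter ranges listed in Section~\ref{sec:state_of_the_art}. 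With those caveats, your assessment of the status of the problem agrees with the paper's.
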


In fact, although we have already seen some examples (precisely Examples \ref{ex:3space}, \ref{glynn}) of arcs which have the same size as but are not equal to normal rational curves, it is our belief that all arcs of size $q+1$ are already known, excluding the cases $k\in\{3,q-1\}$ and $q$ is even.

\begin{conjecture}\label{conject:normal rational curve}
If $6 \leq k \leq q-5$ then an arc in $\PG(k-1,q)$ of size $q+1$ is a normal rational curve.
\end{conjecture}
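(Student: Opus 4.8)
Since the range $6\le k\le q-5$ is invariant under the duality $k\mapsto q-k$, Corollary~\ref{dualarc} reduces the problem to arcs $\cA$ of size $q+1$ in $\PG(k-1,q)$ with $k\le\tfrac{q+1}{2}$, so in particular $q\ge 11$. The plan is then to imitate, in arbitrary dimension, Segre's argument for $\PG(2,q)$ with $q$ odd: attach to $\cA$ an algebraic envelope carrying all its tangent hyperplanes and show, via bounds on the number of $\bF_q$-rational points of its components, that this envelope is forced to be as degenerate as possible, namely the system of osculating hyperplanes of a normal rational curve.

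Concretely, fix a $(k-2)$-subset $S$ of $\cA$; by Lemma~\ref{triv} there are exactly $t=k-2$ hyperplanes meeting $\cA$ in precisely $S$. Applying the scaled coordinate-free lemma of tangents over all such subsets and keeping track of the scalars it produces, one aims to build a homogeneous form $F\in\bF_q[X_1,\dots,X_k]$ that vanishes on every tangent hyperplane of $\cA$, is invariant under relabelling the points of $\cA$, and has degree controlled by $t$ --- the higher-dimensional analogue of Segre's planar envelope, whose class is $2t$ when $q$ is odd and only $t$ when $q$ is even. One then studies the $\overline{\bF_q}$-irreducible factors of $F$: when $6\le k\le q-5$ the degree of $F$ stays small relative to $q$, so Weil-type estimates for the number of $\bF_q$-points on the components, together with the requirement that $F$ account for all $q+1$ tangent hyperplanes, should force $F$ to split into linear factors cutting out the osculating hyperplanes of a normal rational curve $\cN$; Lemma~\ref{lem:unique_normal rational curve} and the transitivity in Lemma~\ref{lem:normal rational curve_transitivity} then identify $\cA$ with $\cN$.

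A complementary induction on $k$ by projection is also available. Projecting $\cA$ from a point $x\in\cA$ onto a hyperplane gives an arc $x(\cA)$ of size $q$ in $\PG(k-2,q)$; if one can show $x(\cA)$ extends to an arc of size $q+1$, then for $7\le k$ the inductive hypothesis makes that arc a normal rational curve $\cN_x$, and Lemma~\ref{lem:extra_tangents_proj_normal rational curve} recovers the line through $x$ that must be the tangent of the sought normal rational curve through $\cA$. Carrying this out at two distinct points of $\cA$ and invoking Lemma~\ref{lem:unique_normal rational curve} would close the induction, leaving only a base case near $k=6$, which one would reduce by iterated projection to Segre's theorem in $\PG(2,q)$, $q$ odd.

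The obstructions are exactly what keep the statement a conjecture. For $q$ even the lemma of tangents only bounds the class of the envelope by $t$ rather than $2t$, so the factorisation step loses most of its power --- this is why $k\in\{3,q-1\}$ are excluded and why no analogue of Segre's theorem is known for even $q$ in small dimensions; removing this limitation seems to demand a genuinely new idea. The inductive route, on the other hand, relies on the extendability of an arc of size $q$ in $\PG(k-2,q)$, itself a fragment of the MDS problem, so that induction cannot be closed in isolation. I expect the hardest single step to be converting the degree bound on $F$ into a proof that \emph{all} of its $\overline{\bF_q}$-components are linear when $6\le k\le q-5$: this is the crux, and the point at which present methods stop short.
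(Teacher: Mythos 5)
This statement is Conjecture~\ref{conject:normal rational curve}: the paper offers no proof of it, and neither do you --- your submission is a strategy sketch which, to your credit, explicitly names the points at which it cannot be closed. The verdict is therefore that there is a genuine gap, namely the gap that keeps the statement a conjecture. To make the obstruction concrete: in your envelope route, the Segre--Blokhuis--Bruen--Thas hypersurface of Theorem~\ref{BBTthm} only exists under the hypothesis $|\cA|\geq mt+k-1$; for $|\cA|=q+1$ one has $t=k-2$, so for $q$ odd ($m=2$) this already restricts you to $k\leq\frac{1}{3}(q+6)$, far short of $q-5$, before the factorisation analysis even begins. Moreover the Weil and St\"ohr--Voloch estimates that would force the components of the envelope to be linear require its degree $2t=2(k-2)$ to be small against $\sqrt{q}$, which is exactly why the strongest results of this type recorded in the paper (Theorems~\ref{oddnonsq} and~\ref{oddsq}, fed through Corollary~\ref{cor:main_projection} and Corollary~\ref{cor:kaneta_maruta}) stop at $k=O(\sqrt{q})$; no known idea bridges the range from $\sqrt{q}$ to $q/2$. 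Your projection route is, as you say yourself, circular: it needs every arc of size $q$ in $\PG(k-2,q)$ to be extendable, which is itself an open fragment of the same problem.

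Two smaller corrections. First, the duality reduction to $k\leq\frac{1}{2}(q+1)$ requires Theorem~\ref{thm:dual_NRC} (normal rational curves correspond to normal rational curves under duality of MDS codes), not merely Corollary~\ref{dualarc}, which only transfers the existence of an arc of the given size and would not let you conclude that the dual arc being a normal rational curve forces the original one to be. Second, be careful with the claim that the exclusions in the conjecture are only about even $q$: the range $6\leq k\leq q-5$ also excludes $k=5$ and $k=q-4$ precisely because of Glynn's arc in $\PG(4,9)$ (Example~\ref{glynn}), which exists for odd $q$, so any eventual proof must also explain why such sporadic examples cannot occur for $6\leq k\leq q-5$.
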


Observe that Theorem \ref{thm:dual_of_RS} implies the following.

\begin{theorem}\label{thm:dual_NRC}
For $k\geq 3$ and $q-k\geq 3$, if every arc of size $q+1$ in $\PG(k-1,q)$ is a normal rational curve then so is every arc of size $q+1$ in $\PG(q-k,q)$.
\end{theorem}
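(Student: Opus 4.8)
The plan is to reduce the statement to a fact about Reed--Solomon codes and their duals, using the dictionary of Theorem~\ref{arcisMDS} together with the duality of Theorem~\ref{thm:dual_of_RS}. The first thing I would do is record the code-theoretic meaning of the phrase ``is a normal rational curve''. The Reed--Solomon code $C_j$ of (\ref{eqn:RS_code}) is, by construction, generated by the matrix whose columns are the coordinate vectors of the points of $\cN_{j-1}$. Hence, if an arc $\mathcal{B}$ of $\PG(j-1,q)$ generates a code $C$ (for some choice of coordinate vectors and ordering of its points), then $\mathcal{B}$ is a normal rational curve if and only if $C$ is \emph{monomially equivalent} to $C_j$, that is, some generator matrix of $C$ has the form $A\,G\,M$ with $A\in\GL(j,q)$, $G$ a generator matrix of $C_j$, and $M$ a monomial matrix (a permutation matrix times a nonsingular diagonal matrix): the factor $A$ is the projectivity carrying $\cN_{j-1}$ to $\mathcal{B}$, the permutation part of $M$ records the labelling of the points of the arc, and the diagonal part of $M$ absorbs the scalars by which coordinate vectors of points are indeterminate. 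Note that a monomial change of a generator matrix changes neither the dimension of the code nor the arc determined by the columns (rescaling a column does not move the point it represents; permuting the columns does not change the underlying set).

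Next I would run the duality argument. Let $\mathcal{A}$ be an arc of size $q+1$ in $\PG(q-k,q)$; by Theorem~\ref{arcisMDS} the code $C$ it generates is MDS of length $q+1$ and dimension $q-k+1$, so by Lemma~\ref{dualMDS} the dual $C^{\perp}$ is MDS of length $q+1$ and dimension $k$, and by Theorem~\ref{arcisMDS} again the columns of a generator matrix of $C^{\perp}$ form an arc $\mathcal{B}$ of size $q+1$ in $\PG(k-1,q)$. By hypothesis $\mathcal{B}$ is a normal rational curve, so by the first step $C^{\perp}=C_k\,M$ for some monomial matrix $M$. For any monomial $M$ one has $(C_k M)^{\perp}=C_k^{\perp}(M^{-1})^{\top}$, and $(M^{-1})^{\top}$ is again monomial, so $C=(C^{\perp})^{\perp}$ is monomially equivalent to $C_k^{\perp}$. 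By Theorem~\ref{thm:dual_of_RS}, $C_k^{\perp}=C_{q+1-k}$, which is the Reed--Solomon code generated by the points of $\cN_{q-k}$ in $\PG(q-k,q)$. Therefore $C$ is monomially equivalent to the code of $\cN_{q-k}$, and by the first step the arc $\mathcal{A}$ is projectively equivalent to $\cN_{q-k}$, i.e.\ a normal rational curve, which is what we had to show.

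As for the hypotheses, $k\geq 3$ and $q-k\geq 3$ serve only to keep us in the genuinely projective range and to guarantee that $\cN_{k-1}$ and $\cN_{q-k}$ really do consist of $q+1$ points each (true since $q\geq k-1$ and trivially $q\geq q-k$); they play no other role. The one ingredient that is not completely formal is the bookkeeping behind the first step, together with the claim used in the second that monomial equivalence of codes is preserved under duality --- equivalently, that rescaling and permuting the columns of a generator matrix of $C$ has the effect of rescaling (by the reciprocal scalars) and permuting the columns of a generator matrix of $C^{\perp}$, via the identity $(C_k M)^{\perp}=C_k^{\perp}(M^{-1})^{\top}$. Once this elementary point is in place, the theorem is an immediate consequence of Theorem~\ref{thm:dual_of_RS}.
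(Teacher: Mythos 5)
Your argument is correct and is exactly the route the paper intends: the paper gives no written proof, merely remarking that the statement is an observation following from Theorem~\ref{thm:dual_of_RS}, and your chain (arc $\to$ MDS code via Theorem~\ref{arcisMDS}, dual MDS via Lemma~\ref{dualMDS}, hypothesis applied to the dual arc, then Theorem~\ref{thm:dual_of_RS} to come back) is precisely what that observation unpacks to. The monomial-equivalence bookkeeping you supply, including the identity $(C_kM)^{\perp}=C_k^{\perp}(M^{-1})^{\top}$, is the right way to make the omitted details rigorous.
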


In the next sections we will give a proof for several instances of the MDS conjecture. 
In Section \ref{sec:state_of_the_art}, we summarize the state of the art on the MDS conjecture.

\section{The projection theorem}

Most results on arcs are, or rely on, results on planar arcs, i.e. arcs in $\PG(2,q)$. The reason for this, besides the fact that it is the smallest interesting case, is the {\em projection theorem} (Theorem \ref{thm:main_projection}), which asserts that if two distinct projections of an arc are contained in a normal rational curve then so is the arc. The fundamental ideas were developed by Segre in \cite{Segre1955b} in which the MDS conjecture is proven for $k\in \{4,5\}$ and $q$ odd.
The papers of Beniamino Segre published in the 1950's and 1960's had a huge influence on the development of combinatorial and geometric techniques which were used (first by himself and later by others) to prove several instances of the MDS conjecture. The proof of his celebrated theorem which says that an arc of size $q+1$ in $\PG(2,q)$, $q$ odd, is a conic (compare to Conjecture \ref{conject:normal rational curve}) is just one example. Segre's work was so influential that it can hardly be overestimated, and mathematics today still benefits from his efforts.

\begin{theorem}[Projection theorem]\label{thm:main_projection}
Let $\cA$ be an arc in $\PG(k-1,q)$, with $k\geq 4$ and $q\geq k+1$. If $\cA$ projects onto a subset of a normal rational curve in a hyperplane of $\PG({k-1},q)$, for two distinct points of $\cA$, then $\cA$ is contained in a normal rational curve.
\end{theorem}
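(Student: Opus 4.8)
The plan is to work with the two distinguished points of $\cA$ from which the projections land on normal rational curves, call them $x$ and $y$, and to reconstruct $\cA$ from the compatibility of these two projections. First I would set up coordinates: let $\Pi_x$ be a hyperplane missing $x$ and $\Pi_y$ a hyperplane missing $y$, and let $\cN_x\subseteq\Pi_x$, $\cN_y\subseteq\Pi_y$ be the (by Lemma~\ref{lem:unique_normal rational curve}, unique, since $q\geq k+1$) normal rational curves containing the projections $x(\cA)$ and $y(\cA)$. Since a normal rational curve in $\PG(k-2,q)$ carries a natural $\PGL(2,q)$-parametrisation (Lemma~\ref{lem:normal rational curve_transitivity}), each point $z\in\cA\setminus\{x\}$ acquires a parameter $s(z)\in\PG(1,q)$ from its image in $\cN_x$, and each $z\in\cA\setminus\{y\}$ acquires a parameter $t(z)\in\PG(1,q)$ from its image in $\cN_y$. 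The key object is the induced partial map $s\mapsto t$ on the common points $\cA\setminus\{x,y\}$; the heart of the argument is to show this map is the restriction of a projectivity of $\PG(1,q)$, which then forces $\cA$ to lie on a single normal rational curve.

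The step that makes this work is a coincidence of tangent lines. I would use Lemma~\ref{lem:extra_tangents_proj_normal rational curve}: the line $\langle x, w\rangle$, where $w$ is the unique extra point of $\cN_x$ not in $x(\cA)$, plays the role of a "tangent at $x$" in any normal rational curve through $\cA\cup\{\text{something}\}$; similarly for $y$. Concretely, I would argue that $\cA\cup\{x,y\}$ together with the point $\langle x,y\rangle\cap\Pi_x$ (coming from $y$) and $\langle x,y\rangle\cap\Pi_y$ must be consistent, and that consistency of the two parametrisations on enough common points (at least $k+1$ of them, which we have since $|\cA|\geq k+2$ after possibly noting small cases separately, and $q\geq k+1$) pins down the cross-ratio. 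Then I would invoke the uniqueness clause of Lemma~\ref{lem:tilde} / Lemma~\ref{lem:normal rational curve_transitivity}: a normal rational curve is determined by $k+2$ of its points (Lemma~\ref{lem:unique_normal rational curve}), so if a normal rational curve $\cN$ exists through some $k+2$ points of $\cA$, I must show every remaining point of $\cA$ lies on it, using that its projection from $x$ (resp.\ $y$) agrees with the projection of $\cN$ from $x$ (resp.\ $y$) — two projections of a point in general position determine the point uniquely (the two hyperplanes through $x$, resp.\ $y$, meeting appropriately intersect in a single point, because tangent lines to $\cN_k$, $k\geq 4$, are pairwise disjoint, Lemma~\ref{lem:tangents_normal rational curve}).

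So the logical skeleton is: (i) pick $k+2$ points of $\cA$ and build the unique $\cN$ through them via Lemma~\ref{lem:unique_normal rational curve}; (ii) show $\cN$ projects from $x$ onto a subset of $\cN_x$ and from $y$ onto a subset of $\cN_y$ — this uses Lemma~\ref{lem:projection_normal rational curve} plus the uniqueness of $\cN_x,\cN_y$, so the projection of $\cN$ from $x$ must literally be a subset of $\cN_x$; (iii) for an arbitrary remaining point $z\in\cA$, its projection $x(z)\in\cN_x$ and $y(z)\in\cN_y$ coincide with the projections of the unique point $z'\in\cN$ having the same image in $\cN_x$ — and then show $z=z'$ because the fibre of the double projection $(x\text{-projection},\,y\text{-projection})$ is a single point. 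For (iii) the crucial geometric input is that the lines $\langle x,z\rangle$ and $\langle y,z\rangle$ are distinct and meet only in $z$ (true as $x\neq y$), combined with the fact that knowing $x(z)$ restricts $z$ to the line $\langle x, x(z)\rangle$ and knowing $y(z)$ restricts $z$ to $\langle y, y(z)\rangle$; these two lines meet in exactly one point, so $z=z'$.

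The main obstacle I anticipate is step (iii), specifically ruling out the degenerate possibility that the two lines $\langle x, x(z)\rangle$ and $\langle y, y(z)\rangle$ coincide or fail to meet transversally — i.e.\ that the double-projection map is not injective on $\cA$. This is exactly where $k\geq 4$ is needed (so that tangents and secants of the relevant normal rational curves behave as in Lemma~\ref{lem:tangents_normal rational curve} and Lemma~\ref{lem:extra_tangents_proj_normal rational curve}), and it is also where one must handle with care the special points: $x$ itself (whose $x$-projection is undefined, so one recovers $x$ from its $y$-projection together with Lemma~\ref{lem:extra_tangents_proj_normal rational curve} identifying $\langle x,y\rangle$ as the image of the tangent at $y$), and symmetrically $y$. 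I expect the bound $q\geq k+1$ to enter precisely to guarantee the uniqueness statements (Lemmas~\ref{lem:unique_normal rational curve}, \ref{lem:projection_normal rational curve}, \ref{lem:extra_tangents_proj_normal rational curve}) and to ensure $|\cA|\geq k+2$ is not vacuous in the interesting range; small cases where $|\cA|\leq k+1$ are trivial since any $\leq k+1$ points lie on a normal rational curve already.
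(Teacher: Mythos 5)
Your overall skeleton --- take the unique normal rational curve $\cN$ through $k+2$ points of $\cA$ containing $x$ and $y$, check that $\cN$ projects from $x$ into $\cN_x$ and from $y$ into $\cN_y$, and then show every remaining point of $\cA$ lies on $\cN$ by comparing its two projections --- is exactly the paper's argument, and your steps (i) and (ii) are correct as stated. (The opening paragraph about transporting $\PGL(2,q)$-parameters and matching cross-ratios is not what your own later skeleton uses and can be discarded.)

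The gap is in step (iii). For a point $z\in\cA$ not among the chosen $k+2$, you define $z'$ as ``the unique point of $\cN$ having the same image in $\cN_x$'' --- but such a point need not exist: $x(\cN)$ covers only $q$ of the $q+1$ points of $\cN_x$, and if $x(z)$ is the missing point then the line $xz$ is the tangent line of $\cN$ at $x$ (Lemma~\ref{lem:extra_tangents_proj_normal rational curve}) and meets $\cN$ only at $x$. Your sketch never excludes this case; the paper kills it by noting that $yz$ is then a secant of $\cN$ not through $x$, or the tangent at $y$, so by Lemma~\ref{lem:tangents_normal rational curve} it cannot meet the tangent $xz$ --- contradicting the fact that both lines pass through $z$. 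Moreover, even in the generic case step (iii) is unfinished: knowing that some $z'\in\cN$ lies on $xz$ and some $z''\in\cN$ lies on $yz$ does not by itself give $z'=z''=z$; one needs the observation that the plane $\langle xz,yz\rangle$ already contains $x,y\in\cN$ and hence at most one further point of the arc $\cN$, forcing $z'=z''$, and then $z'=z$ since the two distinct lines meet only at $z$. You attribute the injectivity of the double projection to the pairwise disjointness of tangent lines, but that lemma is needed only for the tangent case just described; the generic case rests on the ``at most three points of an arc in a plane'' property. So the right lemmas all appear in your write-up, but the case analysis that actually makes step (iii) work is missing or misattributed.
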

\begin{proof}
We may assume that $\cA$ has size at least $k+3$, since by Lemma \ref{lem:unique_normal rational curve} every arc of size $\leq k+2$ is contained in a normal rational curve.
Suppose that each of the projections $x(\cA)$ and $y(\cA)$ from the points $x,y\in \cA$, $x\neq y$,  is contained in a normal rational curve, say $\cN_x$ and $\cN_y$.
Consider any subset $S$ of $\cA$ of size $k+2$, containing $x,y$, and let $\cN$ be the unique normal rational curve containing $S$ (here we use $q\geq k+1$). By the hypothesis, Lemma \ref{lem:unique_normal rational curve} and Lemma \ref{lem:projection_normal rational curve}, both the arc $\cA$ and the normal rational curve $\cN$ are then contained in the two distinct cones $x\cN_x$ and $y\cN_y$. 
Consider a point $z\in \cA\setminus\{x,y\}$, and let $\ell_x=xz$ and $\ell_y=yz$.
Suppose $\ell_x$ does not contain a point of $\cN$. Then $\ell_x$ is the tangent line of $\cN$ at $x$, since $x(\ell_x)$ is then the unique point of $\cN_x\setminus x(\cN)$ (see Lemma \ref{lem:extra_tangents_proj_normal rational curve}). Also $\ell_y$ is either a secant or the tangent line of $\cN$ at $y$. But then the fact that $\ell_x$ and $\ell_y$ meet contradicts Lemma \ref{lem:tangents_normal rational curve}.  
Therefore neither of the lines $\ell_x$ and $\ell_y$ are tangents. But then $z\in \cN$ since the plane $\langle \ell_x,\ell_y\rangle$ can contain at most 3 points of $\cN$. 
This shows that $\cA$ is contained in $\cN$. 
\end{proof}

\begin{corollary}\label{cor:main_projection}
Let $\cA$ be an arc in $\PG(k-1,q)$, with $k\geq 4$ and $q\geq k+1$. If the projection of $\cA$ from each $(k-3)$-subset of a $(k-2)$-subset of $\cA$ is contained in a conic then $\cA$ is contained in a normal rational curve.
\end{corollary}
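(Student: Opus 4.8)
The plan is to induct on $k$, using the Projection Theorem (Theorem~\ref{thm:main_projection}) as the inductive engine together with the fact that in $\PG(2,q)$ a conic is exactly a normal rational curve. Fix the distinguished $(k-2)$-subset $T=\{p_1,\ldots,p_{k-2}\}$ of $\cA$ and write $S_i=T\setminus\{p_i\}$ for its $(k-3)$-subsets; for a subset $S$ of the arc let $S(\cA)$ denote the projection of $\cA$ from the subspace $\langle S\rangle$ onto a complementary subspace (extending the notation $x(\cA)$ of Theorem~\ref{thm:main_projection}), so the hypothesis reads: each $S_i(\cA)$ lies on a conic in a plane of $\PG(k-1,q)$. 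As in the proof of Theorem~\ref{thm:main_projection} we may assume $|\cA|\geq k+3$, since by Lemma~\ref{lem:unique_normal rational curve} a smaller arc already lies on a normal rational curve. I will freely use the standard facts that the projection of an arc from one (or several) of its points is again an arc, and that projecting from $\langle S_i\rangle$ can be carried out by first projecting from a point $p_j\in S_i$ and then projecting the image from the images of the points of $S_i\setminus\{p_j\}$.

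For the base case $k=4$: here $T=\{p_1,p_2\}$, its $1$-subsets are $\{p_1\}$ and $\{p_2\}$, and the hypothesis says that $p_1(\cA)$ and $p_2(\cA)$ each lie on a conic of a plane of $\PG(3,q)$. Since a conic is a normal rational curve of $\PG(2,q)$, the arc $\cA$ projects onto a subset of a normal rational curve in a hyperplane from each of the two distinct points $p_1,p_2\in\cA$, so Theorem~\ref{thm:main_projection} (which applies since $k=4$ and $q\geq 5$) gives that $\cA$ lies on a normal rational curve.

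For the inductive step, assume $k\geq 5$ and that the statement holds in $\PG(k-2,q)$; this is legitimate since $q\geq k+1\geq(k-1)+1$ and $k-1\geq 4$. Pick any $p=p_j\in T$, put $\cA'=p(\cA)$, an arc in a hyperplane $\Pi\cong\PG(k-2,q)$, and $T'=p(T\setminus\{p\})$, which is a $(k-3)$-subset of $\cA'$ because $p(\cdot)$ is injective on $\cA\setminus\{p\}$ by the arc property. For each $i\neq j$ we have $S_i\setminus\{p\}=(T\setminus\{p\})\setminus\{p_i\}$, hence $p(S_i\setminus\{p\})=T'\setminus\{p(p_i)\}$, and the two-stage description of projection gives
$$
(T'\setminus\{p(p_i)\})(\cA')=S_i(\cA),
$$
which lies on a conic by hypothesis. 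As $i$ runs over $\{1,\ldots,k-2\}\setminus\{j\}$, the sets $T'\setminus\{p(p_i)\}$ run over all $(k-4)$-subsets of $T'$, so $\cA'$ satisfies the hypothesis of the corollary in $\PG(k-2,q)$ with distinguished $(k-3)$-subset $T'$. By induction $\cA'$ lies on a normal rational curve of $\Pi$. Repeating the argument with a second point $p'=p_{j'}\in T$, $j'\neq j$, shows likewise that $p'(\cA)$ lies on a normal rational curve, and Theorem~\ref{thm:main_projection} applied to the two distinct points $p,p'\in\cA$ then yields that $\cA$ lies on a normal rational curve.

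The only genuine substance in the un-projection is already packaged inside Theorem~\ref{thm:main_projection}; the points that need care are organisational, namely that the hypothesis is symmetric in the points of $T$ — so that the inductive conclusion can be obtained for two distinct members of $T$ and then fed to the Projection Theorem — and the exact matching of subset indices under projection, which is the displayed identity above.
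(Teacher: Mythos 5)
Your proof is correct and takes essentially the same approach as the paper: induction on $k$, with the base case $k=4$ being a direct application of Theorem~\ref{thm:main_projection} (a conic being a planar normal rational curve), and the inductive step consisting of checking that the hypothesis descends to the arc projected from a point of the distinguished $(k-2)$-subset, then applying the induction hypothesis at two such points and feeding the result back into Theorem~\ref{thm:main_projection}. The paper compresses this into three sentences; you have merely supplied the two-stage projection identity and the index bookkeeping that it leaves implicit.
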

\begin{proof}
The proof follows by induction on $k$ from Theorem \ref{thm:main_projection}. It coincides with Theorem \ref{thm:main_projection} in the case $k=4$. The hypothesis for $k=r$ implies the hypothesis for a projected arc (from one of the points in the $(k-2)$-subset) for $k=r-1$, from which one can apply the induction hypothesis and then Theorem \ref{thm:main_projection}.
\end{proof}

\begin{corollary}\label{cor:kaneta_maruta}
If every arc of size $q+1$ in $\PG(k-1,q)$  with $q+1\geq k+3\geq 6$ is a normal rational curve then the MDS conjecture holds true in $\PG(k,q)$.
\end{corollary}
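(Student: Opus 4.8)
The plan is to argue by contradiction, using projection from a point of the arc to transfer the problem from $\PG(k,q)$ down to the hyperplane case $\PG(k-1,q)$ covered by the hypothesis, and then to apply the projection theorem to lift the conclusion back up. First I would suppose that $\PG(k,q)$ contains an arc of size larger than $q+1$; since every subset of an arc is an arc, I may then fix an arc $\cA$ in $\PG(k,q)$ of size exactly $q+2$, and it suffices to derive a contradiction.

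Next, for a point $x\in\cA$ and a hyperplane $\Pi$ with $x\notin\Pi$, I would check that the projection of $\cA\setminus\{x\}$ from $x$ onto $\Pi\cong\PG(k-1,q)$ is an arc of size $q+1$: the $q+1$ images are pairwise distinct because no three points of $\cA$ are collinear, and any $k$ of them are linearly independent because the corresponding $k$ points of $\cA$, together with $x$, form a set of $k+1$ points of $\cA$ and hence span $\PG(k,q)$. Since $q+1\geq k+3$, the hypothesis applies and this projected arc is a normal rational curve of $\Pi$. Carrying this out for two distinct points $x,y\in\cA$ shows that $\cA$ projects onto a subset of a normal rational curve from two distinct points of $\cA$. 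As $k+1\geq 4$ and $q\geq (k+1)+1$ (both following from $q+1\geq k+3\geq 6$), the projection theorem (Theorem~\ref{thm:main_projection}), applied in $\PG(k,q)$, then gives that $\cA$ is contained in a normal rational curve of $\PG(k,q)$. But a normal rational curve in $\PG(k,q)$ has only $q+1$ points (using $q\geq k$), which contradicts $|\cA|=q+2$. Hence every arc in $\PG(k,q)$ has size at most $q+1$, i.e. the MDS conjecture holds in $\PG(k,q)$.

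The step that needs the most attention is the dimension bookkeeping: the statement concerns $\PG(k,q)$ (projective dimension $k$), while the hypothesis and the projections live in $\PG(k-1,q)$, and Theorem~\ref{thm:main_projection} must be invoked with its ``$k$'' replaced by ``$k+1$'', so one must confirm that $q+1\geq k+3\geq 6$ indeed supplies the inequalities $k+1\geq 4$ and $q\geq k+2$ that the projection theorem requires, as well as the bound $q\geq k$ ensuring a normal rational curve of $\PG(k,q)$ has exactly $q+1$ points. There is no genuine combinatorial or algebraic obstacle here; the entire content of the argument has already been packaged into the projection theorem.
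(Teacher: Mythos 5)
Your proposal is correct and follows essentially the same route as the paper: assume an arc of size $q+2$ in $\PG(k,q)$, project from two of its points to get normal rational curves in hyperplanes via the hypothesis, and invoke Theorem~\ref{thm:main_projection} to force $\cA$ into a normal rational curve, contradicting $|\cA|=q+2$. The extra dimension bookkeeping you supply (verifying $k+1\geq 4$ and $q\geq k+2$ from $q+1\geq k+3\geq 6$) is a worthwhile check that the paper leaves implicit.
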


\begin{proof}
Suppose $\cA$ is an arc in $\PG(k,q)$ of size $q+2$. Projecting $\cA$ from any of its points gives an arc of size $q+1$ in $\PG(k-1,q)$ which by hypothesis is a normal rational curve. By Theorem~\ref{thm:main_projection}, 
$\cA$ is contained in a normal rational curve, which is a contradiction since
a normal rational curve has $q+1$ points.
\end{proof}

\section{The lemma of tangents}\label{sec:lemma_of_tangents}

Let $\cA$ be an arc of $\mathrm{PG}(k-1,q)$ of size $q+k-1-t$ in which we arbitrarily order the points of $\cA$. Let $S$ be a subset of $\cA$ of size $k-2$ ordered as in $\cA$. 
Let $\alpha_1,\ldots,\alpha_t$ be $t$ linear forms whose kernels are the $t$ hyperplanes which meet $\cA$ in precisely $S$, see Lemma~\ref{triv}. 
Define, up to a scalar factor, a homogeneous polynomial of degree $t$,
\begin{eqnarray}
f_S(X)=\prod_{i=1}^t \alpha_i(X),
\end{eqnarray}
where $X=(X_1,\ldots,X_k)$. The zero locus $\cZ(f_S)$ of $f_S$ in $\PG(k-1,q)$ is called the {\em tangent hypersurface of $\cA$ at $S$}.
For any permutation $\sigma$ of the elements of $S$, define
$$
f_{\sigma(S)}(X)=(-1)^{s(\sigma)(t+1)} f_S(X),
$$
where $s(\sigma)$ is the parity of the permutation $\sigma$. 

A homogeneous polynomial $f$ in $k$ variables defines a function $\tilde{f}$ from ${\mathbb F}_q^k$ to ${\mathbb F}_q$ under evaluation. If we change the basis of ${\mathbb F}_q^k$ then although the polynomial $f$ will change, the function $\tilde{f}$ will not. Put another way, any function from ${\mathbb F}_q^k$ to ${\mathbb F}_q$ is the evaluation of a polynomial once we fix a basis of ${\mathbb F}_q^k$. Obviously, the polynomial we obtain depends on the basis we choose.

The following is the coordinate-free version of Segre's lemma of tangents.

\begin{lemma}[Coordinate-free lemma of tangents] \label{segrelemma} 
Let $\cA$ be an arc of $\PG(k-1,q)$ and let $D$ be a subset of $\cA$ of size $k-3$. For all $x,y,z \in \cA \setminus D$,
$$
f_{D \cup \{x\}}(y)f_{D \cup \{y\}}(z)f_{D \cup \{z\}}(x)$$
$$=(-1)^{t+1} f_{D \cup \{y\}}(x)f_{D \cup \{z\}}(y)f_{D \cup \{x\}}(z).
$$
\end{lemma}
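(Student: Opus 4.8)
The plan is to establish the identity by passing to suitable coordinates, expressing each of the six quantities $f_{D\cup\{x\}}(y),\,f_{D\cup\{y\}}(z),\dots$ as an explicit product over the points of $\cA$ outside $D\cup\{x,y,z\}$, and then comparing the two sides directly. Note first that all six factors are nonzero: a tangent hyperplane at $D\cup\{x\}$ contains no point of $\cA$ outside $D\cup\{x\}$, hence contains neither $y$ nor $z$, and similarly for the others, so everything below takes place among nonzero elements of $\bF_q$. We may also assume $x,y,z$ pairwise distinct, for if two of them coincide then one factor on each side is $f_{D\cup\{x\}}(x)=0$ (when $t\geq 1$), while the only possibility with $t=0$ forces $q$ even, where $(-1)^{t+1}=1$ and there is nothing to prove.

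\emph{Coordinates and the pencil identity.} First I would fix coordinates. Since $D\cup\{x,y,z\}$ is a set of $k$ points of the arc, it is a basis of $\bF_q^k$, so after applying a projectivity we may take $x=e_1$, $y=e_2$, $z=e_3$ and $D=\{e_4,\dots,e_k\}$; both sides of the asserted equality are multiplied by the same scalar when any point or any polynomial $f_{D\cup\{\cdot\}}$ is rescaled, so this costs nothing. Now $\langle D,x\rangle=\{X_2=X_3=0\}$, so the $q+1$ hyperplanes containing it are precisely the $\{\mu X_2+\nu X_3=0\}$, $[\mu:\nu]\in\PG(1,q)$; in particular $f_{D\cup\{x\}}$ involves only $X_2$ and $X_3$. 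By the arc property each $w\in\cA\setminus(D\cup\{x\})$ lies on a unique hyperplane of this pencil, the one with linear form $w_3X_2-w_2X_3$, and these together with the $t$ tangent hyperplanes at $D\cup\{x\}$ exhaust the pencil. Hence, up to a nonzero scalar $\kappa_x$ absorbing the choices of linear forms, the product over all members of the pencil gives
\[
f_{D\cup\{x\}}(X)\prod_{w\in\cA\setminus(D\cup\{x\})}(w_3X_2-w_2X_3)=\kappa_x\prod_{[\mu:\nu]\in\PG(1,q)}(\mu X_2+\nu X_3)=\kappa_x\,(X_2^qX_3-X_2X_3^q).
\]
The factors on the left with $w=y$ and $w=z$ are $-X_3$ and $X_2$, so after cancelling $X_2X_3$ one is left with
\[
f_{D\cup\{x\}}(X)\prod_{w\in E}(w_3X_2-w_2X_3)=-\kappa_x\prod_{c\in\bF_q\setminus\{0\}}(X_2-cX_3),\qquad E:=\cA\setminus(D\cup\{x,y,z\}).
\]
Evaluating this polynomial identity in $X_2,X_3$ at $(X_2,X_3)=(1,0)$ (the point $y$) and at $(0,1)$ (the point $z$), and using $\prod_{c\in\bF_q\setminus\{0\}}c=-1$, expresses $f_{D\cup\{x\}}(y)$ and $f_{D\cup\{x\}}(z)$ as explicit signed multiples of $\kappa_x$ over $\prod_{w\in E}w_3$, respectively over $\prod_{w\in E}w_2$.

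\emph{The other two pencils and cancellation.} Then I would run the identical computation with axis $\langle D,y\rangle$ (variables $X_1,X_3$) and with axis $\langle D,z\rangle$ (variables $X_1,X_2$), producing the remaining four factors, each of the shape $(\pm1)\kappa_y$ or $(\pm1)\kappa_z$ over $\prod_{w\in E}w_i$ for the appropriate $i\in\{1,2,3\}$. The structural point is that the only quantities occurring in the six formulas are $\kappa_x,\kappa_y,\kappa_z$ and the three products $\prod_{w\in E}w_1,\prod_{w\in E}w_2,\prod_{w\in E}w_3$, and each of them appears exactly once in $f_{D\cup\{x\}}(y)f_{D\cup\{y\}}(z)f_{D\cup\{z\}}(x)$ and exactly once in $f_{D\cup\{y\}}(x)f_{D\cup\{z\}}(y)f_{D\cup\{x\}}(z)$; hence they all cancel between the two sides and only a comparison of signs remains.

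\emph{The main obstacle.} That sign comparison is the only delicate part. One has to check that the left-hand triple product carries the sign $(-1)^{t+2}=(-1)^t$, while the right-hand triple product carries $(-1)^{2t+1}$, so that after multiplication by the prescribed factor $(-1)^{t+1}$ the right side becomes $(-1)^{3t+2}=(-1)^t$ times the very same expression, and the identity follows. Getting these exponents right rests on three inputs: (i) the degree count $t=q-1-|E|$ supplied by Lemma~\ref{triv}, which is exactly what turns the stray factor $(-1)^{|E|}$ produced when one evaluates at $z$ (or at $y$) into a power of $(-1)^t$; (ii) the value $\prod_{c\in\bF_q\setminus\{0\}}c=-1$; and (iii) using throughout the same chosen coordinate vectors for $x,y,z$ in all three pencil computations, so that the scalars $\kappa_x,\kappa_y,\kappa_z$ and the partial sign contributions match across them. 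Apart from this bookkeeping the proof is routine: the only geometric fact used is that a point off the axis of a pencil of hyperplanes lies on exactly one member of the pencil, and the only nontrivial algebra is the factorisation $\prod_{[\mu:\nu]\in\PG(1,q)}(\mu X+\nu Y)=X^qY-XY^q$.
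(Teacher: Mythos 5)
Your proposal is correct and follows essentially the same route as the paper's proof: fix the basis $D\cup\{x,y,z\}$, use the fact that the non-tangent hyperplanes through $\langle D,x\rangle$ correspond bijectively to the points of $\cA$ off that axis, and invoke $\prod_{c\in\bF_q\setminus\{0\}}c=-1$ to extract the sign. The only cosmetic difference is that you package the pencil computation via the factorisation $X_2^qX_3-X_2X_3^q$ over all $q+1$ members, whereas the paper multiplies the ratios $-s_2/s_3$ directly; the cancellation and sign bookkeeping are the same.
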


\begin{proof} 

Let $f_a^*$ be the polynomial we obtain from $f_{D \cup \{a \}}$ with respect to the basis $B= \{x,y,z \} \cup D $.

Then 
$$f_x^*(X)=\prod_{i=1}^t (a_{i2} X_2+a_{i3}X_3),~f_y^*(X)=\prod_{i=1}^t (b_{i1} X_1+b_{i3}X_3),~f_z^*(X)=\prod_{i=1}^t (c_{i1} X_1+c_{i2}X_2),
$$
for some $a_{ij},b_{ij},c_{ij} \in {\mathbb F}_q$.
Let $s \in \cA \setminus B$. The hyperplane joining $s$ and $D\cup\{x\}$ is $\ker (s_3X_2-s_2X_3)$ where $(s_1,s_2,\ldots,s_k)$ are the coordinates of $s$ with respect to the basis $B$.

As $s$ runs through the elements of $\cA \setminus B$, the element $-s_2/s_3$ runs through the elements of ${\mathbb F}_q\setminus \{ a_{i3}/a_{i2} \ | \ i=1,\ldots,t \}$, the latter being excluded since the tangent hypersurface of $\cA$ at $D\cup\{x\}$ does not pass
through $s$. Using the fact that the product of all the non-zero elements of ${\mathbb F}_q$ is $-1$, we obtain
$$
\prod_{s \in \cA \setminus B} \frac{-s_2}{s_3} \prod_{i=1}^t \frac{a_{i3}}{a_{i2}}=-1,
$$
and since $\prod_{i=1}^t a_{i3}=f_x^*(z)$ and $\prod_{i=1}^t a_{i2}=f_x^*(y)$, we have
$$
f_x^*(z)\prod_{s \in \cA \setminus B} (-s_2) =f_x^*(y) \prod_{s \in \cA \setminus B} s_3.
$$
This equation was obtained by considering all hyperplanes through $D\cup\{x\}$. Similarly, by considering the hyperplanes through $D\cup\{y\}$, we get
$$
f_y^*(x)\prod_{s \in \cA \setminus B} (-s_3) =  f_y^*(z) \prod_{s \in \cA \setminus B} s_1
$$
and by considering the hyperplanes through $D\cup\{z\}$ we get
$$
f_z^*(y)\prod_{s \in \cA \setminus B} (-s_1) = f_z^*(x)\prod_{s \in \cA \setminus B} s_2.
$$
Combining these three equations, we obtain
$$
f_x^*(z)f_y^*(x)f_z^*(y)=(-1)^{t+1} f_x^*(y)f_y^*(z)f_z^*(x).
$$
Now, since $f^*$ and $f$ define the same functions on the points of $\PG(k-1,q)$, the lemma follows.
\end{proof}


Let $\mathcal E$ be the set of the first $k-2$ elements of $\cA$. For each $(k-2)$-subset $S\subset \cA$,
scale the polynomial $f_S(X)$ so that
\begin{eqnarray}\label{eqn:scaling}
f_S(e)=(-1)^{s(t+1)}f_{S\cup\{e\}\setminus \{a\}}(a),
\end{eqnarray}
where $e$ is the first element of $\mathcal E\setminus S$, $a$ is the last element of $S\setminus \mathcal E$, and $s$ is the parity of the permutation which orders $S\cup \{e\}$ as in the ordering of $\cA$ (to determine the value of $s$ we assume the ordering of $\cA$ for the subset $S$). With this notation it should be understood that  the order is respected when taking the union of ordered sets, i.e. with ``union" we mean the concatenation of the ordered sets.

So that this definition makes sense, we fix a vector representative for each point of $\cA$. This is essential, since the scaling will depend on which vector representative we choose for a particular point.

\begin{lemma} \label{segrelemmascaled} 
Let $\cA$ be an arc of $\PG(k-1,q)$ and let $D$ be a subset of $\cA$ of size $k-3$. For all $x,y,z \in \cA \setminus D$,
$$
f_{D \cup \{x\}}(y)=(-1)^{s(\sigma)(t+1)} f_{D \cup \{y\}}(x),
$$
where $\sigma$ is the permutation that orders $(D \cup \{ x\},y)$ as $(D \cup \{ y\},x)$ and $s(\sigma)$ is the parity of $\sigma$.
\end{lemma}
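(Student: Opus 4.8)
The plan is to deduce this pairwise identity from the three‑term identity of Lemma~\ref{segrelemma} together with the normalisation~(\ref{eqn:scaling}). For distinct $x,y\in\cA\setminus D$ let $\sigma_{x,y}$ denote the permutation that orders $(D\cup\{x\},y)$ as $(D\cup\{y\},x)$, and set
$$
\lambda(x,y):=\frac{f_{D\cup\{x\}}(y)}{(-1)^{s(\sigma_{x,y})(t+1)}\,f_{D\cup\{y\}}(x)}\ \in\ \bF_q\setminus\{0\}.
$$
This is well defined: since $y\notin D\cup\{x\}$, no hyperplane meeting $\cA$ in precisely $D\cup\{x\}$ can pass through $y$, so $f_{D\cup\{x\}}(y)\neq 0$. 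The lemma asserts exactly that $\lambda\equiv 1$, and since an arc is assumed to have at least $k+1$ points we have $|\cA\setminus D|\ge 4$, so distinct triples abound.

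I would first clear the signs by a parity count. Writing $\beta_a$ for the number of points of $D$ that follow $a$ in the fixed ordering of $\cA$, one checks by moving $x$ and $y$ into their proper places that $s(\sigma_{x,y})\equiv\beta_x+\beta_y+1\pmod 2$; consequently $s(\sigma_{x,y})+s(\sigma_{y,z})+s(\sigma_{z,x})\equiv 2(\beta_x+\beta_y+\beta_z)+3$ is odd, while $s(\sigma_{x,y})+s(\sigma_{y,x})\equiv 2s(\sigma_{x,y})$ is even. Now substitute $f_{D\cup\{x\}}(y)=(-1)^{s(\sigma_{x,y})(t+1)}\lambda(x,y)f_{D\cup\{y\}}(x)$ and its two cyclic variants into Lemma~\ref{segrelemma}: the nonzero factors $f_{D\cup\{y\}}(x)$, $f_{D\cup\{z\}}(y)$, $f_{D\cup\{x\}}(z)$ cancel, and the accumulated sign $(-1)^{(t+1)(s(\sigma_{x,y})+s(\sigma_{y,z})+s(\sigma_{z,x}))}$ equals $(-1)^{t+1}$, which is precisely the sign already on the right‑hand side of Lemma~\ref{segrelemma}. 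What is left is
$$
\lambda(x,y)\,\lambda(y,z)\,\lambda(z,x)=1\qquad\text{for all distinct }x,y,z\in\cA\setminus D,
$$
together with the transposition count $\lambda(x,y)\lambda(y,x)=1$.

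These two relations say that $\lambda$ is a multiplicative coboundary: fixing any $p\in\cA\setminus D$ and putting $\mu(p):=1$, $\mu(x):=\lambda(x,p)$ for $x\ne p$, one gets $\lambda(x,y)=\mu(x)/\mu(y)$ for all distinct $x,y$. Thus everything reduces to showing that $\mu$ is constant on $\cA\setminus D$, and this is exactly what~(\ref{eqn:scaling}) is built to provide. Unravelling the definitions, (\ref{eqn:scaling}) applied to the $(k-2)$‑set $S=D\cup\{x\}$, in the case where $x$ is the $\cA$‑last point of $S\setminus\mathcal E$, reads $f_{D\cup\{x\}}(e_0)=(-1)^{s(t+1)}f_{D\cup\{e_0\}}(x)$, where $e_0$ is the $\cA$‑first point of $\mathcal E\setminus D$; and the parity $s$ occurring there matches $s(\sigma_{x,e_0})$ modulo $2$ by the same count, so this says $\lambda(x,e_0)=1$, i.e.\ $\mu(x)=\mu(e_0)$. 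When $D\subseteq\mathcal E$ (for instance when $D$ consists of the first $k-3$ points of $\cA$) the side condition on $x$ is automatic and $\mathcal E\setminus D$ is the single point $e_0$, so we obtain $\mu(x)=\mu(e_0)$ for every $x\in\cA\setminus D$ with $x\ne e_0$; hence $\mu$ is constant and $\lambda\equiv 1$.

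The main obstacle is carrying out this last step for an arbitrary $(k-3)$‑subset $D$. Then~(\ref{eqn:scaling}) only gives $\lambda(x,e_0)=1$ for those $x\in\cA\setminus(D\cup\mathcal E)$ that follow all of $D\setminus\mathcal E$ in the ordering, which pins down too few values of $\mu$. One way to finish is to transport the identity from $(k-3)$‑subsets of $\mathcal E$ down to $D$ along a chain of single‑point exchanges: the remaining instances of~(\ref{eqn:scaling})---those whose reduction step $S\mapsto S\cup\{e\}\setminus\{a\}$ deletes a point $a$ of $D$---are themselves two‑term identities $f_{D'\cup\{a\}}(e)=\pm f_{D'\cup\{e\}}(a)$ for the $(k-3)$‑set $D'=S\setminus\{a\}$, which differs from $D$ by a single point, and feeding these, together with Lemma~\ref{segrelemma} for the interpolating $(k-3)$‑sets, into the coboundary relations yields constancy of $\mu$ for $D$. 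None of the algebra here is deep; the only genuinely delicate point is keeping track of the parities of the permutations through the successive reductions.
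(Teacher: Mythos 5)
Your cocycle reduction is sound and is essentially a clean repackaging of what the paper does with Lemma~\ref{segrelemma}: the parity count $s(\sigma_{x,y})\equiv\beta_x+\beta_y+1\pmod 2$ is correct, the three-term identity does collapse to $\lambda(x,y)\lambda(y,z)\lambda(z,x)=1$, and together with $\lambda(x,y)\lambda(y,x)=1$ this legitimately reduces the lemma to proving $\lambda(x,e_0)=1$ for all $x$ and one fixed base point. Your base case $D\subseteq\mathcal E$ is also complete, since there (\ref{eqn:scaling}) applied to $S=D\cup\{x\}$ necessarily deletes $a=x$ and inserts $e=e_0$.

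The gap is the general case, which is where the paper's proof does all of its work and which you leave as a plan. When the last element $a$ of $(D\cup\{x\})\setminus\mathcal E$ lies in $D$, the relation (\ref{eqn:scaling}) reads $f_{D\cup\{x\}}(e)=\pm f_{D''\cup\{x\}}(a)$ with $D''=D\cup\{e\}\setminus\{a\}$; to turn this into $\lambda_D(x,e)=1$ you must then invoke $f_{D''\cup\{x\}}(a)=\pm f_{D''\cup\{a\}}(x)=\pm f_{D\cup\{e\}}(x)$, which is the conclusion of Lemma~\ref{segrelemmascaled} itself for $D''$, not an instance of Lemma~\ref{segrelemma} (the latter only gives the cocycle relation for $\lambda_{D''}$ and pins down no individual value). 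So a genuine induction is unavoidable; it closes because $e\in\mathcal E$ and $a\notin\mathcal E$ force $|D''\cap\mathcal E|=|D\cap\mathcal E|+1$, but your sketch neither sets this up nor checks that the exchanges supplied by (\ref{eqn:scaling}) --- which are forced, not chosen --- actually connect an arbitrary $D$ to the base case. Finally, the sign bookkeeping through the chain (that the scaling parity, the inductive parity and the cocycle signs compose to $s(\sigma_{x,e})$) is the one delicate computation in the lemma, and you explicitly omit it. In short: right strategy, identical in substance to the paper's induction on $|D\cap\mathcal E|$, but the inductive step that carries the content of the statement is asserted rather than proved.
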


\begin{proof}
We will prove this inductively on $|D \cap \mathcal E|$. Suppose that $|D \cap \mathcal E|=k-3$ and let $e$ be the unique element of $\mathcal E \setminus D$. Applying Lemma~\ref{segrelemma} to $D$ and $x,y,e$, we have that
$$
f_{D \cup \{x\}}(y)f_{D \cup \{y\}}(e)f_{D \cup \{e\}}(x)=(-1)^{t+1} f_{D \cup \{y\}}(x)f_{D \cup \{e\}}(y)f_{D \cup \{x\}}(e).
$$
Since we scaled $f_{D \cup \{x\}}(X)$ so that 
$$
f_{D \cup \{x\}}(e)=(-1)^{s(\theta)(t+1)}f_{D \cup \{e \}}(x), 
$$
where $\theta$ is the permutation that orders $(D \cup \{ x\},e)$ as $(D \cup \{ e\},x)$,
and we scaled $f_{D \cup \{y\}}(X)$ so that 
$$
f_{D \cup \{y\}}(e)=(-1)^{s(\tau)(t+1)}f_{D \cup \{e \}}(y), 
$$
where $\tau$ is the permutation that orders $(D \cup \{ y\},e)$ as $(D \cup \{ e\},y)$, we have that
$$
f_{D \cup \{x\}}(y)=(-1)^{(s(\theta)+s(\tau)+1)(t+1)}f_{D \cup \{y \}}(x). 
$$
It suffices to observe now that $s(\sigma)=s(\theta)+s(\tau)+1$, where $\sigma$ is the permutation that orders $(D \cup \{ x\},y)$ as $(D \cup \{ y\},x)$.

Now suppose that $|D \cap \mathcal E| \leq k-4$ and that the statement holds for $D$ such that $|D \cap \mathcal E|$ is larger. Let $e$ be the first element of $ \mathcal E\setminus (D \cup \{y\})$. 

If $e=x$ then we are done since we scaled $f_{D \cup \{y\}}(X)$ so that 
$$
f_{D \cup \{y\}}(e)=(-1)^{s(\sigma)(t+1)}f_{D \cup \{e \}}(y), 
$$
where $\sigma$ is the permutation that orders $(D \cup \{ y\},e)$ as $(D \cup \{ e\},y)$.

If $e \neq x$ then let $z$ denote the last element of $D \cup \{y\}$. By scaling,
$$
f_{D \cup \{y\}}(e)=(-1)^{s(\tau)(t+1)}f_{D \cup \{e,y \}\setminus \{z\}}(z),
$$
where $\tau$ is the permutation that orders $(D \cup \{ y\},e)$ as $(D \cup \{ e,y\}\setminus \{z\},z)$. By induction, 
$$
f_{D \cup \{e,y \}\setminus \{z\}}(z)= (-1)^{s(\theta)(t+1)}f_{D \cup \{e\}}(y),
$$
where $\theta \circ \tau$ is the permutation that orders $(D \cup \{ y\},e)$ as $(D \cup \{ e\},y)$.

Observe that $e$ is also the first element of $\mathcal E \setminus (D \cup \{x\})$, so we also have that
$$
f_{D \cup \{x\}}(e)=(-1)^{s(\alpha)(t+1)}f_{D \cup \{e\}}(x),
$$
where $\alpha$ is the permutation that orders $(D \cup \{ x\},e)$ as $(D \cup \{ e\},x)$.

Now, applying Lemma~\ref{segrelemma}, to $D$ and $x,y,e$ the lemma follows observing again that if $\sigma$ is the permutation that orders $(D \cup \{ y\},x)$ as $(D \cup \{ x\},y)$ then 
$$
s(\sigma)=s(\theta)+s(\tau)+s(\alpha)+1.
$$
This concludes the proof.
\end{proof}

Define the function $g$ on ordered subsets of $\cA$ of size $k-1$ by
\begin{eqnarray}\label{eqn:g}
g(S\cup\{a\})= (-1)^{s(t+1)}f_S(a),
\end{eqnarray} 
where $S$ is an ordered subset of $\cA$ of size $k-2$ and $s$ is the parity of the permutation which orders $S\cup \{a\}$ as in the ordering of $\cA$. Note that $S$ is considered as an unordered set in the notation $f_S(a)$.

\begin{lemma}[Scaled coordinate-free lemma of tangents]\label{lem:g_functionpre}
If $\sigma$ is a permutation in $\mathrm{Sym}(k-1)$ and $C$ is an ordered $(k-1)$-subset of $\cA$ then
$$
g(C^\sigma)=(-1)^{s(t+1)}g(C),
$$
where $s$ is the parity of the permutation $\sigma$.
\end{lemma}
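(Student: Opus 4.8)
The plan is to reduce the identity to the case of an adjacent transposition, and then to distinguish whether or not the transposition moves the last coordinate. Since $\mathrm{Sym}(k-1)$ is generated by the adjacent transpositions $\tau_i=(i,i+1)$, $1\le i\le k-2$, and since the parity of a product of $m$ such transpositions is $m \bmod 2$, it suffices to prove $g(C^{\tau_i})=(-1)^{t+1}g(C)$ for every $i$ and every ordered $(k-1)$-subset $C$; applying this repeatedly along the chain $C,\,C^{\tau_{i_1}},\,C^{\tau_{i_1}\tau_{i_2}},\dots$ then yields $g(C^\sigma)=(-1)^{s(t+1)}g(C)$ for an arbitrary $\sigma$ of parity $s$.

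Write $C=(c_1,\dots,c_{k-1})$ and, following the definition of $g$, set $a=c_{k-1}$, $S=\{c_1,\dots,c_{k-2}\}$, so that $g(C)=(-1)^{s(t+1)}f_S(a)$, where $s$ is the parity of the permutation ordering $C$ as in $\cA$. If $1\le i\le k-3$ then $\tau_i$ merely reorders $c_1,\dots,c_{k-2}$ and fixes $c_{k-1}$; the set $S$ and the point $a$ are unchanged, so $f_S(a)$ is unchanged, while the parity of the ordering permutation of $C$ flips. Hence $g(C^{\tau_i})=(-1)^{(s+1)(t+1)}f_S(a)=(-1)^{t+1}g(C)$. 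This case uses nothing beyond the definition of $g$, the fact that $f_S$ depends on $S$ only as a set, and the fact that a transposition changes the ordering parity by one.

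It remains to treat $\tau_{k-2}=(k-2,k-1)$. Put $x=c_{k-2}$, $y=c_{k-1}$ and $D=\{c_1,\dots,c_{k-3}\}$, so that $g(C)=(-1)^{s(t+1)}f_{D\cup\{x\}}(y)$ while $g(C^{\tau_{k-2}})=(-1)^{s'(t+1)}f_{D\cup\{y\}}(x)$ with $s'\equiv s+1\pmod 2$, since $C$ and $C^{\tau_{k-2}}$ differ by a transposition. Lemma~\ref{segrelemmascaled}, applied to $D$ and the pair $x,y$, gives $f_{D\cup\{x\}}(y)=(-1)^{s(\theta)(t+1)}f_{D\cup\{y\}}(x)$, where $\theta$ is the ordering permutation described in that lemma. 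Substituting,
$$g(C^{\tau_{k-2}})=(-1)^{(s'+s(\theta)-s)(t+1)}g(C),$$
so the whole statement comes down to the congruence $s'+s(\theta)-s\equiv 1\pmod 2$. Given $s'\equiv s+1$, this is a congruence for $s(\theta)$ alone, and it is verified by a finite parity count with respect to the fixed ordering of $\cA$: one writes $s(\theta)$ in terms of the number of elements of $D$ that $\cA$-precede $x$ and the number that $\cA$-precede $y$ (together with the relative order of $x$ and $y$), and sees that the contributions cancel. This is exactly the purpose of the normalisation of $f_S$, and hence of $g$, imposed by the scaling \eqref{eqn:scaling}.

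The main obstacle is this last step. The three parities $s$, $s'$ and $s(\theta)$ are each defined relative to a slightly different reference ordering — the order in which $C$ presents its entries, the order after the swap, and the order used to state Lemma~\ref{segrelemmascaled} — and keeping them coherent is the real content of the proof; everything else is a direct unwinding of the definition of $g$ together with a single application of Lemma~\ref{segrelemmascaled}.
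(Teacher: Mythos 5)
Your overall strategy --- generate $\mathrm{Sym}(k-1)$ by adjacent transpositions, dispose of the transpositions among the first $k-2$ positions by unwinding the definition of $g$, and invoke Lemma~\ref{segrelemmascaled} for the transposition of the last two positions --- is exactly the route the paper takes: its proof is the one-line observation that the statement follows from Lemma~\ref{segrelemmascaled} together with the multiplicativity of $(-1)^{s(\cdot)}$. Your Case~1 is correct, and your reduction of Case~2 to the congruence $s'+s(\theta)-s\equiv 1\pmod 2$, i.e.\ (since $s'\equiv s+1$) to $s(\theta)\equiv 0\pmod 2$, is also correct.

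The gap is that this last congruence is precisely the content of the lemma, you defer it to ``a finite parity count \dots\ and sees that the contributions cancel'' without carrying it out, and when one does carry it out it does not cancel. Take $\theta$ to be the permutation of Lemma~\ref{segrelemmascaled} carrying $(D\cup\{x\},y)$ to $(D\cup\{y\},x)$, with the sets $D\cup\{x\}$ and $D\cup\{y\}$ in the ordering of $\cA$. Let $m_x$ (resp.\ $m_y$) be the number of elements of $D$ that follow $x$ (resp.\ $y$) in the ordering of $\cA$. Sorting $(D\cup\{x\},y)$ into $\cA$-order has parity $m_y$ plus one if $x$ follows $y$, and sorting $(D\cup\{y\},x)$ into $\cA$-order has parity $m_x$ plus one if $y$ follows $x$; hence $s(\theta)=m_x+m_y+1$. (This formula is consistent with the identity $s(\sigma)=s(\theta)+s(\tau)+1$ used inside the proof of Lemma~\ref{segrelemmascaled}, so it is not an artefact of a misreading.) Thus $s(\theta)$ is odd whenever $m_x+m_y$ is even --- for instance whenever every element of $D$ precedes both $x$ and $y$ in the ordering of $\cA$, in which case $\theta$ is simply the transposition of the last two entries. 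In that configuration your substitution chain yields $g(C^{\tau_{k-2}})=(-1)^{(1+s(\theta))(t+1)}g(C)=g(C)$ instead of $(-1)^{t+1}g(C)$, so for $t$ even the conclusion is off by a sign. To complete the proof you must either identify a convention for $\theta$ (and for the scaling (\ref{eqn:scaling}) that feeds into Lemma~\ref{segrelemmascaled}) under which $s(\theta)$ is always even, or show explicitly how the residual factor $(-1)^{(m_x+m_y+1)(t+1)}$ is absorbed; as written, the decisive parity identity is asserted rather than proved, and under the stated conventions it fails.
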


\begin{proof}
This follows from Lemma~\ref{segrelemmascaled} and the observation that  
$$
(-1)^{s(\sigma \circ \tau)}=(-1)^{s(\sigma)+s( \tau)}
$$
for permutations $\sigma$ and $\tau$.
\end{proof}


\section{A system of equations associated with an arc} \label{systemsection}

Let $\cA$ be an arc of $\PG(k-1,q)$ of size $q+k-1-t \geq k+t$ arbitrarily ordered and let $E$ be a subset of $\cA$ of size $k+t$.
For a $(k-1)$-subset $C$ of $\cA$, we define $g(C)$ as in the previous section, see (\ref{eqn:g}). 
We write
$$
\det(u,C)
$$
to mean the determinant of the matrix whose first row is $u$ and whose remaining $k-1$ rows are the vectors in $C$ in the order in which they are ordered in $\cA$. Other expressions for $\det$ are similarly defined and $X$ is understood to mean the indeterminants $(X_1,\ldots,X_k)$.

\begin{lemma} \label{sumeqnthm}
For any subset $S$ of $E$ of size $k-2$,
$$
\sum_{C} g(C) \prod_{u \in E \setminus C} \det(u,C)^{-1}=0,
$$
where the sum is over $(k-1)$-subsets $C$ of $E$ containing $S$.
\end{lemma}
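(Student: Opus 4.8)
The plan is to exploit the scaled coordinate-free lemma of tangents (Lemma~\ref{lem:g_functionpre}), which tells us that $g$ is an alternating function on ordered $(k-1)$-subsets up to the sign $(-1)^{s(t+1)}$, together with a polynomial identity coming from Lagrange-type interpolation on the points of a $(k-1)$-dimensional space. First I would fix the $(k-2)$-subset $S \subset E$ and consider the $(k+t)-(k-2) = t+2$ points of $E \setminus S$; these label the $(k-1)$-subsets $C$ of $E$ containing $S$, namely $C = S \cup \{u\}$ for $u \in E\setminus S$. The key observation is that for each such $u$, the tangent hypersurface $f_S$ (of degree $t$) does not vanish at $u$, while the product $\prod_{v \in (E\setminus S)\setminus\{u\}} \det(v, S\cup\{u\})$ records, up to sign and nonzero scalars coming from the choice of vector representatives, precisely the ``other'' factors; the idea is that $g(S\cup\{u\})$ essentially evaluates $f_S$ at $u$ and $f_S$ itself, when restricted to the line configuration determined by $E\setminus S$, behaves like a degree-$t$ polynomial in one variable.

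More precisely, I would change basis so that $S \cup \{u_0, u_1\}$ is (part of) the standard frame for two chosen points $u_0, u_1 \in E\setminus S$, so that the relevant geometry collapses to $\PG(1,q)$: the $(k-3)$-dimensional space $\langle S\rangle$ can be quotiented out, and every point $u \in E\setminus S$ projects to a point $(\lambda_u : \mu_u)$ of $\PG(1,q)$, with the $t$ hyperplanes through $S$ missing $E$ projecting to $t$ points of $\PG(1,q)$. Under this reduction, $f_S$ becomes a binary form $\prod_{i=1}^t(\beta_i X_1 - \gamma_i X_2)$ of degree $t$, the quantity $\det(v, S\cup\{u\})$ becomes (up to the fixed nonzero scalars) $\lambda_u \mu_v - \lambda_v \mu_u$, and $g(S\cup\{u\})$ becomes this binary form evaluated at $(\lambda_u:\mu_u)$, with the alternating sign bookkeeping handled by Lemma~\ref{lem:g_functionpre}. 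The claimed identity then reduces to the classical statement that for $t+2$ distinct points on $\PG(1,q)$ and any binary form $F$ of degree $t$,
$$
\sum_{u} \frac{F(\lambda_u,\mu_u)}{\prod_{v\neq u}(\lambda_u\mu_v - \lambda_v\mu_u)} = 0,
$$
which is just the statement that the degree-$(t+1)$ divided difference (or the leading coefficient of a degree-$\le t$ interpolant on $t+2$ nodes) vanishes — equivalently, a Lagrange interpolation / partial-fractions identity.

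The main obstacle I anticipate is not the interpolation identity itself, which is routine, but rather keeping the sign and scalar factors consistent: the function $g$ is only defined up to the careful scaling in~(\ref{eqn:scaling}), the determinants $\det(u,C)$ depend on the fixed vector representatives, and the ordering-induced signs from Lemma~\ref{lem:g_functionpre} must be matched against the signs produced when one expands the determinants and reorders rows. I would therefore spend most of the care verifying that, with the scaling convention already in force, $g(S\cup\{u\})\prod_{v}\det(v,S\cup\{u\})^{-1}$ depends (up to a single global constant independent of $u$) only on evaluating the binary form $f_S$ at $u$ divided by the Vandermonde-type product over the other points — at which point the sum vanishes by interpolation. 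A clean way to organise this is to prove the identity first in the ``frame position'' where $S \cup (E\setminus S)$ is as standard as possible and all signs can be computed explicitly, and then invoke the coordinate-free nature of the functions $\tilde f_S$ and of $g$ to conclude in general.
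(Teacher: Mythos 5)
Your proposal follows essentially the same route as the paper: the paper likewise uses Lemma~\ref{lem:g_functionpre} to make the summand independent of the ordering of $C$ (the $t+1$ determinant factors and $g(C)$ pick up cancelling signs), passes to a basis containing $S$ so that $f_S$ becomes a binary form of degree $t$ and the determinants become the two-variable Vandermonde factors, and then applies exactly the Lagrange interpolation identity for a degree-$t$ binary form at the $t+2$ points of $E\setminus S$. The plan is correct and the sign/scaling bookkeeping you flag is handled in the paper precisely by the mechanism you propose.
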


\begin{proof}
Let $S$ be a subset of $E$ of size $k-2$. 
Since there are $t+1$ determinants in the product, any permutation $\sigma$ of $C$ will change the sign by $(-1)^{s(\sigma)(t+1)}$ which, by Lemma~\ref{lem:g_functionpre}, will be cancelled out by the same sign appearing due to the permutation of $C$ in the $g(C)$ term. Thus, the summand in the equation is unaffected by the ordering of $C$.
Hence, it suffices to prove the equation for ordered subsets $C$ of $E$ in which the first $k-2$ elements are $S$.

Let $B$ be a subset of $\cA$ of size $k$ containing $S$. With respect to the basis $B$, the polynomial $f_S(X)$ is a homogeneous polynomial in two variables of degree $t$. 

Let $x$ be a fixed element of $E \setminus S$. The polynomial
$$
\sum_{e \in E\setminus (S \cup \{x \})} f_S(e) \prod_{ u \in  E\setminus (S \cup \{x,e \})} \frac{\det(X,u,S)}{\det(e,u,S)}
$$ 
evaluates to $f_S(e)$ for all $e \in E\setminus (S \cup \{x \})$. Since this polynomial and $f_S(X)$ are both homogeneous polynomials in two variables of degree $t$ which agree at $t+1$ distinct points, they are the same. Evaluating the polynomial in $X=x$, we have
$$
f_S(x)=\sum_{e \in E\setminus (S \cup \{x \})} f_S(e) \prod_{ u \in  E\setminus (S \cup \{x,e \})} \frac{\det(x,u,S)}{\det(e,u,S)}.
$$ 
Dividing by  
$$
\prod_{ u \in  E\setminus (S\cup \{x \})} \det(x,u,S)
$$ 
gives
$$
f_S(x)\prod_{ u \in  E\setminus (S \cup \{x \})} \det(x,u,S)^{-1}=-\sum_{e \in E\setminus (S \cup \{x \})} f_S(e) \prod_{ u \in  E\setminus (S \cup \{e \})} \det(e,u,S)^{-1},
$$
from which we deduce that
$$
\sum_{e \in E\setminus S} f_S(e) \prod_{ u \in  E\setminus (S \cup \{e \})} \det(u,S,e)^{-1}=0,
$$
and the theorem follows.
\end{proof}

If $k \leq p$ then the system of equations in Lemma~\ref{sumeqnthm} contains a set of equations which will be put to use in the classification of large arcs in this range and verifies the MDS conjecture for these values of $k$.

\begin{lemma} \label{eqnsksmall}
Let $E$ be a subset of $\cA$ of size $k+t$ and let $\Delta$ be a subset of $E$ of size $t+2$. If $k \leq p$ then
$$
\sum_C g(C) \prod_{u \in E \setminus C} \det(u,C)^{-1}=0,
$$
where the sum runs over the $(k-1)$-subsets of $E$ contained in $\Delta$.
\end{lemma}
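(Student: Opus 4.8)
The plan is to deduce Lemma~\ref{eqnsksmall} from Lemma~\ref{sumeqnthm} by superposing the equations it provides over all choices of the $(k-2)$-set, organised as a short induction. One cannot simply specialise Lemma~\ref{sumeqnthm}: the equation attached to a $(k-2)$-set $S$ runs over \emph{all} $(k-1)$-subsets $C$ with $S\subseteq C\subseteq E$, including many not contained in $\Delta$. Adding up these equations for a suitable family of $S$'s makes the unwanted terms cancel, and the hypothesis $k\le p$ is used exactly to invert the integer constants $k-1,k-2,\dots,1$ that appear as multiplicities in $\bF_q$.

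Write $h(C):=g(C)\prod_{u\in E\setminus C}\det(u,C)^{-1}$ for a $(k-1)$-subset $C\subseteq E$ ordered as in $\cA$; this is the summand of Lemma~\ref{sumeqnthm}, it depends only on the set $C$ (there are $t+1$ determinants in the product, so a permutation of $C$ changes sign by $(-1)^{s(\sigma)(t+1)}$, matched by $g$), and the determinants are nonzero since the points of $\cA$ are in general position. If $t+2<k-1$ the sum is empty and there is nothing to prove, so assume $t\ge k-3$, whence $|\Delta|=t+2\ge k-1$. For $\Delta'\subseteq E$ put $\Sigma(\Delta'):=\sum_{C\subseteq\Delta',\,|C|=k-1}h(C)$. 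I will show $\Sigma(\Delta')=0$ for every $\Delta'$ with $t+2\le|\Delta'|\le k+t$, by induction on $r:=|E\setminus\Delta'|$ running from $0$ to $k-2$; the case $r=k-2$, i.e.\ $\Delta'=\Delta$, is the lemma. For the base $r=0$ ($\Delta'=E$), sum the identity of Lemma~\ref{sumeqnthm} over all $(k-2)$-subsets $S$ of $E$ and swap the order of summation: each $(k-1)$-set $C$ is counted $\binom{k-1}{k-2}=k-1$ times, so $(k-1)\Sigma(E)=0$, and $k-1\ne 0$ in $\bF_q$ gives $\Sigma(E)=0$.

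For the inductive step, let $|E\setminus\Delta'|=r\ge1$, so $|\Delta'|=k+t-r\ge t+2\ge k-1$. Summing Lemma~\ref{sumeqnthm} over the $(k-2)$-subsets $S$ of $\Delta'$, the coefficient of $h(C)$ becomes $\binom{|C\cap\Delta'|}{k-2}$, which equals $k-1$ when $C\subseteq\Delta'$, equals $1$ when $|C\setminus\Delta'|=1$, and vanishes otherwise; hence $(k-1)\Sigma(\Delta')+\sum_{|C\setminus\Delta'|=1}h(C)=0$. Splitting the second sum by the unique element $v\in E\setminus\Delta'$ of $C\setminus\Delta'$, the $C$ with $C\setminus\Delta'=\{v\}$ are exactly the $(k-1)$-subsets of $\Delta'\cup\{v\}$ through $v$, so their $h$-sum is $\Sigma(\Delta'\cup\{v\})-\Sigma(\Delta')=-\Sigma(\Delta')$ by the induction hypothesis applied to $\Delta'\cup\{v\}$ (which has $|E\setminus(\Delta'\cup\{v\})|=r-1$). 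Summing over the $r$ choices of $v$ yields $(k-1-r)\Sigma(\Delta')=0$, and since $1\le k-1-r\le k-1\le p-1$ the constant $k-1-r$ is nonzero in $\bF_q$, giving $\Sigma(\Delta')=0$.

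The main obstacle is the bookkeeping in the inductive step: pinning down the multiplicity $\binom{|C\cap\Delta'|}{k-2}$ with which each $(k-1)$-subset occurs when one sums over the $(k-2)$-subsets of $\Delta'$, checking that only the classes $C\subseteq\Delta'$ and $|C\setminus\Delta'|=1$ survive, and arranging the induction so that the constants met along the way are precisely $k-1,k-2,\dots,1$ — which $k\le p$ makes invertible. The remaining points (the empty-sum case when $t<k-3$, nonvanishing of the determinants, order-independence of $h$, and the fact that all $\Delta'$ in the induction satisfy $|\Delta'|\ge k-1$) are routine.
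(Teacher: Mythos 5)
Your argument is correct, and it is built on the same engine as the paper's proof: both form a linear combination of the equations $\mathrm{eqn}(S)=0$ from Lemma~\ref{sumeqnthm} so that every term $h(C)$ with $C\not\subseteq\Delta$ cancels while the terms with $C\subseteq\Delta$ survive with an invertible coefficient. The difference is in how the combination is produced. The paper does it in one shot, weighting $\mathrm{eqn}(S)$ by the explicit coefficient $\lambda_S=(-1)^m(k-m-2)!\,m!$ with $m=|S\cap\Delta|$, and then verifying directly that the coefficient of $h(C)$ is $(k-1-m)(-1)^m(k-m-2)!m!+m(-1)^{m-1}(k-m-1)!(m-1)!=0$ for $C\not\subseteq\Delta$ and $(-1)^{k-2}(k-1)!$ for $C\subseteq\Delta$. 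You instead run a descending induction on $\Delta'$, removing one point of $E\setminus\Delta$ at a time; at each stage only the single new class $|C\setminus\Delta'|=1$ appears (with multiplicity $\binom{k-2}{k-2}=1$), it is absorbed by the induction hypothesis, and the only constant to invert is $k-1-r$. Your bookkeeping checks out: the multiplicity $\binom{|C\cap\Delta'|}{k-2}$ is right, the identity $\sum_{C\setminus\Delta'=\{v\}}h(C)=\Sigma(\Delta'\cup\{v\})-\Sigma(\Delta')$ is correct, and $1\le k-1-r\le k-2\le p-2$ keeps every constant a unit. What your route buys is transparency about where $k\le p$ is used (only the integers $k-1,\dots,1$ need to be nonzero mod $p$) and the avoidance of the factorial cancellation identity; what it costs is that the explicit coefficients $\lambda_S$, which the paper reuses in spirit in Lemma~\ref{twotothed}, remain hidden --- unwinding your induction would essentially reconstruct them. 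Your handling of the degenerate case $t+2<k-1$ and of the order-independence of $h(C)$ matches the paper's implicit and explicit remarks respectively.
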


\begin{proof}
For any subset $S$ of $E$ of size $k-2$, Lemma~\ref{sumeqnthm} implies that
\begin{eqnarray}\label{eqn:equation_S}
\mathrm{eqn}(S):=\sum_C g(C) \prod_{u \in E \setminus C} \det(u,C)^{-1}=0,
\end{eqnarray}
where the sum runs over the $(k-1)$-subsets of $E$ containing $S$.
For each such $S$, define
$$
\lambda_S=(-1)^m(k-m-2)!m!
$$ 
where $m=|S \cap \Delta|$, and consider the sum
\begin{equation} \label{Ssum}
\sum_S \lambda_S~ \mathrm{eqn}(S),
\end{equation}
where $S$ runs over all $(k-2)$-subsets of $E$.

Let $C$ be a $(k-1)$-subset of $E$ and let $m=|C \cap \Delta|$. 

If $C \not\subseteq \Delta$ then the term 
$$
g(C) \prod_{u \in E \setminus C} \det(u,C)^{-1}
$$
appears in the sum (\ref{Ssum}) with coefficient
$$
(k-1-m)(-1)^m(k-m-2)!m!+m(-1)^{m-1}(k-m-1)!(m-1)!=0,
$$
where the first term comes from the subsets $S$ of $C$ for which $|S \cap \Delta|=m$ and the second term comes from the subsets $S$ of $C$ for which $|S \cap \Delta|=m-1$.

If $C \subseteq \Delta$ then the term 
$$
g(C) \prod_{u \in E \setminus C} \det(u,C)^{-1}
$$
appears with coefficient
$$
(-1)^{k-2}(k-1)!
$$
from which the assertion follows.
\end{proof}


\section{A proof of the MDS conjecture for prime fields}\label{sec:MDS_p}

The following theorem verifies the MDS conjecture for prime fields. We suppose throughout that $q=p^h$, for some prime $p$.

\begin{theorem} \label{MDSprime}
If $4\leq k \leq p$ then an arc of $\mathrm{PG}(k-1,q)$ has at most $q+1$ points.
\end{theorem}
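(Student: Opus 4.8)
The plan is to argue by contradiction. Suppose $\cA$ is an arc of $\PG(k-1,q)$ with $|\cA|\geq q+2$. Since every subset of an arc is again an arc, we may discard points and assume $|\cA|=q+2$ exactly; in the notation of Section~\ref{systemsection} this forces $t=q+k-|\cA|-1=k-3$, which is at least $1$ because $k\geq 4$. Fix an ordering of the points of $\cA$ and a vector representative for each of them, so that the function $g$ on ordered $(k-1)$-subsets of $\cA$ from (\ref{eqn:g}) is well defined and Lemma~\ref{eqnsksmall} is available.

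Assume first, in addition, that $q\geq 2k-5$, equivalently that $|\cA|=q+2\geq k+t=2k-3$. Choose any $E\subseteq\cA$ with $|E|=k+t$ and any $\Delta\subseteq E$ with $|\Delta|=t+2$. The decisive point is that $t+2=k-1$, so the \emph{only} $(k-1)$-subset of $E$ contained in $\Delta$ is $\Delta$ itself, and the identity of Lemma~\ref{eqnsksmall} collapses to the single term
$$
g(\Delta)\prod_{u\in E\setminus\Delta}\det(u,\Delta)^{-1}=0.
$$
For each $u\in E\setminus\Delta\subseteq\cA\setminus\Delta$ the set $\{u\}\cup\Delta$ consists of $k$ points of the arc $\cA$ and is therefore linearly independent, so $\det(u,\Delta)\neq 0$; hence $g(\Delta)=0$. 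On the other hand, writing $\Delta=S\cup\{a\}$ with $S$ the first $k-2$ elements of $\Delta$ and $a$ its last element, $g(\Delta)$ equals, up to sign, $f_S(a)=\prod_{i=1}^{t}\alpha_i(a)$, where the $\alpha_i$ are linear forms whose kernels are the $t$ hyperplanes meeting $\cA$ in precisely $S$. As $a\in\cA\setminus S$ lies on none of these hyperplanes, $\alpha_i(a)\neq 0$ for all $i$, so $g(\Delta)\neq 0$ — a contradiction. Thus no arc of size $q+2$ exists in $\PG(k-1,q)$ when $4\leq k\leq p$ and $q\geq 2k-5$.

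It remains to treat $q<2k-5$. Then $q<2k-5\leq 2p-5$, which is impossible for $q=p^h$ with $h\geq 2$ (since $p^h\geq p^2>2p-5$), so $q=p$ and $(p+5)/2<k\leq p$. By Corollary~\ref{dualarc} an arc of size $p+2$ in $\PG(k-1,p)$ yields an arc of size $p+2$ in $\PG(k^{*}-1,p)$ with $k^{*}=p+2-k$. From $(p+5)/2<k$ we get $k^{*}<(p-1)/2$, hence $p+2\geq 2k^{*}-3$, so the previous paragraph applies to this dual arc whenever $4\leq k^{*}$ (and indeed $2\leq k^{*}\leq p$ since $2\leq k\leq p$). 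The only cases left are $k^{*}\in\{2,3\}$, i.e.\ $k\in\{p-1,p\}$: there the dual arc would be a set of $p+2$ points of $\PG(1,p)$ or $\PG(2,p)$, which is impossible — a projective line over $\bF_p$ has only $p+1$ points, and a projective plane over $\bF_p$ with $p$ odd (here $p\geq k\geq 4$, so $p\geq 5$) contains no arc of size $p+2$, since such a set meets every line in $0$ or $2$ points and the lines through a point off it would partition it into pairs, forcing $p+2$ to be even. This completes the argument. I expect the only delicate part to be this bookkeeping over the range of $k$ together with the passage to the dual code; all of the substance lies in Lemma~\ref{eqnsksmall}, whose one-term specialisation when $t+2=k-1$ does the real work.
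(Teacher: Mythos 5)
Your proof is correct and follows essentially the same route as the paper: dualise if necessary so that a set $E$ of size $k+t=2k-3$ fits inside $\cA$, then apply Lemma~\ref{eqnsksmall} with $|\Delta|=t+2=k-1$ so that the sum collapses to the single nonzero term $g(\Delta)\prod_{u\in E\setminus\Delta}\det(u,\Delta)^{-1}$, a contradiction. The only difference is that you spell out the degenerate dual dimensions $k^{*}\in\{2,3\}$ explicitly, which the paper's one-line appeal to Corollary~\ref{dualarc} leaves implicit; this is a harmless (indeed slightly more careful) elaboration rather than a new idea.
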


\begin{proof}
Let $\cA$ be an arc of $\PG(k-1,q)$ of size $q+2=q+k-1-t$, so $t=k-3$, and define $g(C)$ as in (\ref{eqn:g}). By Theorem~\ref{dualarc}, we can assume that $k \leq \frac{1}{2}q+1$, so $k+t=2k-3 \leq q-1$. Let $E$ be a subset of $\cA$ of size $k+t=2k-3$ and let $\Delta$ be a subset of $E$ of size $t+2=k-1$. By Lemma~\ref{eqnsksmall},
$$
g(\Delta) \prod_{u \in E \setminus \Delta} \det(u,\Delta)^{-1}=0,
$$
which is a contradiction, since $g(\Delta) \neq 0$ and $\det(u,\Delta) \neq 0$, for all $u \in \cA \setminus \Delta$.
\end{proof}


\section{A proof of the MDS conjecture for $k \leq 2p-2$}\label{sec:MDS_2p-2}

In a similar vein to Theorem~\ref{MDSprime} we will prove that the MDS conjecture is true for $k \leq 2p-2$.

Let $\cA$ be an arc of $\PG(k-1,q)$ of size $q+2$ and define $g(C)$ as in (\ref{eqn:g}).
Let $X=\{x_1,\ldots,x_{k-1}\}$, $Y=\{ y_1,\ldots,y_{k-2} \}$ be disjoint subsets of $\cA$ and define $E=X \cup Y$. Note that $X$ is a set in this section and not a vector of indeterminants as in Section~\ref{systemsection}.
Let $r$ be a non-negative integer such that $r \leq k-p$ and define 
$X_r=\{x_1,\ldots,x_{r}\}$ and $Y_r=\{ y_1,\ldots,y_{r} \}$. 
For any $T \subseteq \{1,2,\ldots,r\}$, let 
$$
X_{T}= \{ x_j \ | \ j \in T \},\ \ Y_{T}= \{ y_j \ | \ j \in T \}.
$$
Define a $(k-1)$-subset $C_{r,T}$ of $E$ by
$$
C_{r,T}=(X \setminus X_r) \cup X_{T} \cup (Y_r\setminus Y_T).
$$

\begin{lemma} \label{twotothed}
Let $\cA$ be an arc of $\PG(k-1,q)$ of size $q+2$ and suppose $k \geq p$. Let $E=X \cup Y$ be a subset of $\cA$ of size $2k-3$ defined as above. Then
$$
\sum_{T \subseteq \{1,\ldots,k-p\}} (-1)^{|T|} g(C_{k-p,T}) \prod_{u \in E \setminus C_{k-p,T}} \det(u,C_{k-p,T})^{-1}=0.
$$
\end{lemma}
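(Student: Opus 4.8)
The plan is to mimic the proof of Lemma~\ref{eqnsksmall}, but iterating the elimination $r=k-p$ times rather than just once, so that the telescoping cancellation runs through several consecutive "levels" of $(k-2)$-subsets. First I would set $r=k-p$ and take, for each subset $S$ of $E$ of size $k-2$, the equation $\mathrm{eqn}(S)=0$ supplied by Lemma~\ref{sumeqnthm}. The goal is to choose scalars $\mu_S\in\bF_q$ so that the linear combination $\sum_S \mu_S\,\mathrm{eqn}(S)$ has, as surviving terms, exactly the summands $g(C_{r,T})\prod_{u\in E\setminus C_{r,T}}\det(u,C_{r,T})^{-1}$ with coefficient $\pm(-1)^{|T|}$ (times a common nonzero constant, which is invertible precisely because $k-p<p$, i.e. $k\le 2p-2$). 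As in Lemma~\ref{eqnsksmall} the summand attached to a $(k-1)$-subset $C$ is unaffected by the ordering of $C$, by Lemma~\ref{lem:g_functionpre}, so it suffices to track which $(k-1)$-subsets $C$ survive and with what coefficient.

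The combinatorial heart is to find the right weights $\mu_S$. I would look for $\mu_S$ depending only on the "profile" of how $S$ sits relative to the distinguished pairs $\{x_j,y_j\}$, $j=1,\dots,r$: for each index $j\le r$, record whether $S$ contains $x_j$, contains $y_j$, or contains neither (it cannot contain both a full $x_j$ and the structure we want, but in general both is possible and must be handled), and also how many elements of $X\setminus X_r$ or of the "free" part $S$ omits. Concretely I expect $\mu_S$ to be a product over $j\le r$ of a local factor ($+1$, $-1$, or a factorial-type weight) times a factorial depending on the size of $S$'s intersection with the ambient free set, exactly paralleling the single-step weight $\lambda_S=(-1)^m(k-m-2)!\,m!$ of Lemma~\ref{eqnsksmall} but now multiplicatively across the $r$ independent coordinates being eliminated. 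The verification then splits into: (i) for every $(k-1)$-subset $C$ that is \emph{not} of the form $C_{r,T}$, the coefficient of its summand in $\sum_S\mu_S\,\mathrm{eqn}(S)$ vanishes — this is where one uses the identity $(k-1-m)(-1)^m(k-m-2)!\,m! + m(-1)^{m-1}(k-m-1)!\,(m-1)!=0$ of Lemma~\ref{eqnsksmall}, applied coordinate by coordinate (or level by level), so that each "bad" subset gets killed by pairing the contributions of the $S\subseteq C$ at two adjacent values of some local parameter; (ii) for $C=C_{r,T}$ the coefficient works out to $(-1)^{|T|}$ times the nonzero constant, coming from the unique extremal term in each telescoping sum, of the shape $(-1)^{k-2}(k-1)!$ in the base case.

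The step I expect to be the main obstacle is precisely (i): organizing the cancellation when $r>1$, because a single "bad" $C$ can fail to be a $C_{r,T}$ for several different reasons simultaneously (wrong at more than one index $j$, or not containing all of $X\setminus X_r$, or containing both $x_j$ and $y_j$ for some $j$), and one must exhibit, for each such $C$, a specific index or parameter along which the weighted count telescopes to zero, uniformly. I would handle this by induction on $r$: assume the lemma's elimination identity holds for $r-1$ (with $E$, $X$, $Y$ replaced appropriately), apply it to produce the equation with summands indexed by $T\subseteq\{1,\dots,r-1\}$ but with $C_{r-1,T}$ still "open" at coordinate $r$, and then collapse coordinate $r$ using one more application of Lemma~\ref{sumeqnthm} with the single-step weights $\lambda_S$ restricted to that coordinate — the hypothesis $k-p<p$ guaranteeing every factorial that appears is a unit in $\bF_q$. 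The base case $r=0$ is the trivial identity $\mathrm{eqn}(S)=0$ itself, and the base case $r=1$ is exactly Lemma~\ref{eqnsksmall} with $\Delta$ chosen so that the surviving $(k-1)$-subsets are $C_{1,\emptyset}$ and $C_{1,\{1\}}$.
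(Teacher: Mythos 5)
Your high-level strategy is the paper's: write the claimed identity as a weighted linear combination $\sum_S \lambda_S\,\mathrm{eqn}(S)$ of the equations from Lemma~\ref{sumeqnthm}, with weights supported on the $(k-2)$-subsets $S$ that select exactly one element from each pair $\{x_j,y_j\}$, $j\le r=k-p$, and then check the coefficient of each $(k-1)$-subset $C$. But the two points you defer are exactly where the content of the lemma lies, and as described they fail. The paper's weight is $\lambda_S=m!\,(p-2-m)!\,(-1)^{m+|T|}$ when $S\cap(X_r\cup Y_r)=X_T\cup(Y_r\setminus Y_T)$ and $\lambda_S=0$ otherwise, with $m=|S\cap(X\setminus X_r)|$. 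The factorial is $(p-2-m)!$, not the $(k-m-2)!$ you propose by analogy with Lemma~\ref{eqnsksmall}: the support condition forces $|S\setminus(X_r\cup Y_r)|=k-2-r=p-2$, so $0\le m\le p-2$, every factorial is automatically a unit mod $p$, and the surviving coefficient is $(k-1-r)(p-2)!=(p-1)!\ne 0$ by Wilson's theorem. With $(k-m-2)!$ the weights themselves vanish mod $p$ as soon as $k\ge p+2$ (take $m=0$), and your stated reason for invertibility, $k-p<p$, is not a hypothesis of this lemma and would not rescue them. Identifying $r=k-p$ as the unique depth at which the leading factorial survives mod $p$ is the heart of the lemma, and it is absent from your sketch.

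The proposed induction on $r$ also cannot be made to work. The analogue of the identity for $r'<k-p$ (same arc, same $E$, $X$, $Y$) is not independently provable by this method: the corresponding linear combination has surviving coefficient $(k-1-r')!\equiv 0\pmod p$, so it degenerates to $0=0$; and for $r'=0$ the would-be identity reads $g(X)\prod_{u\in Y}\det(u,X)^{-1}=0$, a single manifestly nonzero term --- indeed the theorem following this lemma derives exactly that statement \emph{from} the $r=k-p$ case in order to reach a contradiction, so an upward induction would be circular. Your base case $r=1$ is also misidentified: Lemma~\ref{eqnsksmall} requires $k\le p$ and produces all $(k-1)$-subsets of a set $\Delta$ with a common sign, not the two terms $C_{1,\emptyset}$, $C_{1,\{1\}}$ with opposite signs. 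The paper needs no induction here: it disposes of every $C\ne C_{r,T}$ by a direct case analysis on $|C\cap(X_r\cup Y_r)|$ --- if some pair $\{x_i,y_i\}$ lies in $C$ the two nonzero weights $\lambda_{C\setminus\{x_i\}}$ and $\lambda_{C\setminus\{y_i\}}$ cancel via the sign $(-1)^{|T|}$, and if $C\setminus(X_r\cup Y_r)\ne X\setminus X_r$ the same telescoping identity as in Lemma~\ref{eqnsksmall}, now with $p$ in place of $k$, kills the coefficient.
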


\begin{proof}
Let $S$ be a subset of $E$ of size $k-2$ and set $r=k-p$.

If $S \cap (X_r \cup Y_r)=X_{T} \cup (Y_r \setminus Y_{T})$ for some $T \subseteq \{1,\ldots,r\}$ then let 
$$
\lambda_S=m!(p-2-m)!(-1)^{m+|T|},
$$
where $m=|S \cap(X \setminus X_r)|$, otherwise put $\lambda_S=0$. 

Let $ \alpha_C$ denote the coefficient of the $C$-term in the sum 
\begin{equation} \label{tausum}
\sum_{S\subset E} \lambda_S \mathrm{eqn}(S).
\end{equation}
Then $\alpha_C$ is zero if $|C\cap (X_r\cup Y_r)|\notin \{r,r+1\}$.

If $|C\cap (X_r\cup Y_r)|=r+1$ then there exists some $i\in \{1,\ldots, r\}$ for which $x_i,y_i\in C$. If $i$ is not unique then $\lambda_S=0$ for all $(k-1)$-subsets $S$ of $C$, so $\alpha_C=0$. If $i$ is unique then
$$
\alpha_C=\lambda_{C \setminus \{x_i\}}+\lambda_{C \setminus \{y_i\}}=0.
$$ 

If $|C\cap (X_r\cup Y_r)|=r$ and $C\cap (X_r \cap Y_r)\neq X_T \cup (Y \setminus Y_T)$, for any $T\subset\{1,\ldots,r\}$, then $\alpha_C=0$.

If $|C\cap (X_r\cup Y_r)|=r$ and $C\cap (X_r \cap Y_r)=X_T \cup (Y \setminus Y_T)$, for some $T\subset\{1,\ldots,r\}$, and $C \setminus (X_r \cap Y_r) \neq X \setminus X_r$ then 
$$
\alpha_C=(-1)^{|T|}(m(m-1)!(p-1-m)!(-1)^{m-1}+(p-1-m)m!(p-2-m)!(-1)^s)=0
$$
where $m=|C\cap (X\setminus X_r)|$.

Finally, if none of the above then $C=C_{r,T}$ for some $T\subset\{1,\ldots,r\}$ and
$$
\alpha_C=(-1)^{|T|}(k-1-r)(p-2)!=(-1)^{|T|}(p-1)!
$$
which is nonzero.
\end{proof}

\begin{lemma} \label{interpaform}
Let $C$ be a $(k-1)$-subset of  $\cA$. Suppose $W$ and $\Delta$ are disjoint subsets of $\cA$ such that $|W|=k-|\Delta|$, and $\Delta \subseteq C$. Then
$$
\sum_{w \in W}\frac{\det(u,W\setminus w,\Delta)}{\det(w,W\setminus w,\Delta)} \det(w,C)=\det(u,C).
$$
\end{lemma}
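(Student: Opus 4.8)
The plan is to recognise this identity as Cramer's rule in coordinate-free dress. Since $W$ and $\Delta$ are disjoint subsets of the arc $\cA$, the set $W \cup \Delta$ consists of $|W|+|\Delta| = k$ distinct points of $\cA$, hence the corresponding vectors are linearly independent and form a basis of $\bF_q^k$. In particular, for each $w \in W$ the vectors $\{w\} \cup (W \setminus w) \cup \Delta$ are again $k$ points of $\cA$, so $\det(w,W\setminus w,\Delta) \neq 0$ and all the denominators on the left-hand side make sense. One should also note at the outset that the matrices involved are genuinely $k \times k$, since $|W\setminus w| + 1 + |\Delta| = k$ and $|C| + 1 = k$.

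First I would expand $u$ in the basis $W \cup \Delta$, writing
$$
u = \sum_{w \in W} \mu_w w + \sum_{d \in \Delta} \nu_d d
$$
for scalars $\mu_w, \nu_d \in \bF_q$. Next I would identify $\mu_w$: in the matrix with first row $u$ and remaining rows $W \setminus w$ followed by $\Delta$, subtracting from the first row the combination $\sum_{w' \neq w} \mu_{w'} w' + \sum_{d} \nu_d d$ of rows already present reduces the first row to $\mu_w w$ without changing the determinant, so
$$
\det(u, W \setminus w, \Delta) = \mu_w \det(w, W \setminus w, \Delta),
$$
that is, $\mu_w = \det(u, W\setminus w, \Delta)/\det(w, W\setminus w, \Delta)$, which is exactly the coefficient appearing in the sum.

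Finally I would substitute the expansion of $u$ into $\det(u,C)$ and use multilinearity of the determinant in its first row:
$$
\det(u, C) = \sum_{w \in W} \mu_w \det(w, C) + \sum_{d \in \Delta} \nu_d \det(d, C).
$$
Because $\Delta \subseteq C$, each matrix $(d, C)$ with $d \in \Delta$ has two equal rows, so $\det(d, C) = 0$ and the entire $\Delta$-sum vanishes; substituting the formula for $\mu_w$ then gives the claimed identity. I do not expect any real obstacle here — the argument is a direct Cramer/Lagrange-interpolation computation, closely parallel to the interpolation step in the proof of Lemma~\ref{sumeqnthm} — the only care needed is the bookkeeping that the relevant determinants are nonzero (arc property) and that no spurious signs appear, which is automatic since the "special" row $u$ or $w$ always occupies the first position.
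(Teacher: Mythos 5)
Your proof is correct and is essentially the argument the paper intends: the paper's one-line proof ("interpolation of the linear form $\det(u,C)$ at the points of $\Delta\cup W$") is exactly your computation, with the Cramer coefficients $\mu_w=\det(u,W\setminus w,\Delta)/\det(w,W\setminus w,\Delta)$ playing the role of the Lagrange coefficients and the $\Delta$-terms dropping out because $\Delta\subseteq C$ forces repeated rows. Your version just makes explicit the basis expansion of $u$ and the nonvanishing of the denominators via the arc property, which the paper leaves implicit.
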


\begin{proof}
This is interpolation of the linear form $\det(u,C)$ in $u=(u_1,\ldots,u_k)$ at the points of $\Delta \cup W$. 
\end{proof}

\begin{theorem}
If $q$ is not prime and $k \leq 2p-2$ then an arc of $\mathrm{PG}(k-1,q)$ has at most $q+1$ points.
\end{theorem}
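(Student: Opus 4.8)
The strategy mirrors the proof of Theorem~\ref{MDSprime} for prime fields, but now uses Lemma~\ref{twotothed} in place of Lemma~\ref{eqnsksmall}. First I would reduce to the range $p \leq k \leq 2p-2$: if $k \leq p-1$ then we are already in the prime-field range $k\le p$ and Theorem~\ref{MDSprime} applies, while if $k=p$ the hypothesis $q=p^h$ with $q$ not prime is irrelevant since Theorem~\ref{MDSprime} still covers $k=p$. So assume $\cA$ is an arc of $\PG(k-1,q)$ of size $q+2$, with $t=k-3$, and by Corollary~\ref{dualarc} we may assume $k \leq \tfrac12 q+1$, so that $k+t=2k-3 \leq q-1$ and a subset $E=X\cup Y$ of $\cA$ of size $2k-3$ (with $|X|=k-1$, $|Y|=k-2$) exists. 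Set $r=k-p$; the constraint $k\le 2p-2$ is exactly $r \leq p-2 \leq k-p$... more precisely $r=k-p \geq 0$ and $r \leq p-2$, which is what will make the binomial-type sum over $T\subseteq\{1,\ldots,r\}$ manageable.

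\textbf{Main step.} Apply Lemma~\ref{twotothed} to obtain
$$
\sum_{T \subseteq \{1,\ldots,r\}} (-1)^{|T|} g(C_{r,T}) \prod_{u \in E \setminus C_{r,T}} \det(u,C_{r,T})^{-1}=0.
$$
The plan is to show this is a contradiction by proving the left-hand side is, up to a nonzero scalar, equal to a single nonzero quantity. The idea is that the sets $C_{r,T}$ all share the common $(k-1-r)$-subset $X\setminus X_r$ together with choosing, for each index $j\in\{1,\ldots,r\}$, either $x_j$ or $y_j$; so the sum is an alternating sum over the $2^r$ ways of making these choices. Using Lemma~\ref{interpaform} repeatedly — interpolating each linear form $\det(u,C_{r,T})$ at the relevant points so as to express everything over a fixed $(k-1)$-subset, say $C_{r,\emptyset}=(X\setminus X_r)\cup(Y_r)$ — one rewrites each summand with a common product of determinants in the denominator. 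After clearing denominators the alternating sum over $T$ collapses: each pair $x_j,y_j$ contributes a factor of the form $\det(x_j,\ast)\det(y_j,\ast)$ minus a cross term, and telescoping/factoring across the $r$ independent binary choices produces a product of $r$ nonzero $2\times 2$-type "exchange" determinants times $g(C_{r,\emptyset})$, all of which are nonzero because $\cA$ is an arc (any $k$ points in general position) and because $g$ never vanishes (it is a product of linear forms not passing through the point). This yields the desired contradiction.

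\textbf{Expected obstacle.} The hard part will be the bookkeeping in the collapse of the alternating sum: one must track signs coming from (i) the $(-1)^{|T|}$ in Lemma~\ref{twotothed}, (ii) the permutation parities hidden in $g(C_{r,T})$ via the scaling relation~(\ref{eqn:g}) and Lemma~\ref{lem:g_functionpre}, and (iii) the reordering of rows when applying Lemma~\ref{interpaform}. The scaled coordinate-free lemma of tangents (Lemma~\ref{lem:g_functionpre}) is precisely the tool that makes the first and second sources of sign cancel consistently, exactly as in the proof of Lemma~\ref{sumeqnthm}; the key is to choose orderings for all the $C_{r,T}$ that are compatible with a single fixed ordering of $E$, so that these parities are governed by a single permutation and the $2^r$-fold product factors as a genuine product rather than an unwieldy sum. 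Once the sign conventions are pinned down, the algebraic identity reducing the $2^r$-term sum to $(\text{nonzero})\cdot g(C_{r,\emptyset})\cdot\prod(\text{nonzero determinants})$ is a routine, if lengthy, computation, and the arc property finishes the argument.
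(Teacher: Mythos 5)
There is a genuine gap in the main step. You correctly start from Lemma~\ref{twotothed} with $r=k-p$, but the proposed mechanism for extracting a contradiction --- rewriting all $2^{r}$ summands over a common denominator and then ``telescoping/factoring across the $r$ independent binary choices'' into $(\text{nonzero})\cdot g(C_{r,\emptyset})\cdot\prod(\text{nonzero determinants})$ --- does not work. The quantities $g(C_{r,T})$ for distinct $T$ are values of $g$ at genuinely different $(k-1)$-subsets of the arc; the only identities the paper establishes among values of $g$ are the permutation rule (Lemma~\ref{lem:g_functionpre}), which relates reorderings of a \emph{single} set $C$, and the linear relations of Lemma~\ref{sumeqnthm}. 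There is no multiplicative relation that would let an alternating sum $\sum_{T}(-1)^{|T|}a_{T}$ with $a_{T}$ involving $g(C_{r,T})$ factor as a product over the $r$ coordinates of $T$. Likewise, Lemma~\ref{interpaform} interpolates the linear form $\det(u,C)$ in the variable $u$ for a \emph{fixed} $C$; it does not convert $\det(u,C_{r,T})$ into $\det(u,C_{r,\emptyset})$, so it cannot be used to put the summands over a common denominator in the way you describe.

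The paper's actual argument is a downward induction on $r$: assuming the $2^{r}$-term identity vanishes, one deduces that the corresponding $2^{r-1}$-term identity (for $r-1$) vanishes, and iterating down to $r=0$ leaves the single term $g(X)\prod_{u\in Y}\det(u,X)^{-1}=0$, which is the contradiction. The inductive step is the part your sketch is missing: one introduces auxiliary disjoint sets $W$ (of size $r+1$) and $\Delta$ (of size $k-1-r$) in $\cA$, applies the $r$-identity with $\{y_{r+1},\ldots,y_{2r}\}$ replaced by $W\setminus\{w\}$ for each $w\in W$, takes the linear combination with coefficients $\det(y_r,W\setminus w,\Delta)/\det(w,W\setminus w,\Delta)$, and invokes Lemma~\ref{interpaform} to replace $\det(w,C_{r,T})$ by $\det(y_r,C_{r,T})$. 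Since $\det(y_r,C_{r,T})=0$ exactly when $r\notin T$, this annihilates half the terms and the surviving terms (those with $r\in T$) reassemble into the $(r-1)$-identity. This is also precisely where the hypothesis $k\le 2p-2$ is used: together with $r\le k-p$ it gives $k-2\ge 2r$, which is needed for the auxiliary set $\Omega=W\cup\{y_{2r+1},\ldots,y_{k-2}\}$ to have the right size. Your plan never inserts such a vanishing factor, so the $2^{r}$ terms never get eliminated, and the contradiction cannot be reached along the route you describe.
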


\begin{proof}
Let $\cA$ be an arc of $\PG(k-1,q)$ of size $q+2$. By Theorem~\ref{MDSprime}, we can assume that $k \geq p$.
Let $r$ be a non-negative integer such that $r \leq k-p$. Define $g(C)$ as in (\ref{eqn:g}) and define $X$, $Y$, $E$, $X_r$, $Y_r$, $X_{T}$, $Y_{T}$ and $C_{r,T}$ as before.

We will prove that 
$$
\sum_{T \subseteq \{1,\ldots,r\}} (-1)^{|T|} g(C_{r,T}) \prod_{u \in E \setminus C_{r,T}} \det(u,C_{r,T})^{-1} =0
$$
implies that
$$
\sum_{T \subseteq \{1,\ldots,r-1\}} (-1)^{|T|} g(C_{r-1,T}) \prod_{u \in E \setminus C_{r-1,T}} \det(u,C_{r-1,T})^{-1} =0.
$$
By iteration, this implies that
$$
g(X)\prod_{u \in Y} \det(u,X)^{-1}=0,
$$
which is a contradiction.

By Lemma~\ref{twotothed}, the statement holds for $r= k-p$, so it suffices to show the inductive step.

Let $W$ and $\Delta$ be disjoint subsets of $\cA \setminus \{x_1,\ldots,x_{k-1},y_1,\ldots,y_r\}$ of size $r+1$ and $k-1-r$ respectively.
Let $\Omega=W \cup \{y_{2r+1},\ldots,y_{k-2}\}$. 
Note that $k-2 \geq 2r$, since $r \leq k-p$ and $k \leq 2p-2$, so $|\Omega|=k-1-r$.
Let $w \in W$. By induction, replacing $\{y_{r+1},\ldots,y_{2r}\}$ by $W \setminus \{w\}$,
$$
\sum_{T \subseteq \{1,\ldots,r\}} (-1)^{|T|} g(C_{r,T}) \prod_{u \in (X \cup Y_r \cup \Omega) \setminus C_{r,T}} \det(u,C_{r,T})^{-1} \det(w,C_{T})=0.
$$
Hence,
$$
\sum_{T \subseteq \{1,\ldots,r\}} (-1)^{|T|} g(C_{r,T}) \prod_{u \in (X \cup Y_r \cup \Omega) \setminus C_{r,T}} \det(u,C_{r,T})^{-1} 
\frac{\det(y_r,W\setminus w,\Delta)}{\det(w,W\setminus w,\Delta)} \det(w,C_{r,T})=0.
$$
Summing over $w \in W$, Lemma ~\ref{interpaform} implies
$$
\sum_{T \subseteq \{1,\ldots,r\}} (-1)^{|T|} g(C_{r,T}) \prod_{u \in (X \cup Y_r \cup \Omega)  \setminus C_{r,T}} \det(u,C_{r,T})^{-1} \det(y_r,C_{r,T})=0.
$$
Since $\det(y_r,C_{r,T})=0$ if $r \not\in T$, this implies that
$$
\sum_{T \subseteq \{1,\ldots,r-1\}} (-1)^{|T|} g(C_{r-1,T}) \prod_{u \in (X \cup Y_{r-1} \cup \Omega)\setminus C_{r-1,T}} \det(u,C_{r-1,T})^{-1} =0.
$$
This equation no longer depends on $y_r$, so we could have chosen $y_r$ to be an arbitrary point of $\cA$ and chosen
$$
\Omega=\{y_{r},\ldots,y_{k-2}\},
$$
which proves the inductive step.
\end{proof}


\section{The classification of the largest arcs for $k \leq p$ and $k\neq \frac{1}{2}(q+1)$} 

The proof of the following theorem exploits the system of equations in Lemma~\ref{eqnsksmall}.

\begin{theorem} \label{itsannormal rational curve}
If $k \leq p$ and $k \neq \frac{1}{2}(q+1)$ then an arc of $\mathrm{PG}(k-1,q)$ of size $q+1$ is a normal rational curve.
\end{theorem}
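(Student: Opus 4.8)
The plan is to feed the system of equations in Lemma~\ref{eqnsksmall} a frame contained in $\cA$, read off from it a family of purely multiplicative relations among the coordinates of the points of $\cA$, and deduce that $\cA$ must be the image of $\PG(1,q)$ under a degree-$(k-1)$ map, that is, a normal rational curve.

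First, the reductions. If $q$ is even then $p=2\geq k$ and there is nothing to prove, so assume $q$, and hence $p$, is odd. If $q=p$ is prime and $k>\tfrac12(q+1)$, replace $\cA$ by its dual $(q+1)$-arc in $\PG(q-k,q)$: since the dual of a Reed--Solomon code is a Reed--Solomon code (Theorem~\ref{thm:dual_of_RS}), $\cA$ is a normal rational curve if and only if the dual is, the dual has parameter $q+1-k<\tfrac12(q+1)$, and $q+1-k\leq p$. If $q$ is a proper prime power then $k\leq p<\tfrac12(q+1)$ already. Hence, the small cases being trivial, we may assume $3\leq k<\tfrac12(q+1)$ (this is the only place the hypothesis $k\neq\tfrac12(q+1)$ is used); the case $k=3$ is Segre's theorem, which the argument below reproves. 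Then $|\cA|=q+1=q+k-1-t$ forces $t=k-2$, so $\cA$ contains subsets $E$ of size $k+t=2k-2$ with $q-2k+3\geq 4$ further points of $\cA$ to spare, as Lemma~\ref{eqnsksmall} requires.

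Next, choose a projectivity so that $\{e_1,\dots,e_k,\mathbf 1\}\subseteq\cA$, where $\mathbf 1=e_1+\dots+e_k$, and fix these vectors as the representatives required by the scaled lemma of tangents. The arc property forces every other point of $\cA$ to have all coordinates nonzero; list these as $a_1=\mathbf 1,a_2,\dots,a_{q+1-k}$ and put $r_j=(a_{j,1}^{-1},\dots,a_{j,k}^{-1})$. Taking $\Delta=\{e_1,\dots,e_k\}$ in Lemma~\ref{eqnsksmall}, the determinants $\det(u,\Delta\setminus\{e_i\})$ reduce, up to a fixed sign, to the $i$-th coordinate of $u$, and the values $g(\Delta\setminus\{e_i\})$ are constants $c_1,\dots,c_k$, nonzero because $f_S$ vanishes at no point of $\cA$ outside $S$. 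So the equation of Lemma~\ref{eqnsksmall} reads, for every $(k-2)$-subset $F$ of $\{a_1,\dots,a_{q+1-k}\}$,
$$
\sum_{i=1}^{k}c_i\prod_{a_j\in F}a_{j,i}^{-1}=0;
$$
and since $a_1=\mathbf 1$ the same identity holds for every $(k-3)$-subset $F$ of $\{a_2,\dots,a_{q+1-k}\}$. From the bilinear consequences of these one finds that, relative to suitable nondegenerate symmetric bilinear forms, the points $r_j$ are pairwise conjugate; analysing the possible configurations of a pairwise conjugate set (self-polar simplices, totally isotropic subspaces, the degenerate possibilities) and using the size $q+1-k$ of $\cA\setminus\Delta$ should force all the $r_j$ onto a single line $L$ of $\PG(k-1,q)$ through $r_1=\mathbf 1$. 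Writing $r_j=\lambda_jP+\mu_jQ$ with $P,Q$ spanning $L$ then gives $a_j\sim\bigl(\prod_{l\neq i}(\lambda_jP_l+\mu_jQ_l)\bigr)_i$. The forms $F_i(X_1,X_2)=\prod_{l\neq i}(P_lX_1+Q_lX_2)$ are a basis of the degree-$(k-1)$ forms in two variables (the $(P_l,Q_l)\in\PG(1,q)$ are distinct, else $q+1-k$ points of $\cA$ would share a coordinate hyperplane), so their image is a normal rational curve $\cN$ containing each $a_j$ (at parameter $(\lambda_j,\mu_j)$) and each $e_i$ (at parameter $(Q_i,-P_i)$). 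Thus $\cA\subseteq\cN$, and $|\cN|=q+1=|\cA|$ yields $\cA=\cN$.

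The step I expect to be the main obstacle is the passage from the multiplicative identities to the collinearity of the $r_j$. For $k=3$ the identities are already linear and put each $r_j$ on a fixed line at once; for $k=4$ they say precisely that the $r_j$ are pairwise conjugate for one nondegenerate bilinear form, and one must show that a pairwise conjugate set of $q-3$ points lies on a totally isotropic line, ruling out the elliptic and self-polar cases by the size bound; for general $k$ this must be bootstrapped through the family of forms obtained by coordinatewise multiplication of $(c_1,\dots,c_k)$ by the $r_j$. This is exactly where the hypotheses bite: $k\leq p$ both keeps the constants of Lemma~\ref{eqnsksmall} nonzero and controls the degrees (so that $f_S$, of degree $t=k-2<p$, is determined by $t+1$ of its values), while $k\neq\tfrac12(q+1)$ has already been used to secure $2k\leq q$, giving the size estimates room. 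I would also expect careful bookkeeping of signs and of the scaling in the definition of $g$; Corollary~\ref{cor:main_projection} offers an alternative endgame, reducing to the planar case once enough projections of $\cA$ are shown to lie on conics.
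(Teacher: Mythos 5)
Your setup and reductions are sound: the duality reduction to $k<\frac12(q+1)$, the normalization to a frame, and the derivation from Lemma~\ref{eqnsksmall} with $\Delta=\{e_1,\dots,e_k\}$ of the relations $\sum_i c_i\prod_{a_j\in F}a_{j,i}^{-1}=0$ are all correct. But the proof has a genuine gap exactly where you flag it: the passage from these degree-$(k-2)$ multiplicative identities to the collinearity of the reciprocal points $r_j$. For $k\geq 5$ the relations are no longer bilinear, the claim that the $r_j$ are ``pairwise conjugate with respect to suitable nondegenerate symmetric bilinear forms'' is not substantiated by anything you have written, and the proposed case analysis of pairwise conjugate configurations (self-polar simplices, totally isotropic subspaces, degenerate cases) is neither carried out nor shown to be tractable after the promised ``bootstrapping.'' The sentence ``should force all the $r_j$ onto a single line $L$'' is a conjecture, not an argument, and the entire theorem rests on it: once collinearity is granted, the endgame you describe does work, but without it nothing is proved.

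The paper sidesteps this obstacle with a device your proposal lacks. Rather than extracting structure from a single equation per $(k-2)$-subset $F$, it introduces an auxiliary $(k-2)$-set $W\subset\cA\setminus\Delta$, writes the equation of Lemma~\ref{eqnsksmall} for $E=\Delta\cup(W\setminus\{w\})\cup\{x\}$ for each $w\in W$, and then forms the linear combination of these equations corresponding to a vector $v$ in the span of $W$ that also lies in the span of a chosen $3$-subset $D\subseteq\Delta$ (such a $v$ exists by a dimension count, since $(k-2)+3>k$). Because $\det(v,C)=0$ whenever $D\subset C$, all but three of the $k$ terms vanish, and in coordinates adapted to $\Delta$ the equation collapses to $a_1x_2x_3+a_2x_1x_3+a_3x_1x_2=0$ for all $x\in\cA\setminus W$; an exchange of elements of $W$ with elements of $\cA\setminus(W\cup\Delta)$, which uses $|\cA|-(2k-2)\geq 3$ (this is where $k\neq\frac12(q+1)$ enters), extends the identity to all of $\cA$. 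This says precisely that the projection of $\cA$ from $\Delta\setminus D$ lies on a conic, and Corollary~\ref{cor:main_projection} --- the ``alternative endgame'' you mention only in passing --- concludes. If you want to repair your argument, this three-term collapse is the missing idea: it converts the system of degree-$(k-2)$ relations, whose direct analysis you correctly identify as the main obstacle, into planar conic conditions for which no classification of conjugate point sets is needed.
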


\begin{proof}
Let $\cA$ be an arc of size $q+1$. By Corollary~\ref{dualarc}, we may assume that $k \leq \frac{1}{2}(q+1)$. Let $E$ be a subset of $\cA$ of size $k+t=2k-2$.
Let $\Delta$ be a subset of $E$ of size $t+2=k$.

By Lemma~\ref{eqnsksmall},
$$
\sum_C g(C) \prod_{u \in E \setminus C} \det(u,C)^{-1}=0,
$$
where the sum runs over the $(k-1)$-subsets $C$ of $E$ contained in $\Delta$.

Let $W$ be a subset of $\cA \setminus \Delta$ of size $k-2$ and let $w\in W$. Let $x \in \cA \setminus (\Delta \cup W)$ and apply the above equation with $E=\Delta \cup (W \setminus \{ w\}) \cup \{x\}$. Then
$$
\sum_C g(C) \prod_{u \in W} \det(u,C)^{-1} \det(\Delta \setminus C,C)^{-1} \det(x,C)^{-1}\det(w,C)=0,
$$
where the sum runs over the $(k-1)$-subsets of $E$ contained in $\Delta$.

Let $D$ be a subset of $\Delta$ of size $3$. Since $|D|=3$ and $|W|=k-2$, there is a linear combination $v$ of the vectors $\{ w \ | \ w \in W\}$ such that $v$ is in the subspace spanned by $D$. Hence, $\det(v,C)=0$ for all $(k-1)$-subsets $C$ for which $D \subset C$. Thus, we can sum the above equations, so that
$$
\sum_C g(C) \prod_{u \in W} \det(u,C)^{-1} \det(\Delta \setminus C,C)^{-1} \det(x,C)^{-1}\det(v,C)=0,
$$
where the sum is over the three $(k-1)$-subsets of $\Delta$ which do not contain $D$.

With respect to the basis $\Delta=\{e_1,\ldots,e_k\}$, ordered so that the elements of $D$ are the first three elements of the basis, the above sum is
$$
\frac{a_1}{x_1}+\frac{a_2}{x_2}+\frac{a_3}{x_3}=0,
$$
where
$$
a_i=g(\Delta \setminus \{e_i\}) \prod_{u \in W}\det(u,\Delta \setminus \{e_i\})^{-1} \det(e_i,\Delta \setminus \{e_i\})^{-1} \det(v,\Delta \setminus \{e_i\}).
$$
Thus we have that for all $x \in \cA \setminus W $,
$$
a_1x_2x_3+a_2x_1x_3+a_3x_1x_2=0.
$$
The coefficients $a_1$, $a_2$ and $a_3$ are determined, up to scalar factor, by two points of $\cA \setminus (W \cup \Delta)$. Thus, since
$$
|\cA|-(2k-2) \geq 3,
$$
we can switch an element of $W$ with an element of $\cA \setminus (W \cup \Delta)$ and conclude that
$$
a_1x_2x_3+a_2x_1x_3+a_3x_1x_2=0,
$$
for all $x \in \cA$.

This implies that the projection of $\cA$ from $\Delta \setminus D$ is contained in a conic, for any subsets $\Delta$ and $D$ of $\cA$. By Corollary~\ref{cor:main_projection}, $\cA$ is a normal rational curve.
\end{proof}


\section{Extending small arcs to large arcs}

Let $\cG$ be an arc of $\PG(k-1,q)$ arbitrarily ordered.
Suppose that $\cG$ can be extended to an arc $\cA$ of $\PG(k-1,q)$ of size $q+k-1-t \geq k+t$.
Let $n=|\cG|-k-t$ be a non-negative integer.

Let $\mathrm{P}_n$ be the matrix whose columns are indexed by the $(k-1)$-subsets $C$ of $\cG$ and whose rows are indexed by pairs $(S,U)$, where $S$ is a $(k-2)$-subset of $\cG$ and $U$ is a $n$-subset of $\cG \setminus S$. The $((S,U),C)$ entry of $\mathrm{P}_n$ is zero unless $C$ contains $S$ in which case it is $ \prod_{ u \in U} \det(u,C)$.

The following theorem is from \cite{Chowdhury2015}, see also \cite{Ball2018}.

\begin{theorem}
If an arc $\cG$ of $\PG(k-1,q)$ can be extended to an arc of size $q+2k-1-|\cG|+n$ then the system of equations $\mathrm{P}_n v=0$ has a solution in which all the coordinates of $v$ are non-zero.
\end{theorem}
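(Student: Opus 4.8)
The plan is to write down an explicit solution $v$ built from the scaled tangent function $g$ of the \emph{extending} arc $\cA$. Put $t=|\cG|-k-n$, so that $\cA$ has size $q+k-1-t=q+2k-1-|\cG|+n$ and $|\cG|=k+t+n$. For each $(k-1)$-subset $C$ of $\cG$, ordered in a fixed way, I would set
$$
v_C=g(C)\prod_{u\in\cG\setminus C}\det(u,C)^{-1},
$$
where $g$ is the function associated to $\cA$ as in (\ref{eqn:g}). First I would check that $v_C\neq 0$ for every $C$. This is immediate: $g(C)\neq0$ because a tangent hyperplane of $\cA$ at a $(k-2)$-subset of $C$ cannot pass through the remaining point of $C$ (that point lies on $\cA$, and the hyperplane meets $\cA$ in exactly that $(k-2)$-subset), and $\det(u,C)\neq0$ for each $u\in\cG\setminus C$ because the $k$ points of $C\cup\{u\}$ lie on the arc $\cA$ and are therefore linearly independent.

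The key step is to verify that $v$ kills every row of $\mathrm{P}_n$. Fix a row, indexed by a $(k-2)$-subset $S$ of $\cG$ and an $n$-subset $U$ of $\cG\setminus S$, and expand
$$
(\mathrm{P}_n v)_{(S,U)}=\sum_{C\supseteq S}\Big(\prod_{u\in U}\det(u,C)\Big)\,g(C)\prod_{w\in\cG\setminus C}\det(w,C)^{-1}.
$$
Any term with $U\cap C\neq\emptyset$ is zero, since then some factor $\det(u,C)$ has a repeated row. For the remaining terms $U\subseteq\cG\setminus C$, so $\prod_{u\in U}\det(u,C)$ cancels the corresponding factors of $\prod_{w\in\cG\setminus C}\det(w,C)^{-1}$, and the summand becomes $g(C)\prod_{w\in(\cG\setminus U)\setminus C}\det(w,C)^{-1}$. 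Setting $E=\cG\setminus U$, which is a subset of $\cA$ of size $|\cG|-n=k+t$ containing $S$, the surviving $C$ are exactly the $(k-1)$-subsets of $E$ containing $S$, so
$$
(\mathrm{P}_n v)_{(S,U)}=\sum_{\substack{C\subseteq E\\ C\supseteq S}}g(C)\prod_{w\in E\setminus C}\det(w,C)^{-1},
$$
which is $0$ by Lemma~\ref{sumeqnthm} applied to $\cA$, $E$ and $S$ (the size hypothesis $|\cA|\geq k+t$ there holds since $|\cA|\geq|\cG|=k+t+n$).

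I do not expect a genuine obstacle: the argument is essentially a well-chosen substitution into Lemma~\ref{sumeqnthm}. The one point that must be handled carefully is that the quantity $v_C\prod_{u\in U}\det(u,C)$ attached to a column $C$ should be independent of the ordering chosen for $C$, so that the products of $\mathrm{P}_n$-entries with coordinates of $v$ are unambiguous. This is the same cancellation used in the proof of Lemma~\ref{sumeqnthm}: permuting $C$ by a permutation $\sigma$ multiplies $g(C)$ by $(-1)^{s(\sigma)(t+1)}$ (Lemma~\ref{lem:g_functionpre}), multiplies $\prod_{w\in\cG\setminus C}\det(w,C)^{-1}$ by $(-1)^{s(\sigma)(t+n+1)}$, and multiplies $\prod_{u\in U}\det(u,C)$ by $(-1)^{s(\sigma)n}$, and $(t+1)+(t+n+1)+n$ is even. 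Once this invariance is recorded, the only remaining checks are the identity $(\cG\setminus U)\setminus C=E\setminus C$ and the size count $|E|=k+t$, which are exactly what make Lemma~\ref{sumeqnthm} applicable.
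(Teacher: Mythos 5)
Your proposal is correct and follows essentially the same route as the paper: the solution vector is exactly the paper's $\alpha_{C,\cG}=g(C)\prod_{u\in\cG\setminus C}\det(u,C)^{-1}$, and each row $(S,U)$ of $\mathrm{P}_n v=0$ is reduced, after discarding the terms with $C\cap U\neq\emptyset$ and cancelling the $U$-factors, to the identity of Lemma~\ref{sumeqnthm} applied to $E=\cG\setminus U$. Your additional checks (nonvanishing of the coordinates and invariance under reordering of $C$) are accurate and merely make explicit what the paper leaves implicit.
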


\begin{proof}
Let $|\cG|=k+t+n$ and suppose that $\cG$ extends to an arc $\cA$ of size $q+k-1-t$. 

Let $U$ be a subset of $\cG$ of size $n$. Then $E=\cG \setminus U$ is a subset of $\cG$ of size $k+t$. By Lemma~\ref{sumeqnthm}, for each subset $S$ of $E$ of size $k-2$,
$$
\sum_{C} g(C) \prod_{u \in \cG \setminus (U \cup C)} \det(u,C)^{-1} =0,
$$
where the sum runs over all $(k-2)$-subsets $C$ of $E$ containing $S$.

Then
$$
\sum_{C} \prod_{u \in U} \det(u,C) \left(g(C) \prod_{u \in G \setminus C} \det(u,C)^{-1} \right)=0.
$$
This system of equations is given by the matrix $\mathrm{P}_n$ and a solution $v$ is a vector with $C$ coordinate 
$$
\alpha_{C,\cG}=g(C) \prod_{u \in \cG \setminus C} \det(u,C)^{-1} 
$$ 
which is non-zero for all $(k-1)$-subsets $C$ of $\cG$.
\end{proof}

Suppose that we do find a solution $v$ to the system of equations. Then we know the value of $\alpha_{C,\cG}$ and therefore $f_{S} (x)$, where $C=S \cup \{ x\}$. This would allow one to calcuate the polynomials $f_S(X)$ for each subset $S$ of $\cG$ of size $k-2$. Therefore, if $\cG$ does extend to an arc $\cA$ then each solution tells us precisely the tangent hyperplanes to $\cA$ containing $S$, for each $(k-2)$-subset $S$ of $\cG$.

By starting with a sub-arc $\cG$ of size $3k-6$ of the normal rational curve one can prove that the matrix $\mathrm{P}_n$ has full rank and conclude the following theorem, see \cite{BdB2017}.

\begin{theorem}
If $\cG$ is a subset of the normal rational curve of $\PG(k-1,q)$ of size $3k-6$ and $q$ is odd, then $\cG$ cannot be extended to an arc of size $q+2$. 
\end{theorem}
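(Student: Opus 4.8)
The plan is to combine the preceding theorem with an explicit rank computation for the normal rational curve. Suppose, for a contradiction, that $\cG$ extends to an arc $\cA$ of $\PG(k-1,q)$ of size $q+2$. Since $|\cG|=3k-6$, the equation $q+2=q+2k-1-|\cG|+n$ forces $n=k-3$, so the preceding theorem produces a vector $v$, indexed by the $(k-1)$-subsets $C$ of $\cG$, satisfying $\mathrm{P}_{k-3}v=0$ and having every coordinate $v_C=g(C)\prod_{u\in\cG\setminus C}\det(u,C)^{-1}$ non-zero. It therefore suffices to prove that, when $\cG$ lies on a normal rational curve and $q$ is odd, the matrix $\mathrm{P}_{k-3}$ has trivial kernel: this forces $v=0$, the required contradiction.

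To analyse $\mathrm{P}_{k-3}$ I would first make its entries explicit using the structure of $\cN=\cN_{k-1}$. Identify $\cG$ with a set of $3k-6$ parameters on $\PG(1,q)$ (allowing $\infty$) via the parametrisation $\nu_{k-1}$ of \eqref{eqn:map1}, and fix vector representatives; rescaling these only rescales the unknowns $v_C$, so it does not affect whether a solution is nowhere zero. With any such choice, $\det(u,C)$ is a Vandermonde determinant, equal up to a global constant to $\prod_{c\in C}(s_u-s_c)\prod_{\{c,c'\}\subseteq C}(s_c-s_{c'})$. For $g(C)$ one computes the tangent hypersurface of $\cN$: here $t=k-2$, and the $k-2$ hyperplanes meeting $\cN$ in exactly a $(k-2)$-subset $S$ are precisely those tangent to $\cN$ at one point of $S$ and passing simply through the other $k-3$ points of $S$, so $f_S$ pulls back along $\nu_{k-1}$ to a constant times $\prod_i(T-s_i)^{k-1}$, the product being over the $k-2$ parameters $s_i$ of $S$; tracking the scaling \eqref{eqn:scaling}--\eqref{eqn:g} then expresses $g(C)$ as an explicit monomial in the differences $s_c-s_{c'}$. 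Substituting and extracting the resulting column factors turns $\mathrm{P}_{k-3}$ into a generalised Vandermonde (Cauchy-type) matrix whose entries are products of linear factors $s_u-s_c$, and the claim becomes the combinatorial assertion that this matrix has full column rank.

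Proving that full column rank is the heart of the argument and the step I expect to be the main obstacle. I would approach it by induction on $k$: projecting $\cN$ from one of the $3k-6$ points onto a hyperplane lands inside a normal rational curve of $\PG(k-2,q)$ (Lemma \ref{lem:projection_normal rational curve}), and a suitable selection of rows and columns relates the rank of $\mathrm{P}_{k-3}$ to that of the corresponding matrix for a $(3(k-1)-6)$-point subarc in $\PG(k-2,q)$, using the transitivity of $H\cong\PGL(2,q)$ on $\cN$ (Lemma \ref{lem:normal rational curve_transitivity}) to normalise three parameters to $0,1,\infty$; alternatively one exhibits an explicit square submatrix of the correct size and evaluates its determinant directly, as a non-zero product of differences, by a Cauchy-type computation. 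The hypothesis that $q$ is odd is essential here: the defining forms of the tangent hyperplanes above carry coefficients divisible by $2$ coming from the double contact, and in characteristic $2$ the configuration collapses. Indeed the statement genuinely fails for $q$ even already at $k=3$, where $3k-6=3$ and $\mathrm{P}_{k-3}$ is just the vertex--edge incidence matrix of a triangle, of determinant $-2$, invertible exactly when $q$ is odd, while for $q$ even three points of a conic do extend to a hyperoval of size $q+2$. Once trivial kernel is established, $v=0$ contradicts the preceding theorem, and $\cG$ admits no extension to an arc of size $q+2$.
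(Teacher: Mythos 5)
Your reduction is the right one and is exactly the route the paper intends: with $|\cG|=3k-6$ and target size $q+2$ you correctly get $t=n=k-3$, and the theorem follows once $\mathrm{P}_{k-3}$ is shown to have trivial kernel, since the preceding theorem would otherwise supply a nowhere-zero kernel vector. Your $k=3$ sanity check (the matrix is the $3\times 3$ incidence matrix $J-I$ of determinant $\pm 2$, while for $q$ even three points of a conic genuinely do extend to a hyperoval) is a good consistency test of both the claim and the role of the parity of $q$. However, the rank statement \emph{is} the theorem --- the survey itself offers no proof and defers entirely to Ball--De Beule --- and you have not proved it. You explicitly flag full column rank as ``the main obstacle'' and then offer two unexecuted strategies (induction via projection of $\cN$, or an explicit Cauchy-type minor), neither of which is set up in enough detail to check: the matrix has $\binom{3k-6}{k-1}$ columns and rows indexed by pairs $(S,U)$, and the claimed relation between its rank and that of the instance one dimension lower is asserted, not established. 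As it stands the proposal is a correct reduction plus a plan, not a proof.

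There is also a conceptual slip in the middle step. The entries of $\mathrm{P}_{k-3}$ are $\prod_{u\in U}\det(u,C)$ and involve no $g$ at all; $g$ enters only the putative solution vector $v$. Moreover, the $g(C)$ occurring in that vector is built from the tangent hyperplanes of the \emph{hypothetical} arc $\cA$ of size $q+2$ (so with $t=k-3$, and these hyperplanes are unknown), not from the tangent hyperplanes of the normal rational curve $\cN$ (for which $t=k-2$). Your pullback computation $f_S\mapsto \prod_i(T-s_i)^{k-1}$ is correct for $\cN$ but is the wrong object here, and it is in any case unnecessary: nonzero column rescalings do not change the kernel, so only the Vandermonde structure of the $\det(u,C)$ matters. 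This confusion does not invalidate the strategy, but it means the ``Cauchy-type matrix'' whose invertibility you propose to verify has not actually been written down, which reinforces that the essential step is missing.
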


\section{The Segre-Blokhuis-Bruen-Thas hypersurface}

The {\em Segre-Blokhuis-Bruen-Thas hypersurface} associated to an arc $\cA$ in $\PG(k-1,q)$ is a hypersurface in the dual of $\PG(k-1,q)$ containing the set of points which are dual to the hyperplanes of $\PG(k-1,q)$ meeting $\cA$ in $k-2$ points. Its existence was proved in a sequence of papers by Segre (\cite{Segre1967}) and Blokhuis, Bruen and Thas (\cite{BBT1988} and \cite{BBT1990}). The following theorem gives an alternative proof. It relies on the scaled coordinate-free lemma of tangents (Lemma~\ref{lem:g_functionpre}). 

For $i\in \{1,\ldots,k-1\}$ denote by $X_i$ the $(k-1)$-tuple $(X_{i1},X_{i2},\ldots,X_{ik-1})$ and by $\det_j(X_1,\ldots,X_{k-1})$ the determinant in which the $j$-coordinate of each of the $X_i$'s has been deleted.

\begin{theorem} \label{BBTthm}
Let $m\in \{1,2 \}$ such that $m-1=q$ modulo $2$. If $\cA$ is an arc of $\mathrm{PG}(k-1,q)$ of size $q+k-1-t$, where $|\cA| \geq mt+k-1$, then there is a homogeneous polynomial in $k$ variables $\phi(Z)$, of degree $mt$, which gives a polynomial $G(X_1,\ldots,X_{k-1})$ under the substitution $Z_j=\det_j(X_1,\ldots,X_{k-1})$, with the property that for all $\{ y_1,\ldots,y_{k-2} \}\subset \cA$
$$
G(X,y_1,\ldots,y_{k-2})=f_{\{y_1,\ldots,y_{k-2}\}}(X)^m.
$$ 
Moreover, $\phi(Z)$ is unique.
\end{theorem}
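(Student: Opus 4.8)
The plan is to treat existence and uniqueness separately; throughout one may assume $t\ge 1$, since for $t=0$ the form $\phi$ must be the constant $1$. The first observation is that, because either $m=2$ or $q$ is even, the sign $(-1)^{s(\sigma)(t+1)m}$ is always $1$, so by the scaled coordinate-free lemma of tangents (Lemma~\ref{lem:g_functionpre}) the number $h(C):=g(C)^{m}$ depends only on the $(k-1)$-subset $C$ of $\cA$ and not on its ordering. Writing $\langle C\rangle^{\ast}$ for the vector $(\det_{1}(c_{1},\dots,c_{k-1}),\dots,\det_{k}(c_{1},\dots,c_{k-1}))$ attached to a $(k-1)$-subset $C=\{c_{1},\dots,c_{k-1}\}$ of $\cA$, I would reduce the existence of $\phi$ to the finite interpolation statement: \emph{there is a homogeneous polynomial $\phi$ of degree $mt$ in $Z_{1},\dots,Z_{k}$ with $\phi(\langle C\rangle^{\ast})=h(C)$ for every $(k-1)$-subset $C$ of $\cA$.} Indeed, fix a $(k-2)$-subset $S=\{y_{1},\dots,y_{k-2}\}$; the $k$ linear forms $X\mapsto\det_{j}(X,y_{1},\dots,y_{k-2})$ span the two-dimensional space of linear forms vanishing on $\langle S\rangle$, and both $\phi(\det_{1}(X,y_{1},\dots,y_{k-2}),\dots,\det_{k}(X,y_{1},\dots,y_{k-2}))$ and $f_{S}(X)^{m}$ (whose factors also vanish on $\langle S\rangle$) are homogeneous forms of degree $mt$ in a basis of that space. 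By hypothesis $|\cA\setminus S|=|\cA|-(k-2)\ge mt+1$, and the elements of $\cA\setminus S$ give that many distinct points of the corresponding $\PG(1,q)$ at which the two forms agree (using $h(S\cup\{b\})=f_{S}(b)^{m}$); hence the two forms coincide, which is exactly the identity $G(X,y_{1},\dots,y_{k-2})=f_{S}(X)^{m}$.

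For the interpolation statement itself I would produce $\phi$ from the lemma of tangents. For a single $(k-2)$-subset $S$, Lagrange interpolation of the degree-$mt$ binary form $f_{S}(X)^{m}$ at $mt+1$ points of $\cA\setminus S$, followed by expanding each determinant $\det(X,a_{i},S)$ along the row $a_{i}$ as $\sum_{l}(-1)^{l}(a_{i})_{l}\det_{l}(X,S)$, exhibits $f_{S}(X)^{m}$ as the evaluation at $Z_{j}=\det_{j}(X,S)$ of an explicit homogeneous form of degree $mt$ in $Z$, whose coefficients are built from the values $g(S\cup\{a_{j}\})$ and from determinants of arc points. The remaining, and principal, difficulty is to show that these local pieces can be chosen so as to fit together into one polynomial $\phi$, equivalently that the prescribed values $h(C)$ are compatible with every linear relation among the evaluation functionals $\phi\mapsto\phi(\langle C\rangle^{\ast})$ on forms of degree $mt$. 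The basic compatibility, governing the pairwise overlaps of the pencils of hyperplanes through the $\langle S\rangle$, is $f_{D\cup\{x\}}(y)^{m}=f_{D\cup\{y\}}(x)^{m}$ (raise Lemma~\ref{segrelemmascaled} to the $m$-th power; the sign disappears), and the higher compatibilities come from the three-term identity of Lemma~\ref{segrelemma} and the many-term identities of Lemma~\ref{sumeqnthm}. I expect this patching step to be the heart of the proof and the place where Segre's lemma of tangents is genuinely used.

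For uniqueness, let $\phi,\phi'$ be two such forms and set $\psi=\phi-\phi'$, a homogeneous form of degree $mt$. For every $(k-2)$-subset $S$ the polynomial $\psi(\det_{1}(X,y_{1},\dots,y_{k-2}),\dots,\det_{k}(X,y_{1},\dots,y_{k-2}))$ is identically zero, so $\psi$ vanishes identically on the line $\ell_{S}$ of hyperplanes through $\langle S\rangle$ in the dual space: its restriction there is a binary form of degree at most $mt$ with $q+1>mt$ roots, using $mt\le(m+1)t\le q$, which follows from $|\cA|=q+k-1-t\ge mt+k-1$. I would then argue by downward induction on $|D|$, over subsets $D$ of $\cA$, that $\psi$ vanishes identically on the subspace $\Pi_{D}$ of the dual consisting of all hyperplanes containing $\langle D\rangle$. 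The case $|D|=k-2$ is $\Pi_{D}=\ell_{D}$, already done. If $\psi$ vanishes identically on $\Pi_{D'}$ whenever $|D'|=j+1$, then for $|D|=j$ the $|\cA|-j\ge mt+2$ subspaces $\Pi_{D\cup\{a\}}$ ($a\in\cA\setminus D$) are distinct hyperplanes of $\Pi_{D}$ on each of which $\psi$ vanishes identically, so $\psi|_{\Pi_{D}}$ is divisible by more than $mt$ distinct linear forms and must vanish. Taking $|D|=0$ gives $\psi=0$, hence $\phi=\phi'$.
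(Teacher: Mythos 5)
Your uniqueness argument is correct, and in fact a little cleaner than the paper's: because both $\phi$ and $\phi'$ are normalised by $G(X,y_1,\ldots,y_{k-2})=f_S(X)^m$, the difference $\psi=\phi-\phi'$ pulls back to the zero polynomial for every $(k-2)$-subset $S$, so $\psi$ vanishes identically on each line $\langle S\rangle^{\perp}$, and your downward induction on $|D|$ (using $|\cA|-|D|\geq mt+1$ distinct hyperplanes of $\langle D\rangle^{\perp}$ of the form $\langle D\cup\{a\}\rangle^{\perp}$) finishes the job. The paper's version is essentially the same induction, but run for two arbitrary forms vanishing on the dual point set $\mathcal T$, which forces it to carry along a constant $a_x$ and match constants across hyperplanes; your formulation avoids that. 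Your reduction of the identity $G(X,y_1,\ldots,y_{k-2})=f_S(X)^m$ to the single interpolation statement $\phi(\langle C\rangle^{\ast})=g(C)^m$ for all $(k-1)$-subsets $C$ is also sound, as is the observation that $g(C)^m$ is independent of the ordering of $C$.

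However, the existence half has a genuine gap, and you have named it yourself: the ``patching step'' is precisely the content of the theorem, and your proposal does not carry it out. Verifying that the prescribed values $g(C)^m$ are compatible with \emph{every} linear relation among the evaluation functionals on degree-$mt$ forms is not something that follows in any visible way from the pairwise symmetry $f_{D\cup\{x\}}(y)^m=f_{D\cup\{y\}}(x)^m$ together with Lemmas~\ref{segrelemma} and~\ref{sumeqnthm}; the space of such relations is large and not parametrised by the pencils $\ell_S$. The paper avoids this compatibility analysis entirely: it fixes one subset $E\subseteq\cA$ of size $mt+k-1$ and writes down a single global multivariate interpolant
$$
G(X_1,\ldots,X_{k-1})=\sum_{T}\bigl(f_{T\setminus\{a_{k-1}\}}(a_{k-1})\bigr)^m\prod_{u\in E\setminus T}\frac{\det(X_1,\ldots,X_{k-1},u)}{\det(a_1,\ldots,a_{k-1},u)},
$$
the sum over $(k-1)$-subsets $T=\{a_1,\ldots,a_{k-1}\}$ of $E$, which is manifestly of the required shape $\phi(\det_1,\ldots,\det_k)$. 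The restriction identity is then immediate for $S\subset E$ (two binary degree-$mt$ forms agreeing at the $mt+1$ points of $E\setminus S$), and for general $S$ it is proved by induction on $|S\setminus E|$, where the scaled coordinate-free lemma of tangents (Lemma~\ref{lem:g_functionpre}) is used to trade an element of $S\setminus E$ for an element of $E$ without changing the value $f_S(x)^m$. That exchange argument is the actual role of the lemma of tangents here, and it is the step missing from your proposal.
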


\begin{proof}
Order the arc $\cA$ arbitrarily and let $E$ be a subset of $\cA$ of size $mt+k-1$. Define
\begin{eqnarray}\label{eqn:G}
G(X_1,\ldots,X_{k-1})=\sum_{T} \left ( f_{T\setminus \{a_{k-1}\}}(a_{k-1})\right )^m \prod_{u \in E \setminus T} \frac{\det(X_1,\ldots,X_{k-1},u)}{\det(a_1,\ldots,a_{k-1},u)}.
\end{eqnarray}
where the sum runs over subsets $T=\{a_1,\ldots,a_{k-1}\}$ of $E$.

Observe that $G$ can be obtained from a homogeneous polynomial of degree $mt$ in $Z_1,\ldots,Z_k$ under the change of variables
$Z_j=\det_j(X_1,\ldots,X_{k-1})$.

For $S=\{y_1,\ldots,y_{k-2}\}$, define
$$h_S(X):=G(y_1,\ldots,y_{k-2},X).$$

Note that $h_S(X)$ is well-defined since any reordering of $S$ can only ever multiply $h_S(X)$ by $(-1)^{mt}=1$.

For $S\subset E$, the only nonzero terms in $h_S(X)$ are obtained for subsets $T$ of $E$ containing
$S$. Therefore,
$$
h_S(X)=\sum_{a\in E\setminus S} \left ( f_{S}(a)\right )^m \prod_{u \in E \setminus (S\cup\{a\})} \frac{\det(y_1\ldots,y_{k-2},X,u)}{\det(y_1,\ldots,y_{k-2},a,u)}.
$$
The evaluation of $h_S(X)$ at $x\in E$ is equal to zero if $x\in S$ and equal to $(f_S(x))^m\neq 0$ otherwise. Since, with respect to a basis containing $S$ both $f_S^m$ and $h_S$ are homogeneous polynomials in two variables of degree $mt$ and $E\setminus S$ has size $mt+1$, we may conclude that $h_S=f_S^m$.

If $S$ is not contained in $E$ then we proceed by induction on the size of $S\setminus E$. As induction hypothesis we assume that for each subset $S$ with $S\setminus E$ of size $r$ the polynomials $h_S$ and $f_S^m$ are equal. Let $S=\{y_1,\ldots,y_{k-2}\}$ be such that $S\setminus E$ is of size $r+1$. W.l.o.g. assume $y_{k-1}\notin E$. Then for $x\in E$ we have
$$
h_S(x)=(-1)^{mt}h_{S'}(y_{k-1})=h_{S'}(y_{k-1}),
$$
where $S'$ is the set obtained from $S$ by replacing the $(k-1)$-th element $y_{k-1}$ of $S$ by $x$.
On the other hand, by the definition (\ref{eqn:g}) of $g$ and Lemma~\ref{lem:g_functionpre}, the scaled coordinate-free lemma of tangents, we have
$$
(f_S(x))^m=(g(y_1,\ldots,y_{k-1},x))^m=(g(y_1,\ldots ,y_{k-2},x,y_{k-1}))^m=(f_{S'}(y_{k-1}))^m.
$$
By induction, $h_{S'}(y_{k-1})=(f_{S'}(y_{k-1}))^m$, and therefore the polynomials $h_S$ and $f_S^m$ have the same evaluation at points in $E$. Applying the same argument as in the case where $S\subset E$ we obtain $h_S=f_S^m$.

Denote by $\phi(Z)$ the polynomial obtained from $G(X_1,\ldots,X_{k-1})$
under the substitution 
$$Z_j=\det_j(X_1,\ldots,X_{k-1}),$$
$j=1,\ldots,k$.
Then $\phi(Z)$ has degree $mt$. Consider any $S=\{y_1,\ldots,y_{k-2}\} \subset \cA$ and any point $x$ contained in the tangent hypersurface $\cZ(f_S)$ of $\cA$ at $S$.
Then $G(x,y_1,\ldots,y_{k-2})=f_S(x)^m=0$. 
This implies that $\phi(z)=0$, where $z=(z_1,\ldots,z_k)$
and
$$
z_j=\det_j(x,y_1,\ldots,y_{k-2}).
$$
Therefore, the polynomial $\phi(Z)$ vanishes at the set of points, denoted by $\mathcal T$, of the dual space which are dual to the hyperplanes meeting $\cA$ in $k-2$ points.
It remains to prove that $\phi$ is unique. Suppose that $\phi(Z)$ and $\phi'(Z)$ are two forms of degree $mt$ vanishing on $\mathcal T$.

We will prove by induction on $r$ that for a subset $D$ of $\cA$ of size $k-r$, there is a constant $a \in {\mathbb F}_q$, such that the restriction of $\phi-a\phi'$ to $\langle D\rangle^{\perp}$ is zero. Once this is established uniqueness follows when $r=k$.

For $r=2$, $\langle D\rangle^{\perp}$ is a line containing $t$ points of $\mathcal T$. Both $\phi$ and $\phi'$ have zeros of multiplicity $m$ at these $t$ points. Thus, selecting $a$ so that $\phi-a\phi'$ is zero at a point distinct from these $t$ points, the polynomial 
$\phi-a\phi'$ is zero at $mt+1$ points (counting with multiplicity) of the line $\langle D\rangle^{\perp}$, which implies it is zero on all the points of $\langle D\rangle^{\perp}$.

Now, suppose that $D$ is a set of $k-r$ points. By induction, for $x \in \cA \setminus D$, we can choose an $a_x\in \bF_q$ so that $\phi-a_x\phi'$ is zero on $\langle D \cup \{ x \}\rangle^{\perp}$. 
For two distinct points $x,y\in \cA\setminus D$ the form $a_x\phi' - a_y\phi'$ therefore vanishes on the two hyperplanes $\langle D \cup \{ x \}\rangle^{\perp}$ and $\langle D \cup \{ y \}\rangle^{\perp}$ of $\langle D\rangle ^\perp$. Since $\phi'$ does not vanish on their intersection (there exists a hyperplane through $D$, $x$ and $y$ which is spanned by $\cA$) it follows that $a_x=a_y$. So there exists an $a\in \bF_q$ for which the form $\phi-a\phi'$ vanishes on every hyperplane  of $\langle D\rangle^\perp$ of the form $\langle D \cup \{ x \}\rangle^{\perp}$, with $x\in \cA\setminus D$. By hypothesis there are at least $mt+1$ such hyperplanes. It follows that $\phi-a\phi'$ must be zero on $\langle D\rangle ^\perp$. This proves the inductive step.
\end{proof}

The following theorem is a corollary of the uniqueness part of Theorem~\ref{BBTthm}. In the planar case this is due to Segre \cite{Segre1967} and the general case is due to Blokhuis, Bruen and Thas \cite{BBT1988} and \cite{BBT1990}.

\begin{theorem}  \label{uniqueextension}
Let $m\in \{1,2 \}$ such that $m-1=q$ modulo $2$. If $\cA$ is an arc of $\mathrm{PG}(k-1,q)$ of size at least $mq/(m-1)+k-1$ then $\cA$ has a unique completion to a complete arc.
\end{theorem}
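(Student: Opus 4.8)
Write $t=q+k-1-|\cA|$ for the parameter of $\cA$; the hypothesis on $|\cA|$ is exactly the condition $|\cA|\ge mt+k-1$ of Theorem~\ref{BBTthm}, it forces $mt\le mq/(m+1)<q$, and it continues to hold when points are added to $\cA$ — these facts will be used freely. For an arc $\mathcal B$ satisfying this condition let $\phi_{\mathcal B}$ denote the form of degree $m(q+k-1-|\mathcal B|)$, unique up to a scalar, vanishing on the set $\mathcal T_{\mathcal B}$ of points dual to the hyperplanes meeting $\mathcal B$ in $k-2$ points (Theorem~\ref{BBTthm}); note that, by the main identity of that theorem, $\phi_{\mathcal B}$ does \emph{not} vanish at the point dual to a hyperplane meeting $\mathcal B$ in exactly $k-1$ points, since there $\phi_{\mathcal B}$ evaluates to $f_S(x)^m$ with $x\notin S$. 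For a point $p=(p_1,\ldots,p_k)$ set $L_p(Z)=\sum_{j=1}^{k}(-1)^jp_jZ_j$; expanding $\det(x,p,y_1,\ldots,y_{k-2})$ along the row $p$ shows that the substitution $Z_j=\det_j(x,y_1,\ldots,y_{k-2})$ turns $L_p(Z)$ into $\det(x,p,y_1,\ldots,y_{k-2})$, so $L_p$ is the dual linear form of $p$, and $L_p$ is proportional to $L_{p'}$ exactly when $p=p'$. Call $p\notin\mathcal B$ an \emph{extension} of $\mathcal B$ if $\mathcal B\cup\{p\}$ is an arc. The plan is to show that every complete arc $\mathcal B$ with $\cA\subseteq\mathcal B$ equals $\cA$ together with the set $P$ of all extensions of $\cA$; since $\cA\cup P$ does not depend on $\mathcal B$, this yields the unique completion.

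The first step will be: if $p$ is an extension of an arc $\mathcal B$ satisfying the condition above (then $\mathcal B\cup\{p\}$ satisfies it automatically, being larger), then $L_p^m\mid\phi_{\mathcal B}$, and more precisely $\phi_{\mathcal B}=\lambda L_p^m\phi_{\mathcal B\cup\{p\}}$ for some nonzero scalar $\lambda$. To obtain this I would verify that $\psi:=L_p^m\phi_{\mathcal B\cup\{p\}}$, a nonzero form of degree $m(q+k-1-|\mathcal B|)$, vanishes on all of $\mathcal T_{\mathcal B}$: if $H$ is a hyperplane with $|H\cap\mathcal B|=k-2$ then either $p\in H$, in which case the $k$ vectors of $\{x,p\}\cup(H\cap\mathcal B)$ all lie on $H$ and $L_p$ kills the dual point $H^{*}$, or $p\notin H$, in which case $H$ meets $\mathcal B\cup\{p\}$ in $k-2$ points and $\phi_{\mathcal B\cup\{p\}}(H^{*})=0$. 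The uniqueness clause of Theorem~\ref{BBTthm} then forces $\psi=\lambda\phi_{\mathcal B}$ with $\lambda\ne0$; here one uses $m(q+k-1-|\mathcal B|)<q$ to upgrade ``$\psi-\lambda\phi_{\mathcal B}$ vanishes identically'' to equality of polynomials.

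The second step will be a converse: if $\mathcal B$ satisfies the condition, $p\notin\mathcal B$ and $L_p\mid\phi_{\mathcal B}$, then $p$ is an extension of $\mathcal B$. Indeed, if not, some hyperplane $H$ through $p$ meets $\mathcal B$ in exactly $k-1$ points; by the non-vanishing property recorded above $\phi_{\mathcal B}(H^{*})\ne0$, whereas $L_p(H^{*})=0$ because $p$ together with the $k-1$ points of $H\cap\mathcal B$ lie on $H$ — contradicting $L_p\mid\phi_{\mathcal B}$.

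Finally I would assemble these. Choose any complete arc $\mathcal B$ with $\cA\subseteq\mathcal B$ (one exists by greedily extending $\cA$), enumerate $\mathcal B\setminus\cA=\{p_1,\ldots,p_r\}$ and set $\mathcal B_i=\cA\cup\{p_1,\ldots,p_i\}$; each $\mathcal B_i$ is an arc satisfying the condition and $p_{i+1}$ is an extension of $\mathcal B_i$, so iterating the first step gives $\phi_\cA=c\prod_{p\in\mathcal B\setminus\cA}L_p^m\cdot\phi_{\mathcal B}$ with $c\ne0$. Now let $q$ be any extension of $\cA$; the first step gives $L_q^m\mid\phi_\cA$. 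If $q\notin\mathcal B$ then $L_q$ is coprime to every $L_{p_i}$, so unique factorisation forces $L_q\mid\phi_{\mathcal B}$, and the second step makes $q$ an extension of $\mathcal B$, contradicting completeness of $\mathcal B$. Hence $q\in\mathcal B$, so $P\subseteq\mathcal B$; with the trivial inclusion $\mathcal B\setminus\cA\subseteq P$ this gives $\mathcal B=\cA\cup P$, the asserted unique completion. I expect the only delicate part to be the first step — checking that $L_p^m\phi_{\mathcal B\cup\{p\}}$ has the right degree and vanishes on all of $\mathcal T_{\mathcal B}$, and that the uniqueness in Theorem~\ref{BBTthm} is strong enough to conclude equality of polynomials (which is exactly where the size hypothesis, through $mt<q$, enters); the rest is a short divisibility argument.
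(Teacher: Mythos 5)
Your proof is correct, and it takes a genuinely different route from the paper's. The paper fixes a single extending point $x$ and proves, by induction on the subspaces $\langle D\cup\{x\}\rangle^{\perp}$ with $D\subseteq\cA$, that $\phi$ vanishes on the dual hyperplane $x^{\perp}$, so that $\phi$ acquires a linear factor of multiplicity $m$ for each extending point; the deduction of uniqueness of the completion from this is then left as a one-line assertion. You instead obtain the divisibility from the \emph{uniqueness} clause of Theorem~\ref{BBTthm}: $L_p^m\phi_{\mathcal{B}\cup\{p\}}$ and $\phi_{\mathcal{B}}$ have the same degree and both vanish on $\mathcal{T}_{\mathcal{B}}$, hence are proportional, giving the exact factorisation $\phi_{\mathcal{B}}=\lambda L_p^m\phi_{\mathcal{B}\cup\{p\}}$. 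Iterating this along a chain from $\cA$ up to a complete arc $\mathcal{B}$, and combining it with your converse (if $L_p\mid\phi_{\mathcal{B}}$ and $p\notin\mathcal{B}$ then $p$ extends $\mathcal{B}$, because $\phi_{\mathcal{B}}$ is nonzero at points dual to $(k-1)$-secant hyperplanes) and unique factorisation, you identify every complete arc containing $\cA$ with $\cA\cup P$. What this buys is a fully explicit finish: the paper never addresses why the set of all individually extending points, adjoined to $\cA$, is itself an arc, and your argument sidesteps that question entirely; the price is reliance on the uniqueness part of Theorem~\ref{BBTthm}, which the paper's proof of this statement does not need. You also correctly read the size hypothesis as $|\cA|\geq mq/(m+1)+k-1$ (the $m-1$ in the denominator of the statement is evidently a typo), which is exactly what gives $mt<q$ and legitimises passing from functional to polynomial identities. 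Two cosmetic slips: you reuse the letter $q$ for a point, colliding with the field order, and the phrase ``the $k$ vectors of $\{x,p\}\cup(H\cap\mathcal{B})$'' in your first step contains a stray $x$; neither affects the mathematics.
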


\begin{proof} 
Let $\phi(Z)$ be the form of degree $mt$ obtained from $\cA$ by Theorem~\ref{BBTthm}. Suppose that $\cA$ can be extended to a larger arc by appending the point $x$. In the dual space the hyperplane $x^{\perp}$ contains the point dual to the hyperplane $\langle S\cup \{x\}\rangle^{\perp}$ for every $(k-2)$-subset $S$ of $\cA$. By induction on $r$, we will prove that $\phi$ is zero on $\langle D\cup x\rangle ^{\perp}$ for every subset $D$ of $\cA$ of size $k-2-r$. Then, for $r=k-2$, this will imply that $\phi$ is zero on $x^{\perp}$. Thus, $\phi$ contains linear factors of multiplicity $m$ for each point $x$ which extends $\cA$ to a larger arc. Thus, the extension of $\cA$ to a complete arc is unique and can be found by finding the linear factors of $\phi(Z)$.

To prove the induction claim for $r=1$, let $D$ be a $(k-3)$-subset of $\cA$. Then, for every point $y \in \cA \setminus D$, the point $\langle D\cup \{ x,y \}\rangle^{\perp}$ is a point on the line $\langle D\cup \{ x \}\rangle^{\perp}$, which is a zero of $\phi$ of multiplicity $m$. Since $\phi$ has degree $mt$, this implies that $\phi$ is zero on the line $\langle D\cup \{ x \}\rangle^{\perp}$.

To prove the inductive step, consider a $(k-2-r)$-subset of $D$ of $\cA$ 
and observe that for every point $y \in \cA \setminus D$, the form $\phi$ is zero on $\langle D\cup \{ x,y \}\rangle^{\perp}$. Thus, $\phi$ is zero on more than $mt$ hyperplanes of $\langle D\cup \{ x \}\rangle^{\perp}$, which implies $\phi$ is zero on  $\langle D\cup \{ x \}\rangle^{\perp}$.
\end{proof}

The following theorem is a corollary to Theorem~\ref{BBTthm}.

\begin{theorem} \label{dualhypersurface}
Let $m\in \{1,2 \}$ such that $m-1=q$ modulo $2$. Let $\cA$ be an arc of $\mathrm{PG}(k-1,q)$ of size $q+k-1-t$, where $|\cA| \geq mt+k-1$. Let $\mathcal T$ be the set of points in the dual space which are dual to the hyperplanes of $\mathrm{PG}(k-1,q)$  which contain exactly $k-2$ points of $\cA$. Then $\mathcal T$ is contained in a unique hypersuface of degree $mt$.
\end{theorem}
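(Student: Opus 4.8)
The plan is to deduce the statement directly from Theorem~\ref{BBTthm}, whose proof already contains both halves. For existence, I would apply Theorem~\ref{BBTthm} with the same $m$ and the same arc $\cA$; the hypothesis $|\cA|\geq mt+k-1$ is precisely what is needed to run that construction. It yields a homogeneous polynomial $\phi(Z)$ of degree $mt$, and in the course of the proof of Theorem~\ref{BBTthm} it is checked that $\phi$ vanishes at every point $z$ with $z_j=\det_j(x,y_1,\ldots,y_{k-2})$ whenever $\{y_1,\ldots,y_{k-2}\}\subset\cA$ and $x\in\cZ(f_{\{y_1,\ldots,y_{k-2}\}})$. Such a $z$ is exactly the point of the dual space dual to the hyperplane $\langle\{y_1,\ldots,y_{k-2},x\}\rangle$, so $\cZ(\phi)$ is a hypersurface of degree $mt$ passing through every point of $\mathcal T$. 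That settles existence.

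For uniqueness, the point that needs care — and really the only one — is that the ``uniqueness'' portion of the proof of Theorem~\ref{BBTthm} proves more than the uniqueness of the particular $\phi$ constructed there: it shows that \emph{any} two forms $\phi,\phi'$ of degree $mt$ vanishing on $\mathcal T$ are proportional. I would simply invoke that argument: by induction on $r$ one finds, for each subset $D\subset\cA$ with $|D|=k-r$, a scalar $a\in\bF_q$ for which $\phi-a\phi'$ is identically zero on $\langle D\rangle^{\perp}$; the base case $r=2$ uses that $\langle D\rangle^{\perp}$ is a line meeting $\mathcal T$ in $t$ points at each of which both forms vanish to order $m$, and the inductive step uses that $\langle D\rangle^{\perp}$ contains at least $mt+1$ hyperplanes of the shape $\langle D\cup\{x\}\rangle^{\perp}$ with $x\in\cA\setminus D$, which is again where $|\cA|\geq mt+k-1$ enters. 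Taking $r=k$ forces $\phi=a\phi'$, so the form of degree $mt$ vanishing on $\mathcal T$ is unique up to a nonzero scalar.

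I do not expect a genuine obstacle here: once one interprets ``hypersurface of degree $mt$ containing $\mathcal T$'' as ``zero locus of a form of degree $mt$ vanishing on $\mathcal T$, taken up to a nonzero scalar'', the theorem is an immediate corollary of Theorem~\ref{BBTthm} together with the (slightly stronger than advertised) uniqueness statement already established inside its proof. The only thing to write down carefully is the observation that the two displayed conclusions above are exactly what that uniqueness argument delivers.
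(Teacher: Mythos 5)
Your proposal is correct and follows exactly the paper's route: the paper's proof of this theorem is the single sentence that the hypersurface is $\phi(Z)=0$ with $\phi$ as in Theorem~\ref{BBTthm}, relying precisely on the fact (which you rightly highlight) that the uniqueness argument inside the proof of Theorem~\ref{BBTthm} is stated for \emph{any} two forms of degree $mt$ vanishing on $\mathcal T$. Your write-up just makes explicit what the paper leaves implicit.
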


\begin{proof}
The hypersurface is defined by $\phi(Z)=0$, where $\phi(Z)$ is as in Theorem~\ref{BBTthm}.
\end{proof}

\section{A tensor associated to an arc}

The following theorem is in the spirit of Theorem~\ref{BBTthm}. It is stronger in that one does not require $m=2$ in the case of $q$ odd, but weaker in the sense that the multi-homogeneous form $F$ does not necessarily come from a form $\phi(Z)$ on $\PG(k-1,q)$ under substitution. This theorem is from \cite{BL2018} and it is proved using the scaled coordinate-free lemma of tangents (Lemma~\ref{lem:g_functionpre}).

\begin{theorem} \label{BLform}
Let $\cA$ be an arc of $\mathrm{PG}(k-1,q)$ of size $q+k-1-t$ and let $\Psi[X]$ denote the subspace of homogenous polyomials of degree $t$ which are zero on $\cA$. There is a form $F(X_1,\ldots,X_{k-1})$, which is homogenous of degree $t$ in each of the indeterminates $X_i=(X_{i1},\ldots,X_{ik})$ with the property that for each $S=\{ y_1,\ldots,y_{k-2} \}\subset \cA$
$$
F(X,y_1,\ldots,y_{k-2})=f_{S}(X) \pmod{\Psi[X]}.
$$ 
Moreover, modulo $(\Psi[X_1],\ldots,\Psi[X_{k-1}])$, the form $F$ is alternating for $t$ even and symmetric for $t$ odd.
\end{theorem}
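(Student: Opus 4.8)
The plan is to follow the template of the proof of Theorem~\ref{BBTthm}: produce $F$ by an explicit interpolation formula, check it is well defined modulo $(\Psi[X_1],\ldots,\Psi[X_{k-1}])$, verify the restriction identity first for arguments drawn from a fixed finite subset of $\cA$ and then in general by induction, and finally read off the symmetry type. The one essential change is that we may \emph{not} pass through the substitution $Z_j=\det_j(X_1,\ldots,X_{k-1})$ as in (\ref{eqn:G}): that device is precisely what forces the exponent $m=2$ when $q$ is odd, and it would make $F$ symmetrise the wrong way. So $F$ is kept as a genuine tensor of degree $t$ in each block $X_i$, and in general it need not arise from a form $\phi(Z)$ on $\PG(k-1,q)$.

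\textbf{Construction.} Order $\cA$ and fix $E\subseteq\cA$ with $|E|=t+k-1$ (when $\cA$ is too small to contain such a set one takes $E=\cA$ and pads the interpolation; since the whole assertion is modulo the ideal, it suffices to determine $F$ on the arc). Interpolating the blocks one at a time, using at each stage $t+1$ of the determinantal linear forms $\det(X_1,\ldots,X_{k-1},u)$, $u\in E$, as nodes and the scaled tangent values $g(\cdot)$ of (\ref{eqn:g}) as data, build a form $F(X_1,\ldots,X_{k-1})$ which is homogeneous of degree $t$ in each $X_i$ and whose evaluation at $(k-1)$-tuples of points of $E$ recovers the $g$-values up to the usual normalisations; a typical summand, indexed by an ordered $(k-1)$-subset $T=\{a_1,\ldots,a_{k-1}\}$ of $E$, is proportional to $g(T)\prod_{u\in E\setminus T}\det(X_1,\ldots,X_{k-1},u)/\det(a_1,\ldots,a_{k-1},u)$.

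\textbf{Well-definedness and the restriction identity.} A priori $F$ depends on $E$ and on the orderings used to evaluate $g$. Independence of the orderings is exactly Lemma~\ref{lem:g_functionpre}, whose sign $(-1)^{s(\sigma)(t+1)}$ cancels the sign changes of the accompanying determinants, and independence of $E$ modulo the ideal follows by restricting successively to pencils of forms through $(k-2)$-subsets, on which every degree-$t$ form is a binary form of degree $t$ and so is determined by $t+1$ values. For the identity $F(X,y_1,\ldots,y_{k-2})\equiv f_{\{y_1,\ldots,y_{k-2}\}}(X)\pmod{\Psi[X]}$, first take $S=\{y_1,\ldots,y_{k-2}\}\subseteq E$: evaluating at $x\in E\setminus S$, the factors $\det(x,y_1,\ldots,y_{k-2},u)$ kill every summand except one, and that one equals $f_S(x)$, while for $x\in S$ all summands vanish, matching $f_S(x)=0$; in a basis containing $S$ both sides are binary forms of degree $t$ agreeing at the $t+1$ points of $E\setminus S$, hence equal (here the identity even holds exactly). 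For general $S$ one inducts on $|S\setminus E|$ as in Theorem~\ref{BBTthm}, moving a point of $S\setminus E$ into the first slot and a point of $E$ into $S$, matching via Lemma~\ref{lem:g_functionpre} and the transposition rule $f_{\sigma(S)}=(-1)^{s(\sigma)(t+1)}f_S$; crucially, the sign $(-1)^{s(\sigma)(t+1)}$ thrown up by the lemma of tangents is precisely the one built into the definition of $f_S$, so no squaring is needed to absorb it.

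\textbf{Symmetry, and the main obstacle.} Finally, modulo $(\Psi[X_1],\ldots,\Psi[X_{k-1}])$, transposing two blocks $X_i,X_j$ multiplies $F$ by $(-1)^{s(\sigma)(t+1)}$: the $t$ determinantal factors contribute $(-1)^t$ and the remaining $-1$ is forced by Lemma~\ref{lem:g_functionpre} on the coefficients $g(\cdot)$, equivalently by the restriction identity and the transposition rule for $f_S$. Hence $F$ is symmetric for $t$ odd and antisymmetric for $t$ even; and since $\det(X_1,\ldots,X_{k-1},u)$ vanishes whenever two of the blocks coincide, $F$ vanishes on the diagonal, which both upgrades antisymmetry to ``alternating'' for $t$ even and supplies the statement in characteristic two. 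I expect the genuine obstacle to be this sign bookkeeping carried out simultaneously with control of the multidegree: one has to keep $F$ a tensor of degree exactly $t$ in each variable while propagating the parity $s(\sigma)(t+1)$ faithfully through both the $|S\setminus E|$-induction and the symmetry argument, since the convenient $\phi(Z)$-route is closed off. Once the parities are organised by Lemma~\ref{lem:g_functionpre}, no idea beyond the proof of Theorem~\ref{BBTthm} is required.
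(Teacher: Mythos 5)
First, a point of comparison: the paper does not actually prove Theorem~\ref{BLform} — it is quoted from \cite{BL2018} and \cite{BL2019} — so there is no in-paper proof to measure you against. Judged on its own, your proposal has a genuine gap: the $m=1$ determinantal interpolation does not work in odd characteristic. Concretely, take your typical summand
$$
g(a_1,\ldots,a_{k-1})\prod_{u \in E \setminus T}\frac{\det(X_1,\ldots,X_{k-1},u)}{\det(a_1,\ldots,a_{k-1},u)},\qquad |E|=t+k-1.
$$
Reordering $T$ by $\sigma$ multiplies $g(T)$ by $(-1)^{s(\sigma)(t+1)}$ (Lemma~\ref{lem:g_functionpre}) and the $t$ denominator determinants by $(-1)^{s(\sigma)t}$, so the summand changes by $(-1)^{s(\sigma)(2t+1)}=(-1)^{s(\sigma)}$: it is not well defined on unordered subsets when $q$ is odd. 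This is exactly the obstruction that forces $m=2$ in Theorem~\ref{BBTthm}; it has nothing to do with the substitution $Z_j=\det_j(X_1,\ldots,X_{k-1})$, so discarding that substitution does not remove it. If instead you fix a canonical ordering of each $T$, then at $x\in E\setminus S$ the unique surviving term evaluates to $(-1)^{s(\sigma_x)t}f_S(x)$, where $\sigma_x$ reorders $(y_1,\ldots,y_{k-2},x)$ into the canonical order of $S\cup\{x\}$ — a sign that varies with $x$ when $t$ is odd, so the interpolant does not reproduce $f_S$. Equivalently, your $F$ satisfies $F(X_{\sigma(1)},\ldots,X_{\sigma(k-1)})=(-1)^{s(\sigma)t}F(X_1,\ldots,X_{k-1})$, i.e. it is symmetric for $t$ even and antisymmetric for $t$ odd — the opposite of what the theorem asserts and of what the transposition rule $f_{\sigma(S)}=(-1)^{s(\sigma)(t+1)}f_S$ forces on any tensor with the stated restriction property.

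The paper itself signals that a BBT-style Lagrange formula is not the route. If it were, the restriction identity would hold exactly (as $h_S=f_S^m$ does in the proof of Theorem~\ref{BBTthm}), whereas the authors assert it only modulo $\Psi[X]$ and explicitly conjecture the exact version; and the planar case, Theorem~\ref{thm:tensor_planar}, is built on the $t$-socle machinery — a linear-algebra interpolation of the map $y\mapsto f_y$ through a basis of evaluation functionals, with Lemma~\ref{lem:g_functionpre} supplying the symmetry of the interpolation data. An argument of that kind, rather than interpolation at determinantal nodes, is what is needed here, and organising it is precisely the content of \cite{BL2018} that your last paragraph waves away.
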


One can show that for large subsets of $\cA$ such a form exists such that the statement holds without the modulo $\Psi[X]$. We conjecture that the statement indeed holds without the modulo $\Psi[X]$. 

The multi-homogeneous form $F$ from Theorem \ref{BLform} is called a {\em tensor form of $\cA$}. Here is an example.

\begin{example} \label{12arc}
\rm{The set of 12 points in $\mathrm{PG}(2,13)$,
$$
\cA=\{ ( 3,4,1),(-3,4,1),(3,-4,1),(-3,-4,1),(4,3,1),(4,-3,1),(-4,3,1),(-4,-3,1),
$$
$$
(1,1,1),(1,-1,1),(-1,1,1),(-1,-1,1) \}
$$
is an arc with $t=3$ and it is not contained in a curve of degree $3$. Consequently, Theorem~\ref{BLform} implies that there is bi-homogeneous form $F(X,Y)$ of degree three with the property that $F(X,a)=f_a(X)$ for all $a \in S$. It is given by
$$
F(x,y)=5(x_2^2x_3y_1^2y_3+y_2^2y_3x_1^2x_3+x_2x_3^2y_1^2y_2+x_1^2x_2y_2y_3^2+x_1x_3^2y_1y_2^2+x_1x_2^2y_1y_3^2)$$
$$
+6 x_1x_2x_3y_1y_2y_3+x_1^3y_1^3+x_2^3y_2^3+x_3^3y_3^3.
$$
In this example the tensor form $F$ of $\cA$ is unique up to a scalar factor.}
\end{example}

We will see that Theorem \ref{BLform} has strong implications for planar arcs and leads to a proof of the MDS conjecture for $k\leq \sqrt{q} - \sqrt{q}/p+2$ in the case that $q$ is an odd square.

The tensor form for planar arcs is a bi-homogeneous form of bi-degree $(t,t)$, which we call a {\em $(t,t)$-form}. To be able to state the next theorem we need the following terminology.
Let $V_r[X]$ denote the vector space of homogeneous polynomials of degree $r$ in ${\mathbb{F}}_q[X_1,X_2,X_3]$.
For a set $D$ of points of $\mathrm{PG}(2,q)$, we define an {\em $r$-socle} of $D$ as follows.
Let $\mathrm{M}_r(D)$ denote the 
matrix  whose rows are indexed by the elements of $V_r[X]$ and whose columns are indexed by the points of $D$, and where the $(f(X),x)$-entry is $f(x)$. Then an {\em $r$-socle} of $D$ is a subset of 
$D$ whose elements index the columns which form a basis for the column space of $\mathrm{M}_r(D)$.

The following theorem from \cite{BL2018} applies to planar arcs and is more detailed than Theorem \ref{BLform} for the tensor form of an arc in higher dimensions.

\begin{theorem} \label{thm:tensor_planar}
Let $\cA$ be an arc of size $q+2-t$ of $\mathrm{PG}(2,q)$ and let $\Psi[X]$ denote the subspace of homogeneous polyomials of degree $t$ which are zero on $\cA$. For any subset $S$ of $\cA$ containing a $t$-socle $S_0$ of $\cA$, where 
$$
|S \setminus S_0|= \min\{ \tfrac{1}{2}(t+2)(t+1),|\cA \setminus S_0| \},
$$
there is a $(t,t)$-form $F(X,Y)\in {\mathbb{F}}_q[X,Y]$ such that
$$
F(X,y)=f_y(X) \pmod {\Psi[X]}
$$
for all $y \in \cA$ and 
$$
F(X,y)=f_y(X),
$$
for all $ y\in S$. Moreover, modulo $(\Psi[X],\Psi[Y])$, the form $F$ is alternating for $t$ even and symmetric for $t$ odd.
\end{theorem}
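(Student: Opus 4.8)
The plan is to peel off everything except the one genuinely new assertion. The congruence $F(X,y)\equiv f_y(X)\pmod{\Psi[X]}$ for all $y\in\cA$ says precisely that $F(x,y)=f_y(x)$ for all $x,y\in\cA$, and Lemma~\ref{segrelemmascaled} with $k=3$ and $D=\emptyset$ (so that the permutation in its statement is a transposition) gives $f_x(y)=(-1)^{t+1}f_y(x)$. Hence the $\cA\times\cA$ matrix $M=(f_y(x))_{x,y}$ has zero diagonal, satisfies $M^{\mathrm{tr}}=(-1)^{t+1}M$, and has all its rows and columns in the space $W\subseteq\bF_q^{\cA}$ of restrictions to $\cA$ of forms of $V_t[X]$ — the columns because $(f_y(x))_x$ is the restriction of $f_y(X)$, the rows by the symmetry. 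Such an $M$ is the restriction to $\cA\times\cA$ of some $(t,t)$-form, and every $(t,t)$-form restricting to $M$ is alternating for $t$ even and symmetric for $t$ odd modulo $(\Psi[X],\Psi[Y])$; this part is essentially Theorem~\ref{BLform} for $k=3$. So the real task is to choose the $(t,t)$-form so that $F(X,y)=f_y(X)$ holds \emph{exactly} on $S$.

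I would take the \emph{socle Lagrange form}. As $S_0$ is a $t$-socle, the functionals $\mathrm{ev}_s\colon V_t[Y]\to\bF_q$, $h\mapsto h(s)$, for $s\in S_0$ form a basis of $\mathrm{span}\{\mathrm{ev}_x:x\in\cA\}$; let $L_s\in V_t[Y]$ be dual Lagrange forms ($L_s(s')=\delta_{s,s'}$ on $S_0$; they are only determined modulo $\Psi[Y]$, but their values on $\cA$ are well defined), and put $F(X,Y)=\sum_{s\in S_0}f_s(X)L_s(Y)$. Then $F$ restricts to $M$ on $\cA\times\cA$ (evaluate at $x\in\cA$, write $\mathrm{ev}_y=\sum_s L_s(y)\mathrm{ev}_s$, and use $f_s(x)=(-1)^{t+1}f_x(s)$), $F(X,s)=f_s(X)$ for $s\in S_0$, and $F(X,y)=\sum_{s\in S_0}L_s(y)f_s(X)$ for every $y\in\cA$. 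Any modification of $F$ inside the kernel of restriction to $\cA\times\cA$ alters $F(X,y)$, $y\in\cA$, only by $\sum_i w_i(y)\psi_i(X)$ with $\{\psi_i\}$ a basis of $\Psi[X]$ and $w_i\in W$; and $w_i|_{S_0}=0$ forces $w_i=0$ since $S_0$ is a socle. Thus, up to harmless choices this $F$ is the only candidate, and the whole theorem reduces to the single identity
$$
f_y(X)=\sum_{s\in S_0}L_s(y)\,f_s(X)\quad\text{in }V_t[X],\qquad\text{for all }y\in S\setminus S_0 .
$$
Both sides already agree modulo $\Psi[X]$; what must be shown is that the difference, which lies in $\Psi[X]$, is zero.

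For this I would use two exact facts about tangent forms together with the scaled lemma of tangents. First, for $y\in\cA$ and any $t+1$ distinct points $a_0,\dots,a_t\in\cA\setminus\{y\}$ the degree-$t$ forms vanishing to order $t$ at $y$ form a $(t+1)$-dimensional space that contains $f_y(X)$ and on which evaluation at the $a_i$ is injective (the $\langle y,a_i\rangle$ being distinct lines through $y$, by the arc property); this gives the \emph{exact} interpolation formula $f_y(X)=\sum_{i=0}^{t}f_y(a_i)\prod_{j\neq i}\det(X,y,a_j)\det(a_i,y,a_j)^{-1}$. Second, $f_y(a_i)=(-1)^{t+1}f_{a_i}(y)$, so these coefficients are values of the genuine forms $f_{a_i}(Y)\in V_t[Y]$. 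Feeding these, together with the Lagrange-type identities of Lemma~\ref{sumeqnthm} for $k=3$ (a consequence of Lemma~\ref{lem:g_functionpre}, the scaled coordinate-free lemma of tangents), into the difference $f_y(X)-\sum_{s\in S_0}L_s(y)f_s(X)$ expresses it through a matrix of determinants of the type used in the proof of Theorem~\ref{BBTthm} and in Section~\ref{sec:MDS_2p-2}; a rank computation for that matrix — which goes through precisely because the number of extra nodes $|S\setminus S_0|$ does not exceed $\dim V_t[X]=\tfrac12(t+2)(t+1)$ — then forces the difference to vanish. The alternating/symmetric conclusion modulo $(\Psi[X],\Psi[Y])$ is then free, by the first paragraph.

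The hard step is this identity. Linear algebra alone makes $y\mapsto f_y(X)$ compatible with the linear dependences among the functionals $\mathrm{ev}_y$ only modulo $\Psi[X]$; upgrading it to an exact compatibility is exactly where the scaled coordinate-free lemma of tangents is indispensable, and the hypothesis $|S\setminus S_0|\le\tfrac12(t+2)(t+1)$ is what keeps the resulting over-determined interpolation within the reach of the determinant-matrix argument. Everything else — writing down $F$, checking the congruence, and the socle bookkeeping — is routine.
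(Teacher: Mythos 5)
First, note that the paper itself contains no proof of Theorem~\ref{thm:tensor_planar}: it is quoted from \cite{BL2018}, so there is no in-paper argument to measure your proposal against, and it has to be judged on its own terms. On those terms it is a reduction followed by an unexecuted step. You correctly observe that the congruence for all $y\in\cA$ is equivalent to $F$ restricting to the matrix $M=(f_y(x))$ on $\cA\times\cA$, and your candidate $F(X,Y)=\sum_{s\in S_0}f_s(X)L_s(Y)$ does satisfy the congruence everywhere and exact equality on $S_0$. But the decisive assertion --- that $f_y(X)=\sum_{s\in S_0}L_s(y)\,f_s(X)$ holds \emph{exactly} in $V_t[X]$ for the points $y\in S\setminus S_0$ --- is exactly what you do not establish: ``a rank computation for that matrix \dots\ forces the difference to vanish'' names neither the matrix nor the computation, and you concede this is ``the hard step''. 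Since that identity is the entire content of the theorem beyond routine linear algebra, the gap is not peripheral.

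Worse, your own rigidity argument indicates the reduction cannot be the intended route. Any $(t,t)$-form $F$ with $F(X,s)=f_s(X)$ exactly for all $s$ in a $t$-socle $S_0$ has its values $F(X,y)$ forced for \emph{every} $y\in\cA$, because $F(X,y)$ depends linearly on $\mathrm{ev}_y\in V_t[Y]^*$ and each $\mathrm{ev}_y$ is a linear combination of $\{\mathrm{ev}_s : s\in S_0\}$; there is no freedom left to accommodate the points of $S\setminus S_0$. Consequently the hypothesis $|S\setminus S_0|\le\tfrac12(t+2)(t+1)$ plays no role in your argument --- the identity either holds at a given $y$ or it does not, independently of how many other points lie in $S$ --- even though you claim this hypothesis is what keeps the interpolation ``within reach''. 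Moreover, since $S$ is an \emph{arbitrary} subset of the prescribed size, proving your identity for all such $S$ would yield $F(X,y)=f_y(X)$ exactly for every $y\in\cA$, which is precisely the exactness statement the paper records as an open conjecture in the remark following Theorem~\ref{BLform}. A reduction that converts the theorem into that conjecture and then declares the remainder routine is not a proof. The appearance of $\tfrac12(t+2)(t+1)=\dim V_t[Y]$ in the hypothesis is the tell: the exact equalities at the points of $S\setminus S_0$ are meant to be bought with the $\dim V_t[Y]$ interpolation degrees of freedom of a degree-$t$ form in $Y$ (in the spirit of the determinantal construction (\ref{eqn:G}) in the proof of Theorem~\ref{BBTthm}), not extracted from the socle Lagrange form; you would need to rebuild the argument around such a construction.
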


As we will see in the section on planar arcs, Theorem \ref{thm:tensor_planar} leads to the existence of certain algebraic curves containing a given planar arc.


\section{Planar arcs}

It follows from the trivial upper bound Theorem \ref{cor:trivial_upper_bound} that a planar arc can have size at most $q+2$, and by the existence of a hyperoval, this is best possible for $q$ even. However, when $q$ is odd this can easily be improved.

\begin{theorem}
If $q$ is odd then a planar arc has size at most $q+1$.
\end{theorem}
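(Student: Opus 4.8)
The plan is to argue by contradiction, assuming a planar arc $\cA$ attains the trivial upper bound $|\cA|=q+2$ from Theorem~\ref{cor:trivial_upper_bound}, and then to extract a parity contradiction from an elementary line count through an external point. The case $|\cA|\le q+1$ is vacuous and nothing larger than $q+2$ can occur, so this single case is all that must be ruled out.

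First I would pin down the tangent structure. Applying Lemma~\ref{triv} in $\PG(2,\bF_q)$ (so $k=3$) to a one-element subset $\{x\}\subset\cA$, the number of tangent lines to $\cA$ at $x$ equals $t=q+3-|\cA|-1=0$. Hence $\cA$ has no tangent line at any of its points, which means every line of $\PG(2,q)$ meets $\cA$ in either $0$ or $2$ points. (Equivalently, one can avoid Lemma~\ref{triv} and note directly that the $q+1$ lines through a point of $\cA$ must account for the remaining $q+1$ points, each at most once, hence exactly once, so each such line is a $2$-secant.)

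Next I would choose a point $P$ of $\PG(2,q)$ with $P\notin\cA$; such a point exists since $|\cA|=q+2<q^2+q+1$ for $q\ge 2$. The $q+1$ lines through $P$ partition $\PG(2,q)\setminus\{P\}$, so each point of $\cA$ lies on exactly one of them, and each of these lines is either external to $\cA$ or a $2$-secant. Summing the number of points of $\cA$ over the lines through $P$ therefore gives $|\cA|=2s$, where $s$ is the number of $2$-secants through $P$; in particular $|\cA|$ is even. But $q$ odd forces $q+2$ to be odd, a contradiction, so $|\cA|\le q+1$.

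I do not expect a genuine obstacle here: the only points needing a word of care are the existence of the external point $P$ (immediate from the cardinality of $\PG(2,q)$) and the fact that a line through $P$ cannot meet $\cA$ in exactly one point — which is precisely the content of the $t=0$ step. The crux of the argument, and the reason oddness is essential (hyperovals exist for $q$ even), is the parity of $q+2$.
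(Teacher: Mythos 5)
Your proof is correct and follows essentially the same route as the paper: assume $|\cA|=q+2$, observe that every line then meets $\cA$ in $0$ or $2$ points, and count points of $\cA$ on the lines through an external point to conclude $|\cA|$ is even, contradicting $q$ odd. The only cosmetic difference is that you derive the absence of tangents via Lemma~\ref{triv} with $t=0$, whereas the paper argues it directly from the $q+1$ lines through a point of the arc.
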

\begin{proof}
As we have seen in the proof of Theorem \ref{cor:trivial_upper_bound}, considering a point $x\in S$ and the $q+1$ lines through $x$, the bound $|\cA|\leq q+2$ follows immediately. Now suppose $|\cA|=q+2$. 
Then each line through a point in $\cA$ must contain another point in $\cA$, and therefore each line intersects $\cA$ in 0 or 2 points.
Counting points on the lines through a point $y\notin S$ implies that $|\cA|$, and hence $q$, is even.
\end{proof}

The following theorem is Segre's celebrated characterisation of conics from \cite{Segre1955a}. We give a proof using the tensor associated to a planar arc.

\begin{theorem}\label{thm:segre}
If $q$ is odd and $\cA$ is an arc of size $q+1$ in $\PG(2,q)$ then $\cA$ is a conic.
\end{theorem}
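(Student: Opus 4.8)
The plan is to derive the theorem from the tensor form of a planar arc (Theorem~\ref{thm:tensor_planar}, equivalently Theorem~\ref{BLform}) specialised to $t=1$, and then to show that the resulting bilinear form is a non-degenerate \emph{symmetric} form whose associated quadric is the conic we want.

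First I would fix the set-up. Since $|\cA|=q+1=q+2-t$ we have $t=1$, so through each point $y\in\cA$ there is a unique tangent line $\ell_y=\cZ(f_y)$ with $f_y$ a nonzero linear form, and distinct points of $\cA$ have distinct tangents: if $\ell_y=\ell_{y'}$ then $\{y\}=\ell_y\cap\cA=\ell_{y'}\cap\cA=\{y'\}$. Because $\cA$ spans $\PG(2,q)$, the space $\Psi[X]$ of linear forms vanishing on $\cA$ is trivial, so Theorem~\ref{thm:tensor_planar} produces a $(1,1)$-form, i.e.\ a bilinear form $F(X,Y)$ on $\bF_q^3$, which is symmetric (as $t=1$ is odd) and satisfies $F(X,y)=f_y(X)$ for every $y\in\cA$ (the ``$\bmod\ \Psi[X]$'' clause being vacuous). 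Set $Q(X)=F(X,X)$. Since $y\in\ell_y$ we get $Q(y)=f_y(y)=0$ for all $y\in\cA$; and since $F(X,y)=f_y(X)\neq0$ we have $F\neq0$, so in odd characteristic the identity $F(X,Y)=\tfrac{1}{2}\bigl(Q(X+Y)-Q(X)-Q(Y)\bigr)$ shows $Q$ is a nonzero quadratic form. Hence $\cC:=\cZ(Q)$ is a conic containing $\cA$.

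The crux of the argument, which I expect to be the main obstacle, is to show that $\cC$ is non-degenerate, i.e.\ that the radical of $F$ is trivial; this is the one place the parity of $q$ is used. Suppose $\mathrm{rad}(F)=\langle r\rangle\neq0$. Then $r\notin\cA$, for otherwise $f_r=F(\cdot,r)\equiv0$, contradicting that $\ell_r$ is a genuine line. For each $y\in\cA$ we have $f_y(r)=F(r,y)=0$, so $r$ lies on $\ell_y$; thus the $q+1$ pairwise distinct tangents all pass through $r$, and since there are exactly $q+1$ lines through $r$, \emph{every} line through $r$ is a tangent. Consequently every line of $\PG(2,q)$ meets $\cA\cup\{r\}$ in at most two points, so $\cA\cup\{r\}$ would be an arc of size $q+2$ in $\PG(2,q)$ — impossible for $q$ odd by the earlier theorem that a planar arc over a field of odd order has at most $q+1$ points. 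Therefore $F$, and with it $Q$, is non-degenerate.

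Finally, a non-degenerate conic of $\PG(2,q)$ is (a normal rational curve for $k=3$, hence) a set of exactly $q+1$ points, so from $\cA\subseteq\cC$ and $|\cA|=q+1=|\cC|$ we conclude $\cA=\cC$, which is the assertion. Everything except the non-degeneracy step is formal once the tensor form of Theorem~\ref{thm:tensor_planar} is in hand; the genuine arithmetic input is the absence of hyperovals in odd characteristic, invoked precisely to rule out a pencil of concurrent tangents.
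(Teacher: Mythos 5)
Your proof is correct and follows essentially the same route as the paper, which likewise derives the result by applying Theorem~\ref{thm:tensor_planar} with $t=1$ and $\Psi[X]=\{0\}$ to obtain a symmetric bilinear form $F$ vanishing on the diagonal over $\cA$. The paper's proof is a single sentence and leaves implicit the points you spell out — that the radical of $F$ must be trivial (your nucleus argument via the nonexistence of $(q+2)$-arcs for $q$ odd) and that $\cA$ then coincides with the resulting non-degenerate conic by counting — so your write-up is a faithful, more complete version of the intended argument.
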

\begin{proof}
Applying Theorem \ref{thm:tensor_planar}, where in this case $t=1$ and $\Psi[X]=\{0\}$, we obtain a symmetric bilinear form $F(X,Y)$ with $F(a,a)=0$ for each $a\in \cA$.
\end{proof}

\subsection{Hyperovals} \label{hypsection}
The situation is very different for $q$ even. There are many examples of arcs of size $q+2$ known for $q$ even which are projectively inequivalent. These can be described by a so-called {\em $o$-polynomial} which is a polynomial $f$ with the property that
$$
\{(1,t,f(t)) \ | \ t \in {\mathbb F}_q \} \cup \{ (0,1,0),(0,0,1) \}
$$
is an arc of size $q+2$. A planar arc of size $q+2$ is called a {\em hyperoval}.

The list of hyperovals is complete for $q=8$, $q=16$ and $q=64$. There is a sporadic example when $q=32$ due to O'Keefe and Penttila \cite{OP92}.

Table 1 lists all the known families of hyperovals. The references
for these infinite classes of hyperovals are: (1) regular (Bose \cite{RCB:47}),
(2) translation (Segre \cite{Segre1957}), (3) Segre \cite{Segre1962},
(4) Glynn I and Glynn II \cite{DGG:83}, (4) Payne \cite{SEP:85},
(5) Cherowitzo \cite{WEC:95B}, (6) Subiaco (Cherowitzo, Penttila, Pinneri and Royle \cite{WEC:95A}, Payne \cite{SEP:95A}, Payne, Penttila and Pinneri \cite{SEP:95z}),
(7) Adelaide (Cherowitzo, O'Keefe and Penttila \cite{WEC:00}).

{\small
\[
\begin{array}{|c|c|c|c|}
\hline
\mbox{Name} & f(X) & q=2^h & \mbox{Conditions}  \\
\hline \hline
\mbox{Regular} & X^2 &  & \\  \hline
\mbox{Translation} & X^{2^i} & & (h,i)=1  \\ \hline
\mbox{Segre} & X^6 & h \mbox{ odd} &  \\  \hline
\mbox{Glynn I} & X^{3\sigma+4} & h \mbox{ odd} & \sigma=2^{(h+1)/2}  \\  \hline
\mbox{Glynn II} & X^{\sigma + \lambda} & h \mbox{ odd} & 
\sigma=2^{(h+1)/2}; \\
& & & \mbox{$\lambda=2^m$ if  $h=4m-1;$}  \\
& & & \mbox{$\lambda=2^{3m+1}$ if  $h=4m+1$} \\ \hline
\mbox{Payne}&  P(X) & h \mbox{ odd} &   \\   \hline
\mbox{Cherowitzo} & C(X) & h \mbox{ odd}
&  \sigma=2^{(h+1)/2}  \\ \hline
\mbox{Subiaco I} & S_1(X) & h=4r+2 &  \omega^2+\omega+1 =0 \\
  \hline
\mbox{Subiaco II} & S_2(X) & h=4r+2 &  \delta =\zeta^{q-1} + \zeta^{1-q},
 \\  
              &        &         & \mbox{$\zeta$ primitive in $\bF_{q^2}$}\\
\hline
\mbox{Subiaco III} & S_3(X) & h\neq 4r+2 &  T_2(1/\delta) =1 \\ 
  \hline
\mbox{Adelaide} & S(X) & h \mbox{ even}, & \beta \in \bF_{q^2} \setminus \{1\},\ \beta^{q+1}=1,  
\\ & &h \geq 4 & m \equiv \pm (q-1)/3 \pmod{q+1}  \\\hline
 \multicolumn{4}{c}{    } \\
\multicolumn{4}{c}{\mbox{Table 1: Hyperovals in } \mathrm{PG}(2,q), q \mbox{ even}}
\end{array}
\]}

In Table 1, $T_2$ denotes the trace function from ${\mathbb F}_q$ to ${\mathbb F}_2$ and 
$$
P(X) = X^{1/6}+X^{3/6}+X^{5/6},
$$
$$
C(X) = X^{\sigma}+X^{\sigma+2}+X^{3\sigma+4}
$$
$$
S_1(X) = \frac{\omega^2(X^4+X)}{X^4+\omega^2X^2+1} +X^{1/2},
$$
$$
S_2(X) =  \frac{\delta^2X^4+\delta^5X^3+\delta^2X^2+\delta^3X}{X^4+
\delta^2X^2+1}+\left(\frac{X}{\delta}\right)^{1/2},
$$
$$
S_3(X) =  \frac{(\delta^4+\delta^2)X^3+\delta^3X^3+\delta^2X}{X^4+\delta^2
X^2+1} + \left(\frac{X}{\delta}\right)^{1/2},
$$
and
$$
S(X) =\frac{T(\beta^m)(X+1)}{T(\beta)}+\frac{T((\beta X +\beta^q)^m)}{T(\beta)(X+T(\beta)X^{1/2}+1)^{m-1}}+X^{1/2},
$$
where $T(X)=X+X^q$.

The complete classification of hyperovals has been determined for $q \leq 64$ and is given in the next theorem.

\begin{theorem}

{\rm (i)} {\em (Segre \cite{Segre1957})} In $\mathrm{PG}(2,q)$, $q=2,4,8$, the only
hyperovals are the regular hyperovals.

{\rm (ii)} {\em (Hall \cite{MH:75}, O'Keefe and Penttila \cite{CMO:91})} In $\mathrm{PG(2,16)}$, there are exactly two
distinct hyperovals. They are the regular hyperoval and the Subiaco
hyperoval. This latter hyperoval is also called the {\em Lunelli-Sce}
hyperoval {\rm \cite{LL:64}}.

{\rm (iii)} {\em (Penttila and Royle \cite{TP:94})} In $\mathrm{PG}(2,32)$, there
are exactly six distinct hyperovals. They are the regular hyperoval, the
translation hyperoval, the Segre hyperoval, the Payne
hyperoval, the Cherowitzo hyperoval and the O'Keefe--Penttila hyperoval.

{\rm (iv)} {\em (Vandendriessche \cite{ Vandendriessche2019})} In $\mathrm{PG}(2,64)$, there
are exactly four distinct hyperovals. They are the regular hyperoval, the Subiaco I, the Subiaco II and the Adelaide hyperoval.
\end{theorem}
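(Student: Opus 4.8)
This theorem collects four independent classification results, so the plan is to dispose of each value of $q$ in turn, giving hands-on arguments for the small cases and reducing the rest to exhaustive searches supported by group-theoretic normalisation. First I would record the standard normalisation of a hyperoval: since $\PGL(3,q)$ is transitive on frames, one may assume it contains $(1,0,0),(0,1,0),(0,0,1),(1,1,1)$, and after a further coordinate change it is the graph of an $o$-polynomial $f$ with $f(0)=0$, $f(1)=1$, i.e.\ it equals $\{(1,t,f(t))\mid t\in\bF_q\}\cup\{(0,1,0),(0,0,1)\}$; two such hyperovals are projectively equivalent precisely when the $o$-polynomials lie in the same orbit under the (known, finite) group generated by the substitutions induced by $\PGL(3,q)$ together with the Frobenius $t\mapsto t^2$.

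For $q=2$ a hyperoval is a $4$-point frame, hence the unique regular hyperoval. For $q=4$ it has $6$ points; by the uniqueness of the conic through $5$ points (Section~2), any $5$ of them lie on a conic $\cC$, and the last point must be the nucleus of $\cC$ (otherwise some line would be a $3$-secant), so the hyperoval is regular. For $q=8$ I would use that $f$ and each $f+sX$, $s\in\bF_q$, is a permutation polynomial of $\bF_8$, enumerate the polynomials of reduced degree at most $q-2$ with this property, and check that modulo the equivalence above only $X^2$ survives; this recovers Segre's result.

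For $q\in\{16,32,64\}$ I would run a pruned backtracking search over the candidate point sets, the key pruning being that every line meets a hyperoval in $0$ or $2$ points, so each partial hyperoval drastically limits the admissible next point, combined with the normalisation above to trim the search tree; the hyperovals found are then separated into projective equivalence classes via projective invariants and identified with the entries of Table~1. These searches are due to Hall (for $q=16$, later verified computer-free by O'Keefe and Penttila), Penttila and Royle (for $q=32$) and Vandendriessche (for $q=64$). The hard part is the sheer size of the search for $q=64$, which is only tractable because of the very strong pruning from the $0/2$-secant condition; a subtler point, which first appears at $q=32$, is that one must not assume the $o$-polynomial has small degree — the sporadic O'Keefe--Penttila hyperoval has no low-degree representation — so the enumeration must genuinely be exhaustive rather than restricted to the algebraic families in Table~1.
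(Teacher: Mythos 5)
The paper does not actually prove this theorem: it is stated as a survey of classification results, with the proofs delegated entirely to the cited references (Segre for $q\le 8$, Hall and O'Keefe--Penttila for $q=16$, Penttila--Royle for $q=32$, Vandendriessche for $q=64$). So there is no proof in the paper to compare yours against; what you have written is a reconstruction of how those references proceed, and in outline it is broadly faithful. Your $q=2$ and $q=4$ arguments are correct and complete (for $q=4$ the unique conic through five of the six points together with the concurrence of its tangents forces the sixth point to be the nucleus), and your description of the $q\in\{16,32,64\}$ cases correctly identifies them as exhaustive computer classifications driven by the $0/2$-secant pruning and a subsequent sorting into equivalence classes; these cannot honestly be reproduced in a paragraph, which is presumably why the paper only cites them. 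Your warning that the enumeration must not be restricted to low-degree or known algebraic families is also well taken.

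There is, however, a concrete error in your $q=8$ step. The criterion you propose to enumerate --- that $f$ and each $f+sX$, $s\in\bF_q$, be a permutation polynomial --- is not the $o$-polynomial condition; in fact it is never satisfiable for $q\ge 2$, since for any two distinct $x,y$ the element $s_0=(f(x)+f(y))/(x+y)$ makes $f+s_0X$ non-injective, so your search would return the empty set. The correct translation of ``every line meets the hyperoval in $0$ or $2$ points'' is: $f$ is a permutation (this handles the lines through $(0,1,0)$), and for each $s\neq 0$ the map $x\mapsto f(x)+sx$ is two-to-one, i.e.\ every non-empty fibre has size exactly $2$ (this handles the $q$ affine lines through the point $(0,1,s)$ of the line at infinity, whose intersections with the affine part of the hyperoval are precisely the fibres of $f+sX$); equivalently, for each $a\in\bF_q$ the difference-quotient map $x\mapsto (f(x+a)+f(a))/x$, extended by $0\mapsto 0$, is a permutation of $\bF_q$. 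With the criterion corrected, the finite check for $q=8$ does recover Segre's result.
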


The following theorem is due to Caullery and Schmidt and is from \cite{CS2015}. 

\begin{theorem}
If $f$ is an $o$-polynomial of ${\mathbb F}_q$ of degree less than $\frac{1}{2}q^{1/4}$ then $f(X)$ is equivalent to either $X^6$ or $X^{2^k}$ for a positive integer $k$.
\end{theorem}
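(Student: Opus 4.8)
The plan is to follow Caullery and Schmidt \cite{CS2015}: first reformulate the hypothesis as a statement about the $\bF_q$-rational points of an explicit hypersurface, then use a Weil-type bound to show that this hypersurface has no absolutely irreducible component defined over $\bF_q$, and finally invoke the classification of such ``exceptional'' polynomials via monodromy. To set up the reformulation, normalise (using the $\PGL(3,q)$-equivalence) so that $f(0)=0$ and $f(1)=1$. Then $\{(1,t,f(t))\mid t\in\bF_q\}\cup\{(0,1,0),(0,0,1)\}$ is an arc of size $q+2$ if and only if $f$ permutes $\bF_q$ and the polynomial
$$
\Phi_f(X,Y,Z)=\frac{f(X)(Y+Z)+f(Y)(Z+X)+f(Z)(X+Y)}{(X+Y)(Y+Z)(Z+X)}\in\bF_q[X,Y,Z]
$$
has no zero $(x,y,z)\in\bF_q^{3}$ with $x,y,z$ pairwise distinct. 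Here the numerator is precisely the $3\times 3$ collinearity determinant of the points $(1,t_i,f(t_i))$; in characteristic $2$ it is divisible by $X+Y$, $Y+Z$ and $Z+X$ (but by none of their squares), so $\Phi_f$ is a genuine polynomial of total degree $\deg f-2$, not divisible by $X+Y$, $Y+Z$ or $Z+X$. Thus the hypothesis says that every $\bF_q$-point of the surface $\cZ(\Phi_f)$ lies on the diagonal locus $\{X=Y\}\cup\{Y=Z\}\cup\{Z=X\}$.

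Next I would run the usual Weil-bound argument. Suppose, for a contradiction, that some absolutely irreducible factor $P$ of $\Phi_f$ is defined over $\bF_q$. For a suitable $z_0\in\bF_q$ the plane curve $\cZ(P)\cap\{Z=z_0\}$ is absolutely irreducible, defined over $\bF_q$, of degree at most $\deg f-2$, and not contained in the lines $X=Y$, $X=z_0$, $Y=z_0$. By the Weil bound it has at least $q-O((\deg f)^{2}\sqrt q)$ rational points, which is positive because $\deg f<\tfrac12 q^{1/4}$ forces $(\deg f)^{2}\sqrt q<\tfrac14 q$; removing the at most $O(\deg f)$ points on the three forbidden lines still leaves a rational point $(x,y)$ with $x,y,z_0$ pairwise distinct, contradicting the o-polynomial property. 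It is exactly this comparison of $q$ with $(\deg f)^{2}\sqrt q$ that produces the exponent $1/4$, the factor $\tfrac12$ absorbing the implied constants. Hence $\Phi_f$ has no absolutely irreducible factor over $\bF_q$; in particular $f$ is exceptional for the hyperoval property (the degree bound also makes the permutation polynomial $f$ exceptional in the classical sense, but it is the absence of absolutely irreducible $\bF_q$-factors of $\Phi_f$ that carries the extra structural information).

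The remaining, and by far the hardest, step is to classify the polynomials $f$ for which $\Phi_f$ has no absolutely irreducible factor over $\bF_q$ and which satisfy the o-polynomial property. One analyses the factorisation of $\Phi_f$ over $\overline{\bF_q}$ through the geometric and arithmetic monodromy groups of $f(X)-u$ over $\bF_q(u)$, handling the indecomposable case and the behaviour under composition separately, and feeding in the classification of exceptional polynomials in characteristic $2$ --- compositions of linear maps, additive ($2^{k}$-power) polynomials, Dickson-type polynomials, and a short list of sporadic families. Combining this with the o-polynomial constraint, which is far more rigid than mere exceptionality because it is a condition on the three-variable polynomial $\Phi_f$ rather than on $(f(X)-f(Y))/(X-Y)$, eliminates everything except the translation monomials $f(X)=X^{2^{k}}$ (o-polynomials exactly when $\gcd(k,h)=1$, Segre \cite{Segre1957}) and, for $h$ odd, $f(X)=X^{6}$ (Segre \cite{Segre1962}). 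Ruling out the Dickson-type and the sporadic families, apart from the degree-$6$ exception, is the genuine difficulty; in an expository treatment one would import it essentially verbatim from \cite{CS2015} (which in turn builds on the monodromy methods developed for planar and APN functions), whereas the reformulation and the Weil estimate above are routine.
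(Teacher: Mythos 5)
The paper does not prove this theorem: it is quoted from Caullery and Schmidt \cite{CS2015} without proof, so there is no internal argument to compare yours against. Your outline is a faithful summary of the strategy of \cite{CS2015}: the reformulation of the hyperoval condition via $\Phi_f$ is correct (the numerator is exactly the collinearity determinant of $(1,x,f(x))$, $(1,y,f(y))$, $(1,z,f(z))$, it is divisible by each of $X+Y$, $Y+Z$, $Z+X$ in characteristic $2$, and the arc condition is precisely that $f$ permutes $\bF_q$ and $\Phi_f$ has no $\bF_q$-zero off the diagonal locus), and your Weil-bound step correctly identifies where the exponent $1/4$ comes from.

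As a proof, however, the proposal has a genuine gap, which you yourself flag: the classification of those $f$ for which $\Phi_f$ has no absolutely irreducible factor over $\bF_q$ --- the step that actually produces the list $\{X^6,\,X^{2^k}\}$ --- is invoked ``essentially verbatim from \cite{CS2015}'' rather than proved. That step is the mathematical content of the theorem; everything preceding it only converts the degree hypothesis into exceptionality. Two of the steps you call routine also need more care than you give them: (i) you must justify that a plane section $Z=z_0$, $z_0\in\bF_q$, of an absolutely irreducible $\bF_q$-component of $\cZ(\Phi_f)$ can be chosen absolutely irreducible over $\bF_q$ (an effective Bertini statement, or else replace the slicing by a Lang--Weil type bound for surfaces, which is closer to what \cite{CS2015} actually does); and (ii) the constant $\tfrac12$ is not an ``implied constant to be absorbed'' but comes from an explicit inequality of the shape $q+1-(d-1)(d-2)\sqrt{q}-(\text{correction terms})>3d$ with $d=\deg f-2$, so recovering the stated theorem rather than a weaker one with an unspecified constant requires writing that inequality out. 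None of this invalidates the approach, but in its present form the proposal is a roadmap to the proof in \cite{CS2015}, not a proof.
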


This also classifies {\em exceptional $o$-polynomials} extending results from \cite{HeMc2012} and \cite{Zieve2015} on monomial $o$-polynomials.

\subsection{Second largest planar arcs, $q$ even}

The following theorem implies that the second largest complete arc is somewhat smaller than the size of a hyperoval.
Its proof illustrates the power of the algebraic hypersurface (in this case a curve) of an arc from the planar version of Theorem \ref{BBTthm}.
\begin{theorem} \label{evenextention}
If $q$ is even then a planar arc of size at least $q-\sqrt{q}+\frac{3}{2}$ in $\PG(2,q)$ is extendable to a hyperoval.
\end{theorem}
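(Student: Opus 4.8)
The plan is to combine the planar Segre--Blokhuis--Bruen--Thas curve with the Hasse--Weil bound, and to induct on $t:=q+2-|\cA|$. Write $n=|\cA|$. Since $q$ is even, the constant occurring in Theorem~\ref{BBTthm} and Theorem~\ref{dualhypersurface} is $m=1$, and the hypothesis $n\ge q-\sqrt q+\tfrac32$ is exactly $t\le\sqrt q+\tfrac12$. If $t=0$ then $\cA$ is already a hyperoval and there is nothing to prove, so by downward induction on $t$ it suffices to show: \emph{if $1\le t\le\sqrt q+\tfrac12$ then there is a point $N\notin\cA$ with $\cA\cup\{N\}$ an arc} (its size $n+1$ again satisfies the hypothesis, so the inductive hypothesis then finishes the argument). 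To set up the dual picture, let $\mathcal T$ be the set of points of the dual plane dual to the tangents of $\cA$. By Lemma~\ref{triv} applied with a singleton $S\subset\cA$, every point of $\cA$ lies on exactly $t$ tangents, and each tangent meets $\cA$ in a single point, so $|\mathcal T|=nt$. Since $n\ge t+2$ (equivalently $q\ge 2t$, which follows from $t\le\sqrt q+\tfrac12$), Theorem~\ref{dualhypersurface} with $m=1$ yields a plane curve $\Gamma$ of degree $t$, defined over $\bF_q$, with $\mathcal T\subseteq\Gamma(\bF_q)$.

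The heart of the matter is to show that $\Gamma$ has a line $\ell$ as a component with $|\ell\cap\mathcal T|\ge n$. The key estimate is that a component $C$ of $\Gamma$ of degree $d\ge 2$ carries few $\bF_q$-points: if $C$ is absolutely irreducible then $|C(\bF_q)|\le q+1+(d-1)(d-2)\sqrt q$ by the Hasse--Weil bound, while if $C$ is geometrically reducible then every $\bF_q$-point of $C$ lies on the intersection of two Galois-conjugate components of $C$, so $|C(\bF_q)|\le d^2/4$; in both cases a short computation using $t\le\sqrt q+\tfrac12$ gives $|C(\bF_q)|<nd$. (For the absolutely irreducible case, note that $h(d):=nd-(q+1)-(d-1)(d-2)\sqrt q$ is concave in $d$ with $h(2)=q+3-2t\ge(\sqrt q-1)^2+1>0$ and $h(t)=(\sqrt q+1)(t-1)(\sqrt q+1-t)>0$, hence $h(d)>0$ for $2\le d\le t$.) Summing over the components of $\Gamma$: the non-linear components jointly contain strictly fewer than $n$ times their total degree of the points of $\mathcal T$; since $|\mathcal T|=nt$ and the degrees of the distinct components of $\Gamma$ sum to at most $t$, the linear components cannot all be absent, say there are $c\ge1$ of them, and together they carry at least $nc$ points of $\mathcal T$. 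Hence one of them, $\ell$, satisfies $|\ell\cap\mathcal T|\ge n$.

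Let $N$ be the point of $\mathrm{PG}(2,q)$ dual to the line $\ell$, so that $\ell\cap\mathcal T$ is precisely the set of tangents of $\cA$ through $N$. If $N\in\cA$ then $N$ lies on exactly $t$ tangents by Lemma~\ref{triv}, contradicting $|\ell\cap\mathcal T|\ge n>t$ (recall $q+2>2t$); so $N\notin\cA$. For a point $N\notin\cA$, each point of $\cA$ is joined to $N$ by a unique line, so there are at most $n$ tangents through $N$; combined with $|\ell\cap\mathcal T|\ge n$ this forces exactly $n$ tangents through $N$, and these meet $\cA$ in all $n$ of its points. Therefore the remaining $q+1-n=t-1$ lines through $N$ miss $\cA$ entirely, so no line meets $\cA\cup\{N\}$ in three points: $\cA\cup\{N\}$ is an arc of size $n+1$. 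This proves the inductive step, and hence the theorem.

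I expect the main obstacle to be the rational-point estimate in the second paragraph: it requires a form of the Hasse--Weil bound valid for plane curves that may be singular or geometrically reducible, together with the elementary but slightly delicate verification that $|C(\bF_q)|<nd$ holds uniformly in the range $2\le d\le t\le\sqrt q+\tfrac12$ --- this is exactly where the precise constant $q-\sqrt q+\tfrac32$ enters.
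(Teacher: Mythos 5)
Your proof is correct, but it takes a genuinely different route from the paper's. The paper first invokes Theorem~\ref{uniqueextension} to reduce to the case of a \emph{complete} arc, and then derives a contradiction from $t\geq 1$: completeness forces the dual curve $\phi(Z)=0$ to have no linear component, so every line meets its point set $T$ in at most $t$ points, and the elementary count through a single point of $T$ gives $|T|\leq (t-1)(q+1)+1$; comparing with the $(q+2-t)t$ tangents contradicts $t\leq\sqrt q+\tfrac12$. You instead extend the arc one point at a time, locating the new point explicitly as the dual of a linear component of $\Gamma$ carrying at least $n$ points of $\mathcal T$, and you prevent the non-linear components from absorbing too many tangents via the Hasse--Weil bound (in its Aubry--Perret form for possibly singular absolutely irreducible plane curves) together with a conjugate-component argument. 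Your version is more constructive and dispenses with Theorem~\ref{uniqueextension}, but it imports much heavier machinery than the paper needs; note that Hasse--Weil can be eliminated from your component-wise estimate, since the same elementary line count the paper uses shows that any component of degree $d\geq 2$ satisfies $|C(\bF_q)|\leq (d-1)(q+1)+1<nd$, the last inequality being equivalent to $d(t-1)<q$, which holds throughout the range $2\leq d\leq t\leq\sqrt q+\tfrac12$. With that substitution your argument becomes as elementary as the paper's; as it stands, your component analysis is essentially the refinement (attributed to Segre in the text) that yields the sharper bound $q-\sqrt q+1$ stated immediately afterwards.
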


\begin{proof}
Let $t=q+2-|\cA|\leq \sqrt{q}+\frac{1}{2}$, and observe that $\cA$ satisfies the hypothesis of Theorem  \ref{uniqueextension} (since $q$ is even, $m=1$).
It follows that $\cA$ is uniquely extendable to a complete arc.

So we may assume that $\cA$ is a complete arc of size at least $q-\sqrt{q}+\frac{3}{2}$. Suppose $\cA$ has size less than $q+2$. Then $t\geq 1$. Let $\phi(Z)$ be the form of degree $t$ obtained from $\cA$ by Theorem \ref{BBTthm}.
If $\phi(Z)$ has a linear factor then by the arguments used in the proof of Theorem \ref{uniqueextension}, 
$\cA$ can be extended to a larger arc, a contradiction.
Hence $\phi(Z)$ does not have linear factors. Let $T$ denote the set of points on the curve defined by $\phi(Z)$. Then $T\neq \emptyset$, 
and each line meets $T$ in at most $t$ points. Choose $x\in T$.
Counting points of $T $ on the lines through $x$ we obtain $|T |\leq (t-1)(q+1)+1$.
Since the tangents to $\cA$ form a set of $|\cA|t=(q+2-t)t$ points in the dual plane, we have 
$$(q+2-t)t \leq (t-1)(q+1)+1,$$
which contradicts $t\leq \sqrt{q}+\frac{1}{2}$.
\end{proof}

In \cite{Segre1967} Segre used the algebraic curve in the dual plane of a planar arc (which he called the {\em algebraic envelope}) in combination with the Hasse-Weil bound to 
obtain the upper bound $q-\sqrt{q}+1$ for a planar arc in $\PG(2,q)$, $q$ even. This bound is sharp as we will see later.

\begin{theorem} \label{evenextention}
If $q$ is even then a complete planar arc in $\PG(2,q)$ which is not contained in a hyperoval has size at most $q-\sqrt{q}+1$.
\end{theorem}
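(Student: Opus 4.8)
The idea is to mimic the proof of the previous Theorem~\ref{evenextention} (the bound $q-\sqrt q+\tfrac32$), but replace the crude point-count on lines through a point of the envelope curve by the Hasse--Weil bound applied to an irreducible component of that curve. So let $\cA$ be a complete arc of size $q+2-t$ with $t\ge 1$ which is not contained in a hyperoval. As before, $\cA$ satisfies the hypothesis of Theorem~\ref{uniqueextension} once $t$ is small, and since $q$ is even ($m=1$) we obtain a form $\phi(Z)$ of degree $t$, the \emph{algebraic envelope}, vanishing on the set $\mathcal T$ of points of the dual plane dual to the secants of $\cA$; this set has size $|\cA|t=(q+2-t)t$ because each of the $\binom{|\cA|}{2}$ secants is counted with multiplicity $t$... more precisely each point of $\cA$ lies on $t$ tangents, and the $|\cA|t$ tangent points all lie on the curve $\phi=0$. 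Since $\cA$ is complete and not a hyperoval, $\phi$ has no linear factor (a linear factor would, by the argument in the proof of Theorem~\ref{uniqueextension}, produce a point extending $\cA$).

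First I would reduce to an irreducible plane curve. Factor $\phi$ over $\overline{\bF_q}$; since $\phi$ has no linear factor over $\bF_q$, the key point (which requires a short argument, essentially that the tangent lines come in orbits under nothing but are simply $\bF_q$-rational, so a component and its conjugates together are defined over $\bF_q$) is that $\phi$ has an absolutely irreducible factor defined over $\bF_q$ of degree $d\ge 2$, or else $\phi$ is a product of conjugate non-rational lines — but a set of conjugate lines meets the dual plane in very few $\bF_q$-rational points, far fewer than $|\cA|t$ when $t$ is of order $\sqrt q$, giving a contradiction. So we may pass to an absolutely irreducible curve $\mathcal C$ of degree $d$, $2\le d\le t$, defined over $\bF_q$, containing ``most'' of the $|\cA|t$ tangent points.

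Next I would apply Hasse--Weil. An absolutely irreducible plane curve of degree $d$ over $\bF_q$ has at most $q+1+(d-1)(d-2)\sqrt q$ rational points (after desingularisation one loses at most a bounded amount, but for the estimate the stated form suffices). Summing over the at most $t/d$ such components covering the tangent points gives roughly $|\mathcal T|\le (t/d)\bigl(q+1+(d-1)(d-2)\sqrt q\bigr)$, which is maximised near $d=2$ and is at most about $(t/2)(q+1)$ plus lower order, hence certainly $|\mathcal T|\le t(q+1)/2 + O(t\sqrt q)$... wait, one must be slightly careful: the right comparison is $(q+2-t)t \le$ (number of rational points on the envelope counted appropriately). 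Combining $|\cA|t=(q+2-t)t\le$ this Hasse--Weil bound and solving the resulting quadratic inequality in $t$ forces $t\ge\sqrt q$, i.e. $|\cA|=q+2-t\le q-\sqrt q+1$ (with the genuine integer/parity refinement giving exactly $q-\sqrt q+1$, using that $\sqrt q$ is an integer since $q$ is an even square — here $q$ even and a square).

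\textbf{Main obstacle.} The delicate step is not Hasse--Weil itself but controlling the irreducible components of the envelope $\phi(Z)=0$: ruling out that $\phi$ is (a power of) a conic or splits badly, handling components with multiplicity, and making sure that enough of the $|\cA|t$ tangent points actually lie on \emph{absolutely irreducible} $\bF_q$-rational components of degree $\ge 2$ rather than being absorbed by non-rational linear or low-degree junk. Segre's original treatment does exactly this component analysis, and reproducing it cleanly — in particular the claim that no linear factor exists forces every component to have degree $\ge 2$, combined with a counting argument to show one component carries $\gtrsim |\mathcal T|/t \cdot d$ points — is where the real work lies; the arithmetic at the end is then routine.
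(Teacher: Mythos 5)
The paper offers no proof of this statement at all: it records the bound as Segre's theorem and says only that the proof combines the algebraic envelope (the degree-$t$ dual curve of Theorem~\ref{BBTthm} with $m=1$) with the Hasse--Weil bound. So your route is the intended one, and your opening reduction (no linear factor, else the arc extends) is exactly the paper's argument for the weaker bound $q-\sqrt q+\tfrac32$. However, the step you yourself flag as ``the real work'' is genuinely missing, and the arithmetic you do write down would not close. Your global estimate $|\mathcal T|\le (t/d)\bigl(q+1+(d-1)(d-2)\sqrt q\bigr)$, ``maximised near $d=2$'', points the wrong way: for $t$ near $\sqrt q$ the dangerous case is a \emph{single} absolutely irreducible component of degree close to $t$, for which $q+1+(t-1)(t-2)\sqrt q$ is of order $q^{3/2}$, the same order as $|\mathcal T|=(q+2-t)t$, so no contradiction falls out of a count of all of $\mathcal T$ at once. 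What makes the argument work is a per-component \emph{lower} bound, which you never establish. For each $a\in\cA$ the restriction of $\phi$ to the dual line $a^\ast$ is a binary form of degree $t$ vanishing at the $t$ points dual to the tangents at $a$; it is not identically zero (else $\phi$ has the linear factor $a^\ast$), so $\Gamma\cap a^\ast$ consists of exactly these $t$ distinct simple points. Hence $\phi$ is reduced, each component $\Gamma_i$ of degree $d_i$ meets $a^\ast$ in exactly $d_i$ points of $\mathcal T$, and since distinct points of $\cA$ have disjoint tangent pencils, every component carries at least $(q+2-t)d_i$ rational points of $\mathcal T$. It is this inequality, compared against $d_i^2$ (B\'ezout with the Frobenius conjugate, if $\Gamma_i$ is not defined over $\bF_q$) or against $q+1+(d_i-1)(d_i-2)\sqrt q$ (Hasse--Weil otherwise), that forces $t>\sqrt q$ for every admissible $d_i$; one checks the resulting quadratic in $d_i$ is positive on the whole range $2\le d_i\le t$ when $t\le\sqrt q$. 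Without this lower bound your component analysis has nothing to push against.

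Two smaller points. First, your parenthetical ``using that $\sqrt q$ is an integer since $q$ is an even square'' misreads the statement: the theorem is asserted for all even $q$, and for non-square $q$ the quantity $q-\sqrt q+1$ is a real bound (for even non-squares the paper separately quotes Voloch's stronger bound $q-\sqrt{2q}+2$), so integrality of $\sqrt q$ is not available in general and the sharpening from $t>\sqrt q$ to the stated bound needs its own argument. Second, $\mathcal T$ is the set of points dual to the \emph{tangent} lines (lines meeting $\cA$ in exactly one point), not the secants; you correct this mid-sentence, but the count $|\mathcal T|=(q+2-t)t$ depends on it.
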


The following theorem is from Voloch \cite{Voloch1991}. Its proof combines Theorem \ref{BBTthm} with the St\"ohr-Voloch theorem.

\begin{theorem} \label{volocheven}
If $q$ is an even non-square then a planar arc of size larger than $q-\sqrt{2q} +2$ is extendable to a hyperoval.
\end{theorem}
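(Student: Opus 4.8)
The plan is to follow the proof scheme of the extendability theorem for $q$ even (the $q-\sqrt{q}+\tfrac32$ bound), but to replace the crude point count on the envelope by the St\"ohr--Voloch bound. First I would reduce to the case that $\cA$ is complete: if $\cA$ is not already a hyperoval, let $\mathcal{B}$ be a complete arc containing $\cA$; if $\mathcal{B}$ were a hyperoval we would be done, so we may assume $\mathcal{B}$ is complete and not a hyperoval, with $|\mathcal{B}|\geq|\cA|>q-\sqrt{2q}+2$. Replacing $\cA$ by $\mathcal{B}$, we may assume $\cA$ is complete and not a hyperoval, so that $t:=q+2-|\cA|$ satisfies $1\leq t<\sqrt{2q}$. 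Since $q$ is even, Theorem~\ref{BBTthm} applies with $m=1$ and yields a form $\phi(Z)$ of degree $t$ whose zero locus $\Gamma=\cZ(\phi)$ in the dual plane contains the set $\mathcal{T}$ of the $|\cA|t=(q+2-t)t$ points dual to the tangent lines of $\cA$. By the argument used in the proof of Theorem~\ref{uniqueextension}, completeness of $\cA$ forces $\phi$ to have no linear factor, so every distinct irreducible component of $\Gamma$ has degree at least $2$.

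Next I would bound $N_q(\Gamma):=|\Gamma(\bF_q)|$ component by component. For an absolutely irreducible component $\Gamma_0$ of degree $d_0\geq 2$ which is Frobenius classical with respect to the linear system of lines, the St\"ohr--Voloch theorem together with the genus bound $2g-2\leq d_0(d_0-3)$ gives $N_q(\Gamma_0)\leq\tfrac{1}{2} d_0(q+d_0-1)$ (the contribution of the singular points of $\Gamma_0$ being absorbed, as usual). A component of $\Gamma$ which is irreducible over $\bF_q$ but not absolutely irreducible carries only $O(d_0^2)$ rational points, and repeated components only decrease $\sum d_0$. Hence, if every component of $\Gamma$ is Frobenius classical, then writing $d_0$ for the degrees of the distinct components and using $\sum d_0\leq t$,
\[
(q+2-t)t=|\mathcal{T}|\leq N_q(\Gamma)\leq \sum \tfrac{1}{2} d_0(q+d_0-1)+O(t^2)\leq \tfrac{1}{2}\bigl((q-1)t+t^2\bigr)+O(t^2).
\]
Dividing by $t$, this forces $q$ to be at most a constant multiple of $t$, which contradicts $t<\sqrt{2q}$.

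The only surviving possibility is that some component $\Gamma_0$ of $\Gamma$, of degree $d_0\leq t$, is Frobenius nonclassical with respect to the system of lines. For such a curve the Hefez--Voloch formula gives $N_q(\Gamma_0)=d_0(q-d_0+2)$, which is exactly of the order of $|\mathcal{T}|$, so no contradiction comes from counting, and this is precisely the place where the hypothesis that $q$ is a \emph{nonsquare} is needed. The Frobenius order $\nu_1$ of $\Gamma_0$ is a power of $p=2$ subject to $\nu_1^2\leq q$ and to divisibility conditions forced by $\bF_q$-rationality; when $q$ is not a square these constraints push the degree of \emph{any} Frobenius nonclassical plane curve over $\bF_q$ up to at least (essentially) $\sqrt{2q}$, contradicting $d_0\leq t<\sqrt{2q}$. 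Establishing this degree lower bound for Frobenius nonclassical plane curves over nonsquare $\bF_q$, and checking the boundary constants carefully so that precisely the value $q-\sqrt{2q}+2$ emerges, is the main obstacle; the rest is a routine combination of Theorem~\ref{BBTthm}, the St\"ohr--Voloch theorem, and the bookkeeping already present in the proof of the extendability theorem for $q$ even.
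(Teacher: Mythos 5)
The paper itself gives no proof of this theorem; it only attributes it to Voloch and records that the argument combines Theorem~\ref{BBTthm} with the St\"ohr--Voloch theorem, which is exactly the route you take. Your reduction to a complete non-hyperoval arc, the passage to the degree-$t$ envelope $\phi(Z)$ with $m=1$, the exclusion of linear factors via completeness, and the St\"ohr--Voloch count $N_q(\Gamma_0)\leq\tfrac12 d_0(q+d_0-1)$ for Frobenius classical components are all sound and are the intended skeleton.

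However, there is a genuine gap, and it sits exactly where the content of the theorem lies. The Frobenius classical count only yields a contradiction for $t$ up to roughly $q/3$: summing $\tfrac12 d_0(q+d_0-1)$ over components of total degree at most $t$ and comparing with $|\mathcal T|=(q+2-t)t$ gives $q+2-t\leq\tfrac12(q+t-1)$, i.e. $t\gtrsim q/3$. So the specific bound $q-\sqrt{2q}+2$ is produced entirely by the Frobenius nonclassical case, which you do not carry out; you yourself flag the required degree lower bound as ``the main obstacle.'' What is needed there is the structure theory from \cite{SV1986} and Hefez--Voloch: for a Frobenius nonclassical plane curve (with respect to lines) the Frobenius order $\nu_1$ is a power of $p=2$ with $1<\nu_1$, and the degree $d_0$ satisfies both $d_0\geq\nu_1+1$ and a lower bound of the shape $d_0\geq q/\nu_1+1$; minimising $\max(\nu_1,q/\nu_1)$ over powers of $2$ when $q=2^{2s+1}$ is a nonsquare forces $d_0>\sqrt{2q}$, which is incompatible with $d_0\leq t<\sqrt{2q}$. (When $q$ is a square this minimum drops to $\sqrt{q}$, which is why Hermitian-type envelopes of degree $\sqrt{q}+1$ exist and the theorem fails without the nonsquare hypothesis.) Until that lower bound is proved, your argument establishes extendability only for arcs of size larger than about $\tfrac23 q$, not $q-\sqrt{2q}+2$. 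A secondary, smaller issue: the ``$O(t^2)$'' absorption of singular points and of components that are irreducible over $\bF_q$ but not absolutely irreducible should be made explicit, since at the boundary the inequality is tight to within lower-order terms.
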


\subsection{Second largest planar arcs, $q$ odd}
The bounds on the size of the second largest arc in planes of odd order, obtained by the methods described above, are worse than for $q$ even. The reason for this is the fact that the hypersurface from Theorem \ref{BBTthm} has degree $2t$ for $q$ odd.
As with the proof of Theorem~\ref{volocheven}, the theorems by Voloch use Theorem~\ref{dualhypersurface} and the St\"ohr-Voloch theorem from \cite{SV1986}. The following theorems are from \cite{Voloch1990b} and  \cite{Voloch1991} respectively.

\begin{theorem} \label{primeplane}
If $q$ is prime then a planar arc of size larger than $\frac{44}{45}q+\frac{8}{9}$ is contained in a conic.
\end{theorem}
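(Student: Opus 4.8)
The strategy is the classical Segre–Voloch approach: associate to the arc a plane algebraic curve via Theorem \ref{dualhypersurface}, and then extract a bound by combining the combinatorial count of tangent lines with the St\"ohr–Voloch bound on the number of $\mathbb{F}_q$-rational points of that curve. Concretely, let $\cA$ be a complete planar arc of size $q+2-t$ in $\PG(2,q)$ with $q$ prime and $q$ odd. Since $q$ is odd we have $m=2$, so Theorem \ref{dualhypersurface} produces a unique curve $\Gamma$ of degree $2t$ in the dual plane whose points contain the set $\mathcal T$ dual to the $(k-2=1)$-secants, i.e. the tangent lines to $\cA$. The number of tangents is $|\cA|t = (q+2-t)t$, and each point of the arc lies on exactly $t$ of them. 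So $\Gamma$ has at least $(q+2-t)t$ rational points. Note also that $\Gamma$ has no linear component, since a linear factor of $\phi(Z)$ would (by the argument in the proof of Theorem \ref{uniqueextension}) correspond to an extension of $\cA$, contradicting completeness; one should also verify $\Gamma$ has no component that is a conic corresponding to an extension of $\cA$ to a larger arc on a conic, which again cannot happen for a complete arc with $t\ge 1$.

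**Applying St\"ohr–Voloch.** The next step is to bound the number of $\mathbb{F}_q$-points of $\Gamma$ from above. One decomposes $\Gamma$ into its irreducible components over $\overline{\mathbb{F}}_q$ and applies the St\"ohr–Voloch theorem to each component that is defined over $\mathbb{F}_q$ (components not fixed by Frobenius contribute only bounded intersection-type terms). For an irreducible plane curve of degree $d$, the St\"ohr–Voloch bound over $\mathbb{F}_q$ is of the shape $N \le \tfrac12 d(d+q-1)$ in the "classical" case (when the Hermitian/Frobenius-order situation is generic), and this is exactly where the restriction to $q$ prime enters: over a prime field there are no inseparability phenomena forcing the curve into a smaller linear system, so one genuinely gets the coefficient $\tfrac12$ on $d^2$ rather than a worse constant, and the $\tfrac{44}{45}$ in the statement is precisely what one optimizes out of these inequalities. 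Summing over components of total degree $2t$, subadditivity gives $|\Gamma(\mathbb{F}_q)| \le \tfrac12 (2t)(2t + q - 1) = t(2t+q-1)$, plus lower-order corrections from the non-rational components and singular points.

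**Extracting the bound.** Combining the lower and upper estimates gives, roughly,
$$
(q+2-t)t \;\le\; t(2t+q-1) + (\text{error terms}),
$$
and the point is that the leading terms $qt$ cancel, leaving an inequality between the remaining terms that forces $t$ to be large, i.e. $|\cA| = q+2-t$ to be small — specifically, after a careful accounting of the error terms (which is where the constants $\tfrac{44}{45}$ and $\tfrac{8}{9}$ are tuned), one concludes that if $|\cA| > \tfrac{44}{45}q + \tfrac89$ then the inequality is violated unless $t=0$, i.e. unless $\cA$ is a hyperoval — impossible for $q$ odd — or unless $\Gamma$ in fact has a component forcing $\cA$ into a conic, which is the desired conclusion. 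Running the contrapositive: if $\cA$ is not contained in a conic then $\Gamma$ has no conic component, all components have the "generic" behaviour, the bound applies cleanly, and one gets $|\cA| \le \tfrac{44}{45}q + \tfrac89$.

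**Main obstacle.** The genuinely delicate step is not the setup (which Theorem \ref{dualhypersurface} hands us) but the careful bookkeeping in the application of St\"ohr–Voloch: one must control the contribution of reducible and non-$\mathbb{F}_q$-rational components, handle singular points of $\Gamma$ (which can absorb many tangent lines), and verify that over a prime field the relevant Frobenius orders are the generic ones so that the sharp $\tfrac12 d^2$ coefficient is available. Pinning down the exact constants $\tfrac{44}{45}$, $\tfrac89$ requires optimizing the resulting polynomial inequality in $t$ and $q$, and ruling out the small-degree exceptional components case by case; that optimization, rather than any single conceptual difficulty, is the heart of the proof.
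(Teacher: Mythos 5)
You have correctly identified the strategy that the paper attributes to Voloch \cite{Voloch1990b}: the paper does not reprove this theorem, stating only that it follows by combining Theorem \ref{dualhypersurface} (the degree-$2t$ envelope in the dual plane, with $m=2$ since $q$ is odd) with the St\"ohr--Voloch theorem. Your setup --- reduction to a complete arc via Theorem \ref{uniqueextension}, exclusion of linear components, and the count $|\mathcal T|=(q+2-t)t$ of tangent points lying on the envelope --- matches that outline.

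However, the quantitative core of your argument does not close, and this is a genuine gap rather than deferred bookkeeping. The one inequality you actually write down,
$$
(q+2-t)t \;\le\; t(2t+q-1),
$$
simplifies to $3t\le 3t^2$, i.e.\ $t\ge 1$, which is vacuous: the terms $qt$ cancel on both sides and nothing relates $t$ to $q$. Comparing the raw number of tangent points with the crude bound $\tfrac12 d(d+q-1)$ for $d=2t$ can therefore never produce an estimate of the form $t\ge cq$, no matter how the error terms are tuned; the constants $\tfrac{44}{45}$ and $\tfrac89$ cannot be ``optimized out'' of this inequality because there is nothing left to optimize. The missing input is the finer geometric information carried by the envelope: for each point $x$ of the arc, the envelope meets the dual line $x^{\perp}$ precisely in the $t$ points dual to the tangents at $x$, each with intersection multiplicity $2$ (the degree is $2t$), so the points of $\mathcal T$ enter the St\"ohr--Voloch weighted divisor with multiplicity at least $2$; Voloch exploits this, together with an application of the bound to a suitable linear system on the components, to obtain a genuinely nontrivial lower bound on $t$. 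Your remark that primality of $q$ excludes ``inseparability phenomena'' gestures at the right place where the hypothesis enters (controlling the order and Frobenius-order sequences of the components), but it is asserted rather than verified. As written, the proposal is a correct plan of the Segre--Voloch method, consistent with the paper's citation, but it omits the step that actually yields the stated bound.
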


\begin{theorem} \label{oddnonsq}
If $q$ is an odd non-square then a planar arc of size larger than $q-\frac{1}{4}\sqrt{pq} +\frac{29}{16}p-1$ is contained in a conic.
\end{theorem}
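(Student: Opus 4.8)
The plan is to run Segre's algebraic–envelope method: attach to $\cA$ the degree-$2t$ dual curve provided by Theorem~\ref{dualhypersurface}, and bound its number of $\mathbb F_q$-rational points by the St\"ohr--Voloch theorem, the hypothesis that $q$ is a non-square being exactly what prevents low-degree Frobenius-nonclassical curves from spoiling the estimate. First the reductions. Extending $\cA$ to a complete arc only increases its size, so we may assume $\cA$ is complete; if $|\cA|=q+1$ then $\cA$ is a conic by Theorem~\ref{thm:segre}, so the original arc lies in a conic and we are done. Hence we may assume $t:=q+2-|\cA|\ge 2$. We may also assume $q$ is large enough relative to $p$ that the bound in the statement is less than $q+1$, as otherwise the statement is vacuous; then, under the hypothesis $|\cA|>q-\tfrac14\sqrt{pq}+\tfrac{29}{16}p-1$, the integer $t$ is small and $|\cA|$ is large, in particular $|\cA|\ge 2t+2=mt+k-1$ with $m=2$, so Theorem~\ref{dualhypersurface} applies.

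Let $\mathcal T\subseteq\PG(2,q)^\ast$ be the set of points dual to the tangent lines of $\cA$. By Lemma~\ref{triv} with $k=3$ there are exactly $t$ tangents through each point of $\cA$, so $|\mathcal T|=(q+2-t)t$, and Theorem~\ref{dualhypersurface} puts $\mathcal T$ on a curve $\Gamma\subseteq\PG(2,q)^\ast$ of degree $2t$, the zero set of the form $\phi(Z)$ of Theorem~\ref{BBTthm}. The curve $\Gamma$ has no linear component: a linear factor of $\phi(Z)$ would be a pencil $[P]$, $P\in\PG(2,q)$, contained in $\Gamma$, and then $\cA\cup\{P\}$ would again be an arc (by the argument used in the proof of Theorem~\ref{uniqueextension}), contradicting completeness of $\cA$. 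Hence every absolutely irreducible component of $\Gamma$ has degree at least $2$.

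Now write $\Gamma=\Gamma_1\cup\cdots\cup\Gamma_s$ as the union of its $\mathbb F_q$-irreducible components, of degrees $d_i\ge 2$ with $\sum_i d_i=2t$, so that $(q+2-t)t=|\mathcal T|\le\sum_i|\Gamma_i(\mathbb F_q)|$. A component that is $\mathbb F_q$-irreducible but not absolutely irreducible contributes only $O(d_i^2)$ points (Bézout), so the absolutely irreducible components dominate. For such a $\Gamma_i$, the St\"ohr--Voloch theorem applied to the linear system cut out by the lines of $\PG(2,q)^\ast$ gives
$$
|\Gamma_i(\mathbb F_q)|\le\tfrac12\big(\nu_i(2g_i-2)+(q+2)d_i\big),
$$
where $g_i$ is the geometric genus, $2g_i-2\le d_i(d_i-3)$, and $\nu_i$ is the first positive Frobenius order, equal to $1$ in the Frobenius-classical case and to $p^{a_i}$ for some $a_i\ge 1$ otherwise. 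The crucial input from Segre's construction is that at every $T_x\in\mathcal T\cap\Gamma_i$ the curve $\Gamma_i$ is tangent to the pencil line through $x$ — the envelope osculates each pencil $[x]$, $x\in\cA$ — so that a positive proportion of the $\mathbb F_q$-points of $\Gamma_i$ fail to be ordinary; a component carrying a fixed fraction of $\mathcal T$ must therefore be Frobenius-nonclassical, $\nu_i=p^{a_i}>1$, and since $q$ is a non-square the degree of such a component is bounded below by a quantity of order $\sqrt{pq}$. Feeding these estimates into $(q+2-t)t\le\sum_i|\Gamma_i(\mathbb F_q)|$ and optimising over the partitions $\sum_i d_i=2t$ forces $t\ge\tfrac14\sqrt{pq}-\tfrac{29}{16}p+3$, i.e.\ $|\cA|\le q-\tfrac14\sqrt{pq}+\tfrac{29}{16}p-1$, contradicting the hypothesis.

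The hard part is this last step. One must convert Segre's osculation property into the precise assertion that any component of $\Gamma$ carrying a fixed fraction of $\mathcal T$ is Frobenius-nonclassical with nonclassical order at most its degree; control the combined contribution of the small components, the components that are not absolutely irreducible, and those carrying few tangents; and then push the numerology — the genus bound, the partition $\sum_i d_i=2t$, and the known lower bound for the degree of a Frobenius-nonclassical plane curve over $\mathbb F_q$ when $q$ is a non-square — through accurately enough to land the constants $\tfrac14$ and $\tfrac{29}{16}$. The non-square hypothesis is indispensable here: over a square field the Hermitian-type curves of degree $\sqrt q+1$ are Frobenius-nonclassical with nearly the maximal number of rational points, which is precisely why the square case of this family of results yields only a weaker bound (compare Theorem~\ref{volocheven}).
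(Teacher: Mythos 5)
Your overall strategy is the right one, and in fact it is exactly the one the paper attributes to Voloch: the paper states this theorem without proof, citing \cite{Voloch1991}, and says only that ``the theorems by Voloch use Theorem~\ref{dualhypersurface} and the St\"ohr--Voloch theorem from \cite{SV1986}.'' Your reductions (pass to the completion, note $t\ge 2$, invoke Theorem~\ref{dualhypersurface} to get the degree-$2t$ dual curve $\Gamma$, and rule out linear components via the completeness argument from Theorem~\ref{uniqueextension}) are all correct and match the framework of the paper's proof of Theorem~\ref{evenextention}.

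However, what you have written is an outline of a proof, not a proof: the entire quantitative content of the theorem --- the constants $\tfrac14\sqrt{pq}$ and $\tfrac{29}{16}p$ --- lives in precisely the step you label ``the hard part'' and then do not carry out. Two specific assertions there are doing all the work and are left unjustified. First, the inference ``a component carrying a fixed fraction of $\mathcal T$ must be Frobenius-nonclassical'' does not follow from the osculation property as stated: every $\bF_q$-rational point of a curve trivially lies on its own tangent line, so rational points always contribute to the St\"ohr--Voloch divisor, and tangency of $\Gamma_i$ to the pencil $[x]$ at a point of $\mathcal T$ is just ordinary tangency; extracting extra weight (or forcing nonclassicality) requires a genuinely finer analysis of the Weierstrass and Frobenius orders at these points, which is the substance of Voloch's argument. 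Second, the ``known lower bound for the degree of a Frobenius-nonclassical plane curve over $\bF_q$ when $q$ is a non-square'' is invoked but never stated or applied, and the optimisation over partitions $\sum d_i=2t$ together with the handling of non--absolutely-irreducible components is waved at rather than done. Without these steps one cannot recover the stated bound (nor even a bound of the shape $q-c\sqrt{pq}$), so as it stands the proposal establishes only that the theorem is plausibly provable by the St\"ohr--Voloch method --- which is what the paper already asserts by citation.
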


In contrast to the previous bounds the following theorem does not rely on Hasse-Weil or St\"ohr-Voloch.
It is proved using the tensor of a planar arc Theorem~\ref{thm:tensor_planar} from \cite{BL2018}. It illustrates that in this case the tensor form (of degree $t$ in each component)  is stronger than the algebraic hypersurface (which is of degree $2t$).

\begin{theorem} \label{twocurves}
Let $\cA$ be a planar arc of size $q+2-t$ not contained in a conic. If $q$ is odd then $\cA$ is contained in the intersection of two curves, sharing no common component, each of degree at most $t+p^{\lfloor \log_p t \rfloor}$.
\end{theorem}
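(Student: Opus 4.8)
The plan is to start from the tensor form $F(X,Y)$ of bi-degree $(t,t)$ provided by Theorem~\ref{thm:tensor_planar}, which satisfies $F(X,y)=f_y(X)$ for $y$ in a large subset $S$ of $\cA$ containing a $t$-socle, and use it to manufacture \emph{two} genuinely distinct curves of controlled degree through $\cA$. The key observation is that since $\cA$ is not contained in a conic, $\Psi[X]$ (the space of degree-$t$ forms vanishing on $\cA$) need not be zero, but the tensor form is only defined modulo $\Psi[X]$ in the second factor; the symmetry/alternating property modulo $(\Psi[X],\Psi[Y])$ is what we will exploit. Concretely, I would fix a point $a\in\cA$ and consider $F(X,a)=f_a(X)$, a curve of degree $t$ through $\cA\setminus\{a\}$ — but this has degree only $t$, so the real content must come from combining the tensor with the Frobenius, in the spirit of Examples~\ref{ex:3space} and~\ref{glynn}.

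The main idea for getting the exponent $t+p^{\lfloor\log_p t\rfloor}$: write $t$ in base $p$ and let $p^{\ell}$ with $\ell=\lfloor\log_p t\rfloor$ be the top power of $p$ below $t$. I would take the $(t,t)$-form $F(X,Y)$ and apply the Frobenius $\phi:a\mapsto a^p$ (raised to the appropriate power $p^{\ell}$) to one set of variables, producing a form of bi-degree $(t,p^{\ell})$ or similar; more precisely, one looks at how $F(X,Y)$ behaves under $Y\mapsto Y^{p^{\ell}}$ coordinatewise. Because $f_y(X)$ has degree $t$ in $X$ and, as a function of the point $y$, the scaled lemma of tangents (Lemma~\ref{lem:g_functionpre}) forces strong multiplicative relations, one can show that $F(X,y^{p^{\ell}})$ agrees with a power/Frobenius-twist of $f_y(X)$ up to factors. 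Specialising $y$ to range over $\cA$ and eliminating the $Y$-dependence between $F(X,Y)$ and its Frobenius twist $F(X,Y^{p^{\ell}})$ — i.e.\ taking an appropriate "resultant-like" combination, or more simply evaluating both at the same running point and subtracting — yields a single form $H(X)$, of degree at most $t+p^{\ell}$ in $X$, vanishing on all of $\cA$. Running the same construction after swapping the roles of the two factors (using the symmetry mod $(\Psi[X],\Psi[Y])$) gives a second such curve, and a dimension/irreducibility argument shows the two cannot share a component, for otherwise that common component would be a low-degree curve through $\cA$ forcing $\cA$ onto a conic by the projection/socle machinery.

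The steps in order: (1) invoke Theorem~\ref{thm:tensor_planar} to get $F(X,Y)$ with $F(X,y)=f_y(X)$ for all $y$ in a socle-containing set $S$ and $F(X,y)=f_y(X)\bmod\Psi[X]$ for all $y\in\cA$; (2) using Lemma~\ref{lem:g_functionpre}, establish a Frobenius-compatibility: roughly, $f_y(X)$ determines $(f_y(X))^{p^\ell}$ and, via the base-$p$ digits of $t$, one extracts from $F$ a companion form $\tilde F$ of smaller $Y$-degree $p^\ell$ that still interpolates (a Frobenius twist of) the tangent data at the points of $\cA$; (3) eliminate $Y$ by evaluating $F(X,Y)$ and $\tilde F(X,Y)$ along $\cA$ and combining, producing $H_1(X)$ of degree $\le t+p^\ell$ through $\cA$; (4) repeat with the factors swapped to get $H_2(X)$; (5) prove $\gcd(H_1,H_2)$ has no positive-dimensional component meeting $\cA$ fully — if it did, a common component would be a curve of degree $<2t$, actually small enough that by the socle bound $\frac12(t+2)(t+1)$ and Corollary~\ref{cor:main_projection} $\cA$ would lie on a conic, contradicting the hypothesis.

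\textbf{Expected main obstacle.} The delicate point is step~(2)–(3): controlling precisely how the Frobenius twist interacts with the $\Psi[X]$-ambiguity in the tensor form, so that the eliminated form $H_1(X)$ genuinely vanishes on \emph{all} of $\cA$ (not just on $S$) and genuinely has degree bounded by $t+p^{\lfloor\log_p t\rfloor}$ rather than something like $2t$. The symmetry statement in Theorem~\ref{thm:tensor_planar} is only modulo $(\Psi[X],\Psi[Y])$, so one must argue that the non-conic hypothesis (together with the exact interpolation $F(X,y)=f_y(X)$ on the enlarged set $S$ of size $|S_0|+\tfrac12(t+2)(t+1)$) is enough to pin down the relevant coefficients; this is where the count $\frac12(t+2)(t+1) = \dim V_t[X]$ and the socle are used decisively. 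I also expect that verifying the two curves share no common component requires a separate short argument invoking completeness of $\cA$ and the uniqueness part of Theorem~\ref{BBTthm}/Theorem~\ref{thm:tensor_planar}.
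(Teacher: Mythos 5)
This theorem is stated in the paper without proof (it is imported from \cite{BL2018}), so there is no in-paper argument to compare against; judged on its own terms, your proposal has a genuine gap at its core. The degree-reduction mechanism in steps (2)--(3) does not work as described: substituting $Y\mapsto Y^{p^{\ell}}$ coordinatewise into a form of bidegree $(t,t)$ produces a form of bidegree $(t,\,tp^{\ell})$, not $(t,p^{\ell})$, and no construction is given that actually yields a ``companion form'' of smaller $Y$-degree; likewise ``taking a resultant-like combination'' or ``evaluating and subtracting'' does not eliminate $Y$, and a genuine resultant of two such forms would have degree in $X$ far exceeding $t+p^{\ell}$. The exponent $p^{\lfloor\log_p t\rfloor}$ does not arise from a Frobenius twist of the variables. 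In \cite{BL2018} it comes from expanding $F(X,X+eZ)=\sum_i e^i\phi_i(X,Z)$ about the diagonal (Hasse derivatives of $F$), observing that $F(X,a)=f_a(X)$ is a product of $t$ lines all passing through $a$ and hence vanishes to order exactly $t$ at $X=a$, which forces $\phi_i(a,Z)=0$ for all $i<t$ and all $a\in\cA$; the composition rule for Hasse derivatives together with Lucas' theorem, which gives $\binom{t}{\sigma}\not\equiv 0\pmod p$ for $\sigma=p^{\lfloor\log_p t\rfloor}$, then guarantees that $\phi_{t-\sigma}$ is not identically zero, so that suitable specialisations $\phi_{t-\sigma}(X,z)$ are nonzero forms of degree $2t-(t-\sigma)=t+\sigma$ vanishing on $\cA$. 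None of this machinery is present in your sketch, and the difficulty you flag (the $\Psi[X]$-ambiguity interacting with the twist) is precisely the point at which your construction has no content to salvage.

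Two further problems: first, your argument that the two curves share no common component is only asserted --- a common component need not contain all of $\cA$, and even a component that did contain $\cA$ and had degree less than $t+\sigma$ would not automatically force $\cA$ onto a conic (it could be an irreducible cubic, say), so the appeal to ``the projection/socle machinery'' does not close this step. Second, your opening framing treats $F(X,a)=f_a(X)$ as ``a curve of degree $t$ through $\cA\setminus\{a\}$'', but $f_a$ is the product of the tangent lines at $a$ and does not vanish at the other points of $\cA$ (a tangent line at $a$ meets $\cA$ only in $a$); the curve through $\cA$ that one actually starts from is the diagonal $F(X,X)$ of degree $2t$, and the whole content of the theorem is shaving $2t$ down to $t+p^{\lfloor\log_p t\rfloor}$.
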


Theorem~\ref{twocurves} has the following corollary.

\begin{theorem} \label{oddsq}
If $q$ is an odd square then a planar arc of size at least $q-\sqrt{q}+\sqrt{q}/p+3$ is contained in a conic.
\end{theorem}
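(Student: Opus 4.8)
The plan is to combine Theorem~\ref{twocurves} with B\'ezout's theorem. Suppose, for a contradiction, that $\cA$ is a planar arc of $\PG(2,q)$, with $q$ an odd square, of size $|\cA|\geq q-\sqrt q+\sqrt q/p+3$, and that $\cA$ is \emph{not} contained in a conic. Write $|\cA|=q+2-t$. Since $q$ is odd a planar arc has size at most $q+1$, so $t\geq 1$; and the size hypothesis rearranges to $t\leq \sqrt q-\sqrt q/p-1$, so in particular $t<\sqrt q$.

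First I would apply Theorem~\ref{twocurves}: since $q$ is odd and $\cA$ is not contained in a conic, $\cA$ lies in the intersection of two plane curves which share no common component, each of degree at most $d:=t+p^{\lfloor \log_p t\rfloor}$. Two curves with no common component meet in at most the product of their degrees, so by B\'ezout $|\cA|\leq d^2$.

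Next I would estimate $d$, and this is the only place the hypothesis ``$q$ is a square'' enters. Writing $q=p^{2m}$ we have $\sqrt q=p^m$ and $\sqrt q/p=p^{m-1}$, both integral powers of $p$. From $1\leq t<\sqrt q=p^m$ it follows that $\lfloor \log_p t\rfloor\leq m-1$, hence $p^{\lfloor \log_p t\rfloor}\leq p^{m-1}=\sqrt q/p$, and therefore $d=t+p^{\lfloor \log_p t\rfloor}\leq(\sqrt q-\sqrt q/p-1)+\sqrt q/p=\sqrt q-1$. Combining this with the B\'ezout bound gives $|\cA|\leq d^2\leq(\sqrt q-1)^2=q-2\sqrt q+1$, which contradicts $|\cA|\geq q-\sqrt q+\sqrt q/p+3$ because $\sqrt q+\sqrt q/p+2>0$. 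Hence $\cA$ is contained in a conic.

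I expect no real obstacle here; the one step to get right is the bound $p^{\lfloor \log_p t\rfloor}\leq\sqrt q/p$, which is precisely why $q$ must be a perfect square (so that $\sqrt q$ is a power of $p$, whence the largest power of $p$ not exceeding $t<\sqrt q$ is at most $\sqrt q/p$). Everything else is the direct B\'ezout estimate applied to the two curves furnished by Theorem~\ref{twocurves}.
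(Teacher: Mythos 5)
Your argument is correct and is precisely the intended derivation: the paper presents Theorem~\ref{oddsq} as an immediate corollary of Theorem~\ref{twocurves} without writing out the details, and your combination of the two-curves theorem with B\'ezout, together with the observation that $q$ being a square of a power of $p$ forces $p^{\lfloor \log_p t\rfloor}\leq \sqrt q/p$, is exactly how that corollary is obtained. No gaps.
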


In the case that $q$ is a non-square and non-prime, the bound obtained from Theorem~\ref{twocurves} does not improve upon the bound of Voloch mentioned above. However, in the case that $q$ is prime, it does improve on Voloch's bound for primes less than 1783, see \cite[Theorem 5]{BL2018}.

\subsection{Planar arcs of size $q-\sqrt{q}+1$}

In the case that $q$ is a square, Kestenband \cite{Kestenband1981} constructed large complete planar arcs as the intersection of Hermitian curves. For any $3 \times 3$ matrix $\mathrm{M}$, let 
$$
V(\mathrm{M})=\{ x \in \PG(2,q) \ | \ x^t \mathrm{M} x^{\sqrt{q}} =0 \}.
$$

\begin{theorem} \label{kesetenband} 
Let $q> 4$ be a square. Let $\mathrm{I}$ be the $3 \times 3$ identity matrix. Let $\mathrm{H}=(a_{ij})$ be a $3 \times 3$ matrix with the property that $\mathrm{H}^{\sqrt{q}}=\mathrm{H}^t$, where $\mathrm{H}^{\sqrt{q}}=(a_{ij}^{\sqrt{q}})$ and $\mathrm{H}^t=(a_{ji})$.

If the characteristic polynomial of $\mathrm{H}$ is irreducible over ${\mathbb F}_q$ then the intersection of the Hermitian curves $\cA=V(\mathrm{I}) \cap V(\mathrm{H})$ is a planar arc of size $q-\sqrt{q}+1$ which is not contained in a conic.
\end{theorem}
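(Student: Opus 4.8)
The plan is to set $Q=\sqrt q$ and to work with the order‑two automorphism $x\mapsto x^{Q}$ of $\bF_q$, for which $\mathrm I$ and $\mathrm H$ are Hermitian matrices (I write $A^{Q}$ for the coordinatewise $Q$‑th power, so $\mathrm H^{Q}=\mathrm H^{t}$). First I would introduce the pencil
$$
\mathcal P=\{\,V(\mathrm I+\lambda\mathrm H):\lambda\in\bF_{Q}\,\}\cup\{\,V(\mathrm H)\,\}
$$
of $Q+1$ Hermitian curves, every one of which contains $\cA=V(\mathrm I)\cap V(\mathrm H)$. The crucial point is that each member is non‑degenerate: $\det(\mathrm I+\lambda\mathrm H)=0$ would make $-1/\lambda$ an eigenvalue of $\mathrm H$, and $\det\mathrm H=0$ would make $0$ one, either way contradicting the irreducibility over $\bF_q\supseteq\bF_{Q}$ of the characteristic polynomial of $\mathrm H$. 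Hence every member is a non‑degenerate Hermitian curve, with exactly $Q^{3}+1$ points.

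Next I would compute $|\cA|$. For a point $x$ of $\PG(2,q)$ put $a(x)=x^{t}x^{Q}$ and $b(x)=x^{t}\mathrm Hx^{Q}$; using that $\mathrm I$ and $\mathrm H$ are Hermitian one checks that $a(x),b(x)\in\bF_{Q}$, that $x\in\cA$ precisely when $a(x)=b(x)=0$, and that every $x\notin\cA$ lies on exactly one member of $\mathcal P$, since $V(\mu\mathrm I+\lambda\mathrm H)\ni x$ iff $\mu\,a(x)+\lambda\,b(x)=0$ and $(a(x),b(x))\neq(0,0)$. Double counting the point--member incidences --- each of the $Q+1$ members has $Q^{3}+1$ points and contains $\cA$, and each point outside $\cA$ lies on exactly one member --- gives $(Q+1)(Q^{3}+1)-(Q+1)|\cA|=q^{2}+q+1-|\cA|$, whence $|\cA|=Q^{2}-Q+1=q-\sqrt q+1$.

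For the arc property I would use the standard fact that a line meets a non‑degenerate Hermitian curve in exactly one point, or in a Baer subline of $Q+1$ points. If a line $\ell$ met $\cA$ in three points, it would be secant to both $V(\mathrm I)$ and $V(\mathrm H)$, so $B_{1}=\ell\cap V(\mathrm I)$ and $B_{2}=\ell\cap V(\mathrm H)$ would be Baer sublines with $\ell\cap\cA=B_{1}\cap B_{2}$. If $B_{1}\neq B_{2}$ this forces $|\ell\cap\cA|\le 2$, since two distinct Baer sublines of $\PG(1,q)$ meet in at most two points; if $B_{1}=B_{2}$ then $B_{1}\subseteq\cA$, so $\ell$ meets every member of $\mathcal P$ in $B_{1}$, which leaves the $q+1-(Q+1)=Q^{2}-Q>0$ points of $\ell\setminus B_{1}$ on no member of $\mathcal P$ --- contradicting the partition of the previous step. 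Hence $\cA$ is an arc.

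Finally, suppose $\cA\subseteq\cC$ for a conic $\cC$. A degenerate conic lies in a union of at most two lines and so would contain $\lceil\tfrac12|\cA|\rceil\ge 4$ collinear points of $\cA$, which is impossible; so $\cC$ is non‑degenerate. The clean finish is group‑theoretic. The pencil $\mathcal P$ is stabilised by a cyclic group $\Sigma_{0}\le\PGL(3,q)$ of order $N:=q-\sqrt q+1$ that acts semiregularly on $\PG(2,q)$ (it lies inside a Singer cycle, as $N\mid q^{2}+q+1$); this group of unitary similitudes is built from the field $\bF_q[\mathrm H]\cong\bF_{q^{3}}$. Since $\Sigma_{0}$ stabilises $\mathcal P$ it stabilises $\cA$, and as $|\cA|=N=|\Sigma_{0}|$ with $\Sigma_{0}$ semiregular, $\cA$ is a single $\Sigma_{0}$‑orbit. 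As $|\cA|=N\ge 7\ge 5$, the conic through any five points of $\cA$ is unique, hence equals $\cC$; therefore $\Sigma_{0}$ preserves $\cC$ and embeds into $\operatorname{Stab}(\cC)\cong\PGL(2,q)$. But $N=Q^{2}-Q+1$ is prime to the characteristic $p$ (as $p\mid Q$), is coprime to $q+1$, and satisfies $\gcd(N,q-1)\mid 3<N$; since a cyclic subgroup of $\PGL(2,q)$ of order prime to $p$ has order dividing $q-1$ or $q+1$, it follows that $\PGL(2,q)$ contains no subgroup of order $N$ --- a contradiction. (For $q\ge 16$ the group is not needed: $\cA\subseteq\cC\cap V(\mathrm I)$ would give $q-\sqrt q+1\le 2(\sqrt q+1)$ by B\'ezout, which fails; only $q=9$ escapes this, and there one invokes the group argument, $\PGL(2,9)$ having no element of order $7$.) I expect the genuine obstacle to be exactly this last step: verifying that $\cA$ is a regular orbit of a cyclic group of order $q-\sqrt q+1$, i.e.\ producing that group inside $\PGL(3,q)$ from $\mathrm H$, which requires describing how the unitary similitudes preserving $\mathcal P$ sit inside $\bF_q[\mathrm H]$ and interact with its Galois action; the remaining steps are routine given the standard facts on Hermitian curves and Baer sublines.
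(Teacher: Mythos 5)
Your proposal is correct and follows essentially the same route as the paper: the same pencil of $\sqrt{q}+1$ Hermitian curves partitioning the points outside $\cA$, the same incidence count giving $|\cA|=q-\sqrt{q}+1$, and the same B\'ezout argument for non-containment in a conic when $q\geq 16$. The only variations are minor: you verify the arc property via Baer sublines where the paper counts the points of a secant line outside $\cA$ over the members of the pencil (both rest on the same partition of the plane), and for $q=9$ you sketch a Singer-subgroup argument --- whose key step, realising $\cA$ as a regular orbit of a cyclic group of order $q-\sqrt{q}+1$, you rightly flag as unproven --- where the paper simply states that this case can be checked directly.
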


\begin{proof}
Consider the Hermitian curves $V(\mathrm{H} +\mu \mathrm{I})$, where $\mu \in{\mathbb F}_{\sqrt{q}}$ and $V(\mathrm{I})$. 
If $x$ is a point on two of these curves then $x \in \cA$.
If $x \not\in \cA$ and $x \not \in V(\mathrm{I})$ then $x$ is a point of $V(\mathrm{H} +(a/b) \mathrm{I})$, where $x^t \mathrm{H} x^{\sqrt{q}} =a$ and $x^t \mathrm{I} x^{\sqrt{q}} =-b$. 
Hence, each point is either in $\cA$ or on exactly one of the $\sqrt{q}+1$ Hermitian curves.
Therefore, 
$$
(\sqrt{q}+1)(q\sqrt{q}+1)=|\cA|(\sqrt{q}+1)+q^2+q+1-|\cA|,
$$
which gives $|\cA|=q-\sqrt{q}+1$.

Suppose that $\ell$ is a line incident with $r \geq 2$ points of $\cA$. Then $\ell$ intersects each Hermitian curve ($V(\mathrm{H} +\mu \mathrm{I})$ or $V(\mathrm{I})$) in $\sqrt{q}+1$ points, $r$ of which are in $\cA$ and $\sqrt{q}+1-r$ of which are not in $\cA$.

Counting points of $\ell$ not in $\cA$ we have $(\sqrt{q}+1-r)(\sqrt{q}+1)=q+1-r$, since each point not in $\cA$ is on exactly one of the $\sqrt{q}+1$ Hermitian curves. This gives $r=2$ and so $\cA$ is an arc.

It follows from Bezout's theorem that $\cA$ has at most $2 \sqrt{q}+2$ points in common with a conic, so cannot be contained in a conic for $q-\sqrt{q}+1 \geq 2\sqrt{q}+2$. The case $q=9$ can be checked directly.
\end{proof}

\subsection{Classification of planar arcs of size $\geq q-2$}

Planar arcs of size $q$ were shown to be incomplete by Segre for  $q$ odd and by Tallini for $q$ even. In contrast there do exist complete planar arcs of size $q-1$. 
The classification of planar arcs of size $q-1$ and $q-2$ was only recently completed in \cite{BL2018} and is given in the following corollaries.

Corollary \ref{qminus1} proves a conjecture from Hirschfeld, Korchm\'aros and Torres \cite[Remark 13.63]{HiKoTo2008}. It was proven in \cite{BL2018} as a corollary of Theorem \ref{twocurves}.
\begin{corollary} \label{qminus1}
The only complete planar arcs of size $q-1$ occur for $q=7$ (there are $2$ projectively distinct arcs of size $6$), for $q=9$ (there is a unique arc of size $8$), $q=11$ (there is a unique arc of size $10$) and $q=13$ (there is a unique arc of size $12$). 
\end{corollary}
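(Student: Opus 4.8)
The plan is to separate the easy case $q$ even from the substantive case $q$ odd, and in the latter to feed $t=3$ into Theorem~\ref{twocurves} and apply Bézout's theorem to reduce to a short list of values of $q$. First, for $q$ even: a complete arc of size $q-1$ cannot be contained in a hyperoval (which has $q+2$ points), so by Theorem~\ref{evenextention} it would have size at most $q-\sqrt q+1<q-1$ (for $q>4$), while for $q\le 4$ an arc of size $q-1\le 3$ is incomplete by definition; hence no complete $(q-1)$-arc exists for $q$ even. The same two remarks settle $q\in\{3,5\}$: a $2$-arc is incomplete because its size is $\le k=3$, and a $4$-arc is a frame, so it lies on a smooth conic of the pencil of conics through it and is therefore not complete.

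Now let $q$ be odd with $q\ge 7$ and let $\cA$ be a complete arc of size $q-1$. A conic has $q+1>q-1$ points, so $\cA$ is not contained in a conic, and $t:=q+2-|\cA|=3$. Theorem~\ref{twocurves} then places $\cA$ inside $C_1\cap C_2$ for two plane curves $C_1,C_2$ sharing no component and with $\deg C_i\le t+p^{\lfloor\log_p t\rfloor}$, which is $4$ if $p\ge 5$ and $6$ if $p=3$. By Bézout, $q-1=|\cA|\le\deg C_1\cdot\deg C_2$, so $q\le 17$ if $p\ge 5$ and $q\le 37$ if $p=3$; since $q$ is a power of $p$ this forces $q\in\{7,9,11,13,17,27\}$.

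It remains to examine these six planes. For $q\in\{7,9,11,13\}$ one produces the arcs explicitly and checks --- using that $\cA$ has no three collinear points and lies on $C_1\cap C_2$, or by a direct search of $\PG(2,q)$ --- that up to projectivity there are exactly two complete $(q-1)$-arcs for $q=7$ and exactly one for each of $q=9,11,13$. For $q=17$ the Bézout estimate is an \emph{equality}: it forces $\deg C_1=\deg C_2=4$ and $\cA=C_1\cap C_2$ to be a transversal complete intersection of $16$ points, and one must show that no such configuration is an arc; similarly for $q=27$ one must exclude a $26$-arc lying on the intersection of two curves of degree at most $6$. I expect these two boundary cases to be the main obstacle, since Bézout alone is inconclusive there: settling them needs a closer analysis of the decomposition of $C_1,C_2$ into irreducible components --- bounding, via the arc property, the number of points of $\cA$ on any line ($\le 2$) or conic, and via Hasse--Weil / St\"ohr--Voloch estimates the number on any component of higher degree --- or else a short computer check. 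This endgame, together with the precise count for $q\in\{7,9,11,13\}$, is where the real work lies; everything preceding it is immediate from Theorem~\ref{twocurves} and Bézout's theorem.
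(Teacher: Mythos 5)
Your proposal follows exactly the route the paper indicates: Corollary~\ref{qminus1} is presented there as a consequence of Theorem~\ref{twocurves} (with the proof in the cited source \cite{BL2018}), and your reduction --- $t=3$, two curves with no common component of degree at most $4$ (resp.\ $6$ when $p=3$), B\'ezout, hence $q\le 17$ (resp.\ $q\le 37$), together with the easy dismissal of $q$ even via extendability to a hyperoval --- is precisely that argument. The remaining finitely many planes $q\in\{7,9,11,13,17,27\}$, including the exclusion of complete $16$-arcs in $\PG(2,17)$ and complete $26$-arcs in $\PG(2,27)$ and the exact counts of projectively distinct examples, are indeed settled in the source by appeal to the known (computer-aided) classifications of complete arcs in small planes, exactly as you anticipate.
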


\begin{corollary} \label{qminus2}
The only complete planar arcs of size $q-2$ occur for $q=8$ (there are $3$ projectively distinct arcs of size $6$), for $q=9$ (there is a unique arc of size $7$) and $q=11$ (there are $3$ projectively distinct arcs of size $9$). 
\end{corollary}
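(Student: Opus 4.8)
Write $t=q+2-|\cA|$, so that an arc of size $q-2$ is precisely the case $t=4$; the argument splits according to the parity of $q$. \emph{Suppose first $q$ is even.} If $\cA$ is a complete arc of size $q-2$, then $\cA$ cannot be contained in a hyperoval $\mathcal H$, since adjoining to $\cA$ one of the four points of $\mathcal H\setminus\cA$ would give a larger arc. Hence Theorem~\ref{evenextention} (in the form: a complete planar arc not contained in a hyperoval has size at most $q-\sqrt q+1$) applies and yields $q-2\le q-\sqrt q+1$, so $\sqrt q\le 3$ and $q\le 8$. For $q\in\{2,4\}$ an arc of size $q-2\le 2$ is trivially incomplete, so only $q=8$ remains; a finite check of $\PG(2,8)$ then exhibits exactly three projectively distinct complete $6$-arcs, none of which lies in a hyperoval, in accordance with the above.

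\emph{Now suppose $q$ is odd.} If the complete arc $\cA$ of size $q-2$ were contained in a conic $\cC$, then for $q\ge 7$ we have $|\cA|\ge 5$, so $\cC$ is the unique conic through $\cA$ and adjoining any of the three points of $\cC\setminus\cA$ contradicts completeness; for $q\le 5$ there is nothing to prove since $|\cA|\le 3$. So $\cA$ lies on no conic, and Theorem~\ref{twocurves}, applied with $t=4$, places $\cA$ in the intersection of two plane curves $C_1,C_2$ having no common component, each of degree at most $4+p^{\lfloor\log_p 4\rfloor}$, which equals $5$ when $p\ge 5$ and $7$ when $p=3$. By B\'ezout's theorem $|\cA|\le 25$ if $p\ge 5$ and $|\cA|\le 49$ if $p=3$, and since $|\cA|=q-2$ this forces $q\le 27$. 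Thus only the odd orders $q\in\{9,11,13,17,19,23,25,27\}$ remain to be examined.

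\emph{The remaining finitely many orders.} For each such $q$ one analyses the curves $C_1,C_2$ supplied by Theorem~\ref{twocurves}: one decomposes each $C_i$ into irreducible components, bounds via B\'ezout (applied componentwise) the number of points of $\cA$ lying on each component, and uses that $\cA$ lies on no conic to rule out low-degree components carrying too many points of $\cA$. This reduces the possible configurations to a finite list, which is then resolved by explicit (computer-assisted) computation, producing the unique complete $7$-arc for $q=9$, exactly three complete $9$-arcs for $q=11$, and no complete arc of size $q-2$ for $q\in\{13,17,19,23,25,27\}$. I expect this final step to be the main obstacle: the bare B\'ezout inequality $|\cA|\le 25$ (resp.\ $\le 49$) is not in itself contradictory for $q\in\{13,17,19,23,25,27\}$, so establishing non-existence there, and pinning down the exact number of projectively inequivalent complete arcs for $q\in\{8,9,11\}$, is where one genuinely relies on the finer geometry of the pair of curves together with explicit computation.
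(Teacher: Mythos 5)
The paper offers no proof of this corollary: like Corollary \ref{qminus1}, it is quoted from \cite{BL2018}, where it is obtained from Theorem \ref{twocurves} together with a finite case analysis. Your reduction is sound and is essentially the intended route: completeness rules out containment in a hyperoval ($q$ even) or a conic ($q$ odd), the bound $q-\sqrt{q}+1$ of Theorem \ref{evenextention} disposes of all even $q>8$, and Theorem \ref{twocurves} with $t=4$ together with B\'ezout caps the odd orders at degree $5$ curves for $p\ge 5$ and degree $7$ curves for $p=3$, hence at the finite list you give. What remains --- non-existence for $q\in\{13,17,19,23,25,27\}$ and the exact counts of projectively distinct arcs for $q\in\{8,9,11\}$ --- is, as you correctly flag, not reachable from the bare B\'ezout inequality; it requires the componentwise analysis of the two curves and the explicit computations carried out in \cite{BL2018}. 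So your proposal is a correct reduction to a finite problem that matches the source's strategy, with the terminal verification asserted rather than performed, which is also the level of detail at which the paper itself treats the statement.
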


\section{A proof of the MDS conjecture for $k\leq \sqrt{q} - \sqrt{q}/p+2$}
Suppose that $q$ is an odd square. In the previous section we saw how the tensor for planar arcs has far reaching consequences for the size of planar arcs which are not contained in conics. Using the projection theorem this leads to a proof of the MDS conjecture for $k\leq \sqrt{q} - \sqrt{q}/p+2$.
The following two corollaries are from \cite{BL2018}.

\begin{corollary} \label{cor_main_conjecture}
If $k\leq \sqrt{q}-\sqrt{q}/p+1$ and $q=p^{2h}$, $p$ odd, then an arc of $\mathrm{PG}(k-1,q)$ of size $q+1$ is a normal rational curve. 
\end{corollary}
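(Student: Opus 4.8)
The plan is to derive Corollary~\ref{cor_main_conjecture} by combining the bound for planar arcs not contained in a conic (Theorem~\ref{oddsq}) with the projection theorem in the form of Corollary~\ref{cor:main_projection}. The cases $k\le 2$ are trivial (when $k=2$ the whole line $\mathrm{PG}(1,q)$ is the normal rational curve $\cN_1$), and the case $k=3$ is Segre's theorem (Theorem~\ref{thm:segre}), since $q=p^{2h}$ is odd; so from here on I would assume $k\ge 4$.

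First I would fix an arc $\cA$ of size $q+1$ in $\mathrm{PG}(k-1,q)$, a $(k-2)$-subset $T$ of $\cA$, and any $(k-3)$-subset $D$ of $T$. Because $\cA$ is an arc, $\langle D\rangle$ is a flat of dimension $k-4$ through no further point of $\cA$, and projecting $\cA\setminus D$ from $\langle D\rangle$ onto a complementary plane gives a planar arc $\cA_D$: using that $\cA$ has the arc property on all its $k$-subsets one checks that no two points of $\cA\setminus D$ have the same image and that no three images are collinear, so $|\cA_D|=q+1-(k-3)=q-k+4$.

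The numerical point is that the hypothesis $k\le\sqrt q-\sqrt q/p+1$ is precisely equivalent to $q-k+4\ge q-\sqrt q+\sqrt q/p+3$. Hence $|\cA_D|$ meets the threshold of Theorem~\ref{oddsq}, so $\cA_D$ is contained in a conic; this conic is non-degenerate since $|\cA_D|\ge 5$, i.e.\ $\cA_D$ lies on a normal rational curve of the plane. Letting $D$ run over all $(k-3)$-subsets of $T$ verifies the hypothesis of Corollary~\ref{cor:main_projection}, which then shows that $\cA$ is contained in a normal rational curve of $\mathrm{PG}(k-1,q)$; as such a curve has exactly $q+1$ points and $|\cA|=q+1$, $\cA$ is itself a normal rational curve.

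Once Theorem~\ref{oddsq} and Corollary~\ref{cor:main_projection} are in hand there is no serious obstacle: the content is the size bookkeeping above, together with the routine fact that the projection of an arc from a subset of its own points is again an arc of the predicted size. The only hypothesis of Corollary~\ref{cor:main_projection} that is not automatic is $q\ge k+1$, but this holds because $k\le\sqrt q+1\le q-1$ for $q=p^{2h}\ge 9$.
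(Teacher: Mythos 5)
Your proposal is correct and follows essentially the same route as the paper: project from $(k-3)$-subsets to obtain planar arcs of size $q-k+4\ge q-\sqrt q+\sqrt q/p+3$, apply Theorem~\ref{oddsq} to place each projection in a conic, and conclude via Corollary~\ref{cor:main_projection}. The extra checks you include (small $k$, non-degeneracy of the conic, $q\ge k+1$) are routine and only make the argument more complete than the paper's two-line version.
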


\begin{proof}
Let $\cA$ be an arc of size $q+1$ in $\mathrm{PG}(k-1,q)$. The projection of $\cA$ from a subset of size $k-3$ is a planar arc of size at least $q+1-(\sqrt{q}-\sqrt{q}/p-2)$. By Theorem~\ref{oddsq}, this projection is contained in a conic. By Corollary \ref{cor:main_projection}, this implies that $\cA$ is a normal rational curve. 
\end{proof}

\begin{corollary} \label{cor_main_conjecture_2}
If $k\leq \sqrt{q}-\sqrt{q}/p+2$ and $q=p^{2h}$, $p$ odd, then an arc of $\mathrm{PG}(k-1,q)$ has size at most $q+1$. 
\end{corollary}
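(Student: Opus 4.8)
The plan is to obtain the bound directly from the classification of $(q+1)$-arcs one dimension lower, by feeding Corollary~\ref{cor_main_conjecture} into the projection machinery packaged in Corollary~\ref{cor:kaneta_maruta}.

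First I would dispatch the cases $k\le 3$: for $k\le 2$ the projective space $\PG(k-1,q)$ has at most $q+1$ points in total, and for $k=3$ we have already seen that, $q$ being odd, a planar arc has size at most $q+1$. So from now on assume $k\ge 4$. I would then record the one-line estimate that, since $k\le \sqrt q-\sqrt q/p+2<\sqrt q+2$ and $q\ge 9$ (the smallest odd square exceeding $4$), we have $q+1\ge k+2\ge 6$; this is exactly the numerical hypothesis needed below.

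Next, since the hypothesis $k\le \sqrt q-\sqrt q/p+2$ is literally $k-1\le \sqrt q-\sqrt q/p+1$, I would apply Corollary~\ref{cor_main_conjecture} with $k-1$ in place of $k$ to conclude that every arc of size $q+1$ in $\PG(k-2,q)$ is a normal rational curve. With this classification in hand, and the inequality $q+1\ge (k-1)+3\ge 6$ just established, I would invoke Corollary~\ref{cor:kaneta_maruta}, again with $k-1$ in place of $k$, to conclude that the MDS conjecture holds in $\PG(k-1,q)$, that is, an arc of $\PG(k-1,q)$ has size at most $q+1$.

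There is no genuinely hard step here: the real content sits inside Corollary~\ref{cor_main_conjecture} (which rests on the bound of Theorem~\ref{oddsq} together with the projection theorem) and inside Corollary~\ref{cor:kaneta_maruta} (the projection theorem once more). The only points requiring attention are the dimension bookkeeping in the two substitutions $k\mapsto k-1$ and checking that the side conditions $k\ge 4$ and $q+1\ge k+2$ are compatible with the range $4\le k\le \sqrt q-\sqrt q/p+2$ being non-empty; the extreme case to verify by hand is $q=9$, $k=4$, where the argument specializes to the statement that every $(q+1)$-arc of $\PG(2,9)$ is a conic, hence no $(q+2)$-arc exists in $\PG(3,9)$.
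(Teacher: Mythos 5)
Your argument is correct and is essentially the paper's own proof: the paper likewise assumes a $(q+2)$-arc, applies Corollary~\ref{cor_main_conjecture} to its point-projections (which live in $\PG(k-2,q)$, whence the shift $k\mapsto k-1$), and concludes via the projection theorem that the arc lies on a normal rational curve, a contradiction. Routing the last step through Corollary~\ref{cor:kaneta_maruta} rather than citing Theorem~\ref{thm:main_projection} directly is only a cosmetic difference, since that corollary's proof is exactly this application of the projection theorem; your extra bookkeeping on $k\ge 4$ and $q+1\ge k+2$ is sound.
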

\begin{proof}
Suppose $\cA$ is an arc of size $q+2$ in $\PG(k-1,q)$. By Corollary \ref{cor_main_conjecture}, the projection of $\cA$ from any of its points is a normal rational curve.
By Theorem \ref{thm:main_projection}, the arc $\cA$ is contained in a normal rational curve, a contradiction.
\end{proof}

\section{Arcs in $\PG(3,q)$}\label{sec:k=4}

The following theorem was proved by Segre \cite{Segre1955b} for $q$ odd and by Casse \cite{Casse1969} for $q$ even. The theorem proves the MDS conjecture for $k=4$.

\begin{theorem}
An arc of $\PG(3,q)$, $q\geq 4$, has size at most $q+1$.
\end{theorem}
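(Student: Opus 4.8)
The plan is to deduce this from the machinery already set up, rather than re-proving Segre's theorem from scratch. The statement is the MDS conjecture in the case $k=4$, i.e. that an arc of $\PG(3,q)$ with $q\geq 4$ has size at most $q+1$. By Corollary~\ref{dualarc}, an arc of size $n$ in $\PG(3,q)$ is equivalent to an arc of size $n$ in $\PG(q-5,q)$; this observation, together with the small cases handled by direct inspection, means we only need to worry about the genuinely planar-like regime. The cleanest route is: first dispose of $q$ even using the projection theorem and the classification of hyperovals, then dispose of $q$ odd using Segre's theorem (Theorem~\ref{thm:segre}) and again the projection theorem.

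First I would treat $q$ even. Suppose $\cA$ is an arc of size $q+2$ in $\PG(3,q)$. Projecting $\cA$ from one of its points $x$ gives an arc $x(\cA)$ of size $q+1$ in a plane $\PG(2,q)$. By Theorem~\ref{evenextention} (second largest planar arcs, $q$ even), every planar arc of size at least $q-\sqrt{q}+\tfrac{3}{2}$ — in particular one of size $q+1$ — is extendable to a hyperoval; but more to the point, by Theorem~\ref{uniqueextension} a planar arc of size $q+1$ has a unique completion, and a size-$(q+1)$ planar arc is either a conic plus one point of a hyperoval (hence contained in a hyperoval) or already something that forces $q$ even structure. For the argument at hand the key input is simply: a planar arc of size $q+1$, $q$ even, is contained in a hyperoval, hence its $q+1$ points lie on a conic together with the nucleus, or are obtained from a hyperoval by deleting one point. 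In the sub-case where the projection is a conic plus nucleus configuration that is \emph{not} a conic, one still has to rule out the lift; here I would instead project from \emph{two} distinct points of $\cA$ and use the Projection Theorem~\ref{thm:main_projection}: if both projections are contained in normal rational curves (conics) of their respective planes then $\cA$ lies on a normal rational curve of $\PG(3,q)$, which has only $q+1$ points, contradiction. So the real content for $q$ even is to show that a size-$(q+1)$ planar arc which arises as a projection of an arc in $\PG(3,q)$ is contained in a conic — equivalently to rule out the ``hyperoval minus a point'' projections, which is exactly the classical theorem of Casse and is what the $q\geq 4$ hypothesis is protecting (small exceptions). I would cite Casse \cite{Casse1969} here, or equivalently invoke the known fact that in $\PG(3,q)$, $q$ even, every $(q+1)$-arc is a normal rational curve for $q>2$, which after projection forces the planar projections to be conics.

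For $q$ odd the argument is cleaner. Suppose $\cA \subseteq \PG(3,q)$ is an arc of size $q+2$. Projecting from any point of $\cA$ gives a planar arc of size $q+1$; by Segre's Theorem~\ref{thm:segre} this projection is a conic, i.e.\ a normal rational curve of $\PG(2,q)$. This holds for the projection from \emph{every} point of $\cA$, in particular from two distinct points, so by the Projection Theorem~\ref{thm:main_projection} (valid since $k=4\geq 4$ and $q\geq k+1=5$; the case $q=4$ is even, already handled, and $q=3<k$ would need $|\cA|\leq k+1$ which is covered separately) the arc $\cA$ is contained in a normal rational curve of $\PG(3,q)$. But a normal rational curve in $\PG(3,q)$ has exactly $q+1$ points, contradicting $|\cA|=q+2$. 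Hence $|\cA|\leq q+1$, as claimed. I expect the main obstacle to be the $q$ even case: Segre's theorem is not available there, and one genuinely needs Casse's classification of $(q+1)$-arcs in $\PG(3,q)$ for $q$ even (equivalently, ruling out that a projected planar arc is a hyperoval with a point removed and still lifts), which is why the hypothesis $q\geq 4$ appears — the excluded value $q=2$ admits larger arcs, and the argument must quietly use that for $q\geq 4$ these pathological planar projections cannot occur.
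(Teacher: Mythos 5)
Your treatment of $q$ odd coincides with the paper's: project a hypothetical $(q+2)$-arc from two of its points, apply Segre's Theorem~\ref{thm:segre} to each planar projection, and invoke the Projection Theorem~\ref{thm:main_projection} to place the arc on a normal rational curve, which has only $q+1$ points. That half is fine.

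The $q$ even half has a genuine gap. Your plan ultimately reduces to citing Casse \cite{Casse1969}, but that reference \emph{is} the proof of the statement for $q$ even (the solution of Segre's problem $I_{r,q}$ for $r=3$, $q=2^h$), so the argument is circular. Moreover, the auxiliary fact you propose to invoke --- that every $(q+1)$-arc of $\PG(3,q)$, $q$ even, $q>2$, is a normal rational curve --- is false: by Theorem~\ref{thm:q+1_arc_in_3space} such arcs are projectively equivalent to Example~\ref{ex:3space}, namely $\{(1,t,t^{\sigma},t^{\sigma+1})\mid t\in\bF_q\}\cup\{(0,0,0,1)\}$ with $\sigma:x\mapsto x^{2^e}$, $(e,h)=1$, and these are normal rational curves only for $e=1$. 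Correspondingly, the projection of a $(q+2)$-arc from one of its points would be a planar $(q+1)$-arc contained in a hyperoval but not necessarily in a conic, so the Projection Theorem cannot be brought to bear, and no amount of reshuffling between one and two projection points repairs this. The paper's actual route for $q$ even is entirely different and self-contained: here $t=1$ and $m=1$, so Theorem~\ref{dualhypersurface} places the points dual to the $2$-secant planes on a hypersurface of degree one, forcing all such planes through a common point, which extends $\cA$ to a $(q+3)$-arc $\cA'$; a count of points on bisecants then produces a point $u$ on no bisecant of $\cA'$, and counting $3$-secant planes through $u$ yields $3\mid q+2$, while a second count of $3$-secant planes through a point of a bisecant avoiding its two arc points yields $3\mid q+1$ --- a contradiction. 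You would need to supply an argument of this kind (or some other non-circular one) for the even case.
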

\begin{proof}
If $q$ is odd and $\cA$ is an arc in $\PG(3,q)$ of size $q+2$, then by projection from a point of $\cA$ we obtain an arc of size $q+1$ in a plane, which by Segre's theorem is a conic. By projecting $\cA$ from two distinct points and applying Theorem \ref{thm:main_projection} it follows that $\cA$ is contained in a unique normal rational curve of $\PG(3,q)$. This contradicts the assumption $|\cA|=q+2$.

Let $\cA$ be an arc of size $q+2$ with $q$ even. By Theorem~\ref{dualhypersurface}, the set of points dual to the planes containing exactly two points of $\cA$
is contained in a hypersurface of degree one (a plane). Hence, these planes are all concurrent in a point $y$, which extends $\cA$ to an arc of size $q+3$. 

Let $\cA'$ be an arc of size $q+3$. 
The number of points which are incident with bisecants to $\cA'$ is
$$
q+3+{q+3 \choose 2}(q-1).
$$
This is less than $(q+1)(q^2+1)$, the total number of points of $\PG(3,q)$, which implies there is a point $u$ not incident with any bisecant.
Since $|\cA'|=q+3$, any plane incident with two points of $\cA'$ is incident with three points of $\cA'$. Therefore, the number of planes incident with $u$ and three points of $\cA'$ is
$$
\frac{1}{3} {q+3 \choose 2},
$$ 
which implies $3$ divides $q+2$.

Let $x$ and $y$ be points of $\cA'$ spanning the line $\ell$ and let $v$ be a point of $\ell \setminus \{x,y\}$. For each pair of points $z,w \in \cA' \setminus \{x,y\}$, the plane spanned by $v$, $z$ and $w$ contains three points of $\cA'\setminus \{x,y\}$.
Thus, the number of $3$-secant planes which are incident with $v$ but do not contain $x$ or $y$ is
$$
\frac{1}{3} {q+1 \choose 2}.
$$
This implies $3$ divides $q+1$, contradicting the previously obtained divisibility condition.
\end{proof}

The following theorem classifies arcs of size $q+1$ in $\PG(3,q)$. The case $q$ odd was already obtained by Segre \cite{Segre1955b}.
The case $q$ even is due to Casse and Glynn \cite{CG1982}.

\begin{theorem}\label{thm:q+1_arc_in_3space}
Let $\cA$ be an arc in $\PG(3,q)$ of size $q+1$. If $q$ is odd and $q\geq 5$ then $\cA$ is a twisted cubic. If $q$ is even and $q\geq 8$ then $\cA$ is projectively equivalent to Example \ref{ex:3space}.
\end{theorem}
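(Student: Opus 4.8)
The plan is to treat the two cases $q$ odd and $q$ even separately, in both cases reducing to a planar classification via the projection theorem and the surrounding machinery already developed.

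For $q$ odd, $q\geq 5$, the strategy is: take an arc $\cA$ of size $q+1$ in $\PG(3,q)$ and project from each of two distinct points $x,y\in\cA$. Each projection is an arc of size $q$ in a plane, which by Theorem~\ref{cor:trivial_upper_bound} and the fact that it arises from a projection of an arc can be extended by one point; alternatively one argues that the projected point-set lies on a conic directly. The cleanest route is: a projection of $\cA$ from a point of $\cA$ is an arc of size $q$ in $\PG(2,q)$, and by the classification results on planar arcs of size close to $q+1$ (for $q$ odd, a planar arc of size $q$ is incomplete and extends to an arc of size $q+1$, which by Segre's theorem, Theorem~\ref{thm:segre}, is a conic), each such projection is contained in a conic. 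Having two distinct projections of $\cA$ each contained in a (normal rational curve of the plane, i.e.\ a) conic, Theorem~\ref{thm:main_projection} (the projection theorem, applicable since $k=4\geq 4$ and $q\geq 5 = k+1$) yields that $\cA$ is contained in a normal rational curve of $\PG(3,q)$, which is precisely a twisted cubic. Since a normal rational curve has exactly $q+1$ points, $\cA$ equals a twisted cubic. The main subtlety here is justifying that a size-$q$ planar arc (for $q$ odd, $q\geq 5$) really is contained in a conic; this is where one needs the statement that planar arcs of size $q$ are incomplete and extend uniquely (a classical result of Segre), combined with Theorem~\ref{thm:segre}.

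For $q$ even, $q\geq 8$, I would follow the approach used in the proof of the MDS conjecture for $k=4$ in the even case. By Theorem~\ref{dualhypersurface} applied with $k=4$, $|\cA|=q+1$, $t=1$, the set of points dual to the planes meeting $\cA$ in exactly $2$ points lies on a hypersurface of degree $mt=1$, i.e.\ a plane; equivalently the $2$-secant planes of $\cA$ all pass through a common point $N$ (a ``nucleus''). One then shows $\cA\cup\{N\}$ is an arc of size $q+2$ in $\PG(3,q)$; but this contradicts the MDS conjecture for $k=4$ unless... — no: here $\cA$ has size $q+1$ which is extremal, so instead the correct deduction is that $\cA$ has a special structure. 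Rather, for the classification one uses the known structure theory: every arc of size $q+1$ in $\PG(3,q)$, $q$ even, is obtained by taking the image of $\PG(1,q)$ under a map $(x_1,x_2)\mapsto (x_1^{\sigma+1},x_1^\sigma x_2,x_1x_2^\sigma,x_2^{\sigma+1})$ for some generator $\sigma:x\mapsto x^{2^e}$ with $(e,h)=1$, which is exactly Example~\ref{ex:3space}. I would establish this by coordinatising: using the action of $\PGL(2,q)$ (Lemma~\ref{lem:normal rational curve_transitivity} and its analogue for Example~\ref{ex:3space}, which acts regularly on triples) one normalises three points of $\cA$ to be $(1,0,0,0)$, $(0,0,0,1)$, $(1,1,1,1)$, then writes the remaining points as $(1,t,g(t),h(t))$ and uses the arc condition (vanishing of $4\times 4$ determinants) together with the nucleus condition to force $g(t)=t^\sigma$ and $h(t)=t^{\sigma+1}$ for a field automorphism $\sigma$ of the required type.

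The main obstacle I expect is the even case: proving that the functions $g,h$ parametrising $\cA$ are forced to be the additive/multiplicative power maps $t^\sigma$, $t^{\sigma+1}$. The arc condition gives a system of polynomial (functional) identities in $g$ and $h$ over $\bF_q$, and extracting from these that $g$ must be a linearised (additive) permutation polynomial whose exponent is coprime to $h$ requires genuine work — this is essentially the content of the Casse--Glynn theorem and is not a routine determinant computation. One approach is to first use the nucleus/tangent structure from Theorem~\ref{dualhypersurface} to pin down the tangent lines of $\cA$, then translate the condition that these tangents form the prescribed configuration into the statement that $g$ is additive, and finally invoke a characterisation of o-polynomial-like functions. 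I would also need to check the small-order exclusions ($q=4$ for odd-type statements is vacuous, and the bound $q\geq 8$ in the even case) by hand or by citing the hyperoval classification, since for $q=4$ there are exceptional configurations. The odd case, by contrast, should go through cleanly once the planar input (Segre's theorem plus incompleteness of size-$q$ planar arcs) is in place.
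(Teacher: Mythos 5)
Your odd-case argument is essentially the paper's: project $\cA$ from two distinct points, observe that each projection is a planar arc of size $q$ contained in a conic, and apply the projection theorem (Theorem~\ref{thm:main_projection}). The only difference is the justification of the planar input: you invoke Segre's classical result that a planar arc of size $q$, $q$ odd, is incomplete (together with Theorem~\ref{thm:segre}), whereas the paper cites the quantitative bounds of Theorems~\ref{oddsq}, \ref{primeplane} and \ref{oddnonsq}. Either route is acceptable and the overall structure is identical.

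The even case, however, contains a genuine gap, and it begins with an arithmetic error. For an arc of size $q+1$ in $\PG(3,q)$ one has $t=q+k-1-|\cA|=2$, not $1$, so Theorem~\ref{dualhypersurface} produces a \emph{quadric} in the dual space, not a plane; the nucleus argument you sketch (all $2$-secant planes concurrent in a point $N$ which extends $\cA$) is the argument used for an arc of size $q+2$ in the proof that $|\cA|\leq q+1$, and it simply does not apply here --- as you yourself notice when the deduction leads nowhere. The paper's proof exploits the degree-$2$ dual hypersurface quite differently: each projection $x(\cA)$ is a planar arc of size $q$ which by Theorem~\ref{evenextention} extends uniquely to a hyperoval, defining two tangent lines of $\cA$ at $x$; Theorem~\ref{dualhypersurface} then forces the resulting $2(q+1)$ tangent lines to be the lines of a hyperbolic quadric $Q^+$ in $\PG(3,q)$. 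Normalising $Q^+$ to $X_1X_4-X_2X_3=0$, the arc admits two parametrisations, $\{(1,s,f(s),sf(s))\ :\ s\in\bF_q\}\cup\{y\}$ and $\{(rg(r),g(r),r,1)\ :\ r\in\bF_q\}\cup\{x\}$, for o-polynomials $f$ and $g$; equating them gives $f(1/s)=g(s)$, which forces $f(s)=g(s)=s^n$, and a multiplicity count at the tangent planes forces $n=2^e$ with $(e,h)=1$, yielding Example~\ref{ex:3space}. None of these steps appears in your proposal: you correctly identify that forcing $g(t)=t^\sigma$ requires genuine work, but you do not supply it, so your even case remains an appeal to the Casse--Glynn theorem rather than a proof of it.
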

\begin{proof}
Suppose $q$ is odd. Projecting $\cA$ from two distinct points gives to planar arcs of size $q$, each of which, by Theorems~\ref{oddsq}, \ref{primeplane}, \ref{oddnonsq}, is contained in a conic. Applying Theorem \ref{thm:main_projection} concludes the proof.

Suppose $q=2^h$.
Projecting $\cA$ from a point $x\in \cA$ gives a planar arc $x(\cA)$ of size $q$. By Theorem~\ref{evenextention}, $x(\cA)$ is extendable to a hyperoval. Let $a,b$ be the unique pair of points which extend $x(\cA)$ to this hyperoval. The lines $xa$ and $xb$ are called the {\em tangent lines of $\cA$ at the point $x$}. For any point $y\neq x$ of $\cA$, it follows from Theorem~\ref{dualhypersurface}, that the planes $xya$ and $xyb$ are contained in a hyperbolic quadric in the dual space. Therefore the lines $xa$ and $xb$ are contained in a hyperbolic quadric in the dual space. Hence, the $2(q+1)$ tangent lines of $\cA$ are the lines on a hyperbolic quadric $Q^+$ in $\PG(3,q)$.
We may assume that $Q^+$ is the zero locus of the quadratic form $X_1X_4-X_2X_3$ and that $x=(1,0,0,0)$ and $y=(0,0,0,1)$ are in $\cA$. The ovals $x(\cA)$ and $y(\cA)$ correspond to two {\em o-polynomials} $f$ and $g$ with $f(0)=g(0)=0$ and $f(1)=g(1)=1$. The arc $\cA$ can therefore be written as
$$\cA=\{(rg(r),g(r),r,1)~:~r\in \bF_q\} \cup \{x\}=\{(1,s,f(s),sf(s))~:~s\in \bF_q\} \cup \{y\}.$$
Equating coefficients in the $o$-polynomilals $f$ and $g$, leads to the condition that $f(1/s)=g(s)$, for each $s\neq 0$, and consequently to the conclusion that the only possibility for $f$ and $g$ is $f(s)=g(s)=s^n$ for some integer $n$. So 
$$\cA=\{(1,s,s^n,s^{n+1})~:~s\in \bF_q\} \cup \{(0,0,0,1)\}.
$$
It follows that $\cA$ is invariant under the projectivity $\varphi$ interchanging the first with the last and the second with the third coordinate. In particular $\varphi$ interchanges the points $x$ and $y$. Since $x$ and $y$ were arbitrary points of $\cA$ it follows that $\cA$ is invariant under a group $G\leq \PGL(4,q)$, which acts transitively on the points of $\cA$. Since the tangent plane at $x$ (which has equation $X_4=0$) intersects $\cA$ in $x$ with multiplicity $n+1$, each tangent plane must intersect $\cA$ in a point with multiplicity $n+1$. Imposing this condition on the tangent plane at a general point $(1,s,s^n,s^{n+1})$ implies that $n=2^e$ for some integer $e$. The fact that $\cA$ is an arc implies $(e,h)=1$ (see last line of the proof of Example \ref{ex:3space}).
\end{proof}

\section{Arcs in $\PG(4,q)$}\label{sec:k=5}

The following theorem proves the MDS conjecture for $k=5$. It was proved by Segre \cite{Segre1967} for $q$ odd and Casse \cite{Casse1969} for $q$ even.
\begin{theorem}
An arc $\cA$ in $\PG(4,q)$, $q\geq 5$, has size at most $q+1$.
\end{theorem}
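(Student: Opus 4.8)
The plan is to argue by contradiction, assuming $|\cA|=q+2$, and to treat $q$ odd and $q$ even separately. First, Theorem~\ref{MDSprime} settles every prime $q$ (in particular $q=5,7$), and since the only prime powers $q$ with $5\le q<8$ are $5$ and $7$, we may assume from now on that $q\ge 8$; in particular $q\ge k+1=6$, so the projection theorem (Theorem~\ref{thm:main_projection}) applies.

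Suppose first that $q$ is odd, so $q\ge 9$. Projecting $\cA$ from one of its points gives an arc of size $q+1$ in a hyperplane, a copy of $\PG(3,q)$; since $q\ge 5$ is odd, Theorem~\ref{thm:q+1_arc_in_3space} says this projection is a twisted cubic, i.e.\ a normal rational curve. Doing this for two distinct points of $\cA$ and applying the projection theorem, $\cA$ lies on a normal rational curve of $\PG(4,q)$, which has only $q+1$ points --- a contradiction.

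The case $q$ even, $q\ge 8$, is where the real work lies. Here Theorem~\ref{thm:q+1_arc_in_3space} only tells us that the projection of $\cA$ from a point of $\cA$ is an arc of size $q+1$ in $\PG(3,q)$ projectively equivalent to the arc of Example~\ref{ex:3space}, which in general is \emph{not} a normal rational curve, so the projection theorem is no longer directly applicable. Instead I would pass to the dual space of $\PG(4,q)$: with $t=q+k-1-|\cA|=2$ and $m=1$ (as $q$ is even), and since $|\cA|=q+2\ge mt+k-1=6$, Theorem~\ref{dualhypersurface} provides a unique quadric $Q$ containing the set $\mathcal{T}$ of points dual to the solids that meet $\cA$ in exactly $k-2=3$ points. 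One then analyses how $Q$ meets the lines dual to the $3$-secant planes of $\cA$: by Lemma~\ref{triv} such a line carries exactly the two points of $\mathcal{T}$ coming from the two solids through the given plane meeting $\cA$ in only those three points, so it either meets $Q$ precisely in those two points or lies on $Q$. Together with a count of the $4$-secant solids, the goal is to extract enough structure --- concretely, following Casse, to show that $\cA$ must extend to an arc of size $q+3$ --- and then to rule out such a $(q+3)$-arc of $\PG(4,q)$ by incidence counts over its secant subspaces, which force incompatible divisibility conditions on $q$ (in the spirit of the simultaneous ``$3\mid q+1$'' and ``$3\mid q+2$'' obtained in the $k=4$, $q$ even case). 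The hard part, by contrast with $k=4$ where the corresponding dual hypersurface is a hyperplane --- so the relevant solids are automatically concurrent and the extension is immediate --- is that here $Q$ is a genuine quadric; getting the extension out of it requires controlling the projective type of $Q$ (in particular the behaviour of its nucleus, $q$ being even) and its intersections with the pencils of solids through the $3$-secant planes. I expect this even case to be the main obstacle.
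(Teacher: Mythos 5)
Your treatment of $q$ odd is fine: projecting from a single point and invoking Theorem~\ref{thm:q+1_arc_in_3space} (twisted cubic) for two points of $\cA$, then applying Theorem~\ref{thm:main_projection}, is a legitimate variant of the paper's route (the paper instead projects from pairs of points to get planar $q$-arcs in conics and applies Corollary~\ref{cor:main_projection}; the two are essentially equivalent since Theorem~\ref{thm:q+1_arc_in_3space} is itself proved by planar projection). Your handling of the small cases $q=5,7$ via Theorem~\ref{MDSprime} is also correct and slightly more careful than the paper about the hypothesis $q\geq k+1$ of the projection theorem.

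The even case, however, is a genuine gap, and you say as much yourself: ``the goal is to extract enough structure'' and ``I expect this even case to be the main obstacle'' are statements of intent, not proof. Worse, the specific plan is unlikely to close. For $k=4$ the extension to a $(q+3)$-arc is immediate precisely because the dual hypersurface of Theorem~\ref{dualhypersurface} has degree $mt=1$: it is a hyperplane, so the $2$-secant planes are concurrent and the common point extends the arc. For $k=5$ the dual hypersurface is a quadric, and a quadric does not hand you a distinguished point of $\PG(4,q)$ through which all the relevant solids pass; there is no analogue of the ``add the nucleus'' step, which is exactly why you flag it as the obstacle. The paper takes a completely different route here. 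It projects from two points $x,y\in\cA$ onto a hyperplane $\pi$, obtaining two $(q+1)$-arcs $x(\cA)$, $y(\cA)$ of $\PG(3,q)$ whose tangent lines, by the argument in the proof of Theorem~\ref{thm:q+1_arc_in_3space}, are the lines of hyperbolic quadrics $Q_x^+$, $Q_y^+$ containing the respective projected arcs. A short incidence argument at the point $z=xy\cap\pi$ shows the two quadric cones $xQ_x^+$ and $yQ_y^+$ share the pair of planes over the two common tangents at $z$, so their quadratic forms can be written as $X_1X_2+X_5a(X)$ and $X_1X_2+X_5b(X)$; since $\cA$ lies on both cones it lies on the difference $X_5(a(X)-b(X))$, a union of two hyperplanes, forcing $|\cA|\leq 8<q+2$. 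To complete your proof you would either need to carry out this cone argument or find a genuinely new way to exploit the dual quadric; as written, the even case is missing.
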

\begin{proof}
Let $\cA$ be an arc in $\PG(4,q)$ of size $q+2$. 

Suppose $q$ is odd. Projecting $\cA$ from any two distinct points of $\cA$ gives a planar arc of size $q$, which is contained in a conic. By Corollary  \ref{cor:main_projection}, the arc $\cA$ is contained in a normal rational curve, a contradiction.

Suppose $q$ is even.
Projecting from two distinct points $x,y\in \cA$ gives two arcs $x(\cA)$ and $y(\cA)$ of size $q+1$ in a hyperplane $\pi$ of $\PG(4,q)$.
Arguing as in the proof of Theorem \ref{thm:q+1_arc_in_3space}, it follows that for each of these projected arcs the
$2(q+1)$ tangent lines are the lines on a hyperbolic quadric $Q_x^+$ and $Q_y^+$ in $\pi$. In particular, $x(\cA)$ is contained in $Q_x^+$ and $y(\cA)$
is contained in $Q_y^+$. 

Let $z$ be the point of intersection of the line through $x$ and $y$ with $\pi$ and let $\ell$ be one of the tangent lines of $x(\cA)$ at $z$. 
By the definition of $\ell$, each plane through $\ell$ contains at most one point of $x(\cA)$ different from $z$.
For each $p\in \cA\setminus\{x,y\}$ the plane $\langle x,y,p\rangle$ meets $\pi$ in a line containing $z$, $x(p)$ and $y(p)$. Therefore, the plane spanned by $\ell$ and $x(p)$ also contains $y(p)$. Hence, each plane through $\ell$ also contains at most one point of $y(\cA)$ different from $z$. It follows that the two tangents, say $\ell$ and $m$, of $x(\cA)$ at $z$ are equal to the two tangents of $y(\cA)$ at $z$.

The quadratic cones $xQ_x^+$ and $yQ_y^+$ therefore coincide on the hyperplane $\langle x,\ell,m\rangle$ with the union of the two planes $\langle x,\ell\rangle$ and $\langle x,m\rangle$. After a suitable choice of basis, we can assume that $xQ_x^+$ and $yQ_y^+$ are the zero locus of the forms $X_1X_2+X_5a(X)$ and $X_1X_2+X_5b(X)$, where $a(X)$ and $b(X)$ are linear forms in $X=(X_1,\ldots,X_5)$. The arc $\cA$ is therefore contained in the zero locus of $X_5(a(X)-b(X))$, which is the union of two hyperplanes.
This implies that $\cA$ has size at most $8$, a contradiction.
%
\end{proof}
The following improves on the previous bound $q>83$ (see e.g. \cite[Table 3.1]{HiSt2001}).
\begin{theorem}
An arc in $\PG(4,q)$, $q$ odd, $q>13$, of size $q+1$ is a normal rational curve.
\end{theorem}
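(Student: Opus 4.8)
The plan is to reduce the statement to planar arcs and then invoke Corollary~\ref{cor:main_projection}. Let $\cA$ be an arc of $\PG(4,q)$ of size $q+1$, with $q$ odd and $q>13$, and fix a $3$-subset $S$ of $\cA$. For each of the three $2$-subsets $S'$ of $S$, I would project $\cA$ from the line $\langle S'\rangle$ onto a plane $\Pi$ disjoint from $\langle S'\rangle$. Since $\cA$ is an arc of $\PG(4,q)$, no hyperplane contains $5$ points of $\cA$; a routine dimension count then shows that the $q-1$ points of $\cA\setminus S'$ have pairwise distinct images in $\Pi$, no three of which are collinear, so each of these three projections is a planar arc of size $q-1$. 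The first task is therefore to show that every planar arc of size $q-1$ in $\PG(2,q)$ with $q$ odd, $q>13$, is contained in a conic; granting this, Corollary~\ref{cor:main_projection} (whose hypotheses $k\geq 4$ and $q\geq k+1$ hold since $k=5$ and $q>13$) yields that $\cA$ is contained in a normal rational curve $\cN$ of $\PG(4,q)$, and since $|\cN|=q+1=|\cA|$ this forces $\cA=\cN$.

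To establish the planar claim, let $\mathcal{B}$ be a planar arc of size $q-1$ in $\PG(2,q)$, $q$ odd, $q>13$, and extend it to a complete planar arc $\mathcal{B}^{*}$. Because $q$ is odd we have $|\mathcal{B}^{*}|\leq q+1$, so $|\mathcal{B}^{*}|\in\{q-1,q,q+1\}$. The value $q-1$ is excluded, since then $\mathcal{B}^{*}=\mathcal{B}$ would be a complete planar arc of size $q-1$, forcing $q\in\{7,9,11,13\}$ by Corollary~\ref{qminus1}, against $q>13$; the value $q$ is excluded because a planar arc of size $q$ is incomplete when $q$ is odd (Segre). Hence $|\mathcal{B}^{*}|=q+1$, so $\mathcal{B}^{*}$ is a conic by Segre's theorem (Theorem~\ref{thm:segre}), and $\mathcal{B}\subseteq\mathcal{B}^{*}$ is contained in a conic, as wanted.

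Combining these two steps proves the theorem. I expect no serious obstacle: the substantive input is entirely imported, namely the classification of complete planar arcs of size $q-1$ in Corollary~\ref{qminus1} together with the projection machinery packaged in Corollary~\ref{cor:main_projection}. The only place demanding a little care is the verification that projecting $\cA$ from two of its points really does produce a planar arc of size exactly $q-1$, which is immediate from the arc axiom in $\PG(4,q)$; and it is precisely the use of Corollary~\ref{qminus1} that pins the hypothesis down to $q>13$ and gives the improvement over the earlier range $q>83$.
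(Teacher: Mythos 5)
Your proposal is correct and follows essentially the same route as the paper: the paper's (one-line) proof also projects from two points of the arc to obtain a planar arc of size $q-1$, invokes Corollary~\ref{qminus1} to conclude it lies in a conic for $q>13$, and then applies the projection machinery. You have merely filled in the details the paper leaves implicit (extending the planar arc to a complete one, ruling out sizes $q-1$ and $q$, and citing Segre's theorem for size $q+1$), all of which are handled correctly.
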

\begin{proof}
Projecting from two points gives a planar arc of size $q-1$ which by Theorem \ref{qminus1} is contained in a conic unless $q\leq 13$.
\end{proof}

\begin{remark}
An arc $\cA$ of size 8 in $\PG(4,7)$ gives a 5-dimensional MDS code $C_\cA$ of length 8, whose dual is a 3-dimensional MDS code of length 8, which by Segre's theorem is a Reed-Solomon code. By Theorem \ref{thm:dual_of_RS}, the code $C_\cA$ is also a Reed-Solomon code, and therefore $\cA$ is a normal rational curve.
In \cite{Glynn1986} Glynn proved that there are exactly two arcs of size $10$ in $\PG(4,9)$.
\end{remark}

To prove the following theorem from \cite{CG1984}, Casse and Glynn prove that every arc of size $q$ in $\PG(3,q)$, $q$ even and $q \geq 16$, is incomplete and its completion is unique. From this one can then prove that an arc of size $q+1$ in $\PG(4,q)$, $q$ even and $q \geq 16$, is a normal rational curve.

\begin{theorem}\label{thm:NRC_PG(4,q)_q_even}
An arc in $\PG(4,q)$, $q=2^h$, $h\geq 3$ , of size $q+1$ is a normal rational curve.
\end{theorem}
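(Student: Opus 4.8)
The plan is to reduce to the classification of arcs of size $q+1$ in $\PG(3,q)$ (Theorem~\ref{thm:q+1_arc_in_3space}) by projecting from two points, and then to apply the projection theorem; the one thing that must be excluded is that the relevant three‑dimensional completions are the exotic curves of Example~\ref{ex:3space} with exponent $>1$ rather than normal rational curves. Write $q=2^h$ with $h\geq 4$ (the case $q=8$, not covered by the Casse--Glynn incompleteness theorem, must be treated separately). Let $\cA$ be an arc of size $q+1$ in $\PG(4,q)$ and fix distinct $x,y\in\cA$. Each projection $x(\cA),y(\cA)$ is an arc of size $q$ in a $\PG(3,q)$, hence by the Casse--Glynn incompleteness result (valid for $q$ even, $q\geq 16$) has a \emph{unique} completion to an arc $\cA_x$ resp. $\cA_y$ of size $q+1$; by Theorem~\ref{thm:q+1_arc_in_3space} these are projectively equivalent to Example~\ref{ex:3space}, say with parameters $e_x,e_y$, where $\gcd(e_x,h)=\gcd(e_y,h)=1$. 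Since Example~\ref{ex:3space} is a normal rational curve exactly when the parameter is $1$, once we know $e_x=e_y=1$ the arc $\cA$ projects from the two distinct points $x,y$ into normal rational curves in hyperplanes, so Theorem~\ref{thm:main_projection} forces $\cA$ inside a normal rational curve of $\PG(4,q)$, which then equals $\cA$ for cardinality reasons. Everything reduces to showing $e_x=e_y=1$.

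To this end I would coordinatise $\cA$ with respect to $x$. Using the transitivity of the automorphism group of Example~\ref{ex:3space} on its points, normalise so that $x=(1,0,0,0,0)$, the projection from $x$ is onto the hyperplane $\{X_1=0\}$, the point of $\cA_x$ missing from $x(\cA)$ is $(0,1,0,0,0)$, and $x(y)=(0,0,0,0,1)$. Then $\cA=\{(1,0,0,0,0)\}\cup\{(\mu_t,1,t,t^{n},t^{n+1})\mid t\in\bF_q^\times\}\cup\{(\nu,0,0,0,1)\}$ for scalars $\mu_t$ and $\nu$, where $n=2^{e_x}$; note that the curve $\{(1,t,t^{\sigma},t^{\sigma+1})\}\cup\{(0,0,0,1)\}$ lies on the hyperbolic quadric $X_1X_4=X_2X_3$, which pins down the coordinates further. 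Now project $\cA$ from $y$: its image consists of $(1,0,0,0)$ and the points $(\mu_t-\nu t^{n+1},1,t,t^{n})$, $t\in\bF_q^\times$, and by the first paragraph this completes uniquely to a copy of Example~\ref{ex:3space} with parameter $m=2^{e_y}$. Comparing coordinate ratios of this parametrisation with the standard one $r\mapsto(1,r,r^{m},r^{m+1})$ — and exploiting that in characteristic $2$ the maps $z\mapsto z^{n}$, $z\mapsto z^{m}$ are additive — forces $\mu_t-\nu t^{n+1}$ to be a fixed power of $t$ and yields the congruence $(2^{e_x}-1)(2^{e_y}-1)\equiv 1\pmod{q-1}$; for a normal rational curve one has $e_x=e_y=1$ and indeed $1\cdot 1\equiv 1$. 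I expect this coordinate‑matching step — in particular keeping track of the unknown linear change of coordinates realising the last completion, and thereby extracting the congruence cleanly — to be the technical heart and the main obstacle.

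The rest is arithmetic. Writing $a=e_x$, $b=e_y$ with $1\leq a,b\leq h-1$, the congruence reads $2^{a+b}\equiv 2^{a}+2^{b}\pmod{2^h-1}$. If $a+b<h$ then $0\leq 2^{a+b}-2^{a}-2^{b}<2^{h-1}<2^h-1$, so the left‑hand side is $0$, which for sums of powers of $2$ forces $a=b=1$; if $a+b\geq h$ one reduces the exponent $a+b$ modulo $h$ and a short parity‑and‑size argument rules out any solution. Hence $e_x=e_y=1$, both $\cA_x$ and $\cA_y$ are twisted cubics, and Theorem~\ref{thm:main_projection} finishes the proof, modulo the separate treatment of $q=8$.
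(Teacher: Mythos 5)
Your reduction matches the outline the paper itself gives for this theorem (the paper supplies no proof; it cites Casse--Glynn \cite{CG1984} and records only that one first shows every $q$-arc of $\PG(3,q)$, $q$ even, $q\geq 16$, is incomplete with a unique completion, and then deduces the statement). Projecting from two points, completing each $q$-arc uniquely, invoking Theorem~\ref{thm:q+1_arc_in_3space}, and finishing with Theorem~\ref{thm:main_projection} is exactly the right frame, and your observation that everything hinges on excluding the completions of Example~\ref{ex:3space} with $e>1$ is correct.

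However, the proposal does not actually prove that exclusion, and that exclusion \emph{is} the theorem: you derive no relation between $e_x$ and $e_y$, you only announce a congruence $(2^{e_x}-1)(2^{e_y}-1)\equiv 1 \pmod{q-1}$ that you ``expect'' to fall out of a coordinate comparison you explicitly defer as ``the technical heart and the main obstacle''. That is a genuine gap, not a detail. Worse, the congruence as written cannot be the correct condition, because the parameter $e$ of Example~\ref{ex:3space} is only determined up to the involution $e\mapsto h-e$: substituting $t=s^{2^{h-e}}$ and swapping the two middle coordinates carries $\{(1,t,t^{2^e},t^{2^e+1})\}\cup\{(0,0,0,1)\}$ onto $\{(1,s,s^{2^{h-e}},s^{2^{h-e}+1})\}\cup\{(0,0,0,1)\}$ while fixing the two points ($t=0$ and $t=\infty$) that your normalisation pins down. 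Since $(2^{e}-1)(2^{e'}-1)\equiv 1$ is not invariant under $e\mapsto h-e$, any correctly derived condition must either be stated for a specific choice of representative (which your normalisation does not determine) or be checked over all four representatives $\{e_x,h-e_x\}\times\{e_y,h-e_y\}$; your closing arithmetic treats only one. Finally, the case $q=8$ (which the theorem as stated includes, $h\geq 3$, but which falls outside the $q\geq 16$ range of the Casse--Glynn incompleteness result) is deferred and never handled. As it stands this is a plausible proof strategy with the decisive computation missing, rather than a proof.
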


%
%
%
%
%
%
%
%
%

\section{State of the art on the MDS conjecture}\label{sec:state_of_the_art}

Proofs of the MDS conjecture for $k$-dimensional codes with $k\in \{4,5\}$ can be found in Sections \ref{sec:k=4} and \ref{sec:k=5}.
The following table contains a survey of the conditions on the parameters for $k\geq 6$ for which the MDS conjecture has been proven.

\bigskip

{\small
\[
\begin{array}{|c|c|c|}
\hline
\mbox{Reference} & \mbox{condition on $q$} & \mbox{condition on $k$}    \\
\hline \hline

\mbox{Casse - Glynn 1984} & q=2^h & k=6   \\  \hline
\mbox{Kaneta - Maruta 1991} & q=2^h & k=7   \\  \hline

\mbox{Storme - Thas 1993} & q=2^h & k<\frac{1}{2}\sqrt{q}+\frac{15}{4}   \\  \hline

\mbox{Voloch 1991} & q =p^{2h+1} \mbox{ odd}&k<\frac{1}{4}\sqrt{pq} -\frac{29}{16}p+4   \\ \hline
\mbox{Ball - Lavrauw 2018} & q =p^{2h}  \mbox{ odd}& k<\sqrt{q}-\sqrt{q}/p+2  \\ \hline

\mbox{Hirschfeld - Korchm\'aros 1996} & q =p^h, p\geq 5  \mbox{ odd}& k\leq \frac{1}{2}\sqrt{q}     \\ \hline
\mbox{Hirschfeld - Korchm\'aros 1998} & q\geq 529 \mbox{ odd}, q\neq 3^6,5^5 & k\leq \frac{1}{2}\sqrt{q}+2    \\ \hline

\mbox{Ball 2012} &  q=p^h& k<p     \\ \hline
\mbox{Ball - De Beule 2012} & q=p^h, h>1 & k\leq 2p-2  \\  \hline

\multicolumn{3}{c}{    } \\
\multicolumn{3}{c}{\mbox{Parameters for which the MDS conjecture holds true in $\PG(k-1,q)$ with $k\geq 6$}}
\end{array}
\]}

The proof of the MDS conjecture for $k=6$, $q$ even, follows from Theorem \ref{thm:NRC_PG(4,q)_q_even} and Corollary \ref{cor:kaneta_maruta}.

In 1991 Kaneta and Maruta \cite{KaMa1991} proved the uniqueness of arcs of size $q+1$ in $\PG(5,q)$, $q$ even, $q\geq 16$, which by Corollary  \ref{cor:kaneta_maruta} proves
the MDS conjecture for $k=7$ and these values of $q$. 

In \cite{StTh1993} the Segre-Blokhuis-Bruen-Thas hypersurface is used to obtain an upper bound for the second largest arc in $\PG(4,q)$, $q$ even, improving upon the bound from \cite{BBT1988}. By Theorem \ref{thm:NRC_PG(4,q)_q_even}, any arc exceeding that bound is contained in a normal rational curve. Using Corollaries \ref{cor:main_projection} and \ref{cor:kaneta_maruta}, this proves the MDS conjecture for  $k<\frac{1}{2}\sqrt{q}+\frac{15}{4} $.

The following instances for the MDS conjecture for $q$ odd follow from the results on planar arcs and the projection theorem. Precisely, projecting an arc $\cA$ in $\PG(k-2,q)$ of size $q+1$ from $k-4$ of its points gives a planar arc of size $q-k+5$. So if $q+5-k>m$, i.e. $k<q+5-m$, and each planar arc of size $>m$ is contained in a conic then, by Corollary \ref{cor:main_projection}, $\cA$ is a normal rational curve. By Corollary \ref{cor:kaneta_maruta} the MDS conjecture holds true in $\PG(k-1,q)$.

The best asymptotic bounds for $q$ odd are from \cite{Voloch1991} and \cite{BL2018}. Theorem \ref{oddnonsq} implies the MDS conjecture for 
$k<\frac{1}{4}\sqrt{pq} -\frac{29}{16}p+4$, $q$ an odd non-square. If $q$ is an odd square then Theorem \ref{oddsq} implies the MDS conjecture for $k<\sqrt{q}-\sqrt{q}/p+2$.

For a proof of the MDS conjecture for $k\leq p$ and $k\leq 2p-2$, we refer to Sections \ref{sec:MDS_p} and \ref{sec:MDS_2p-2}.

The bounds on planar arcs from Hirschfeld and Korchm\'aros \cite{HiKo1996} and \cite{HiKo1998} imply the MDS conjecture for $k\leq \frac{1}{2}\sqrt{q}$, provided that the characteristic is at least 5, and for $k\leq \frac{1}{2}\sqrt{q}+2$, for $q\geq 529$ and $q\neq 3^6,5^5$. 

We have not dealt here very deeply with the techniques used to obtain the results of Voloch \cite{Voloch1991} and Hirschfeld and Korchm\'aros \cite{HiKo1996}, \cite{HiKo1998}. These results not only use bounds on the number of points of algebraic curves of small degree but also require more careful analysis of the curves which can arise from Segre's algebraic envelope associated to the tangents of a planar arc.

\newpage


\begin{thebibliography}{}

\bibitem{Ball2012}
S. Ball, On sets of vectors of a finite vector space in which every subset of basis size is a basis, {\it J. Eur. Math. Soc.}, {\bf 14} (2012) 733--748. 

\bibitem{Ball2015}
S. Ball, {\em Finite Geometry and Combinatorial Applications}, London Mathematical Society Student Texts {\bf 82}, Cambridge University Press, 2015.

\bibitem{Ball2018}
S. Ball, Extending small arcs to large arcs, {\it European Journal of Mathematics}, {\bf 4} (2018) 8--25.

\bibitem{BdB2017}
S. Ball and J. De Beule, On subsets of the normal rational curve, {\em IEEE Trans. Information Theory}, {\bf 63} (2017) 3658--3662.

\bibitem{BL2018}
S. Ball and M. Lavrauw, Planar arcs, {\em J. Combin. Theory Ser. A}, {\bf 160} (2018) 261--287.

\bibitem{BL2019}
S. Ball and M. Lavrauw, Arcs and tensors, {\em Des. Codes and Cryptogr.}, to appear.

\bibitem{BT2014} A. Barg and I. Tamo, A Family of Optimal Locally Recoverable Codes, {\it IEEE Trans. Inform. Theory}, {\bf 60} 4661--4676 (2014).

\bibitem{Bernal2017} A. Bernal, On the Existence of Absolutely Maximally Entangled States of Minimal Support, {\it Quant. Phys. Lett.}, {\bf  6} 1-3 (2017).

\bibitem{BBT1988} 
A. Blokhuis, A. A. Bruen and J. A. Thas,  On M.D.S. codes, arcs in PG$(n,q)$ with $q$ even, and a solution of three fundamental problems of B. Segre, {\it Invent. Math.} {\bf 92} (1988) 441--459.

\bibitem{BBT1990} 
A. Blokhuis, A. A. Bruen and J. A. Thas, Arcs in $PG(n,q)$, MDS-codes and three fundamental problems of B. Segre - some extensions, {\it Geom. Dedicata}, {\bf 35} (1990) 1--11. 

\bibitem{RCB:47} R.C. Bose, Mathematical theory of the symmetrical
     factorial design, {\em Sankhy$\bar{a}$} {\bf 8} (1947), 107--166.
     
\bibitem{Casse1969} L. R. A. Casse, A solution to B. Segre's problem $I_{r,q}$, {\em Atti Accad. Naz. Lincei Rend.}, {\bf 46} (1969) 13--20.


\bibitem{CG1982} L. R. A. Casse and D. G. Glynn, The solution to Beniamino Segre's problem $I_{r,q}$, $r=3$, $q=2^h$, {\it Geom. Dedicata}, {\bf 13} (1982) 157--163.

\bibitem{CG1984} L. R. A. Casse and D. G. Glynn, On the uniqueness of $(q+1)$-arcs of PG$(4,q)$, $q=2^h$, $h\geq 3$, {\it Discrete Math.}, {\bf 48} (1984) 173--186.


\bibitem{CS2015} F. Caullery and K-U Schmidt, On the classification of hyperovals, {\it Adv. Math.}, {\bf 283} (2015) 195--203.

\bibitem{WEC:95B} W.E. Cherowitzo, $\alpha$-flocks and hyperovals,
{\em Geom. Dedicata} {\bf 72} (1998), 221--246.
 
\bibitem{WEC:00} W.E. Cherowitzo, C.M. O'Keefe and T. Penttila, A
unified construction of finite geometries in characteristic two, {\it Advances in Geometry} {\bf 3} (2003) 1-21.

\bibitem{WEC:95A} W.E. Cherowitzo, T. Penttila, I. Pinneri and G.F. Royle,
Flocks and ovals, {\em Geom. Dedicata} {\bf 60} (1996), 17--37.

\bibitem{Chowdhury2015} A. Chowdhury, Inclusion Matrices and the MDS Conjecture, {\em Electronic J. Combin.}, {\bf 23} (2016) P4.29.

\bibitem{CL2012} E. Crestani and A. Lucchini, $d$-Wise generation of prosolvable groups, {\it J. Algebra}, {\bf 369} (2012) 59--69. 

\bibitem{FPXY2014} O. Farr\`as,  C. Padr\'o, C. Xing and A. Yang, Natural generalizations of threshold secret sharing, {\it IEEE Trans. Inform. Theory}, {\bf 60} (2014) 1652--1664.

\bibitem{GiPaToUg2002} M. Giulietti, F. Pambianco, F. Torres, and E. Ughi, On large complete arcs: odd case, {\it Discrete Math.}, {\bf 225} (2002) 145--159.

\bibitem{Glynn1986} D. G. Glynn, The non-classical 10-arc of $\mathrm{PG}(4,9)$,
{\it Discrete Math.}, {\bf 59} (1986) 43--51.

\bibitem{DGG:83} D.G. Glynn, Two new sequences of ovals in finite
Desarguesian planes of even order, {\em Combinatorial Mathematics X}
(ed. L.R.A. Casse), Lecture Notes in Mathematics {\bf 1036}, Springer,
1983,  217--229.

\bibitem{GR2015} M. Grassl and M. R\"otteler, Quantum MDS codes over small fields, in {\it Proc. Int. Symp. Inf. Theory (ISIT)}, 1104--1108 (2015).

\bibitem{GRZ2018} A. Gray, T. J. Reid, X. Zhou, Clones in matroids representable over a prime field, {\it Discrete Math.} {\bf 341} (2018) 213--216.

\bibitem{GXY2019} V. Guruswami,C. Xing, C. Yuan, How long can optimal locally repairable codes be?, {\it IEEE Trans. Inform. Theory}, {\bf 65} (2019) 3662--3670.

\bibitem{MH:75} M. Hall, Ovals in the Desarguesian plane of order 16,
{\em Ann. Mat. Pura Appl.} {\bf 102} (1975), 159--176.

\bibitem{Harris} J. Harris. Algebraic Geometry, A First Course.

\bibitem{HeMc2012}
F. Hernando and G. M. McGuire, Proof of a conjecture of Segre and Bartocci on monomial hyperovals in projective planes,
{\em Des. Codes Cryptogr.} 65 (2012), no. 3, 275--289


\bibitem{Hirschfeld1971} J. W. P. Hirschfeld, Rational curves on quadrics over finite fields of characteristic
two, {\it Rend. Mat.}, {\bf 3} (1971) 772-795.


\bibitem{HiKo1996} J. W. P. Hirschfeld and G. Korchm\'aros, On the embedding of an arc into a conic in a finite plane, {\it Finite Fields Appl.}, {\bf 2} (1996) 274--292.

\bibitem{HiKo1998} J. W. P. Hirschfeld and G. Korchm\'aros, On the number of rational points on an algebraic curve over a finite field, {\it Bull. Belg. Math. Soc. Simon Stevin}, {\bf 5} (1998) 313--340.


\bibitem{HiKoTo2008} J. W. P. Hirschfeld, G. Korchm\'aros and F. Torres, Algebraic curves over a finite field. Princeton Series in Applied Mathematics. Princeton University Press, Princeton, NJ, 2008. xx+696 pp. ISBN: 978-0-691-09679-7

\bibitem{HiSt2001} J. W. P. Hirschfeld and L. Storme. The packing problem in statistics, coding theory and finite projective spaces: update 2001. Finite geometries, 201--246, Dev. Math., 3, Kluwer Acad. Publ., Dordrecht, 2001.


\bibitem{Huntemann2012}
S. Huntemann, The upper bound of general Maximum Distance Separable codes, Honours thesis, University of New Brunswick, St. John, Canada, 2012.

\bibitem{JK1952}, G. J\"arnefelt and P. Kustaanheimo,
An observation on finite geometries. Den 11te Skandinaviske Matematikerkongress, Trondheim, 1949, pp. 166?182. Johan Grundt Tanums Forlag, Oslo, 1952. 

\bibitem{KaMa1989} H. Kaneta and T. Maruta. An elementary proof and an extension of Thas' theorem on $k$-arcs. 
{\em Math. Proc. Camb. Phil. Soc.}  {\bf 105} (1989) 459--462.

\bibitem{KaMa1991} 
T. Maruta and H. Kaneta, On the uniqueness of $(q+1)$-arcs of $\PG(5,q)$, $q=^h$, $h\geq 4$. {\em Math. Proc. Cambridge Philos. Soc.} 110 (1991), no. 1, 91--94.

\bibitem{Kestenband1981} 
B. C. Kestenband, Unital intersections in finite projective planes, {\em Geom. Dedicata}, {\bf 11} (1981) 107--117.


\bibitem{LL:64} L. Lunelli and M. Sce, Considerazione aritmetiche e risultati
sperimentali sui $\{K;n\}_q$-archi, {\em Ist. Lombardo Accad. Sci. Rend.
A} {\bf 98} (1964), 3--52.


\bibitem{CMO:91}  C.M. O'Keefe and T. Penttila, Hyperovals in 
$PG(2,16)$, {\em European J. Combin.} {\bf 12} (1991), 51--59.

\bibitem{OP92}  C.M. O'Keefe and T. Penttila, A new hyperoval in 
$PG(2,32)$, {\em J. Geom.} {\bf 44} (1992), 117--139.

\bibitem{MS1977}
F. J. MacWilliams and N. J. A. Sloane,
{\it The Theory of Error-Correcting Codes},
North-Holland, 1977.

\bibitem{SEP:85} S.E. Payne,
A new infinite family of generalized quadrangles, {\em Congr. Numer.}
{\bf 49} (1985), 115--128.


\bibitem{SEP:95A} S.E. Payne, A tensor product action on $q$-clan 
generalized quadrangles with $q=2^e$, {\em Linear Algebra Appl.} 
{\bf 226--228} (1995), 115--137.

\bibitem{SEP:95z} S.E. Payne, T. Penttila and I. Pinneri,
Isomorphisms between Subiaco $q$-clan geometries, {\em Bull. Belg. Math.
Soc.} {\bf 2} (1995), 197--222.


\bibitem{PP1999} T. Penttila and I. Pinneri, Hyperovals, {\it Australasian J. Combin}, {\bf 19} (1999) 101--114.

\bibitem{TP:94} T. Penttila and G.F. Royle, 
Classification of hyperovals in $PG(2,32)$, {\em J. Geom.} {\bf 50} (1994),
151--158.

\bibitem{Pinneri1996} I. Pinneri, Flocks, Generalised Quadrangles and Hyperovals, PhD dissertation, University of Western Australia, (1996).

\bibitem{Potapov2018} V. N. Potapov, On the number of SQSs, Latin hypercubes and MDS codes, {\it J. Combin. Des.}, {\bf 26} (2018) 237--248. 

\bibitem{Segre1955a} B. Segre, Ovals in a finite projective plane, {\it Canad. J. Math.}, {\bf 7} (1955) 414--416.

\bibitem{Segre1955b} B. Segre, Curve razionali normali e $k$-archi negli spazi finiti, {\it Ann. Mat. Pura Appl.}, {\bf 39} (1955) 357--379.

\bibitem{Segre1957} B. Segre, Sui $k$-archi nei piani finiti di caratteristica due,
 {\em Rev. Math. Pures Appl.} {\bf 2} (1957), 289--300.
 
 \bibitem{Segre1959} B. Segre, Le geometrie di Galois. {\em Ann. Mat. Pura Appl.} (4) {\bf 48} (1959), 1--96.
 
\bibitem{Segre1962} B. Segre,  Ovali e curve $\sigma$ nei piani di Galois di
caratteristica due, {\em Atti dell' Accad. Naz. Lincei Rend.} (8) {\bf 32}
(1962), 785--790.

  
\bibitem{Segre1967} B. Segre, Introduction to Galois geometries, {\it Atti Accad. Naz. Lincei Mem.}, {\bf 8} (1967) 133--236.
 

\bibitem{SV1986} K. O. St\"ohr and J. F. Voloch, Weierstrass points on curves over finite fields, {\it Proc. London Math. Soc.}, {\bf 52} (1986) 1--19.

\bibitem{StTh1993} L. Storme and J. A. Thas, M.D.S. codes and arcs in $\PG(n,q)$ with $q$ even: an improvement of the bounds of Bruen, Thas, and Blokhuis. (English summary) 
{\em J. Combin. Theory Ser. A}  62 (1993), no. 1, 139--154.


\bibitem{Vandendriessche2019}  P. Vandendriessche, Classification of the hyperovals in $\mathrm{PG}(2,64)$, {\it Electronic J. Combin.}, {\bf 26} Issue 2 (2019).

\bibitem{Voloch1990b} J. F. Voloch,  Arcs in projective planes over prime fields, {\it J. Geom.}, {\bf 38} (1990) 198--200.

\bibitem{Voloch1991} J. F. Voloch, Complete arcs in Galois planes of non-square order, in: {\it Advances in Finite Geometries and Designs}, Oxford University Press, Oxford, 1991, pp. 401--406.

\bibitem{WLL2018} X. Wang, L. Ji, Y. Li, Yun and M. Liang, Constructions of augmented orthogonal arrays, {\it J. Combin. Des.}, {\bf 26} (2018) 547--559. 

\bibitem{Zieve2015} M. E. Zieve, Planar functions and perfect nonlinear monomials over finite fields,
{\it Des. Codes Cryptogr.} {\bf 75} (2015), no. 1, 71--80.

\end{thebibliography}
\end{document}